\newtheorem{maintheorem}{Theorem}
\newtheorem{maincorollary}[maintheorem]{Corollary}
\newtheorem*{question}{Question}
\newtheorem*{TLP}{Takens' Last Problem}
\newtheorem{theorem}{Theorem}[section]
\newtheorem{lemma}[theorem]{Lemma}
\newtheorem{proposition}[theorem]{Proposition}
\theoremstyle{definition}
\newtheorem{remark}[theorem]{Remark}
\numberwithin{equation}{section}
\numberwithin{figure}{section}
\begin{document}

\title[Takens' last problem and wandering domains]{
Takens' last problem and existence of non-trivial wandering domains}

\author
[Shin Kiriki]
{Shin Kiriki} 
\address{
Department of Mathematics, Tokai University, 4-1-1 Kitakaname, 
Hiratuka, Kanagawa, 259-1292, Japan. \\
E-mail: {\tt kiriki@tokai-u.jp}
}

\author
[Teruhiko Soma] 
{Teruhiko Soma}
\address{Department of Mathematics and Information Sciences,
Tokyo Metropolitan University,
Minami-Ohsawa 1-1, Hachioji, Tokyo 192-0397, Japan. \\
E-mail: {\tt tsoma@tmu.ac.jp}
}

\subjclass[2010]{
Primary: 37G25, 37C29, 37D20, 37D25}
\keywords{wandering domain, historic behavior, homoclinic tangency, H\'enon family}

\date{\today}


\begin{abstract}
In this paper, we give an answer to a $C^{r}$ $(2\leq r <\infty)$ version of the open problem of 
Takens in \cite{T08} which is related to historic behavior of dynamical 
systems. 
To obtain the answer, we show the existence of non-trivial wandering 
domains near a homoclinic tangency, which is conjectured by Colli-Vargas \cite[\S 2]{CV01}.
Concretely speaking, it is proved that any Newhouse open set in the space of $C^{r}$-diffeomorphisms on a closed surface  
is contained in the closure of the set of diffeomorphisms which have non-trivial 
wandering domains whose forward orbits have historic behavior. 
Moreover, this result implies an answer in the $C^{r}$ category to one of the open problems 
of van Strien \cite{vS10} which is 
concerned with wandering domains for H\'enon family.
\end{abstract}

\maketitle

\section{Introduction}\label{s.Introduction}
\subsection{Historic behavior and wandering domains}
Consider a dynamical system with a compact state space $X$, given by a continuous map 
$\varphi:X\to X$.
We say that the forward orbit $\{x, \varphi(x),\varphi^2(x),\dots\}$  of $x\in X$ has \emph{historic behavior} if 
the partial average  
$$\frac{1}{m+1}\sum_{i=0}^m\delta_{\varphi^i(x)}$$
 dose not converge as $m\to\infty$
in the weak topology, 
where $\delta_{\varphi^i(x)}$ is the Dirac measure on $X$ supported at $\varphi^i(x)$. 
The terminology of historic behavior was given by Ruelle in \cite{R01}. 
The following is the last open problem presented by Takens (see \cite{T08}).

\begin{TLP}
Whether are there persistent classes of smooth 
dynamical systems such that the set of initial states which give rise to orbits with historic
behavior has positive Lebesgue measure?
\end{TLP}

The first example of historic behavior was given in \cite{HK90}, 
where it is shown that the logistic family contains elements for which almost all orbits have historic behavior. 
This was extended to generic full families of unimodal maps, see \cite{dMvS93}. 
While Takens showed in  \cite{T94} that 
Bowen's $2$-dimensional flow with an attracting heteroclinic loop 
has a set of positive Lebesgue measure consisting of initial points of orbits with historic behavior, 
but the property is not preserved 
under arbitrarily small perturbations of the dynamics. Also, 
by using Dowker's result \cite{D53}, Takens  showed that the doubling map on the circle 
persistently has orbits with historic behavior, for which the collection of initial points is a residual subset  on the circle, see \cite{T08}.

In this paper, 
we obtain an answer to Takens' Last Problem for non-hyperbolic diffeomorphisms having homoclinic tangencies 
by a different way from the previous works.
To solve the problem
we use a non-empty connected open set, called a wandering domain, 
whose images do not intersect each other but are wandering around non-trivial hyperbolic sets. 
Wandering domains have been studied from the beginning of 20th century.
In fact, Bohl \cite{Bo16} in 1916 and Denjoy \cite{D32} in 1932 constructed examples of
$C^{1}$ diffeomorphisms on a circle which have wandering domains 
whose $\omega$-limit set is a Cantor set. 
However, it can not be extended to any $C^{2}$ as well as $C^{1}$ diffeomorphism whose derivative is a function of bounded variation, see in \cite{dMvS93}. 
Subsequently, similar phenomena for high dimensional diffeomorphisms 
were studied by several authors, for example  \cite{Kn81, Ha89, Mc93, BGLT94, NS96, KM10}. 
Also, for unimodal as well as multimodal maps on an interval or a circle,  
the main difficulty in  their classification in real analytic category
was to show the absence of wandering domains, which were 
developed by many dynamicists \cite{dMvS89, Ly89,BlLy89, dMvS93, vSV04}, see the survey of van\ Strien \cite{vS10}.

On the other hand,  a wide variety of investigations derived from Smale's works in 1960s 
yielded abundant developments, and 
provided a focal point for us to explore beyond hyperbolic phenomena. 
Thus, we here focus entirely on one of non-hyperbolic phenomena called homoclinic tangencies, which 
were pioneered by Newhouse, Palis, Takens and others. 
It is somewhat surprising that homoclinic tangencies and wandering domains were not studied together until Colli-Vargas' model  in \cite{CV01}. 
We furthermore discuss these two themes in more general situation to solve Takens' Last Problem.

\subsection{Main results}
To state our results  we have to introduce some definitions.  
 Let $M$ be a closed  two-dimensional $C^{\infty}$ manifold  
 and $\mathrm{Diff}^{r}(M)$, $r\geq 2$, the set of $C^{r}$ diffeomorphisms on $M$ endowed with $C^{r}$ topology.  
 We say that $f\in \mathrm{Diff}^{r}(M)$ has a  
\emph{homoclinic tangency} of a saddle periodic point $p$ if the stable manifold $W^{s}(p, f)$ and unstable manifold $W^{u}(p, f)$ 
have a non-empty and non-transversal intersection. 
Newhouse showed in \cite{N79} that, for any $f\in \mathrm{Diff}^r(M)$ with a homoclinic 
tangency of a dissipative saddle fixed point $p$, there is an open set $\mathcal{N} \subset \mathrm{Diff}^{r}(M)$ whose closure $\mathrm{Cl}(\mathcal{N})$ contains $f$ and 
such that any element of $\mathcal{N}$  is arbitrarily $C^r$-approximated by a diffeomorphism 
$g$ with 
a homoclinic tangency associated with a dissipative saddle fixed point $p_g$ which is 
the continuation of $p$, and moreover $g$ has a $C^{r}$-persistent tangency associated with some  
basic sets $\Lambda_g$ containing $p_g$ 
(i.e. there is a $C^{r}$ neighborhood of $g$ any element of which has a homoclinic tangency 
for the continuation of $\Lambda_g$).
Such an open set $\mathcal{N}$ is called a \emph{Newhouse open set} or a \emph{Newhouse domain}. 
Various non-hyperbolic phenomena were observed in $\mathcal{N}$ but still far from being completely understood. 
For example, Newhouse also showed in \cite{N79} that 
generic elements of $\mathcal{N}$ have infinitely many sinks or 
sources. Kaloshin proved  in \cite{Kal00} that  
the number of periodic points for diffeomorphisms in a residual subset of $\mathcal{N}$ 
grows super-exponentially. See \cite[\S 1]{GK07} and \cite[\S 0]{KS08} 
for detail descriptions of other results which are not mentioned here.

Our definition of a wandering domain is the same as one defined for one-dimensional dynamics  in \cite{dMvS93, vS10}. In fact, 
we say that, for $f\in \mathrm{Diff}^{r}(M)$, 
a non-empty connected open set $D\subset M$ is a \emph{wandering domain} if
\begin{itemize}
\item $f^{i}(D)\cap f^{j}(D)=\emptyset$ for any integer $i, j\geq 0$ with $i\neq j$; 
\item the union $\omega(D, f)=\bigcup_{x\in D}\omega(x,f)$ of $\omega$-limit sets is not equal to a single periodic orbit.
\end{itemize}
A wandering domain $D$ is called \emph{contracting} if the diameter of $f^{n}(D)$  
converges to zero as $n\to \infty$. 
Note that Denjoy's example is a contracting wandering domain, see \cite{D32, vS10}. 

We now state the main result of this paper 
where a conjecture of Colli and Vargas is proved affirmatively in $C^{r}$ topology and 
a solution to Takens' Last Problem will be obtained by using some non-trivial wandering domains.

\begin{maintheorem}\label{main-thm}
Let $M$ be a closed surface and $\mathcal{N}$ any Newhouse open set in 
$\mathrm{Diff}^{r}(M)$ with $2\leq r< \infty$.
Then there exists a dense subset of $\mathcal{N}$ each element $f$ of which 
has a contracting wandering domain $D$ such that
\begin{enumerate}[{\rm (1)}]
\item $\omega(D, f)$ contains a hyperbolic set which is not just a periodic orbit;
\item the forward orbit of every $x\in D$ under $f$ has historic behavior.
\end{enumerate}
\end{maintheorem}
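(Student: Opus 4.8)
The plan is to build, inside any Newhouse open set $\mathcal{N}$, a $C^{r}$-dense collection of diffeomorphisms carrying a wandering domain with the required contracting and historic features, following the philosophy of the Colli--Vargas construction but upgrading it so that it survives in $C^{r}$ rather than only $C^{1}$ and so that the orbit of the domain genuinely records non-convergent time averages. Since the persistent-tangency structure is exactly what $\mathcal{N}$ provides, I would first fix $f_{0}\in\mathcal{N}$ together with a basic set $\Lambda$ containing a dissipative saddle fixed point $p$ and a $C^{r}$-persistent homoclinic tangency between $W^{s}(p)$ and $W^{u}(\Lambda)$. After an arbitrarily small $C^{r}$ perturbation I may assume this tangency is quadratic and placed in convenient linearizing coordinates near $p$, where $f_{0}$ acts as $(x,y)\mapsto(\lambda x,\sigma y)$ with $0<\lambda<1<\sigma$ and $\lambda\sigma<1$; I also arrange a ``thick'' horseshoe portion of $\Lambda$ and an associated pair of branches of $W^{u}$ accumulating on the tangential branch of $W^{s}$, so that returns through a neighborhood of the tangency can be iterated indefinitely.

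The core of the argument is a renormalization scheme near the tangency. The return map, after many iterates near $p$ composed with the excursion that realizes the tangency, is $C^{r}$-close in suitable rescaled coordinates to the one-dimensional endomorphism $x\mapsto a-x^{2}$ (a Hénon-like limit, cf.\ the discussion of van Strien's problem alluded to in the abstract). Choosing the free parameters so that this limiting quadratic map sits at a carefully selected point on the boundary between regimes — concretely, so that the critical orbit shadows two different periodic pieces of the horseshoe in alternating, longer and longer blocks — produces, for the genuine surface diffeomorphism, a small disk $D$ near the tangency whose successive returns are each contracted (by a definite factor coming from $\lambda\sigma<1$ and the quadratic folding) while being shepherded alternately toward two disjoint compact invariant pieces of $\Lambda$. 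Contracting diameter gives the wandering property (images of a small enough $D$ cannot overlap once they are trapped near distinct pieces of a hyperbolic set and shrinking), and the $\omega$-limit set of $D$ contains at least a horseshoe-type subset of $\Lambda$, which is not a single periodic orbit; that is conclusion (1). For the density statement I would run this construction starting from an arbitrary $g\in\mathcal{N}$: Newhouse's theorem lets me $C^{r}$-approximate $g$ by a diffeomorphism with the persistent-tangency configuration above, and then the parameter tuning is itself an arbitrarily small $C^{r}$ perturbation, so the resulting diffeomorphism with the wandering domain is $C^{r}$-close to $g$.

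Conclusion (2), the historic behavior, is forced by the alternating-block design: if the orbit of a point $x\in D$ spends a fraction of time near a periodic piece $\mathcal{O}_{1}$ of the horseshoe during one block and near a disjoint piece $\mathcal{O}_{2}$ during the next, and the block lengths grow geometrically, then the Birkhoff averages $\frac{1}{m+1}\sum_{i=0}^{m}\delta_{\varphi^{i}(x)}$ oscillate between neighborhoods of two distinct convex combinations of the invariant measures on $\mathcal{O}_{1}$ and $\mathcal{O}_{2}$ (test against a continuous function that separates the two pieces), hence do not converge; and because every point of $D$ has the same itinerary structure up to the first return, the same non-convergence holds for all of $D$ simultaneously. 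The main obstacle is the second paragraph: making the one-dimensional renormalization limit rigorous in $C^{r}$ for $r\ge 2$ — controlling the $C^{r}$ size of the perturbations that adjust the parameters, ensuring the rescaled return maps converge in $C^{r}$ on a fixed-size domain, and verifying that the critical-orbit itinerary one prescribes for the limiting quadratic map can actually be realized and is stable enough to pull back to the diffeomorphism. The distortion estimates needed to keep the folding under control across unbounded return times, and the bookkeeping that the successive $f^{i}(D)$ are pairwise disjoint rather than merely shrinking, are where the real work lies; everything else is packaging around Newhouse's persistence result and standard hyperbolic shadowing.
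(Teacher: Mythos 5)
Your outline captures the right philosophy (renormalization near the tangency plus a Colli--Vargas-style alternating-block itinerary), but it is missing the central new idea that makes the construction work for an \emph{arbitrary} Newhouse open set, and it leans on a perturbation mechanism that cannot deliver what you need. First, you work only with the horseshoe $\Lambda$ and ``two disjoint pieces'' of it. For a general Newhouse set there is no guarantee that $\tau(K^{s}_{\Lambda})\tau(K^{u}_{\Lambda})>1$, so the Gap Lemma does not apply and the linked pairs of stable and unstable bridges that drive the whole recursive construction need not intersect. The paper's essential move is to use the H\'enon-like return map $\varphi_n$ to create a \emph{second} basic set $\Gamma_m$ whose unstable Cantor set $K^u_m$ has thickness as large as one wishes (by taking the period $m$ large), and to run the linking argument on the arc of heteroclinic tangencies between $W^u(\Lambda)$ and $W^s(\Gamma_m)$; a separate ``critical chain'' argument is then needed to route the orbit back from $\Gamma_m$ to $\Lambda$. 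Without this ingredient your construction only works for tangencies where $\Lambda$ happens to be thick enough, which is not the generality claimed.

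Second, ``parameter tuning'' inside the family cannot realize the infinitely many independent return conditions $f^{m_k}(R_k)\subset \mathrm{Int}\,R_{k+1}$: one real parameter (or finitely many) controls only finitely many such constraints. The actual proof performs infinitely many localized perturbations with disjoint supports, and the whole point of the $C^r$, $r<\infty$, hypothesis is the summability estimate $\sum_k \|\boldsymbol{u}_k\|/|A^u_{L,k+1}|^{r}<\infty$, which controls the $C^r$-norm of the bump-function perturbations relative to the sizes of the shrinking bridges; this is where $r$ enters irreducibly (and why the result is not obtained in $C^\infty$). Relatedly, your justification of the wandering property --- that shrinking images ``trapped near distinct pieces'' cannot overlap --- is not an argument: contraction of diameters does not preclude $f^a(D)\cap D\neq\emptyset$ for some $a$. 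The disjointness must be read off from the prescribed symbolic itineraries (blocks $\underline{1}^{(z_kk^2)}\underline{2}^{(k^2)}$ of the $u$-bridges containing the images of $D$), which is also what makes the time-average computation for historic behavior precise: the proportions of time spent near the two fixed points $p$ and $\widehat p$ converge to two distinct convex combinations of $\delta_p$ and $\delta_{\widehat p}$ only because the $O(k)$ transition times are negligible against the $k^2$-length blocks.
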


\noindent
We note the following: 
\begin{itemize}
\item
(The absence of regularity)
Any diffeomorphisms given in Theorem \ref{main-thm} as well as Colli-Vargas' examples in \cite{CV01}  
are not necessarily guaranteed to be of class $C^\infty$, see 
Remark \ref{rem_disjoint}\,(2).
\item
(Variety of average measures)
In Colli-Vargas' examples, the sequence of the average measures $\mu_x(m)$ can have    
various accumulation points, see \cite[\S 9]{CV01}.
However, in our construction, one can not expect such a variety for some technical reasons, 
see Remark \ref{r_accumulation_pt}.

\item (Positivity of Lebesgue measure and persistent property) Any wandering domain  is an open set, and hence in particular $D$ has positive Lebesgue measure, 
which is the condition required in Takens' Last Problem. 
While the property having a wandering domain obtained in Theorem \ref{main-thm} is not persistent. 
This is the reason why only dense subsets are obtained, 
which is exactly the same sort of restriction as in the paper by Colli \cite{Co98} 
concerning the density of H\'enon-like attractors. 

\item (Higher dimensions) We think that a similar result holds for codimension one dissipative 
homoclinic tangencies in dimensions higher than two, which is studied in \cite{PV94}. 
\end{itemize}

 \medskip
 
Next we consider 
the \emph{H\'enon family} $f_{a,b}:\mathbb{R}^2\to \mathbb{R}^2$ defined  as
\begin{equation}\label{original}
f_{a,b}(x,y)=(1-ax^2+y, bx),
\end{equation}
where $a, b$ are real parameters. 
This family will play a significant role in the renormalization 
near the homoclinic tangency.  
As $b=0$, the dynamics of $f_{a,0}$ is perfectly controlled 
by the family of quadratic maps $\varphi_{a}(x)=1-ax^2$. 
It is known that there is a parameter value of $a$ 
such that 
there is a $C^{1}$ unimodal map which is 
semi-conjugated to $\varphi_{a}$ and 
has a wandering interval \cite{Ha81}. 
Moreover, under the sufficient differentiability, 
some large class of multimodal maps including the quadratic maps
cannot have non-trivial wandering intervals, see
\cite{vSV04}. However, as $b\neq 0$, 
it has not been known whether or not $f_{a,b}$ 
has  wandering domains, 
which is one of open problems in \cite{vS10, LM11}. 
We can answer such a problem 
in the $C^{r}$ category with $2\leq r<\infty$ as in the following corollary 
of Theorem \ref{main-thm} together with the fact that 
there exists $(a,b)$  arbitrarily close to $(2, 0)$ such that 
$f_{a,b}$ has a quadratic homoclinic tangency for some 
saddle fixed point which unfolds generically with respect to $a$, 
see for example in \cite{KLS10, KS13}.

\begin{maincorollary} 
There is an open set $\mathcal{O}$ of the parameter space of H\'enon family 
with $\mathrm{Cl}(\mathcal{O})\ni(2,0)$ such that, for every $(a, b)\in \mathcal{O}$, 
$f_{a,b}$ is $C^{r}$-approximated by diffeomorphisms which have  contracting non-trivial wandering domains.
\end{maincorollary}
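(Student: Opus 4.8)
The plan is to derive the Corollary directly from Theorem~\ref{main-thm} by exhibiting a Newhouse open set in the H\'enon parameter family and then pulling the conclusion back to parameter space. First I would recall the known fact, cited in the excerpt from \cite{KLS10, KS13}, that there exist parameters $(a,b)$ arbitrarily close to $(2,0)$ for which $f_{a,b}$ possesses a quadratic homoclinic tangency associated with a dissipative saddle fixed point, and moreover this tangency unfolds generically as $a$ varies. Fix such a parameter pair $(a_0,b_0)$ close to $(2,0)$; since $f_{a_0,b_0}$ has a homoclinic tangency of a dissipative saddle fixed point, Newhouse's theorem \cite{N79} (as recalled in the excerpt) supplies a Newhouse open set $\mathcal{N}\subset \mathrm{Diff}^{r}(M)$ with $f_{a_0,b_0}\in\mathrm{Cl}(\mathcal{N})$, where $M$ is taken to be a large disk or a sphere compactifying the relevant region of $\mathbb{R}^2$ (the H\'enon map is a diffeomorphism onto its image and one compactifies in the standard way so that the dynamically relevant part is unaffected).

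Next I would invoke Theorem~\ref{main-thm}: the set of diffeomorphisms in $\mathcal{N}$ having a contracting wandering domain (with the historic-behavior and nontrivial-hyperbolic-set properties) is dense in $\mathcal{N}$, hence in particular $f_{a_0,b_0}$ lies in the closure of the set of $C^r$-diffeomorphisms possessing contracting nontrivial wandering domains. The point now is to transfer this from the infinite-dimensional space $\mathrm{Diff}^r(M)$ back to the two-parameter H\'enon family. For this I would use that the tangency unfolds generically with respect to $a$: by the standard parametrized Newhouse / homoclinic-bifurcation machinery, the family $(a,b)\mapsto f_{a,b}$ meets the Newhouse phenomenon in an open set $\mathcal{O}$ of parameter space accumulating on $(2,0)$ — concretely, for $(a,b)\in\mathcal{O}$ the map $f_{a,b}$ still has a homoclinic tangency (or persistent tangencies of a basic set) of the dissipative continuation of the saddle, so that each $f_{a,b}$ with $(a,b)\in\mathcal{O}$ lies in $\mathrm{Cl}(\mathcal{N}')$ for a Newhouse open set $\mathcal{N}'$; applying Theorem~\ref{main-thm} to $\mathcal{N}'$ then shows $f_{a,b}$ is $C^r$-approximated by diffeomorphisms with contracting nontrivial wandering domains. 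The openness of $\mathcal{O}$ comes from the robustness of homoclinic tangencies associated with a basic set (the ``persistent tangency'' clause in Newhouse's theorem as quoted), and $\mathrm{Cl}(\mathcal{O})\ni(2,0)$ comes from the density of suitable tangency parameters near $(2,0)$.

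The main obstacle I expect is bookkeeping rather than deep new mathematics: one must be careful that the approximating diffeomorphisms produced by Theorem~\ref{main-thm} need only be $C^r$ maps on $M$ and need not lie in the H\'enon family itself — which is exactly why the statement of the Corollary asserts only that $f_{a,b}$ is \emph{$C^r$-approximated by} diffeomorphisms with wandering domains, not that $f_{a,b}$ itself has one. So the argument does not need a parametrized version of the wandering-domain construction; it only needs the parametrized existence of the Newhouse phenomenon inside the H\'enon family, which is classical. A secondary point requiring care is the passage to a compact surface $M$: one should check that restricting attention to a forward-invariant trapping region (e.g. a large disk mapped into itself for $(a,b)$ near $(2,0)$) and extending $f_{a,b}$ to a diffeomorphism of $S^2$ does not create spurious recurrence, so that the wandering domain and its historic behavior genuinely reflect the H\'enon dynamics. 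Once these identifications are in place, the Corollary is immediate from Theorem~\ref{main-thm}.
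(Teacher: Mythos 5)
Your proposal is correct and follows essentially the route the paper intends: the paper derives the Corollary in one sentence from Theorem~\ref{main-thm} together with the fact (from \cite{KLS10,KS13}) that quadratic homoclinic tangencies of dissipative saddles unfolding generically in $a$ occur at parameters accumulating on $(2,0)$, so that the generic unfolding produces Newhouse parameter domains $\mathcal{O}$ accumulating on $(2,0)$, and the density statement of Theorem~\ref{main-thm} then gives the $C^r$-approximation. Your additional remarks — that the approximating diffeomorphisms need not be H\'enon maps, and that one must view the relevant (local, bounded) dynamics inside a closed surface to invoke Theorem~\ref{main-thm} — are exactly the bookkeeping the paper leaves implicit, and they are handled correctly.
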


In the end of this section we recall that non-trivial wandering phenomena are observable in circle homeomorphisms in the $C^{\infty}$ category by Hall \cite{Hal83} 
but not in the $C^{\omega}$ category by Yoccoz \cite{Yoc84}, which is the answer to one of problems by Poincar\'e \cite{Po1885}. 
Note that every discussion of the present paper unfolds in the $C^{r}$ category with  $2\leq r<\infty$, but  
some tools would not be applied directly 
to discussion in the $C^{\infty}$ as well as $C^{\omega}$ category. See Remark \ref{rem_disjoint}-(2).
Thus, the open problem for wandering domains of the H\'enon family 
by van Strien et al.\ is unsolved yet in $C^{\infty}$ and $C^{\omega}$ categories.
So it is worth recalling the following:
\begin{question}
Does there exist a parameter value $(a, b)$ for the H\'enon family (\ref{original}) such that 
$f_{a,b}$ has a non-trivial wandering domain?
\end{question}

\noindent 
Note that 
Astong et al.\! \cite{ABDPR} study  the existence of wandering Fatou components for polynomial skew-product maps
and 
present an example which admits 
a wandering Fatou component intersecting $\mathbb{R}^{2}$. However, it does not contain any H\'enon family.

\section{Outline of the proof}
In this section, we will sketch the proof of Theorem \ref{main-thm},  
where several technical terminologies, 
e.g.\ \emph{linked pair}, \emph{linking property}, \emph{critical chains}, etc, 
appear without definitions, all of which will be given in the  following sections.

\subsection{Standard  settings for general situations}
For the beginning, it could not  be better than that assumptions are minimized. 
So we start our discussions not for specific models as in \cite{CV01}, 
but rather for any two-dimensional diffeomorphism $f$ which has a homoclinic tangency for a dissipative  saddle fixed point, say $p$. 
See Figure \ref{fig_2_2}. 
For such a situation, 
it naturally reminds us of the renormalization scheme near  homoclinic tangencies by  Palis-Takens \cite{PT93}, see Theorem \ref{lem.renormalization}. 
In fact, we will take much advantage of the scheme as follows.

We may here assume that $f$ is a linear map $f(x,y)=(\lambda x, \sigma y)$ 
in a small 
neighborhood $U(p)$ of $p$ with $0<\lambda<1<\sigma$, $\lambda\sigma<1$ by performing an arbitrarily small perturbation for $f$ and replacing $f$ by 
$f^2$ if necessary.
In our scheme, we have two main  basic sets, 
where the \emph{basic set} means 
a compact hyperbolic and locally maximal invariant set 
which is transitive and contains a dense subset of periodic orbits.
One is a horseshoe $\Lambda$  which is 
associated with a transverse homoclinic intersection of $p$ but \emph{not} affine in general. 
The other is a basic set, denoted by $\Gamma_{m}$,  which is 
created by an H\'enon-like return map $\varphi_n$ of (\ref{eqn_phi_n}) in  
the renormalization near the homoclinic tangency,
where $m$ is the period of some periodic point for $\varphi_n$. 
Those ingredients and their cyclic interconnection by way of persistent heteroclinic tangencies
are precisely described in Sections \ref{sec.prep}. 
\begin{figure}[hbt]
\centering
\scalebox{0.8}{\includegraphics[clip]{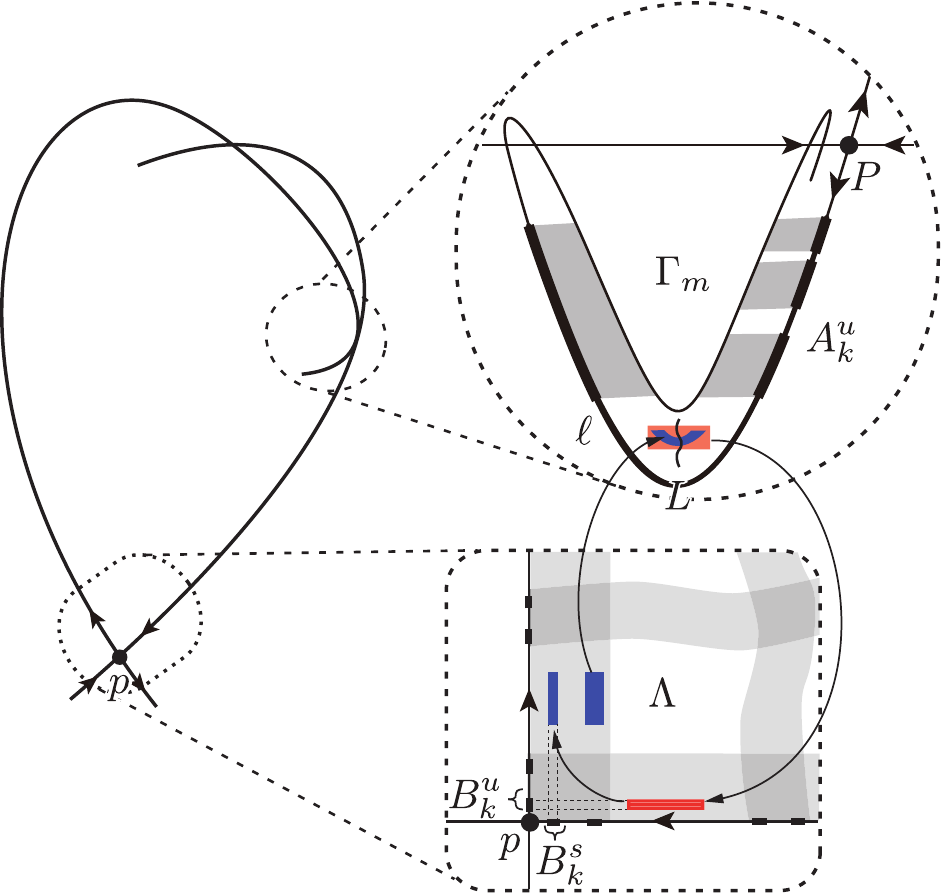}}
\caption{Transition of the wandering domains}
\label{fig_2_2}
\end{figure}

\subsection{Main Cantor sets and bridges} 
 From the basic sets $\Lambda$ and $\Gamma_{m}$
one can obtain several dynamically defined Cantor sets, 
among which the following three are especially important in this paper:
$$
K^{s}_{\Lambda}:=\pi_{\mathcal{F}_{\mathrm{loc}}^{u}(\Lambda)}(\Lambda), \quad
K^{u}_{\Lambda}:=\pi_{\mathcal{F}_{\mathrm{loc}}^{s}(\Lambda)}(\Lambda),\quad
K^{u}_{m}:=\pi_{m}(\Gamma_{m}), 
$$ 
where 
$\pi_{\mathcal{F}_{\mathrm{loc}}^{u}(\Lambda)}$, 
respectively $\pi_{\mathcal{F}_{\mathrm{loc}}^{s}(\Lambda)}$, 
is the  
projection on $W^{s}_{\mathrm{loc}}(p)$, 
respectively $W^{u}_{\mathrm{loc}}(p)$, 
along the leaves of an unstable foliation $\mathcal{F}_{\mathrm{loc}}^{u}(\Lambda)$, 
respectively stable foliation $\mathcal{F}_{\mathrm{loc}}^{s}(\Lambda)$, 
and 
$\pi_{m}$ is the projection on an arc $\ell\subset W^{u}_{\mathrm{loc}}(P)$ along the leaves of a 
stable foliation  $\mathcal{F}^s_{\mathrm{loc}}(\Gamma_{m})$ compatible with $W^s_{\mathrm{loc}}(\Gamma_{m})$. 
Here $P$ stands for the saddle fixed point for $\varphi_{n}$ as illustrated in Figure \ref{fig_2_2}, which is not contained in  $\Gamma_{m}$. 
See
 (\ref{def.Ks}) and (\ref{def.Ku}) respectively.
Moreover, one has the sequences of 
$s$, $u$-bridges related to the three Cantor sets,  respectively denoted by 
$$
\{B^{s}_{k}\}, \quad
\{B^{u}_{k}\},\quad
\{A^{u}_{k}\}, 
$$ 
where the former two are defined in Subsection \ref{subsec.horse}, see Figure \ref{fig_4_1}, 
and 
the latter is in Subsections \ref{subset.one-dim}--\ref{subsec.Hetero}, see Figure \ref{fig_4_3}.
In Section \ref{sec.bounded-distortions}, we give  descriptions of bounded distortion properties of these bridges.

\subsection{Generalized uniformly linking properties for 
$\{B^{s}_{k}(\Delta)\}$ and $\{A^{u}_{k}(\Delta)\}$} 
In the case of the Colli-Vargas model in \cite[\S 3]{CV01}, 
non-trivial wandering domains 
were detected in intersections between stable gaps 
and unstable gaps  of sufficiently thick affine Cantor sets. 
Obviously,  
we cannot directly unfold the same story into our case, 
because there is no promise in general such that 
the product of thickness of $K^{s}_{\Lambda}$ and $K^{u}_{\Lambda}$ is larger than one.

However, a bypass of this problem is already given in the 
renormalization scheme, see Theorem \ref{lem.renormalization}.
In fact, since 
the thickness of the third Cantor set $K^{u}_{m}$ has an arbitrarily large value by taking $m$ large enough, 
one can obtain a $C^{2}$-persistent heteroclinic tangency 
associated with $W^{u}(\Lambda_{g})$ and $W^{s}(\Gamma_{m,g})$, 
where $\Lambda_{g}$ and $\Gamma_{m,g}$ are the continuations of $\Lambda$ and $\Gamma_{m}$, respectively, 
see (S-\ref{setting5}) in Section \ref{sec.prep}. 
For simplicity, we still denote such continuations by $\Lambda$ and $\Gamma_{m}$, respectively, in this outline.
Therefore, in Section \ref{sec.Linking}, 
we can discuss linking properties between the 
images of continuations for bridges 
$\{B^{s}_{k}\}$ and $\{A^{u}_{k}\}$ on the arc $L$ of 
tangencies between 
$f^{N_2}(\mathcal{F}_{\mathrm{loc}}^u(\Lambda))$ 
and
$f^{-N_0}(\mathcal{F}_{\mathrm{loc}}^s(\Gamma_m))$  
for some integers $N_0, N_2>0$.  See Figure \ref{fig_5_3}.  
Actually, these linking situations will be found by the projected images (\ref{def.Cantor sets}) on $L$. 

To obtain a linked pair of the continuations for bridges, 
we have to add the first perturbation in $f^{-1}(\mathcal{B}_{\delta_{0}})$ where 
$\mathcal{B}_{\delta_{0}}$ is a small $\delta_{0}$-disk 
which meets the inverse image $\tilde{L}$ of $L$, see Figure \ref{fig_5_5}.
Precisely, this perturbation is the horizontal $\delta$-shift with $|\delta| \ll \delta_{0}$, and hence 
the perturbed map is given as 
$$f_{\delta}(\boldsymbol{x})=f(\boldsymbol{x})+(\delta,0)$$
 for any 
$\boldsymbol{x}\in f^{-1}(\mathcal{B}_{\delta_0/2})$.
Using this perturbation, 
we present Lemma  \ref{lem_LL1} which is a generalization of \emph{Linking Lemma} in \cite[p.\,1669]{CV01}. 
Moreover, 
in Lemma \ref{lem_LG1}, we show the existence of uniformly linking subsequence of $\{A^{u}_{k}(\Delta)\}$ and 
$\{B^{s}_{k}(\Delta)\}$ 
where $\Delta=\sum_{k=1}^{\infty} \delta_{k}$ for the sequence of local $\delta_k$-shifts given in Lemma \ref{lem_LG2}. 
This is a generalization of  \emph{Linear Growth Lemma} in \cite[p.\,1670]{CV01}.
The proofs of the results are supported by 
Lemma \ref{lem.bdp1}   
and  
Lemma \ref{lem.bdp2} 
in which bounded distortion properties for $s$-bridges  
$\{B^{s}_{k}\}$ and $u$-bridges $\{A^{u}_{k}\}$ are presented.

\subsection{Critical chains in $\{B^{s}_{k}(\Delta)\}$ and $\{B^{u}_{k}(\Delta)\}$}
Note that the uniformly linking subsequence of $\{B^{s}_{k}(\Delta)\}$ and $\{A^{u}_{k}(\Delta)\}$  
are constructed on the arc $L$ including 
\emph{heteroclinic} persistent tangencies of $W^{u}(\Lambda)$ and $W^{s}(\Gamma_{m})$.
At this stage, 
since we use only a one-way from $\Lambda$ to $\Gamma_{m}$,
even if one takes any domain constructed from the linking subsequence by some perturbations, 
there is no certification that the orbit of domain comes back to and 
wanders around $\Lambda$ non-trivially. 

However, we can construct a  return route by using the fact that $\Gamma_{m}$ and $\Lambda$ are homoclinically related to each other, 
see the condition (S-\ref{setting5}) of Section \ref{sec.prep}.
It follows that the stable foliation 
$\mathcal{F}^{s}(\Lambda)$ and 
some gap of $K_{m,L}^u$ have a
transverse intersection. 
Hence, in Section \ref{sec.CC}, 
one obtains  
some gap  $\widehat{G}^{u}_{L,k+1}$ of $A^{u}_{L, k+1}$  which contains 
some $u$-bridge $B^{u}_{L, k+1}:=\pi_{\hat{A}^{u}_{k}}(B^{u}_{k+1})$ as illustrated in Figure \ref{fig_7_1}, 
 where $\pi_{\hat{A}^{u}_{k}}:B(0)\to L$ is the projection defined as  (\ref{eqn_pi_A^u}) and 
 $B^{u}_{k+1}$ is the $u$-bridge whose itinerary satisfies  (\ref{eqn_CC1}).

Moreover, 
by taking the itinerary of the $s$-bridge $B_{k+1}^{s*}$ as in  (\ref{eqn_CC1}), 
one can obtain linking situations which are desired in Lemma \ref{lem_CC} (\emph{Critical Chain Lemma}):  
there is an interval $J^{*}_{k+1}$ such that 
$t\in J_{k+1}^*$ if and only if 
$$(B_{L,k+1}^{s*}+t)\cap B_{L,k+1}^{u}\neq \emptyset,$$ 
where $B_{L,k+1}^{s*}$ is the image of  the $s$-bridge $B_{k+1}^{s*}$ of $K^{u}_{\Lambda}$ by 
the projection $\pi^{s}:S\to L$ of (\ref{eqn_pi^s}). 
Note that the itineraries given in (\ref{eqn_CC1}) will be crucial to control the orbit of any point in the wandering domain obtained in the later sections.

\subsection{Multidirectional perturbations and critical chains of rectangles}
We may consider the inverse image $\widetilde L$ of $L$ which is contained in the neighborhood of $\Lambda$, see Figure \ref{fig_7_3}. 
Lemma \ref{l_slope} implies that, for all sufficiently large $k$, 
there exists  almost horizontal line $L_k$ such that 
 $L_k$ meets $L$ transversely at a single point $\boldsymbol{x}_k$.

Moreover we take a point 
$\widetilde{\boldsymbol x}_k=f^{N_1}\circ f^{\widehat i_k}\circ f^{N_0}(\boldsymbol{x}_k)$ in $\widetilde L_k$,
and 
define a sequence $\{\widehat{\boldsymbol{x}}_k\}$ with 
$\widehat{\boldsymbol{x}}_k=f^{z_kk^2+\langle k\rangle}(\widetilde{\boldsymbol x}_k)$, 
where $z_k$ is either $z_0$ or $z_0+1$ for a fixed positive integer $z_0$ satisfying the 
conditions (\ref{eqn_z0+1}), (\ref{eqn_eta}) and 
$\langle k\rangle$ is the integer of (\ref{eqn_lakra}) with $\lim_{k\to \infty} \langle k\rangle/k^{2}=1$. 
For $\{\boldsymbol{x}_k\}$ and $\{\widehat{\boldsymbol{x}}_k\}$, 
one has 
the sequence 
$$
\underline{\boldsymbol{t}}=(\boldsymbol{t}_{2}, \boldsymbol{t}_{3}, \ldots, \boldsymbol{t}_{k},\ldots),
$$
where each  $\boldsymbol{t}_{k} $ is the vector given by 
 $f^{N_2}(\widehat{\boldsymbol x}_k)+\boldsymbol{t}_{k+1}=\boldsymbol{x}_{k+1}$, see 
 Figure \ref{fig_7_6},
 which is the second perturbation  corresponding to  the \emph{perturbation vector} in \cite[p.1673]{CV01}. 
Note that 
entry vectors of the perturbation not necessarily have the same direction. 

For $\underline{\boldsymbol{t}}$, 
we now define the diffeomorphism $f_{\underline{\boldsymbol{t}}}$ by
$$
f_{\underline{\boldsymbol{t}}}:=f\circ \psi_{\underline{\boldsymbol{t}}} 
$$
so that $f_{\underline{\boldsymbol{t}}}(\widehat{\boldsymbol{x}}_k)=\boldsymbol{x}_{k+1}$, 
where $\psi_{\underline{\boldsymbol{t}}}$ is the $C^r$-map defined 
as (\ref{eqn_psi_t}).  
Note that Lemma \ref{lem.perturb} claims that, 
if $T$ is sufficiently large, then $\psi_{\underline{\boldsymbol{t}}}$ 
is arbitrarily $C^r$-close to 
the identify and hence $f_{\underline{\boldsymbol{t}}}$ is a $C^r$-
diffeomorphism arbitrarily $C^{r}$-close to $f$.

\subsection{Non-trivial wandering domains} 
Around each $\boldsymbol{x}_k=(x_k,y_k)$, we define a rectangle as 
$$R_k=\bigl[x_k-b_k^{\frac12},x_k+b_k^{\frac12}\bigr]\times [y_k-b_k,y_k+b_k],$$
where $b_k$ is the positive number given in (\ref{def.b_k}), 
and show in 
Lemma \ref{lem_RL} (\emph{Rectangle Lemma}) that there is an integer $k_0>0$ such that, for any 
$k\geq k_0$, $$f_{\underline{\boldsymbol{t}}}^{m_k}(R_k)\subset \mathrm{Int} R_{k+1},$$ where $m_k$ is 
the positive integer given in Remark \ref{r_fzDz}.  See Figure \ref{fig_8_1}. 
This implies that the interior $D$ of $R_{k_{0}}$ is a wandering domain. 
It follows  immediately from our construction of $D$ that the wandering domain is contracting.

We will consider a sufficiently large positive integer $z_0$ independent of $k$ and satisfying the conditions (\ref{eqn_z0+1}) and (\ref{eqn_eta}), and consider a sequence $\boldsymbol{z}=\{z_k\}_{k=1}^\infty$ of integers such that each entry $z_k$ is either $z_0$ or $z_0+1$. This implies that 
the linear map $f_{\underline{\boldsymbol{t}}}^{z_kk^2}(x,y)=(\lambda^{z_kk^2}x,\sigma^{z_kk^2}y)$ in a small neighborhood of $p$ occupies 
a major factor of $f_{\underline{\boldsymbol{t}}}^{m_k}$ and 
absorbs fluctuations caused by non-linear factors. 
See also Remark \ref{r_accumulation_pt} for the sequence. 
Moreover, extra words $\underline{v}_{\,k+1}\in \{1,2\}^{k}$ will be added to the itineraries of (\ref{eqn_CC1})
so that the $\omega$-limit set of any point in the wandering domain contains $\Lambda$, which is possible by choosing entries of 
$v_k$ suitably in the proof 
of Proposition \ref{prop_exist_wd}. 

\subsection{Historic behavior }
The diffeomorphism $f_{n}$ obtained in Proposition \ref{prop_exist_wd} 
and the wandering domain $D=\mathrm{Int} R_{k_0}$ 
depend on the sequence $\boldsymbol{z}$.
In the proof of Theorem \ref{main-thm}, 
we express the dependence by $f_{n,\boldsymbol{z}}$ and $D_{\boldsymbol{z}}=\mathrm{Int} R_{k_0,\boldsymbol{z}}$.
From  the setting of  (\ref{eqn_CC1}), 
the  itinerary of any orbit starting from the wandering domain $D_{\boldsymbol{z}}$ contains 
$\underline{1}^{(z_k k^2)}$ and $\underline{2}^{(k^2)}$, 
and the remaining part of the itinerary is corresponding to at most order $k$ iterations of $f_{n,\boldsymbol{z}}$.  
Using this, 
one can show that for any $x\in D_{\boldsymbol{z}}$ there is 
the subsequence  $\{\mu_{x}(\widehat m_k)\}_{k= k_0}^\infty$  of partial averages  
$$\mu_{x}(\widehat m_{k})=\frac{1}{\widehat m_{k}+1}\sum_{i=0}^{\widehat m_{k}}\delta_{f_{n,\boldsymbol{z}}^i(x)}$$  
which tends to the mutually distinct two probability measures:  
$$\nu_{0}=\frac1{z_0+1}\bigl(z_0\delta_{p}+\delta_{\widehat p}\bigr), \quad
\nu_1=\frac1{z_0+2}\bigl((z_0+1)\delta_{p}+\delta_{\widehat p}\bigr)$$ 
as  $k\to \infty$,  
where 
$\widehat{p}$ is a saddle fixed point in the horseshoe $\Lambda$ other than $p$. 
It implies that the orbit of 
any point in the  wandering domain $D_{\boldsymbol{z}}$ has historic behavior.
This finishes the outline of  the proof of Theorem \ref{main-thm}. 

\begin{remark}\label{r_accumulation_pt}
Our diffeomorphism $f$ is linear only in a small neighborhood $U(p)$ of $p$ 
but not necessarily in neighborhoods 
of other points of $M$, which may yield some fluctuations to the dynamics.
To reduce influences of such fluctuations relatively, we first rearranged $f$ 
so that, for any $k\in\mathbb{N}$, the orbit of $x\in D$ under $f$ spends $k^2$ times in 
the linearizing neighborhood $U(p)$  
and $O(k)$ times in any other small neighborhoods.
However, in such a construction, the sequence of $\mu_x(m)$ would converge to the Dirac measure $\delta_p$ and hence the forward orbit of $x$ would not have historic behavior.
So, we have rearranged $f$ again so that the orbit spends $z_kk^2$ times in $U(p)$, $k^2$ times in $U(\widehat p)$ 
and $O(k)$ times in any other small neighborhoods.
In such an example, we can not expect that the sequence of $\mu_x(m)$ has various accumulation points, 
which is essentially different from the example in \cite[Section 9]{CV01}.
Note that we have taken the integer $z_0$ large enough so that any fluctuation caused in $U(\widehat p)$ is relatively small.
On the other hand, 
since $k^2/(z_kk^2)\geq 1/(z_0+1)>0$, the restriction of $\mu_x(m)$ on $U(\widehat p)$ does not converge to zero as $m\rightarrow \infty$.
\end{remark}


\section{Preliminaries}\label{sec.prep}
In this section, 
we present standard notions concerning planer homoclinic bifurcations  introduced by
Palis-Takens \cite{PT93}, which are given in forms adaptable to our discussions.

Throughout the remainder of this paper, 
suppose that $M$ is a closed surface and $r$ an integer $r$ with $2\leq r<\infty$.
Let $\mathcal{N}$ be any Newhouse open set in $\mathrm{Diff}^r(M)$.
Any element of $\mathcal{N}$ is arbitrarily $C^r$-approximated a diffeomorphism $f$ 
with a dissipative saddle periodic point $p$ 
whose stable manifold $W^{s}(p)$ and unstable manifold $W^{u}(p)$ 
have a quadratic tangency, 
where a periodic point $p$ of period $\mathrm{Per}(p)$ is said to be \emph{dissipative} if  $|\det(Df^{\mathrm{Per}(p)})_{p}|< 1$.
Denoting $f^{\mathrm{Per}(p)}$ again by $f$ for simplicity, 
one can suppose that $p$ is a dissipative saddle fixed point of $f$.

The following lemma is an elementary but crucial fact for justifying our replacement of 
$f$ by $f^{n\mathrm{Per}(p)}$ with $n\in \mathbb{N}$ here and later.

\begin{lemma}\label{l_f^nD}
Suppose that $a$ is a positive multiple of $\mathrm{Per}(p)$.
If $D$ is a contracting wandering domain for $f^a$ with $\omega_{f^a}(\boldsymbol{x})\ni p$ for some (or equivalently any) 
$\boldsymbol{x}\in D$, then $D$ is a contracting wandering domain also for $f$.
\end{lemma}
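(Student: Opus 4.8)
The plan is to show that the defining conditions of a contracting wandering domain are stable under passing between $f$ and $f^{a}$, using the hypothesis that $p$ lies in the $\omega$-limit set. Write $a = c\,\mathrm{Per}(p)$ and recall that we have renamed $f^{\mathrm{Per}(p)}$ as $f$, so in effect $a$ is a positive integer and $p$ is a fixed point of $f$. Suppose $D$ is a contracting wandering domain for $g:=f^{a}$ with $\omega_{g}(\boldsymbol{x})\ni p$ for some $\boldsymbol{x}\in D$. I must verify three things for $f$: (i) the forward $f$-orbit of $D$ consists of pairwise disjoint sets; (ii) $\mathrm{diam}\,f^{n}(D)\to 0$; (iii) $\omega(D,f)$ is not a single periodic orbit.

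For (i), suppose $f^{i}(D)\cap f^{j}(D)\neq\emptyset$ with $0\le i<j$. Writing $j-i = qa + s$ with $0\le s<a$, apply $f^{a\lceil i/a\rceil - i}$ — more simply, push both indices forward by a suitable multiple of $a$ so that both become multiples of $a$: if $f^{i}(D)$ meets $f^{j}(D)$ then $f^{i+t}(D)$ meets $f^{j+t}(D)$ for every $t\ge 0$, and choosing $t$ with $i+t\equiv 0 \pmod a$ would force two distinct sets in the $g$-orbit of $D$ to intersect unless $s=0$; and if $s=0$ then $f^{i}(D)$ and $f^{i+qa}(D)$ already lie in the $g$-orbit of $f^{i\bmod a}$-translates, again contradicting that $D$ (hence each $f^{r}(D)$, $0\le r<a$, which is a wandering domain for $g$ too, or at least has disjoint $g$-orbit) is $g$-wandering. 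The one case needing the hypothesis on $p$ is to rule out $s\neq 0$: here I use that $\mathrm{diam}\,g^{n}(D)\to 0$ together with $p\in\omega_{g}(\boldsymbol{x})$ to locate a $g$-iterate of $D$ arbitrarily near $p$; then the contracting property plus the local linearity $f(x,y)=(\lambda x,\sigma y)$ near $p$ forces the finitely many sets $D, f(D),\dots,f^{a-1}(D)$ to have disjoint forward orbits, because an intersection $f^{i}(D)\cap f^{j}(D)$ with $i\not\equiv j$ would propagate forward and, near $p$, the iterates $f^{s}$ for $0<s<a$ move points off their own $g$-orbit by a definite amount relative to the (shrinking) diameters. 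This is the step I expect to be the main obstacle: making the "definite amount" estimate precise so that it beats the diameter decay requires quantifying the contraction rate of $g^{n}(D)$ against the fixed bound $\max_{0\le s<a}\|Df^{s}\|$ near $p$, and is the only place real work is needed.

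For (ii), contraction is immediate: given $n$, write $n = ka + s$ with $0\le s<a$; then $f^{n}(D) = f^{s}(g^{k}(D))$, and since $f^{s}$ is uniformly Lipschitz on $M$ (a diffeomorphism of a compact manifold) with constant $C:=\max_{0\le s<a}\mathrm{Lip}(f^{s})$ independent of $n$, we get $\mathrm{diam}\,f^{n}(D)\le C\cdot\mathrm{diam}\,g^{k}(D)\to 0$ as $n\to\infty$. For (iii), observe first that $\omega(x,f)\supseteq\omega_{g}(x)$ for every $x$, so $\omega(D,f)\supseteq\omega(D,g)$; since $D$ is a \emph{non-trivial} wandering domain for $g$, $\omega(D,g)$ is not a single periodic orbit, hence neither is $\omega(D,f)$. (More directly: $p\in\omega_{g}(\boldsymbol x)\subseteq\omega(D,f)$, and if $\omega(D,f)$ were the single $f$-periodic orbit of $p$ — which is just $\{p\}$ since $p$ is fixed — then $\omega(D,g)\subseteq\{p\}$, contradicting non-triviality for $g$.) Combining (i)–(iii), $D$ is a contracting wandering domain for $f$, and since $p\in\omega(\boldsymbol x,f)$ the parenthetical "equivalently any" follows because $\omega(x,f)$ is independent, up to the $f$-orbit of $p$, of $x\in D$ — concretely, any two points of the connected set $D$ have the same $\omega$-limit set here as the diameters shrink to zero. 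This completes the plan.
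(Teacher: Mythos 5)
Your parts (ii) and (iii) are fine: the contraction estimate via $C=\max_{0\le s<a}\mathrm{Lip}(f^s)$ is exactly the paper's first step, and the non-triviality of $\omega(D,f)$ is an easy consequence that the paper leaves implicit. The genuine gap is in (i), precisely at the place you yourself flag as ``the main obstacle'': ruling out $f^{i}(D)\cap f^{j}(D)\neq\emptyset$ when $j-i\not\equiv 0\pmod a$. The heuristic you offer -- that near $p$ the maps $f^{s}$, $0<s<a$, ``move points off their own $g$-orbit by a definite amount'' which can be compared with the shrinking diameters -- does not work as stated: after the renaming $p$ is a \emph{fixed} point of $f$, so $f^{s}(p)=p$ and the displacement of a point $\boldsymbol{w}$ under $f^{s}$ is only of the order of $\mathrm{dist}(\boldsymbol{w},p)$. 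Since the sets $g^{k}(D)$ may approach $p$ at the same time as their diameters shrink, there is no uniform lower bound on displacement to play against the diameter decay; the quantitative comparison you defer is not merely technical, it fails in the regime where the orbit accumulates at $p$.

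What is needed, and what the paper actually does, is to produce a limit point \emph{away from} $p$. First one shows $D\cap W^{s}(p)=\emptyset$ (otherwise the iterates $f^{aj}(D)$ would accumulate on $W^{u}(p)$, contradicting contraction); combined with $p\in\omega_{f^a}(\boldsymbol{x})$ and the local linearity, this yields a subsequence $f^{am_j}(\boldsymbol{x})\to\boldsymbol{z}\in W^{u}_{\mathrm{loc}}(p)\setminus\{p\}$, a point at definite distance from $p$. Then, assuming $D\cap f^{n}(D)\neq\emptyset$ for some $n\ge 1$ (any residue of $n$ mod $a$ at once, so no case split is needed), one forms the connected chain $D\cup f^{n}(D)\cup\cdots\cup f^{an}(D)$, whose two endpoints differ by $f^{an}=g^{n}$, and observes that its $f^{am}$-images have diameter tending to $0$ (finitely many contracting pieces with consecutive intersections). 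Hence $f^{a(m_j+n)}(\boldsymbol{x})$ converges both to $\boldsymbol{z}$ and to $f^{an}(\boldsymbol{z})$, forcing $f^{an}(\boldsymbol{z})=\boldsymbol{z}$, which is impossible for a point of $W^{u}_{\mathrm{loc}}(p)\setminus\{p\}$. Your proposal contains neither the step $D\cap W^{s}(p)=\emptyset$ nor any mechanism locating the accumulation on $W^{u}_{\mathrm{loc}}(p)\setminus\{p\}$, and without some such device the disjointness claim remains unproved.
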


\begin{proof}
Since $C=\sup\bigl\{\|Df^i_{\boldsymbol{x}}\|\,;\, \boldsymbol{x}\in M,\ i=0,1,\dots,a-1\bigr\}<\infty$, 
$\mathrm{diam}(f^j(D))\leq C\mathrm{diam}(f^{am}(D))$ for any $j\in \mathbb{N}$, where
$m$ is the greatest integer with $ma\leq j$.
Thus our assumption $\lim_{m\to\infty}\mathrm{diam}(f^{am}(D))=0$ implies that  $\lim_{j\to\infty}\mathrm{diam}(f^{j}(D))=0$.  

If $D$ were not a wandering domain for $f$, then there would exist a positive integer $n$ with 
$D\cap f^n(D)\neq \emptyset$.
Then we have the chain 
$$D,\ f^n(D),\ f^{2n}(D),\ \dots,\ f^{an}(D)$$
with $f^{ni}(D)\cap f^{n(i+1)}(D)=f^{ni}(D\cap f^n(D))\neq \emptyset$ for $i=0,1,\dots,a-1$.
Since $\lim_{j\to\infty}\mathrm{diam}(f^{j}(D))=0$, 
\begin{equation}\label{eqn_f^anD}
\lim_{m\to\infty}\mathrm{diam}\bigl(f^{am}\bigl(D\cup f^n(D)\cup \cdots\cup f^{na}(D)\bigr)\bigr)=0.
\end{equation}
Moreover we have $D\cap W^s(p)=\emptyset$.
Otherwise, the sequence $\bigl\{f^{aj}(D)\bigr\}$ of open sets would converge to $W^u(p)$, 
which contradicts that $D$ is a contracting wandering domain.
Since $\omega_{f^a}(\boldsymbol{x})\ni p$ and $\boldsymbol{x}\not\in W^s(p)$ for any $\boldsymbol{x}\in D$, there exists a monotone increasing sequence $\{m_j\}$ of 
positive integers such that $f^{am_j}(\boldsymbol{x})$ converges to a point $\boldsymbol{z}$ in $W_{\mathrm{loc}}^u(p)\setminus \{p\}$ as $j\to \infty$.
Then $\lim_{j\to \infty}f^{a(m_j+n)}(\boldsymbol{x})=f^{an}(\boldsymbol{z})\in W_{\mathrm{loc}}^u(p)\setminus \{p\}$.
On the other hand, (\ref{eqn_f^anD}) implies that $\lim_{j\to \infty}f^{a(m_j+n)}(\boldsymbol{x})=\lim_{j\to \infty}f^{am_j}(\boldsymbol{x})=\boldsymbol{z}$ 
and so $f^{an}(\boldsymbol{z})=\boldsymbol{z}$.
This contradicts that $\boldsymbol{z}\in W_{\mathrm{loc}}^u(p)\setminus \{p\}$ is not a fixed point of 
$f^{an}$.
Thus $D$ is a contracting wandering domain for $f$.
\end{proof}

By a small perturbation near the homoclinic tangency $q$, 
one can also suppose that the tangency is quadratic. 
Moreover, as in \cite[Section 6.5]{PT93}, performing several arbitrarily small perturbations, 
we obtain a diffeomorphism 
which has both a transverse homoclinic intersection and 
a homoclinic tangency of the continuation of $p$ simultaneously. 
Using the transverse homoclinic intersection, 
one obtains a \emph{basic set} containing the continuation of $p$ called a \emph{horseshoe}, i.e.\ 
a compact invariant hyperbolic set which is transitive and contains a dense subset of periodic orbits and such that the restriction of the diffeomorphism on the set is conjugate to the $2$-shift. 
In what follows, 
if no confusion can arise, we will denote a $C^\infty$ diffeomorphism 
which is arbitrarily $C^{r}$ close to $f$ again by $f$. 
So we will work under the assumption that $f$ is a diffeomorphism of $C^\infty$-class 
and return to a diffeomorphism of $C^{\overline{r}}$-class with $3\leq \overline{r}<\infty$ at 
the stage of (\ref{def_f_mu,t}) in Subsection \ref{SS_Pert}.

In accordance with the discussion as above, we may suppose that 
$f$ has 
\begin{enumerate}[({S-}i)]
\item \label{setting1} 
a horseshoe $\Lambda$ containing a dissipative saddle fixed point $p$;
\item \label{setting2} 
a non-degenerate homoclinic tangency $q$ of $p$.
\end{enumerate}
Moreover, if a $C^{\infty}$ diffeomorphism $C^r$-close to $f$ 
satisfies an open and dense Sternberg condition \cite{St80} concerning the eigenvalues at 
the continuation of $p$, then $f$ is  $C^r$-linearizable in a neighborhood $U$
of the continuation.
So, one can suppose   that the above $f$ has 
\begin{enumerate}[({S-}i)]
\setcounter{enumi}{2}
\item \label{setting3} 
a $C^{r}$-linearizing coordinate in a neighborhood of $p$ 
such that $f(x,y)=(\lambda x, \sigma y)$ with $0<\lambda<1<\sigma$ and $\lambda\sigma<1$ (we replace 
$f$ by $f^{2}$ if $\lambda$ or $\sigma$ is negative).
\end{enumerate}
Note that one might proceed without (S-\ref{setting3}) by using techniques in Gonchenko et al.\ \cite{GST3,GST4,GST5, GST7}, 
but  (S-\ref{setting3}) is more appropriate here from a standpoint of simple descriptions.

Let $\{f_{\mu}\}_{\mu\in \mathbb{R}}$ be a 
one-parameter family of $C^{\infty}$ diffeomorphisms on  $M$ 
with $f_{0}=f$ and such that the homoclinic quadratic tangency $q$ of $p$ unfolds generically at $\mu=0$. 
The \emph{renormalization} of return maps near the tangency provides a better description as follows.

\begin{theorem}[Renormalization Theorem, Palis-Takens \cite{PT93}]\label{lem.renormalization}
There exists an integer $N_*>0$ such that, for any sufficiently large integer $n>0$, 
there are a $C^{r}$ parametrization  $\Theta_{n}:\mathbb{R}\to \mathbb{R}$ and a $\bar \mu$-dependent $C^r$ coordinate change $\Phi_{n}:\mathbb{R}^2\to M$ satisfying the following:
\begin{itemize}
\item
$\frac{d\Theta_{n}}{d\mu}(\bar \mu)>0$;
\item
for any $(\bar\mu,\bar x,\bar y)\in \mathbb{R}\times \mathbb{R}^2$,
$(\Theta_n(\bar \mu), \Phi_n(\bar x,\bar y))$ converges to $(0, q)$ as $n\rightarrow \infty$;
\item
for any $\bar \mu\in \mathbb{R}$, the diffeomorphisms $\varphi_n$ 
on $\mathbb{R}^2$ defined by
\begin{equation}\label{eqn_phi_n}
\varphi_n:(\bar{x},\bar{y})\longmapsto 
  \Phi_{n}^{-1}\circ f_{\mu_{n}}^{N_*+n}\circ \Phi_{n}(\bar{x},\bar{y})
\end{equation}
converge to 
\begin{equation}\label{henon-like endo}
( \bar{x}, \bar{y})\longmapsto( \bar{y}, \bar{y}^{2}+\bar{\mu} ).
\end{equation}
as $n\rightarrow \infty$  in the $C^r$ topology, where $\mu_{n}:=\Theta_{n}(\bar{\mu})$.
\end{itemize}
\end{theorem}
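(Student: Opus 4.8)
The plan is to follow the classical renormalization argument of Palis--Takens, adapting the bookkeeping to the $C^{r}$ setting with $2\le r<\infty$ and to the convenient normalization $f(x,y)=(\lambda x,\sigma y)$ near $p$ afforded by (S-\ref{setting3}). First I would fix the linearizing coordinates on $U=U(p)$ from (S-\ref{setting3}) and choose two small boxes: a box $U$ around $p$ where $f_\mu$ is (uniformly in $\mu$ near $0$) linear, and a box $V$ around the tangency point $q$. The return map near $q$ decomposes as $f_\mu^{N_*+n}=f_\mu^{N_*}\circ f_\mu^{\,n}$, where $f_\mu^{\,n}$ is the $n$-fold iterate of the linear map inside $U$ carrying a neighborhood of $W^{s}_{\mathrm{loc}}(p)$ across $U$, and $f_\mu^{N_*}$ is a fixed transition map (independent of $n$) from a neighborhood of $W^{s}_{\mathrm{loc}}(p)$ in $U$ to $V$, taking $q$ into a point of $W^{u}_{\mathrm{loc}}(p)$. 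Writing $f_\mu^{N_*}$ in the linear coordinates near $p$, the hypothesis that the tangency $q$ of $W^{s}(p)$ and $W^{u}(p)$ is quadratic and unfolds generically at $\mu=0$ says precisely that, after an affine change on the target and a reflection if needed, $f^{N_*}_\mu$ sends $(x,y)\mapsto$ (something like) $\bigl(\text{const}+ \cdots,\ \gamma\,(x - \text{const})^{2} + \text{(affine in }y) + \beta\mu + \cdots\bigr)$ with $\gamma\neq 0$ and $\beta\neq 0$; this is the only place the non-degeneracy and genericity enter.

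Next comes the heart of the computation: the rescaling. The linear part contributes $f^{\,n}_\mu(x,y)=(\lambda^{n}x,\sigma^{n}y)$, so a point entering $U$ near $W^{s}_{\mathrm{loc}}(p)$ with coordinates $(x_0,y_0)$, $y_0$ small, leaves with $x$-coordinate exponentially small ($\lambda^{n}x_0\to0$) and $y$-coordinate blown up to order $\sigma^{n}y_0$. Composing with the quadratic transition, one looks for the affine change of coordinates $\Phi_n$ and the affine reparametrization $\Theta_n$ of $\mu$ that make the resulting map converge. The correct scaling, dictated by balancing $\sigma^{n}$ against $\gamma$ and the quadratic term, is of the form $\bar x = \sigma^{n}\,\alpha_n\,(x - c_n^{(1)})$, $\bar y = \sigma^{n}\,\alpha_n\,(y - c_n^{(2)})$ for suitable $\alpha_n\asymp\sigma^{n}$ and centering constants $c_n^{(i)}\to$ (coordinates pulling $q$ to the appropriate reference point), together with $\mu = \Theta_n(\bar\mu)$ affine with $\Theta_n(\bar\mu)\to0$ and $d\Theta_n/d\mu>0$ (this last from $\beta\neq0$ and $\gamma\neq0$ having compatible signs after the reflection). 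Plugging these in and using $\lambda\sigma<1$ — so that the $x$-dependence of the old map, already damped by $\lambda^{n}$, is further damped relative to the $\sigma^{n}$ rescaling and disappears in the limit — one gets that $\varphi_n$ of (\ref{eqn_phi_n}) converges to $(\bar x,\bar y)\mapsto(\bar y,\ \bar y^{2}+\bar\mu)$. That the claimed convergences $(\Theta_n(\bar\mu),\Phi_n(\bar x,\bar y))\to(0,q)$ and $\varphi_n\to$ (\ref{henon-like endo}) hold uniformly on compact sets in $C^{r}$ follows because all the nonlinear remainders in $f^{N_*}_\mu$ beyond the quadratic model, when expressed in the rescaled variables, acquire positive powers of $\lambda^{n}$ or $\sigma^{-n}$ and hence tend to $0$ together with their derivatives up to order $r$ on compacta; here one uses that $f$ is $C^{\infty}$ (reduced to from $C^{r}$ at this stage of the paper) so Taylor expansion to the needed order is legitimate with controlled remainder.

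The main obstacle is the uniform $C^{r}$ control of the higher-order terms under the exponential rescaling: one must verify that every monomial of degree $\ge3$ in the Taylor expansion of $f^{N_*}_\mu$ at $q$, and every term involving the $x$-variable beyond the recorded quadratic one, contributes a factor tending to $0$ in $C^{r}$ after conjugation by $\Phi_n$ and reparametrization by $\Theta_n$ — including the chain-rule factors from differentiating $\Phi_n^{-1}$, which themselves carry large constants $\sigma^{n}$. The bookkeeping is: a derivative of order $j\le r$ of $\varphi_n$ picks up at most $(\sigma^{n}\alpha_n)^{\,j}$ from the outer $\Phi_n^{-1}$ and $(\sigma^{n}\alpha_n)^{-1}$ (or $\lambda^{n}(\sigma^{n}\alpha_n)^{-1}$ in the $x$-slot) from each inner $\Phi_n$, and one must check these always combine with the intrinsic smallness of the remainder (powers of the box size, which shrinks, times powers of $\lambda^{n}$ or $\sigma^{-n}$) to give a net negative power of $\sigma^{n}$, uniformly for $(\bar\mu,\bar x,\bar y)$ in a fixed compact set and $\mu$ near $0$. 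This is exactly the estimate carried out in \cite[Ch.\ 3, 6]{PT93}; I would cite it in the form needed here, having only to note that the argument goes through verbatim for finite $r$ and with the explicit linear normalization (S-\ref{setting3}), and that $\lambda\sigma<1$ is what kills the $x$-direction in the limit endomorphism (\ref{henon-like endo}).
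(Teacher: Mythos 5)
Your proposal is correct and coincides with the paper's treatment: the paper gives no independent argument but simply invokes \cite[\S 3.4, Theorem 1]{PT93}, and your sketch is exactly the standard Palis--Takens rescaling mechanism (fixed transition map composed with the long linear passage, quadratic unfolding, $\sigma^{n}$-rescaling with $\lambda\sigma<1$ killing the $x$-dependence), ending by citing the same source for the uniform $C^{r}$ estimates. The only quibble is a directional slip in your description of the transition map: $f_{\mu}^{N_*}$ carries a neighborhood of the point $f^{-N_*}(q)\in W^{u}_{\mathrm{loc}}(p)$ to a neighborhood of $q$ (not the other way around), but this does not affect the argument.
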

\begin{proof}
See Palis-Takens \cite[\S 3.4, Theorem 1]{PT93} for the proof.
\end{proof}

Here the integer $N_*$ is taken so that $f^{-N_*}(q)$ is a point in $W_{\mathrm{loc}}^u(p)$ sufficiently near $p$.
In short
this lemma ensures that the return maps $f^{N_*+n}_{\mu_{n}}$ near the homoclinic tangency 
can be approximated by diffeomorphisms 
so called \emph{H\'enon-like maps} which are close to the quadratic endomorphism (\ref{henon-like endo}).

For simplicity, we consider the map
$$\varphi_{\mu,\nu}(x,y)=(y, \mu+\nu x+y^{2}), $$ 
which is equivalent to the original H\'enon map given in (\ref{original}) via 
appropriate parameter and coordinate changes, see \cite{KLS10}. 
More general H\'enon-like families are obtained by adding small higher-order terms 
to the above form.
From  \cite[\S 6.3, Proposition~1]{PT93}, 
for a given $m\geq 3$, there exist 
a neighborhood $\mathcal{U}(-2,0)$ of the point $(-2,0)$ in the parameter space and 
continuous maps 
$P$, $Q_{m}$ and $\Gamma_{m}$ 
which map $(\mu,\nu)\in \mathcal{U}(-2,0)$ to the fixed point $P_{\mu,\nu}$, 
the periodic point $Q_{m;\mu,\nu}$, and 
the non-trivial invariant set $\Gamma_{m;\mu,\nu}$ for $\varphi_{\mu,\nu}$, respectively, 
and furthermore satisfy the following properties: 
\begin{itemize}
\item 
$P_{-2,0}=(2,2)$ is a saddle fixed point and $Q_{m;-2,0}$ is a saddle periodic point of period $m$ both of which are contained in 
a parabolic arc which is convex downward between $P_{-2,0}$ and $\tilde{P}_{-2,0}:=(-2, 2)$, 
see Figure \ref{fig_4_4}. 
\item 
For any $(\mu,\nu)\in \mathcal{U}(-2,0)$ with $\nu\neq 0$, $\Gamma_{m;\mu,\nu}$ is a basic set  
containing the orbit of $Q_{m;\mu,\nu}$.
\end{itemize}
More detailed information on these ingredients will be given 
in the next section.
Note that 
the same properties hold for any  H\'enon-like map which is sufficiently close to $\varphi_{-2,0}$. 

By the above results, there exists a positive integer $n(m)>0$ such that, for any $n\geq n(m)$, 
$f_{\nu_n}$ is a diffeomorphism arbitrarily $C^{r}$ close to the original $f$ 
and satisfying not only (S-\ref{setting1})--(S-\ref{setting3}) but also   
\begin{enumerate}[({S-}i)]
\setcounter{enumi}{3}
\item \label{setting4} 
the restriction $\varphi_n$ of $f^{N_*+n}_{\mu_{n}}$ near the tangency $q$ is a return map $C^{r}$-approximated by an H\'enon-like map and has the continuation $P_{\bar\mu}$ of the saddle fixed point $P$, 
the continuation $\Gamma_{m;\bar\mu}$ of the basic set $\Gamma_{m}$ which 
contains the continuation $Q_{m;\bar\mu}$ of the saddle periodic point $Q_{m}$ of period $m$, where $\bar\mu=\Theta_n^{-1}(\mu_n)$.
\end{enumerate}

We here recall two important relations on a pair of basic sets.
We say that disjoint basic sets 
$\Lambda$ and $\Gamma$ are \emph{homoclinically related}   
if both 
$W^u(\Lambda)\cap W^s (\Gamma)$ and 
$W^s(\Lambda)\cap W^u (\Gamma)$ contain non-trivial transverse intersections.  
Basic sets  $\Lambda$ and $\Gamma$ for $f$ have a  \emph{$C^{2}$-robust tangency}  
if there exists a $C^{2}$ neighborhood $\mathcal{U}(f)$ of $f$ satisfying the following condition: for every  $g\in \mathcal{U}(f)$, 
either $W^u(\Lambda_{g})\cap W^s (\Gamma_{g})$ or 
$W^s(\Lambda_{g})\cap W^u (\Gamma_{g})$  contains a tangency, 
where $\Lambda_{g}$ and $\Gamma_{g}$ are the continuations of $\Lambda$ and $\Gamma$, respectively.

By \cite[Section 6.4]{PT93}, we may also suppose that 
\begin{enumerate}[({S-}i)]
\setcounter{enumi}{4}
\item \label{setting5} 
the continuation $\Lambda_{n}:=\Lambda(f_{\mu_n})$ of the horseshoe $\Lambda$ in (S-\ref{setting1}) and 
the basic set $\Gamma_{m,n}:=\Gamma_m(\varphi_n)$ in (S-\ref{setting4})  are homoclinically related,  and 
they have a $C^{2}$-robust tangency. 
To be more precise, 
$W^{u}(\Lambda_{g})\cap W^{s}(\Gamma_{m, g})$ contains a tangency $a$ 
for any diffeomorphism $g$ $C^{2}$-near $f$, see Figure \ref{fig_3_1}.
\end{enumerate}

\begin{figure}[hbt]
\centering
\scalebox{0.7}{\includegraphics[clip]{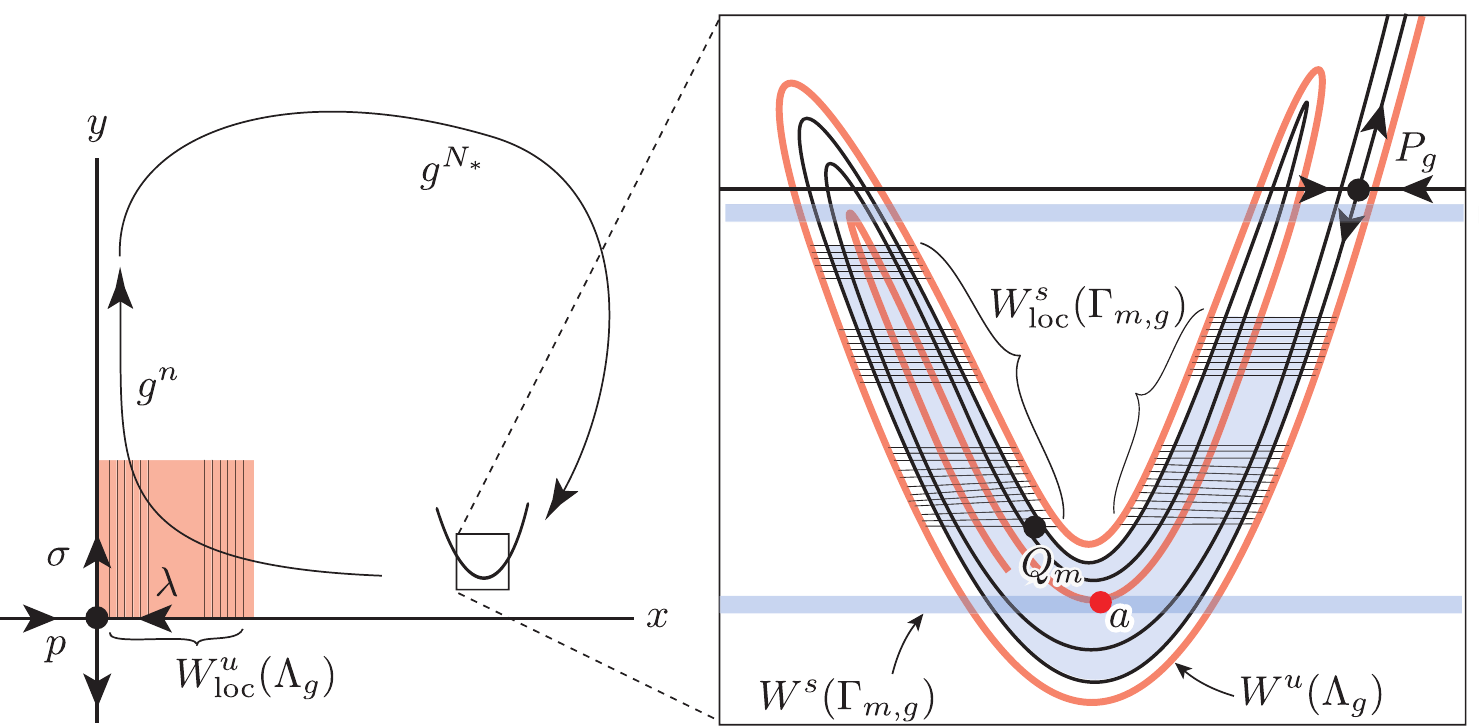}}
\caption{A homoclinical relation between $\Lambda_g$ and $\Gamma_{m,g}$ and 
a quadratic tangency between $W^u(\Lambda_g)$ and $W^s(\Gamma_{m,g})$.}
\label{fig_3_1}
\end{figure}

Now we recall the construction of the return map $\varphi_n$ by Palis-Takens.
There exists a small rectangle $D_n$ near $q$ such that $\Gamma_{m,n}=\bigcap_{\,k=-\infty}^\infty
f_{\mu_n}^{k(N_*+n)}(D_n)$.
According to \cite[Section 6.4]{PT93}, there is a transverse intersection point $b$ of $W^u(p)$ and $W^s(P)$ 
such that
\begin{enumerate}[({S-}i)]
\setcounter{enumi}{5}
\item \label{setting6}
the sub-arc $\alpha^u$ in $W^u(p)$ connecting $p$ with $b$ is disjoint from the union $X_n=D_n\cup f_{\mu_n}(D_n)\cup\cdots\cup 
f_{\mu_n}^{N_*+n}(D_n)$, and 
\item\label{setting7}
$f_{\mu_n}^i(D_n)\cap f_{\mu_n}^j(D_n)=\emptyset$ for any $i,j\in \{1,2,\dots,N_*+n\}$ with $i\neq j$, see Figure \ref{fig_3_2}.
\end{enumerate}
\begin{figure}[hbt]
\centering
\scalebox{0.7}{\includegraphics[clip]{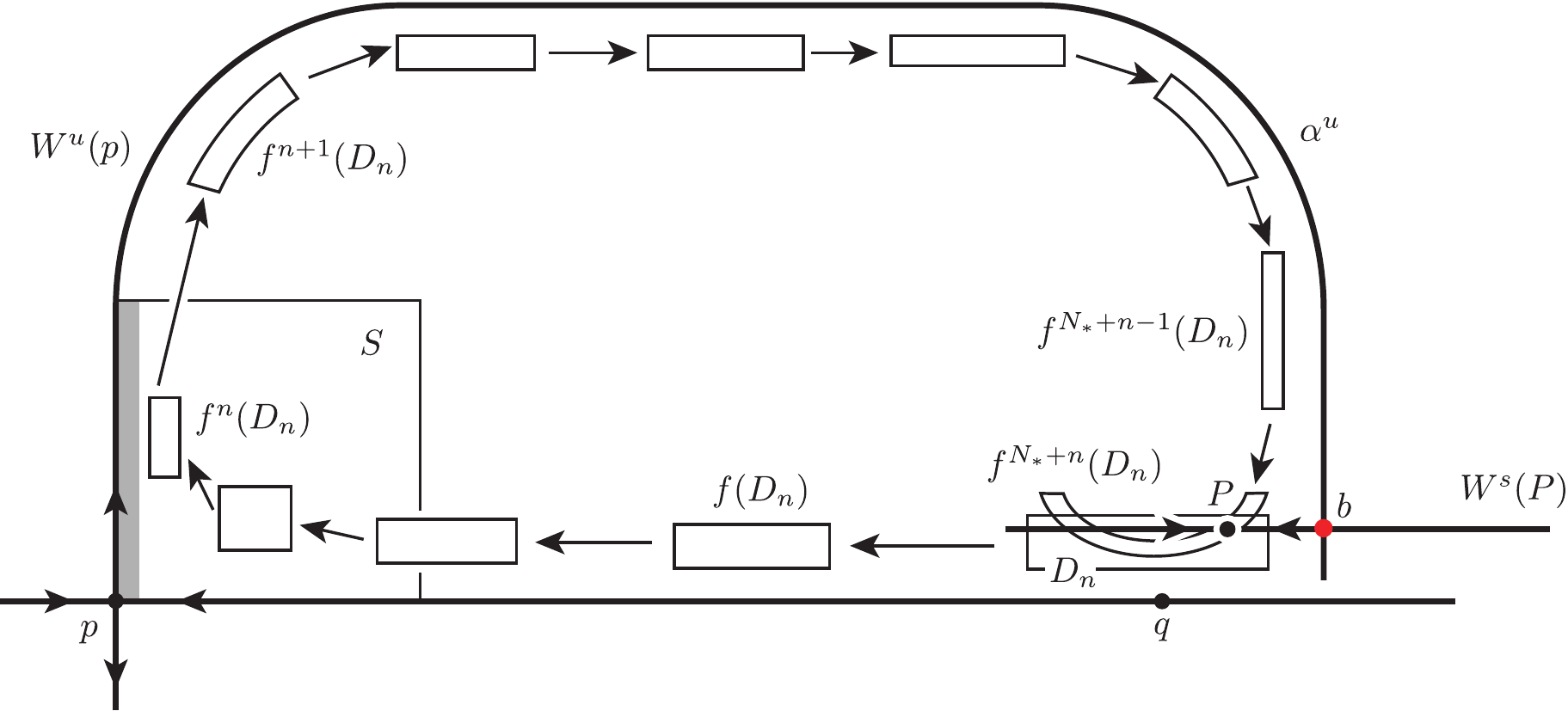}}
\caption{$f_{\mu_n}$, $P(\varphi_n)$ are represented shortly by $f$ and $P$ respectively.
The shaded region disjoint from $X_n$ contains the supports of the perturbations of $f_{\mu_n}$ considered in Subsections \ref{subsec.LP} and \ref{SS_Pert} respectively.}
\label{fig_3_2}
\end{figure}
Note that the condition (S-\ref{setting7}) does not necessarily hold for $i,j\not\in \{1,2,\dots,N_*+n\}$.
In fact, for any integer $N$ sufficiently larger than $N_*$, $f_{\mu_n}^N(\alpha^u)$ meets all leaves of 
$W_{\mathrm{loc}}^s(\Gamma_m)$ transversely, which is suggested in Figure \ref{fig_3_1}, and hence 
in particular $f_{\mu_n}^N(\alpha^u)\cap D_n\neq\emptyset$.

A nonempty compact subset $K$ of an interval $I$ is called a \emph{Cantor set} if 
$K$ has neither interior points nor isolated points. 
A \textit{gap} of the Cantor set $K$ is the closure of a connected component of $I\setminus K$. 
Let $G$ be a gap and $p$ a boundary point of $G$. 
A closed interval $B\subset  I$ is called the \textit{bridge} at $p$ 
if $B$ is maximal among all closed intervals $B'$ in $I$ with $G\cap B'=\{p\}$ and such that $B'$ does not intersect any gap whose length is at least that of $G$. 
The  \textit{thickness}  for the Cantor set  $K$ at $p$ is defined by  $\tau(K,p)= |B|/|G|$,  
where $B$ and  $G$ are a bridge and a gap satisfying  $G\cap B=\{p\}$.
The \textit{thickness} $\tau(K)$ of $K$ is the infimum  over these $\tau(K,p)$ for all boundary points $p$ of gaps of $K$.
Two Cantor sets $K_1$ and $K_2$ are said to be \emph{linked} if 
neither $K_{1}$ is contained in the interior of any gap of $K_{2}$ nor $K_{2}$ is contained in the 
interior of any gap of $K_{1}$.
\emph{Gap Lemma} (see \cite[\S 4]{N79}, \cite[\S 4.2]{PT93}, \cite{Kraft92}) shows that, 
for any linked Cantor sets $K_{1}$ and $K_{2}$ with $\tau(K_{1})\tau(K_{2})>1$, 
$K_{1}\cap K_{2}\neq \emptyset$ holds.  

We say that a bridge $B$ of $K$ is \emph{adjacent} to a gap $G$ if $B\cap G\neq\emptyset$ and 
$\mathrm{Int} B\cap G=\emptyset$.
If two bridges $B$, $B'$ are adjacent to a common gap $G$, then 
$G$ is called the \emph{connecting gap} for $B$ and $B'$ and denoted by $\mathrm{Gap}(B,B')$.

\section{Bounded distortions}\label{sec.bounded-distortions}
\subsection{Classical bounded distortion lemma}\label{subsec.trad}
A Cantor set $K$ in an interval $I$ is said to be \emph{dynamically defined} 
if  the following conditions hold: 
there are mutually disjoint closed sub-intervals $B_{1}, B_{2},\ldots, B_{r} \subset I$ and a 
differentiable map $\Psi$ 
defined in a neighborhood $U$ of $B_{1} \sqcup\dots \sqcup B_{r}$ in $I$ 
such that 
\begin{itemize}
\item $\Psi$ is uniformly hyperbolic on $K$, that is, 
there are constants $C>0$ and $\sigma>1$ such that $|(\Psi^{n})^{\prime}(x)|\geq C\sigma^{n}$ for 
every $x\in K$ and $n\geq 1$, and 
\item $\{B_{1},\dots, B_{r}\}$ is a Markov partition
satisfying 
$$K=\bigcap_{n\in \mathbb{N}} \Psi^{-n}(B_{1} \sqcup\dots \sqcup B_{r}).$$
\end{itemize}

The next classical result, called \emph{Bounded Distortion Lemma},  
will play an important role in this paper.

\begin{lemma}[Palis-Takens \cite{PT93}]\label{lem.bdp0}
Let $K$ be a dynamically defined Cantor set as above associated with a uniformly hyperbolic map $\Psi$ and $a_{0}$ the minimum positive integer with 
$C\sigma^{a_{0}}>1$.
If $\Psi$ satisfies the $C^{1+\alpha}$ H\"older condition for some $0<\alpha\leq1$, 
then, for every $\delta>0$, there exists a constant $c(\delta)>0$ satisfying
\begin{equation}\label{eqn_bdp0}
e^{-c(\delta)} \leq
|(\Psi^{na_{0}})^{\prime}(q)|  |(\Psi^{na_{0}})^{\prime}(\tilde{q})|^{-1}
\leq e^{c(\delta)}
\end{equation} 
for any $q, \tilde{q}\in I$ and integer $n\geq 1$ such that  
(i) $| \Psi^{na_{0}}(q)- \Psi^{na_{0}}(\tilde{q})|\leq \delta$;
(ii) the interval between $\Psi^{i}(q)$ and $\Psi^{i}(\tilde{q})$ is contained in $B_{1} \sqcup\dots \sqcup B_{r}$ for all $0\leq i\leq (n-1)a_{0}$.
Moreover, $c(\delta)$  is of order $\delta^{\alpha}$.
In particular, $c(\delta)$ converges to zero as $\delta\rightarrow 0$.
\end{lemma}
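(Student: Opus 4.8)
The plan is to prove the Bounded Distortion Lemma by the standard telescoping-sum-plus-geometric-series argument, exploiting the uniform expansion along $K$ together with the H\"older continuity of $\log|\Psi'|$. First I would fix $\delta>0$ and, for given $q,\tilde q$ and $n$ satisfying hypotheses (i) and (ii), write
$$
\log\frac{|(\Psi^{na_0})'(q)|}{|(\Psi^{na_0})'(\tilde q)|}
=\sum_{i=0}^{na_0-1}\bigl(\log|\Psi'(\Psi^i(q))|-\log|\Psi'(\Psi^i(\tilde q))|\bigr).
$$
Because $\Psi$ is $C^{1+\alpha}$ on the compact set $B_1\sqcup\dots\sqcup B_r$, there is a constant $H>0$ with $\bigl|\log|\Psi'(x)|-\log|\Psi'(y)|\bigr|\le H|x-y|^\alpha$ for $x,y$ in the same interval $B_j$; by hypothesis (ii) every pair $\Psi^i(q),\Psi^i(\tilde q)$ lies in a common $B_j$, so each summand is bounded by $H|\Psi^i(q)-\Psi^i(\tilde q)|^\alpha$.

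The key step is to control the distances $|\Psi^i(q)-\Psi^i(\tilde q)|$ backwards from time $na_0$. Grouping iterates in blocks of size $a_0$ and using $|(\Psi^{a_0})'|\ge C\sigma^{a_0}>1$ on $K$ — more precisely, that the derivative of $\Psi^{a_0}$ along the orbit segment, which stays in the Markov intervals, is bounded below by some $\theta>1$ after possibly shrinking $\delta$ — I would show by downward induction that for $0\le j\le n$,
$$
|\Psi^{(n-j)a_0}(q)-\Psi^{(n-j)a_0}(\tilde q)|\le \theta^{-j}\,\delta.
$$
This uses the mean value theorem on each block together with the fact that, once the distance at the later time is at most $\delta$, the whole orbit segment in between stays in the Markov intervals so the expansion estimate applies; a minor bootstrap is needed to ensure the intermediate distances never exceed $\delta$, which follows since expansion only makes earlier distances smaller. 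Summing the resulting bound $H|\Psi^{ia_0+\ell}(q)-\Psi^{ia_0+\ell}(\tilde q)|^\alpha$ over the $na_0$ terms, and using that within one block the distance is at most a bounded multiple of the distance at the block's endpoint, gives
$$
\Bigl|\log\frac{|(\Psi^{na_0})'(q)|}{|(\Psi^{na_0})'(\tilde q)|}\Bigr|
\le a_0 H \,\delta^\alpha \sum_{j=0}^{\infty}\theta^{-j\alpha}
=:c(\delta),
$$
which is finite, independent of $n$, of order $\delta^\alpha$, and tends to $0$ as $\delta\to0$. Exponentiating yields \eqref{eqn_bdp0}.

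The main obstacle is the bookkeeping that makes the backward contraction estimate rigorous: one must be careful that the uniform hyperbolicity bound $|(\Psi^n)'(x)|\ge C\sigma^n$ is stated only for $x\in K$, whereas $q,\tilde q$ need only lie in $I$, so the argument has to pass through the hypothesis (ii) that the connecting orbit segments stay inside the Markov intervals $B_1\sqcup\dots\sqcup B_r$ (on which $\Psi$ is genuinely expanding by the Markov/hyperbolicity structure, up to uniform constants) rather than on $K$ itself. I would handle this by first choosing $\delta$ small enough that any interval of length $\le\delta$ meeting the relevant dynamics and contained in $B_1\sqcup\dots\sqcup B_r$ has all its $\Psi^{a_0}$-images expanded by a factor $\ge\theta>1$ for a uniform $\theta$, which is possible by continuity of $(\Psi^{a_0})'$ and compactness, after replacing $C\sigma^{a_0}$ by a slightly smaller $\theta$; everything else is the geometric summation above. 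Since this is exactly the argument of Palis–Takens, I would in the write-up simply cite \cite{PT93} for the details while recording the statement in the form needed later.
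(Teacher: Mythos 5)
Your argument is correct and is essentially the same as the paper's: the paper likewise extends the expansion estimate from $K$ to a neighborhood (so that points satisfying (i) and (ii) are covered) and then invokes \cite[\S 4.1, Theorem~1]{PT93} for the expanding map $\Psi^{a_0}$, obtaining the explicit constant $c(\delta)=\widetilde C\delta^\alpha(C\sigma^{a_0})^{-\alpha}/(1-(C\sigma^{a_0})^{-\alpha})$. You merely unpack the telescoping-sum and geometric-series argument that is the content of that cited theorem, so there is nothing to fix.
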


\begin{proof}
Since $\Psi$ is uniformly hyperbolic on $K$,  
$|(\Psi^{n})^{\prime}(x)|\geq C\sigma^{n}$ for 
any $x\in K$ and $n\geq 1$.
Then there exists a constant $\gamma>1$ and an open neighborhood $V$ of $K$ in $U$ such that $|(\Psi^{a_0})^{\prime}(x)|\geq \gamma$ for any $x\in V$ and $n\geq 1$.
For all sufficiently small $\delta>0$, 
any points $q$, $\tilde q$ of $I$ satisfying the conditions (i) and (ii) bound an interval contained in $V$.
Hence, one can apply \cite[\S 4.1, Theorem~1]{PT93} directly to the expanding map $\Psi^{a_{0}}$, and 
obtain a constant 
$$c(\delta)=\widetilde{C}\delta^{\alpha}\frac{(C\sigma^{a_{0}})^{-\alpha}}{1-(C\sigma^{a_{0}})^{-\alpha}}$$
satisfying the condition (\ref{eqn_bdp0}), where $\widetilde{C}$ is a positive constant independent of $\delta$. This completes the proof.
\end{proof}

\subsection{Horseshoes and $s$-bridges}\label{subsec.horse}
Let us recall a simple example of a dynamically defined Cantor set   
and give the bounded distortion property for the set.

Consider 
a two-dimensional $C^{r}$ diffeomorphism $f$ admitting a horseshoe $\Lambda$ 
as in (S-\ref{setting1}) of Section \ref{sec.prep}, which contains a saddle fixed point $p$.
Let $\mathcal{F}_{\mathrm{loc}}^{u}(\Lambda)$ and $\mathcal{F}_{\mathrm{loc}}^{s}(\Lambda)$ be local unstable and stable foliations on $S=[0,2]\times [0,2]$ compatible with $W_{\mathrm{loc}}^{u}(\Lambda)$ and 
$W_{\mathrm{loc}}^{s}(\Lambda)$ respectively.
Here we may assume that
\begin{itemize}
\item
$W_{\mathrm{loc}}^s(p)=[-2,2]\times \{0\}$, 
$W_{\mathrm{loc}}^u(p)=\{0\}\times [-2,2]$,
\item
$[0,2]\times \{2\}$ is a leaf of $\mathcal{F}_{\mathrm{loc}}^s(\Lambda)$ disjoint from $W_{\mathrm{loc}}^s(\Lambda)$ 
and 
$\{2\}\times [0,2]$ is a leaf of $\mathcal{F}_{\mathrm{loc}}^u(\Lambda)$ disjoint from $W_{\mathrm{loc}}^u(\Lambda)$.  
\end{itemize}
Let $\pi_{\mathcal{F}_{\mathrm{loc}}^{u}(\Lambda)}:S\to W^{s}_{\mathrm{loc}}(p)$
 be the projection along the leaves of $\mathcal{F}_{\mathrm{loc}}^{u}(\Lambda)$, 
and 
$\pi_{\mathcal{F}_{\mathrm{loc}}^{s}(\Lambda)}:S\to W^{u}_{\mathrm{loc}}(p)$ the projection along the leaves of $\mathcal{F}_{\mathrm{loc}}^{s}(\Lambda)$. 
Consider the Cantor sets   
\begin{equation}\label{def.Ks}
K^{s}_{\Lambda}:=\pi_{\mathcal{F}_{\mathrm{loc}}^{u}(\Lambda)}(\Lambda), \quad
K^{u}_{\Lambda}:=\pi_{\mathcal{F}_{\mathrm{loc}}^{s}(\Lambda)}(\Lambda),
\end{equation}
associated with $\Lambda$ dynamically defined by 
$\Psi_{s}:=\pi_{\mathcal{F}_{\mathrm{loc}}^{u}(\Lambda)}\circ f^{-1}$ and 
$\Psi_{u}:=\pi_{\mathcal{F}_{\mathrm{loc}}^{s}(\Lambda)} \circ f$, respectively.  
Since $f$ is a $C^{r}$ map and $\pi_{\mathcal{F}_{\mathrm{loc}}^{u}}$, $\pi_{\mathcal{F}_{\mathrm{loc}}^{s}}$ 
are $C^{1+\alpha}$ maps with $0<\alpha<1$ (see \cite[\S 4.1]{PT93}), it follows that 
both $\Psi_{s}$ and $\Psi_{u}$ are of $C^{1+\alpha}$ class.
Note that both $\Psi_{s}$ and $\Psi_{u}$ are expanding maps.

\begin{remark}\label{r_three_foliations}
\begin{enumerate}[(1)]
\item
In this paper, we use three local `stable' foliation on $S$, one of which is the above $\mathcal{F}_{\mathrm{loc}}^{s}(\Lambda)$.
The other two are $\mathcal{F}_S$ in Subsection \ref{subsec.Hetero} and 
$\mathcal{G}_{\mathrm{loc}}^{s}(\Lambda)$ in Subsection \ref{SS_Pert}.
\item
The unstable foliation $\mathcal{F}_{\mathrm{loc}}^{u}(\Lambda)$ will be chosen carefully in Subsection 
\ref{subsec.Hetero} 
so as to be suitable to our purpose.
However the Cantor set $K_\Lambda^s$ and hence the bridges and gaps associated with $K_\Lambda^s$ 
are independent of the choice.
\end{enumerate}
\end{remark}

Let $B^s(0)$ (respectively $B^u(0)$) be the smallest interval in $W_{\mathrm{loc}}^s(p)$ (respectively $W_{\mathrm{loc}}^u(p)$) containing $K^s$ (respectively $K^u$).
We give here descriptions associated only with $K_\Lambda^{s}$.
Similar arguments work also in the case of $K_\Lambda^u$.  
There exists a Markov partition of $K_\Lambda^{s}$ in $B^{s}(0)$ which consists of two components, and 
denote one of them by $B^{s}(1; 1)$ and the other by  $B^{s}(1; 2)$. 
For each integer $k\geq 1$ and $w_i\in \{1,2\}$ $(i=1,\dots,k)$, we define the interval $B^{s}(k;w_{1}\ldots w_{k})$, called an \emph{$s$-bridge of generation $k$}, as 
$$B^{s}(k;w_{1}\ldots w_{k})=\left\{ x\in I\ ;\  \Psi_{s}^{i-1}(x)\in B^{s}(1;w_{i}),\ i=1, \ldots, k\right\},$$
where the sequence $w_{1}w_{2}\ldots w_{k}$ is the \emph{itinerary} for the $s$-bridge. 
If one writes $\underline{w}=w_{1}w_{2}\ldots w_{k}$, then $\underline{w}^{-1}$ stands for 
 the reverse sequence $w_{k}w_{k-1}\ldots w_{1}$. 
The \emph{$u$-bridges} $B^{u}(k;w_{1}\ldots w_{k})$ of \emph{generation $k$} associated with $K_\Lambda^{u}$ 
can be defined similarly by using $\Psi_{u}$.
For our convenience, we regard that $B^s(0)$ and $B^u(0)$ are the bridges of generation $0$ with 
empty itinerary.

For any integer $k\geq 0$, let $\mathcal{B}^{s}_{k}$ be the collection of all $B^{s}(k;w_{1}\ldots w_{k})$, see Figure \ref{fig_4_1}.
Note that $\mathcal{B}^{s}_{k}$ consists of mutually disjoint $2^{k}$ $s$-bridges. 
The union $\mathcal{B}^s=\bigcup_{k=0}^\infty \mathcal{B}_k^s$ is the set of all $u$-bridges of $K_\Lambda^s$.
The set $\mathcal{B}^u=\bigcup_{k=0}^\infty \mathcal{B}_k^u$ of all $u$-bridges of $K_\Lambda^u$ 
is defined similarly.
\begin{figure}[hbt]
\centering
\scalebox{0.9}{\includegraphics[clip]{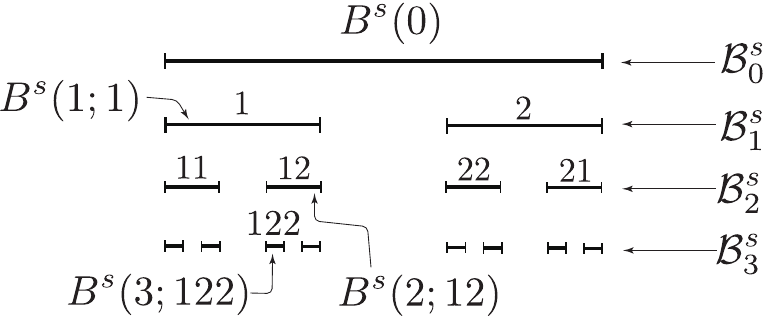}}
\caption{A collection of nested stable bridges.
For any $w_1,w_2\in \{1,2\}$, 
$\Psi_s(B^s(2;w_1w_2))=B^s(1;w_2)$.}
\label{fig_4_1}
\end{figure}

If necessary replacing the original $f$ by $f^{a_0}$ with a large integer $a_0$, $S$ by a thiner sub-rectangle $S'$ with $\partial S'\supset [0,2]\times \{0\}$ and such that $\Lambda'=\bigcap_{n=-\infty}^\infty f^{na_0}(S')$ 
is an $f^{a_0}$-invariant basic set, we may suppose that 
\begin{equation}\label{eqn_BB<B}
\max\bigl\{
|B^{s}(1; 1)|,\ |B^{s}(1; 2)|\bigr\}< \frac9{21}|B^{s}(0)|, 
\end{equation}
where $|\cdot|$ stands for the length of the corresponding interval.
See Figure \ref{fig_4_2}.
\begin{figure}[hbt]
\centering
\scalebox{0.8}{\includegraphics[clip]{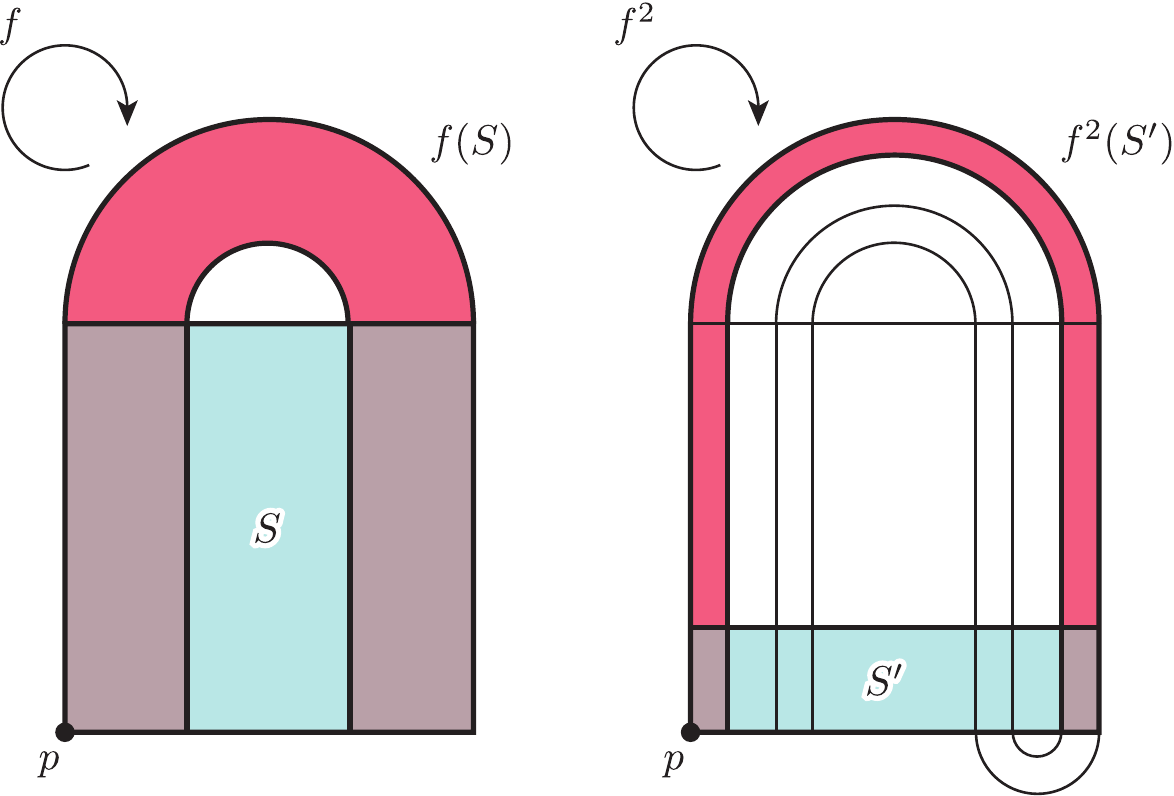}}
\caption{Replacement of a horseshoe by a more slender one.
The case of $a_0=2$.}
\label{fig_4_2}
\end{figure}
Note that the new bridges $B^s(1;1)$, $B^s(1;2)$ for $f^{a_0}$ coincide with the original bridges 
$B^s(a_0;11\cdots1)$ and $B^s(a_0;21\cdots1)$ for $f$ 
respectively.
Since moreover one can choose $a_0$ so that $\max\bigl\{
|B^{s}(1; 1)|,\ |B^{s}(1; 2)|\bigr\}$ is arbitrarily small, we may also 
assume by Lemma \ref{lem.bdp0} that
\begin{equation}\label{eqn_Psiqq}
\frac9{10}\leq |(\Psi_s^{a_0n})'(q)||(\Psi_s^{a_0n})'(\tilde q)|^{-1}\leq \frac{11}{10}
\end{equation}
for any 
$q, \tilde q\in I$ such that the intervals between $\Psi_s^{a_0i}(q)$ and $\Psi_s^{a_0i}(\tilde q)$ 
$(i=0,1,\dots,n-1)$ are contained in $B(1;1)\sqcup B(1;2)$.
We set $f^{a_0}$, $S'$, $\Lambda'$ again by $f$, $S$ and $\Lambda$ respectively, and retake the local coordinate on $M$ near $p$ so that the new $S$ equals $[0,2]\times [0,2]$.

\begin{lemma}[Bounded distortions  for  $s$-bridges]\label{lem.bdp1}
The exist constants $r_{s+}\geq r_{s-}>2$ satisfying 
the following conditions.
For any integer $k>0$, 
let $B^{s}_{k}$ and $B^{s}_{k+1}$ be $s$-bridges for $K_\Lambda^{s}$ of generation $k$ and $k+1$ 
such that 
the first $k$ entries in the itinerary of $B^{s}_{k+1}$
are identical to the entries in the itinerary of $B^{s}_{k}$, that is,
$$B^{s}_{k}:=B^{s}(k;w_{1}\ldots w_{k}),\quad
B^{s}_{k+1}:=B^{s}(k+1;w_{1}\ldots w_{k} w_{k+1}).$$
Then the inequality  
$$
r_{s-}\leq |B_{k}^{s}| | B_{k+1}^{s}|^{-1} \leq r_{s+}.
$$
holds.
\end{lemma}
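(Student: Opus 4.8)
The plan is to deduce the lemma from the classical Bounded Distortion Lemma (Lemma \ref{lem.bdp0}) together with the slenderness normalization \eqref{eqn_BB<B} and \eqref{eqn_Psiqq}, which were set up precisely for this purpose. First I would observe that, since the first $k$ entries of the itinerary of $B^s_{k+1}$ agree with those of $B^s_k$, the map $\Psi_s^k$ sends $B^s_k$ diffeomorphically onto the generation-$0$ bridge $B^s(0)=B^s(0;\,)$ and sends $B^s_{k+1}$ diffeomorphically onto the generation-$1$ bridge $B^s(1;w_{k+1})$. Hence
$$
\frac{|B^s_k|}{|B^s_{k+1}|}=\frac{|B^s_k|}{|B^s(0)|}\cdot\frac{|B^s(1;w_{k+1})|}{|B^s_{k+1}|}\cdot\frac{|B^s(0)|}{|B^s(1;w_{k+1})|},
$$
so it suffices to control the distortion of $\Psi_s^k$ restricted to $B^s_k$ and to $B^s_{k+1}$, and then invoke the bound $|B^s(0)|/|B^s(1;w_{k+1})|$, which is a fixed finite number bigger than $1$ (bounded below by $21/9>2$ by \eqref{eqn_BB<B} and bounded above because $K_\Lambda^s$ has no isolated points so each $B^s(1;i)$ has positive length).

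Next I would make the distortion estimate precise using the mean value theorem. For $x,\tilde x\in B^s_k$ there are points $\xi,\tilde\xi$ with $|B^s(0)|=|\Psi_s^k(B^s_k)|=|(\Psi_s^k)'(\xi)|\,|B^s_k|$ and likewise for $B^s_{k+1}$; writing the ratio $|(\Psi_s^k)'(\xi)|/|(\Psi_s^k)'(\tilde\xi)|$ and noting that all forward iterates $\Psi_s^i(\xi),\Psi_s^i(\tilde\xi)$ for $i\le k-1$ lie in $B^s(1;1)\sqcup B^s(1;2)$ (by definition of the itineraries), the hypothesis of \eqref{eqn_Psiqq} is met — after, if necessary, grouping iterates in blocks of size $a_0$, which is exactly the normalization already performed so that one may take $a_0=1$. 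Thus the derivative ratio lies in $[9/10,11/10]$, and therefore
$$
\frac{|B^s_k|}{|B^s_{k+1}|}\ \text{lies between}\ \frac9{10}\cdot\frac{|B^s(0)|}{\max_i|B^s(1;i)|}\quad\text{and}\quad\frac{11}{10}\cdot\frac{|B^s(0)|}{\min_i|B^s(1;i)|}.
$$
Setting $r_{s-}:=\tfrac9{10}\,|B^s(0)|/\max_i|B^s(1;i)|$ and $r_{s+}:=\tfrac{11}{10}\,|B^s(0)|/\min_i|B^s(1;i)|$, the inequality $r_{s-}>2$ follows from \eqref{eqn_BB<B} since $\tfrac9{10}\cdot\tfrac{21}{9}=\tfrac{21}{10}>2$, and $r_{s+}\ge r_{s-}$ is clear.

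The one genuinely delicate point — and the step I expect to be the main obstacle to write out carefully — is making the block/$a_0$ bookkeeping rigorous: after the replacement of $f$ by $f^{a_0}$ in Subsection \ref{subsec.horse}, the "new" $\Psi_s$ is already the $a_0$-th iterate of the old one, so condition (ii) of Lemma \ref{lem.bdp0} with $na_0$ replaced effectively by $k$ needs the intervals between $\Psi_s^i(q)$ and $\Psi_s^i(\tilde q)$ for $0\le i\le k-1$ to stay in $B^s(1;1)\sqcup B^s(1;2)$; this is automatic from the definition of $s$-bridges, but one must make sure that the interval spanned by the two endpoints of $B^s_k$ (resp.\ $B^s_{k+1}$) under $\Psi_s^i$ stays inside the union of the two first-generation bridges and not merely the endpoints themselves. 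This follows because $\Psi_s^i(B^s_k)=B^s(k-i;w_{i+1}\ldots w_k)\subset B^s(1;w_{i+1})$, so the whole spanned interval is contained in $B^s(1;w_{i+1})$. Once this is noted, \eqref{eqn_Psiqq} applies directly and the proof is complete; the analogous statement for $u$-bridges follows by replacing $\Psi_s$ by $\Psi_u$, which is likewise a $C^{1+\alpha}$ expanding map.
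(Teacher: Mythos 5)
Your proposal is correct and follows essentially the same route as the paper: apply the mean value theorem to $\Psi_s^k$ on $B^s_k$ and $B^s_{k+1}$, control the derivative ratio via the bounded distortion normalization (\ref{eqn_Psiqq}), and use the slenderness condition (\ref{eqn_BB<B}) to get $r_{s-}=\tfrac{21}{10}>2$ and the matching upper bound $r_{s+}$. Your extra care about the $a_0$-blocking and the fact that $\Psi_s^i(B^s_k)\subset B^s(1;w_{i+1})$ is exactly the implicit justification in the paper's argument, so there is no gap.
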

\begin{proof}
By the mean-value theorem, for any bridges $B^{s}_{k}$ and $B^{s}_{k+1}$ as above, 
there are points $q\in B^{s}_{k+1}$, 
$\tilde{q}\in B^{s}_{k}$ 
such that 
$$
|B^s(0)|=|B^{s}_{k}|| (\Psi_s^{k})^{\prime}(\tilde{q}) |,\quad 
|B^{s}(1;w_{k+1})|=|B^{s}_{k+1}|| (\Psi_s^{k})^{\prime}(q) |.
$$
By (\ref{eqn_BB<B}) and (\ref{eqn_Psiqq}),
\begin{align*}
|B^{s}_{k}| |B^{s}_{k+1}|^{-1}&=|B^s(0)| |B^s(1;w_{k+1})|^{-1} 
|(\Psi_s^{k})^{\prime}(q) | | (\Psi_s^{k})^{\prime}(\tilde{q}) |^{-1}\\
&>\frac{21}9\cdot \frac9{10}=\frac{21}{10}=:r_{s-}.
\end{align*}
Moreover, we have
$$|B^{s}_{k}| |B^{s}_{k+1}|^{-1}\leq \frac{|B^s(0)|}{\min\bigl\{|B^s(1;1)|,\, |B^s(1;2)|\bigr\}}
\cdot \frac{11}{10}=:r_{s+}.$$
This completes the proof.
\end{proof}

\subsection{Quadratic maps and $u$-bridges} \label{subset.one-dim}
We recall another Cantor set  for 
one-dimensional maps fundamental properties of which are succeeded by H\'enon-like maps in Section \ref{subsec.Henon}.

Let $F_{\mu}$ be the family of one-dimensional quadratic maps on $\mathbb{R}$ defined as 
\begin{equation}\label{eqn_Fmu}
F_{\mu}(x)=x^{2}+\mu,
\end{equation}
where $\mu$ is a real parameter. 
For any integer $m\geq 3$,  
$F_{\mu}$ has a periodic orbit of period $m$ if $\mu$ is sufficiently close to $-2$.
Denote by $q_{1}$ and $q_{2}$, respectively, 
the minimum and maximum points of the $m$-periodic orbit, which satisfies $F_{\mu}(q_{1})=q_{2}$. 
We denote by 
$A^{u}(0)$ the interval $[q_{1}, q_{2}]$, and 
define the mutually disjoint $m-1$ intervals as 
$$
A^{u}(1;z_{1}):=\left\{
\begin{array}{ll}
F_{\mu}^{-1}(A^{u}(0))\cap\{x<0\} & \text{if}\ z_{1}=1; \\
F_{\mu}^{-z_{1}+1}(
F_{\mu}^{-1}(A^{u}(0))\cap\{x<0\}
)\cap\{x>0\} &  \text{if}\  z_{1}=2,\ldots,m-1.
\end{array}
\right.
$$
Moreover,  
for every integer $k \geq 2$,  we inductively define  the $(m-1)^{k}$ intervals 
$A^{u}(k;z_{1}z_{2}\ldots z_{k})$ as
 $$
A^{u}(k;z_{1}z_{2}\ldots z_{k}):=A^{u}(k-1;z_{1}\ldots z_{k-1})\cap F_{\mu}^{-z_{1}}(A^{u}(k-1;z_{2}\ldots z_{k})),
 $$
where  each entry $z_{i}$ is an element of $\{1,2,\ldots, m-1\}$.  
See Figure \ref{fig_4_3} for the case of $m=4$. 
The interval  $A^{u}(k;z_{1}z_{2}\ldots z_{k})$ is called a \emph{$u$-bridge} for $F_\mu$ of \emph{generation} $k$, 
and the sequence $(z_{1}\ldots z_{k})$ is the \emph{itinerary} of the $u$-bridge. 
A unique point of $\partial A^{u}(k;z_{1}z_{2}\ldots z_{k})\cap \partial A^{u}(k+1;z_{1}z_{2}\ldots z_{k}1)$ (resp.\ $\partial A^{u}(k;z_{1}z_{2}\ldots z_{k})\cap \partial A^{u}(k+1;z_{1}z_{2}\ldots z_{k}m-1)$) is called the \emph{leading point} (resp.\ \emph{bottom point}) of $A^{u}(k;z_{1}z_{2}\ldots z_{k})$.
Consider the Cantor set  
 $$
 K^{u}_m(\mu):=A^{u}(0)\cap  \bigcap_{k=1}^{\infty} 
 \bigsqcup_{\begin{subarray}{c}(z_{1},\ldots, z_{k})\\ \in  \{1,\ldots,m-1\}^{k}\end{subarray}}A^{u}(k;z_{1}\ldots z_{k})
 $$
dynamically defined by $F_{\mu}$ and associated with the $m$-periodic orbit. 
For our convenience, we regard that $A^u(0)$ is a $u$-bridge of generation $0$ with 
empty itinerary.
The \emph{leading gap} of $A^{u}(k;z_1\ldots z_{k})$ is the gap in $A^{u}(k;z_1\ldots z_{k})$  
bounded by $A^{u}(k+1;z_1\ldots z_{k}1)$ and $A^{u}(k+1;z_1\ldots z_{k}2)$.
For each integer $k\geq 0$, let $\mathcal{A}^{u}_{k}$ be the collection of all $k$-bridges $A^{u}(k;z_{1}\ldots z_{k})$, 
see Figure \ref{fig_4_3}. 
\begin{figure}[hbt]
\centering
\scalebox{0.8}{\includegraphics[clip]{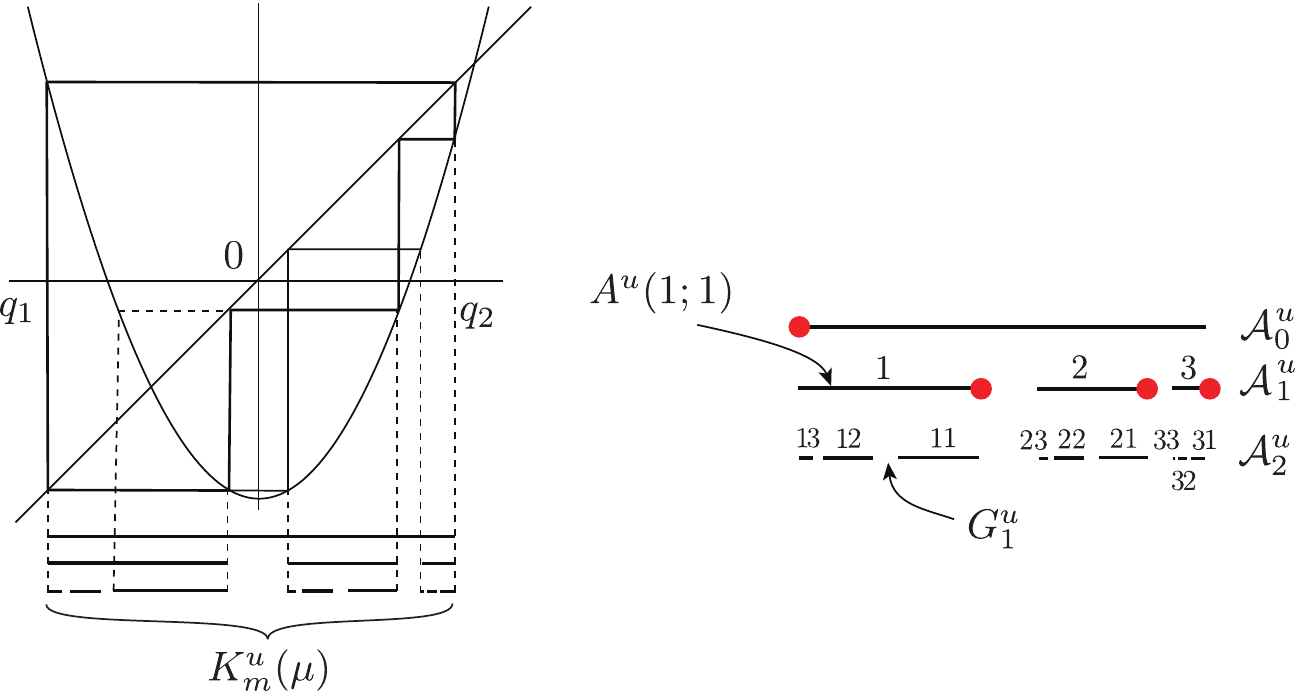}}
\caption{A nested sequence of unstable bridges.
For any $z\in\{1,2,3\}$, $F_\mu(A^u(2;3z))=A^u(2;2z)$, $F_\mu(A^u(2;2z))=A^u(2;1z)$, and $F_\mu(A^u(2;1z))=A^u(1;z)$.
The red dots represent the leading points of $u$-bridges of generation $0$ and $1$.
$G_1^u$ is the leading gap of $A^u(1;1)$.}
\label{fig_4_3}
\end{figure}

\begin{remark}\label{rmk.hyperbolicity}
As $\mu=-2$, $F_{-2}$  
is topologically conjugate to the tent map $T:x\mapsto |2x-1|+1$ 
via $g:x\mapsto 2-4\sin^{2}(\pi/2)x$. It implies that, 
if $\mu$ is contained in a small neighborhood $I$ of $-2$ and $m\geq 3$, 
$K^{u}_m(\mu)$ is a uniformly hyperbolic set for $F_{\mu}$, see  \cite[\S 6.2]{PT93}. 
\end{remark}

\begin{lemma}[Bounded distortions  for  $u$-bridges]\label{lem.bdp2}
For any integer $m\geq 3$, there exist $\eta(m)>0$ and an integer $\kappa(m)\geq 1$ such that, 
for any $k\geq \kappa(m)$ and $\mu \in (-2-\eta(m),-2+\eta(m))$, the following conditions {\rm (\ref{bdp2_AA})}--{\rm (\ref{bdp2_tilde AA})} hold, 
where $A_k^u$ and $A_{k+1}^u$ are $u$-bridges for $K_m^{u}(\mu)$ of generation 
$k$ and $k+1$ respectively such that the first $k$ entries in the itinerary of $A^{u}_{k+1}$ are the same as the entries in the itinerary of $A^{u}_{k}$, that is,   
$$A^{u}_{k}=A^{u}(k;z_{1}\ldots z_{k}),\   
A^{u}_{k+1}=A^{u}(k+1;z_{1}\ldots z_{k} z_{k+1}).$$
\begin{enumerate}[\rm (1)]
\item\label{bdp2_AA}
If $z_{k+1}=j\in \{1,\dots,m-1\}$, then 
$$
3\cdot 2^{j-2} \leq |A_{k}^{u}| | A_{k+1}^{u}|^{-1} \leq 5\cdot 2^{j-2}.
$$
In particular, we have
$$
\frac{3}{2} \leq |A_{k}^{u}| | A_{k+1}^{u}|^{-1} \leq 5\cdot 2^{m-3}
$$
for any $z_{k+1}\in \{1,\dots,m-1\}$.
Moreover, $|A_{k}^{u}| | A_{k+1}^{u}|^{-1} \leq 5$ 
if $z_{k+1}$ is either $1$ or $2$.
\item\label{bdp2_AI}
Let $I_k^u$ be the minimum sub-interval of $A_k^u$ containing 
$A_{k+1}^u$ and the bottom point of $A_k^{u}$. 
Then
$$|A_{k+1}^u|\, |I_k^u|^{-1}\geq \frac13.$$
\item\label{bdp2_tilde AA}
Suppose that $z_{k+1}\leq m-2$ and $\widetilde A^{u}_{k+1}=A^{u}(k+1;z_{1}\ldots z_{k} \ z_{k+1}+1)$.
Let $G_{k+1}^u$ be the connecting gap for $A_{k+1}^u$ and $\widetilde A_{k+1}^u$, i.e.\ 
$G_{k+1}=\mathrm{Gap}(A_{k+1}^u,\widetilde A_{k+1}^u)$.
Then
$$|\widetilde A_{k+1}^u|\, |A_{k+1}^u|^{-1}\geq \frac13
\quad\text{and}\quad |G_{k+1}^u|\, |A_{k+1}^u|^{-1}\geq \frac1{2^{m+1}}.$$
\end{enumerate}
\end{lemma}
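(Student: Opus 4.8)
The plan is to reduce all three estimates---which concern $u$-bridges of arbitrarily large generation $k$---to a finite list of explicit computations on generation-$0$, $1$ and $2$ bridges of the model map $F_{-2}$, and to control the passage from large to small generation by Lemma~\ref{lem.bdp0}. First I would record the renormalization structure of the family $\{A^{u}(k;z_{1}\ldots z_{k})\}$: writing $s_{k}=z_{1}+\cdots+z_{k}$, the iterate $F_{\mu}^{s_{k}}$ maps $A^{u}(k;z_{1}\ldots z_{k})$ monotonically and $C^{r}$-diffeomorphically onto $A^{u}(0)$, carrying $A^{u}_{k+1}=A^{u}(k+1;z_{1}\ldots z_{k}z_{k+1})$ onto $A^{u}(1;z_{k+1})$, $\widetilde{A}^{u}_{k+1}$ onto $A^{u}(1;z_{k+1}+1)$, the connecting gap $G^{u}_{k+1}$ onto the corresponding generation-$1$ gap, and the bottom point of $A^{u}_{k}$ onto that of $A^{u}(0)$; stopping one renormalization step sooner, $F_{\mu}^{s_{k-1}}$ carries the same configuration onto its generation-$1$/generation-$2$ analogue. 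By the mean value theorem, each ratio in (1)--(3) then equals the matching generation-$\leq 2$ ratio times a distortion factor $|(F_{\mu}^{s})'(\xi)|\,|(F_{\mu}^{s})'(\xi')|^{-1}$ with $\xi,\xi'$ in one common high-generation bridge. By Remark~\ref{rmk.hyperbolicity}, for $\mu$ near $-2$ the set $K^{u}_{m}(\mu)$ is uniformly hyperbolic and the associated $C^{1+\alpha}$ Markov map (given by $F_{\mu}^{z}$ on $A^{u}(1;z)$) is subject to Lemma~\ref{lem.bdp0}; since the forward orbits of $\xi,\xi'$ stay in the Markov pieces and, after shrinking $\eta(m)$, $K^{u}_{m}(\mu)$ keeps a definite distance from the critical point $0$ of $F_{\mu}$, the distortion factor lies in $[e^{-c},e^{c}]$ for a constant $c$ controlled by the diameter of a generation-$1$ (or generation-$2$) bridge; taking $k\geq\kappa(m)$ guarantees that enough iterates of the Markov map have been applied for this to hold.

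Next I would carry out the generation-$\leq 2$ computations for $F_{-2}$ through the conjugacy $g$ of Remark~\ref{rmk.hyperbolicity} with the tent (equivalently doubling) map, under which each $A^{u}(k;z_{1}\ldots z_{k})$ is the $g$-image of a dyadic cylinder whose consecutive length ratios are exact powers of $2$ (equivalently, $(F_{-2}^{m})'$ equals $\pm 2^{m}$ along the $m$-periodic orbit). This identifies $|A^{u}(0)|\,|A^{u}(1;j)|^{-1}$ with $2^{j}$ and $|A^{u}(1;j+1)|\,|A^{u}(1;j)|^{-1}$ with $\tfrac12$, each up to the distortion of $g$---which over bridges of generation $\geq\kappa(m)$ that stay a definite distance from the critical points of $g$ is as close to $1$ as one wishes---and it gives explicit lower bounds for the generation-$1$ gaps relative to their adjacent bridges; summing the geometrically decaying family of generation-$1$ bridges and gaps lying between $A^{u}(1;z_{k+1})$ and the bottom point of $A^{u}(0)$ bounds that portion of $A^{u}(0)$ by a factor less than $3$ of $|A^{u}(1;z_{k+1})|$. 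Feeding these values back into the first step, and using continuity of $q_{1},q_{2},P_{\mu},\dots$ in $\mu$ (which, after a final shrink of $\eta(m)$, keeps every generation-$\leq 2$ quantity within an arbitrarily small factor of its value at $\mu=-2$), yields: the coarse bounds $\tfrac32\leq|A^{u}_{k}|\,|A^{u}_{k+1}|^{-1}\leq 5\cdot 2^{m-3}$ and the refined bounds $3\cdot 2^{j-2}\leq|A^{u}_{k}|\,|A^{u}_{k+1}|^{-1}\leq 5\cdot 2^{j-2}$ in~(1), the factors $\tfrac34$ and $\tfrac54$ around $2^{j}$ absorbing both the distortion of $g$ and the long-iterate distortion; the bound $|A^{u}_{k+1}|\,|I^{u}_{k}|^{-1}\geq\tfrac13$ in~(2); and $|\widetilde{A}^{u}_{k+1}|\,|A^{u}_{k+1}|^{-1}\geq\tfrac13$ together with $|G^{u}_{k+1}|\,|A^{u}_{k+1}|^{-1}\geq 2^{-(m+1)}$ in~(3).

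I expect the main obstacle to be the quantitative interplay between these two steps rather than any single inequality. The windows in (1)--(3) are narrow enough that one must control simultaneously the distortion of $g$ over the relevant bridges and the distortion of the long iterates $F_{\mu}^{s}$; one must keep careful track of which constants survive as universal ones ($3$, $5$, $\tfrac13$) and which must absorb the $m$-dependence into $\kappa(m)$ and $\eta(m)$; and one must verify throughout---in particular for the smallest bridges $A^{u}(1;j)$, those with $j$ close to $m-1$, which sit nearest a repelling fixed point of $F_{-2}$---that they remain bounded away from the critical point $0$, so that Lemma~\ref{lem.bdp0} keeps applying.
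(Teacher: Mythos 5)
Your overall strategy (collapse a high-generation configuration by a long iterate of $F_\mu$, control the nonlinearity by Lemma~\ref{lem.bdp0}, compute explicitly at $\mu=-2$ through the tent-map conjugacy $g$, then pass to $\mu$ near $-2$ by $C^1$-continuity) is the same as the paper's, but the specific reduction you propose has a genuine quantitative gap. You iterate all the way down to bridges of generation $\le 2$ and claim the distortion factor $|(F_\mu^{s})'(\xi)|\,|(F_\mu^{s})'(\xi')|^{-1}$ lies in $[e^{-c},e^{c}]$ with ``$c$ controlled by the diameter of a generation-$1$ (or generation-$2$) bridge,'' becoming harmless once $k\ge\kappa(m)$. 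This misreads Lemma~\ref{lem.bdp0}: the constant $c(\delta)$ there is governed by the separation of the \emph{final} images of the two points, i.e.\ by the diameter of the terminal bridge, and if that bridge has generation $\le 2$ this diameter is macroscopic and does not shrink as $k\to\infty$; more iterates do not help. In fact the true distortion of the long iterate onto a generation-$1$ bridge equals (at $\mu=-2$) the distortion of $g$ over that macroscopic bridge, which near the ends of $A^u(0)$ --- where $g$ has critical points and the Cantor set comes within distance $\sim 2^{-m}$ of them --- is of order $2^m$, nowhere near the $25\%$ window you need. Correspondingly, the generation-$\le 2$ ratios \emph{in the $F_\mu$-coordinates} are not within $[\tfrac34,\tfrac54]\cdot 2^{j}$ of the dyadic values either (they carry the same order-$2^m$ correction); only the product of your two factors lies in the stated window, and these errors cancel, so bounding the factors separately, as your plan does, cannot yield (1)--(3).

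The paper's proof avoids this by stopping the collapse at an \emph{intermediate, still high} generation: for $k$ large, $F_\mu^{nk_0}$ maps $A_k^u, A_{k+1}^u$ onto bridges $A_h^u, A_{h+1}^u$ with $h\in\{\kappa,\dots,\kappa+k_0-1\}$, where $\kappa$ can be chosen large. Then the final images lie in a single bridge of width at most $\delta(\kappa)$, so Lemma~\ref{lem.bdp0} gives a distortion $e^{\pm\tilde c(\delta(\kappa))}$ arbitrarily close to $1$, and the moderate-generation ratio $|A_h^u|\,|A_{h+1}^u|^{-1}$ is evaluated through $g$: in tent coordinates it is exactly $2^{j}$ (piecewise linearity), and $g$ is almost affine on the \emph{short} interval $A_h^u{}'$, so only there does the near-affinity of $g$ enter --- not on macroscopic bridges. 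To repair your proposal you would either adopt this intermediate-generation stopping rule, or exploit the exact cancellation by conjugating the whole long iterate through $g$ at once; as written, the two-step estimate does not close.
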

\begin{proof}
(\ref{bdp2_AA})
There are constants $C>0$ and $\sigma>1$ such that $|(F_{\mu}^{k})^{\prime}(x)|\geq C\sigma^{k}$ 
for any $x\in K_m^{u}(\mu)$ and $k\geq 1$.
Let $k_{0}=k_{0}(\mu,m)$ be the minimum non-negative integer with $C\sigma^{k_{0}}>1$. 
As in the proof of Lemma \ref{lem.bdp0}, we have a constant $\gamma>1$ and an open neighborhood $V$ of $K_m^u(\mu)$ such that $|(F_\mu^{k_0})^{\prime}(x)|\geq \gamma$ for any $x\in V$ and $n\geq 1$.

Take a positive integer $\kappa$.
For any integer $k\geq k_0+\kappa$, there exist integers $n\geq 0$ and $h\in\{\kappa,\ldots, \kappa+k_{0}-1\}$ such that 
$F_\mu^{nk_0}(A_k^u)=A_h^u$, $F_\mu^{nk_0}(A_{k+1}^u)=A_{h+1}^u$, where
$$
A^{u}_{h}=A^{u}(h; z_{k-h+1} \ldots z_{k}),\ 
A^{u}_{h+1}=A^{u}(h+1; z_{k-h+1} \ldots z_{k} z_{k+1}).
$$
By the mean-value theorem, there are 
$q\in A^{u}_{k}$ and $\tilde q\in A^{u}_{k+1}$ 
  such that 
\begin{equation}\label{eqn.bdp2-1}
|A^{u}_{h}|=|A^{u}_{k}| | (F^{nk_{0}}_{\mu})^{\prime}(q) |,\quad
|A^{u}_{h+1}|=|A^{u}_{k+1}| | (F^{nk_{0}}_{\mu})^{\prime}(\tilde{q}) |.
\end{equation}
Let $\delta(k)$ be the maximum width of elements in $\mathcal{A}(k)$.
Applying Lemma \ref{lem.bdp0} to $F^{nk_{0}}_{\mu}$, 
there exists a constant $\tilde{c}=\tilde{c}(\delta(\kappa))$ of order $\delta(\kappa)^{\alpha}$   
independent of $n$ such that  
$$
e^{-\tilde{c}}
\leq 
 | (F^{nk_{0}}_{\mu})^{\prime}(\tilde{q}) | | (F^{nk_{0}}_{\mu})^{\prime}(q) |^{-1}
\leq e^{\tilde{c}}.
 $$
 Thus we have 
 $$
e^{-\tilde{c}} r_{u-}(m,j) \leq |A_{k}^{u}| | A_{k+1}^{u}|^{-1}
 \leq e^{\tilde{c}} r_{u+}(m,j),
$$
where
\begin{align*}
r_{u-}(m,j)& =\min \bigl\{ |A^{u}_{h}|\, |A^{u}_{h+1}|^{-1}\ ;\ h\in \{\kappa ,\ldots, \kappa+k_{0}-1\},\\
&\qquad\qquad\qquad
z_{k-h+1},\dots, z_{k}\in\{1,\dots,m-1\},\
z_{k+1}=j\bigr\}; \\
r_{u+}(m,j)& =\max \bigl\{ |A^{u}_{h}|\, |A^{u}_{h+1}|^{-1}\ ;\ h\in \{\kappa,\ldots, \kappa+k_{0}-1\},\\
&\qquad\qquad\qquad
z_{k-h+1},\dots, z_{k}\in\{1,\dots,m-1\},\
z_{k+1}=j\bigr\}.\\
\end{align*}
One can suppose that $\delta(\kappa)$ is arbitrarily small by taking $\kappa$ large enough, and hence 
$\tilde c(\delta(\kappa))$ is arbitrarily close to $0$.

First we consider the case of $\mu=-2$.
Set ${A_h^u}' =g^{-1}(A_h^u)$ for the conjugation map $g$ given in Remark \ref{rmk.hyperbolicity}.
By \cite[\S 6.2]{PT93}, ${A^u}'(0)=[2\delta,1-\delta]$ and ${A^u}'(1;j)=[\frac1{2^j}+\frac1{2^j}\delta,\frac1{2^{j-1}}-\frac1{2^{j-1}}\delta]$ for $j=1,\dots,m-1$. 
${A^u}'(1;2)=[\frac14+\frac14\delta,\frac12-\frac12\delta]$, where $\delta=\frac1{2^m-1}$. 
Thus we have $|{A^u}'(0)|=1-3\delta$ and $|{A^u}'(1;j)|=\frac1{2^j}(1-3\delta)$.
This implies that
$$|{A^u}'(0)|\,|{A^u}'(1;j)|^{-1}=2^{j}.$$
Since the tent map $T$ is a piecewise linear map with $|DT_x|=2$ for any $x\neq \frac12$, 
$|{A^{u}_{h}}'|\, |{A^{u}_{h+1}}'|^{-1}=2^j$ for any $h\in \{\kappa,\ldots, \kappa+k_{0}-1\}$, $z_{k-h+1},\dots, z_{k}\in\{1,\dots,m-1\}$ 
and $z_{k+1}=j$.
The width of the interval ${A^{u}_{h}}'$ can be arbitrarily small if we take $\kappa$ sufficiently large.
Since the conjugation map $g$ is almost affine on such a short interval, one can suppose that 
$r_{u-}(m,j)> \frac34\cdot 2^j=3\cdot 2^{j-2}$ and $r_{u+}(m,j)<\frac54 2^j=5\cdot 2^{j-2}$ for $\mu=-2$.
Since $F_\mu^t$ uniformly $C^1$ converges to $F_{-2}^t$ on $[-3,3]$ as $\mu\to -2$ 
for $t=1,\dots,k_0$, there exist $\eta(m)>0$ and an integer $\kappa(m)\geq 1$ such that $e^{-\tilde c(k)}r_{u-}(m,j)> 3\cdot 2^{j-2}$ and $e^{\tilde c(k)}r_{u+}(m,j)<5\cdot 2^{j-2}$  
if $k\geq \kappa(m)$ and $\mu\in (-2-\eta(m),-2+\eta(m))$.
This shows (\ref{bdp2_AA}).

(\ref{bdp2_AI})
Suppose that $\mu=-2$.
Then ${A^u}'(0)=[2\delta,1-\delta]$ and ${A^u}'(1;l)=\bigl[\frac1{2^l}+\frac1{2^l}\delta,\frac1{2^{l-1}}-\frac1{2^{l-1}}\delta\bigr]$  
for $l=1,\dots,m-1$.
Let ${I_0^u}'$ be the minimum sub-interval of ${A^u}'(0)$ containing ${A^u}'(1;l)$ and $2\delta$, 
that is, ${I_0^u}'=\bigl[2\delta,\frac1{2^{l-1}}-\frac1{2^{l-1}}\delta\bigr]$.
Since $|{A^u}'(1;l)|=\frac1{2^l}(1-3\delta)$ and $|{I_0^u}'|=\bigl(\frac1{2^{l-1}}-\frac1{2^{l-2}}\delta\bigr)-2\delta\leq \frac1{2^{l-1}}(1-3\delta)$, 
$|{A^u}'(1;l)|\,|{I_0^u}'|^{-1}\geq \frac12$.
By using the argument as in (\ref{bdp2_AA}), one can show that 
$$|A_{k+1}^u|\, |I_k^u|^{-1}\geq \frac13$$
if necessary retaking $\kappa(m)$ by a larger integer and $\mu(m)$ by a smaller positive number.
This shows (\ref{bdp2_AI}).

(\ref{bdp2_tilde AA})
The proof is quite similar to that of (\ref{bdp2_AI}).
Since $|{A^u}'(1;l)|=\frac1{2^l}(1-3\delta)$ and $|{A^u}'(1;l+1)|=\frac1{2^{l+1}}(1-3\delta)$ 
for any $l\in \{1,\dots,m-2\}$, we have $|{A^u}'(1;l+1)|\,|{A^u}'(1;l)|^{-1}=\frac12$.
The connecting gap ${G_l^u}'$ for ${A^u}'(1;l)$ and ${A^u}'(1;l+1)$ has the length $|{G_l^u}'|=\frac1{2^l}\delta$.
Thus $|{A^u}'(1;l)|\,|{G_l^u}'|^{-1}\leq (1-3\delta)\delta^{-1}=2^m-4<2^m$.
Then 
one can retake $\kappa(m)$ and $\eta(m)$ again so that 
the inequalities of (\ref{bdp2_tilde AA}) hold.
This completes the proof.
\end{proof}

As $\mu$ is close to $-2$, 
$K^{u}_m(\mu)$ is contained in the interior $\mathrm{Int}(I)$ of $I=[-3,3]$.
In such a situation, 
observe that, for every $p\in K^{u}_m(\mu)$, 
 there exist $k\geq 0$,  
 $A^{u}\in \mathcal{A}_{k}^{u}$ and 
the closure $G$ a component of $I\setminus \bigsqcup_{A^{u}\in \mathcal{A}_{k}^{u}}  A^{u}$ such that 
 $A^{u}$ and $G$ are the bridge and gap satisfying $A^{u}\cap G=\{p\}$. 
 Note that $K^{u}_m(\mu)$  depends on the initially given $m$-periodic orbit containing $q_{1}$ and $q_{2}$. 
This fact together with Remark \ref{rmk.hyperbolicity} implies the following: 
\begin{remark}\label{rmk.large-thickness1}
 $\tau(K^{u}_{m}(\mu))$ can be arbitrarily large if we take $m$ sufficiently large, see  \cite[\S 6.2]{PT93}.
\end{remark}

\subsection{Translation into H\'enon-like maps}\label{subsec.Henon}
H\'enon map introduced in Section \ref {sec.prep} is written as 
$$\varphi_{\mu,\nu}(x,y)=(y,\ \nu x+F_{\mu}(y)),$$ 
where $\nu$ is a real parameter and $F_{\mu}$ is the quadratic map of (\ref{eqn_Fmu}). 
Fortunately all the properties given in Section \ref{subset.one-dim} are inherited by H\'enon maps 
with $\nu\approx 0$.

Using the Cantor set $K^{u}_{m}(\mu)$ for $F_{\mu}$ with $\mu\approx -2$, 
one can define the subset $\{(x, F_{\mu}(x))\ ;\ x\in K^{u}_{m}(\mu)\}$ of $\mathbb{R}^2$, which is a Cantor set on the 
parabolic curve 
$\mathrm{Im}(\varphi_{\mu,0})=\{(x,\mu+x^2); -\infty <x<\infty\}$. 
For simplicity, we denote the Cantor set $\varphi_{\mu,0}(K^{u}_{m}(\mu))$ in  $\mathrm{Im}(\varphi_{\mu,0})$ again by 
$K^{u}_{m}(\mu)$.

Let $\mathcal{U}(-2,0)$ be a small neighborhood of $(-2,0)$ in the parameter space, and let $P_{\mu,\nu}$, $Q_{m;\mu,\nu}$, $\Gamma_{m;\mu,\nu}$ be respectively the fixed point, periodic point and basic set for $\varphi_{\mu,\nu}$ with $(\mu,\nu)\in \mathcal{U}(-2,0)$ defined in Section \ref{sec.prep}.
Now we consider a continuous map $\widetilde{P}:\mathcal{U}(-2,0)\to \mathbb{R}^2$  with $\widetilde P_{\mu,\nu}:=\widetilde{P}(\mu,\nu)\in W^u(P_{\mu,\nu})$ and 
 $\widetilde{P}_{-2,0}=(-2,2)$, see Figure \ref{fig_4_4}. 
For any $(\mu,\nu)\in \mathcal{U}(-2,0)$, let $\widehat\ell_{\mu,\nu}^{u}$ be the arc in 
$W^{u}(P_{\mu, \nu})$ connecting $P_{\mu,\nu}$ with $\widetilde P_{\mu,\nu}$.
By the stable manifold theorem (for example see \cite{Rob99}, 
Chapter 5, Theorem 10.1U)),  
$\widehat\ell_{\mu,\nu}^{u}$ $C^r$-converges to $\widehat\ell_{-2,0}^{u}$ as $(\mu,\nu)\to (-2,0)$, see Figure \ref{fig_4_4}.
\begin{figure}[hbt]
\centering
\scalebox{0.8}{\includegraphics[clip]{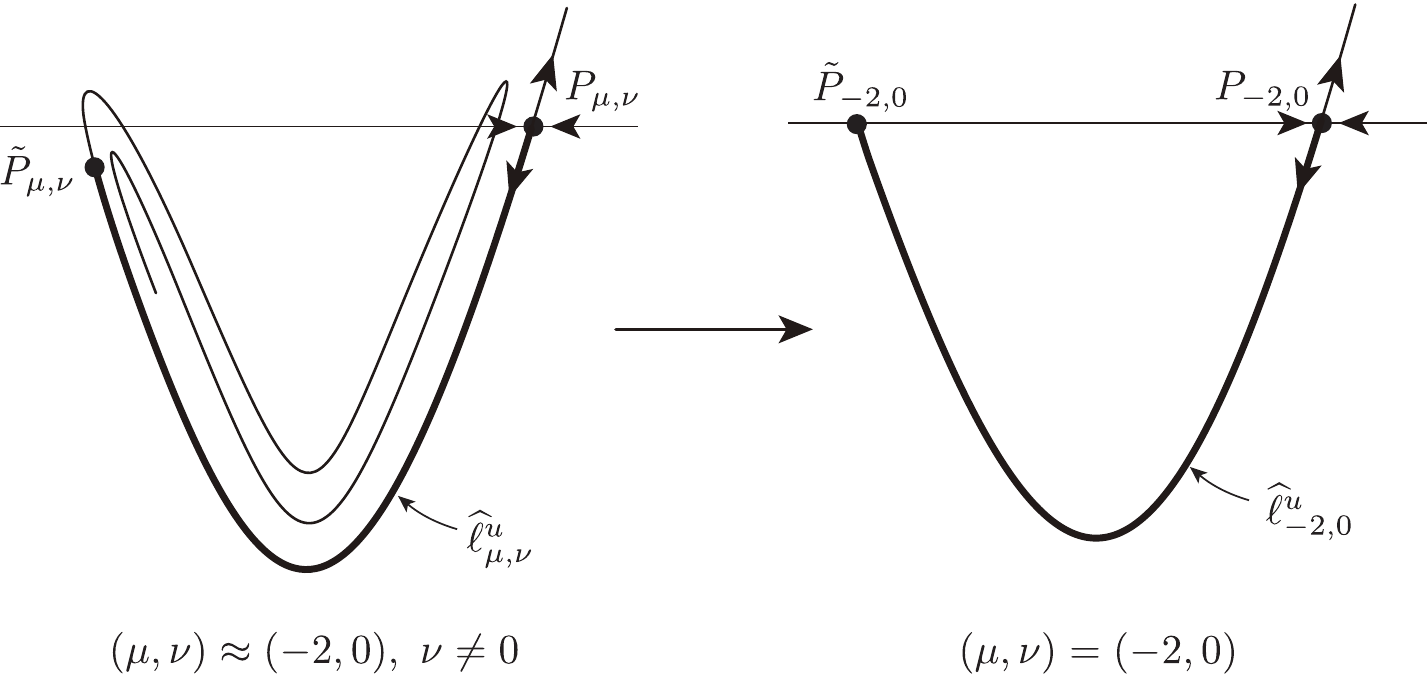}}
\caption{Folding of $\widehat\ell_{\mu,\nu}^u$ as $(\mu,\nu)\to (-2,0)$.}
\label{fig_4_4}
\end{figure}

For any $(\mu,\nu)\approx (-2,0)$ with $\nu\neq 0$, consider arcs $h_i$ $(i=1,2)$ in $W_{\mathrm{loc}}^s(\varphi_{\mu,\nu}^i(Q_{m;\mu,\nu}))$ as illustrated in Figure \ref{fig_4_5}.
\begin{figure}[hbt]
\centering
\scalebox{0.8}{\includegraphics[clip]{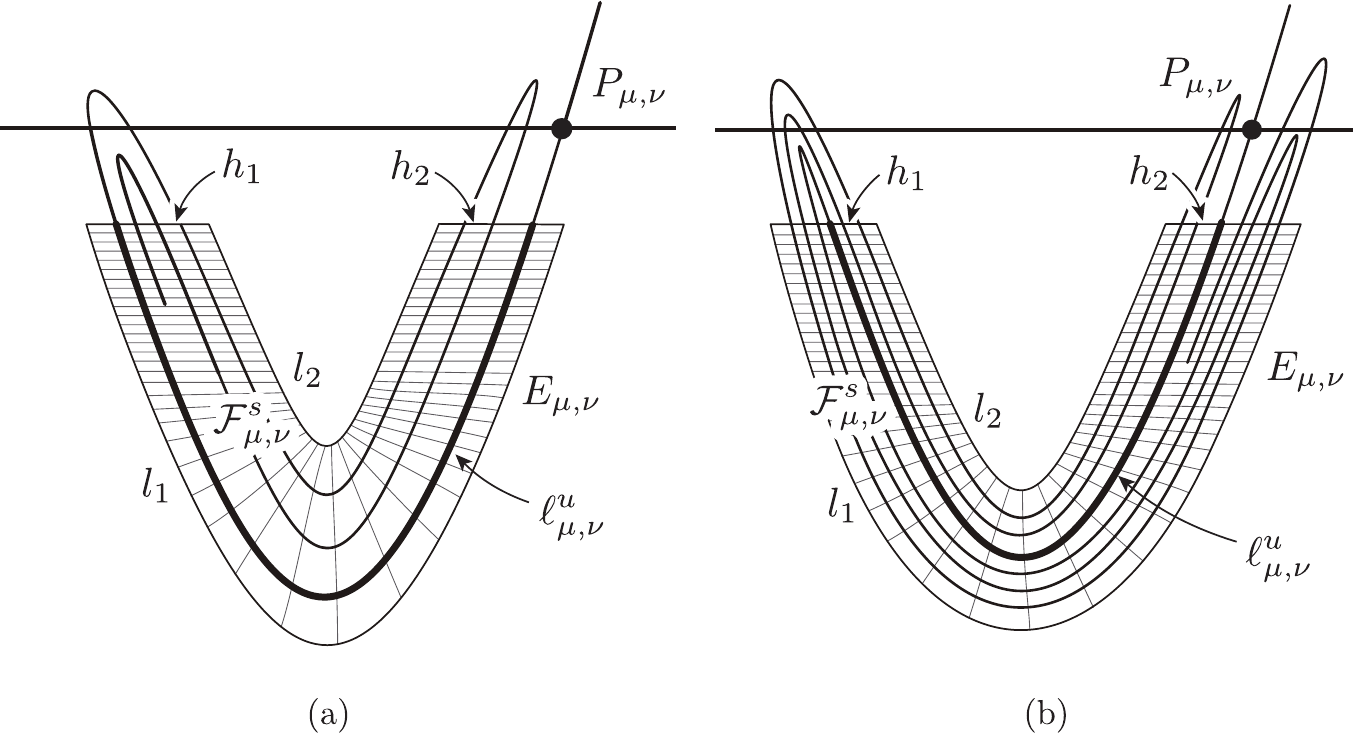}}
\caption{Local stable foliations compatible with $W_{\mathrm{loc}}^s(\Gamma_{m;\mu,\nu})$.
(a) The case of $\nu>0$. (b) The case of $\nu<0$.}
\label{fig_4_5}
\end{figure}
Let $l_j$ $(j=1,2)$ be parabolic curves in $\mathbb{R}^2$ with $\partial l_1\cup \partial l_2=\partial h_1\cup 
\partial h_2$ and such that the union $l_1\cup h_1\cup l_2\cup h_2$ is a simple closed curve in $\mathbb{R}^2$ 
bounding a compact region $E_{\mu,\nu}$ which contains the basic set $\Gamma_{m;\mu,\nu}$ and 
such that $\ell^u_{\mu,\nu}=\widehat \ell^u_{\mu,\nu}\cap E_{\mu,\nu}$ is an arc connecting $h_1$ with $h_2$.  
Consider a local stable foliation $\mathcal{F}^s_{\mu,\nu}=\mathcal{F}^s_{\mathrm{loc}}(\Gamma_{m;\mu,\nu})$  on $E_{\mu,\nu}$ \emph{compatible with} $W^s_{\mathrm{loc}}(\Gamma_{m;\mu,\nu})$,  that is, 
\begin{enumerate}[({F-}i)]
\item\label{F1}
each component of $W^s_{\mathrm{loc}}(\Gamma_{m;\mu,\nu})\cap E_{\mu,\nu}$ is a leaf of $\mathcal{F}^s_{\mu,\nu}$;
\item\label{F2}
$\ell_{\mu,\nu}^{u}$ crosses $\mathcal{F}^s_{\mu,\nu}$ \emph{exactly}, that is, 
 each leaf of $\mathcal{F}^s_{\mu,\nu}$ intersects $\ell_{\mu,\nu}^{u}$ transversely in a single point and any point of $\ell_{\mu,\nu}^{u}$ is passed through by a leaf of $\mathcal{F}^s_{\mu,\nu}$;
\item\label{F3}
leaves of $\mathcal{F}^s_{\mu,\nu}$ are $C^3$ curves such that themselves, their directions, and their curvatures vary $C^1$ with respect to any transverse direction and $(\mu,\nu)$.
\end{enumerate}
See \cite[Lemma 4.1]{KKY} and \cite[\S 2.3]{KLS10} for details.

Here we take the curves $l_1$, $l_2$ so that they are sufficiently $C^r$-close to the outermost components of 
$W^u_{\mathrm{loc}}(\Gamma_{m;\mu,\nu})\cap E_{\mu,\nu}$.
Then there exists 
a local stable foliation $\mathcal{F}^s_{\mu,\nu}=\mathcal{F}^s_{\mathrm{loc}}(\Gamma_{m;\mu,\nu})$  on $E_{\mu,\nu}$ compatible with $W^s_{\mathrm{loc}}(\Gamma_{m;\mu,\nu})$ which contains $h_1$, $h_2$ as 
leaves.

Let $\pi^s_{\mu,\nu}: E_{\mu,\nu}\to \ell_{\mu,\nu}^{u}$ be the 
projection along the leaves of $\mathcal{F}^s_{\mu,\nu}$.
Define
$$
 K^{u}_{m;\mu,\nu}:=\pi^s_{\mu,\nu}(\Gamma_{m;\mu,\nu}),
$$
which is a Cantor set dynamically defined by 
$\pi^s_{\mu,\nu} \circ \varphi_{\mu,\nu}$.
Here we note that the set $K^{u}_{m;\mu,\nu}$ does not depend on the choice of the local stable foliation 
$\mathcal{F}^s_{\mu,\nu}$ on $E_{\mu,\nu}$ compatible with $W_{\mathrm{loc}}^s(\Gamma_{m;\mu,\nu})$.

If $(\mu,\nu)$ is close to $(-2,0)$, then one can define the presentation involved with bridges and gaps for the Cantor set $K^{u}_{m;\mu,\nu}$
in a manner quite similar to that for $K^{u}_{m}$. 
Then Lemma \ref{lem.bdp2} and 
Remark \ref{rmk.large-thickness1} are translated as follows if necessary replacing 
$\mathcal{U}(-2,0)$ by a smaller neighborhood of $(-2,0)$:

\begin{remark}\label{rmk.translation-of-bdp2}

\begin{enumerate}[(1)]
\item
For every $(\mu,\nu)\in \mathcal{U}(-2,0)$, the bounded distortion property in Lemma \ref{lem.bdp2} holds for $K^{u}_{m;\mu,\nu}$. 
\item
The thickness $\tau(K^{u}_{m;\mu,\nu})$ of $K^{u}_{m;\mu,\nu}$ 
converges to $\tau(K^{u}_{m;\mu,0})$ as $\nu\to 0$, and hence it 
can have an arbitrarily large value if we take $m$ large enough and $(\mu,\nu)\in \mathcal{U}(-2,0)$ sufficiently close to $(-2,0)$, see   \cite[\S 6.3, Proposition~1]{PT93}.
\end{enumerate}
\end{remark}

Note that the return map 
$$\varphi_{n}:=\Phi_{n}^{-1}\circ f_{\mu_{n}}^{N_*+n}\circ \Phi_{n}(\bar{x},\bar{y})$$
of Theorem \ref{lem.renormalization} is arbitrarily $C^{r}$-close to $\varphi_{-2,0}$ if 
$n$ is sufficiently large.
Locally identifying the coordinate on $\mathbb{R}^2$ with that on a small neighborhood $U(q)$ of $q$ in $M$, we may set 
\begin{equation}\label{eqn_varphi2}
\varphi_{n}=f_{\mu_{n}}^{N_*+n}.
\end{equation} 
The $\varphi_{n}$ is an H\'enon-like map with  
the saddle fixed point $P(\varphi_{n})$ 
and 
the basic set $\Gamma_{m}=\Gamma_{m}(\varphi_{n})$ 
corresponding to $P_{\mu,\nu}$ and $\Gamma_{m;\mu,\nu}$.
Let $E$ be a compact region corresponding to $E_{\mu,\nu}$ and $\pi_m: E\to \ell^{u}(\varphi_{n})$ the projection along the leaves of a stable foliation $\mathcal{F}^s_{\mathrm{loc}}(\Gamma_{m})$ on $E$  
compatible with $W^s_{\mathrm{loc}}(\Gamma_{m})$,
where $\ell^{u}(\varphi_{n})$ is the curve in $W^{u}(P(\varphi_{n}))\cap E$ corresponding to $l^u(\varphi_{\mu,\nu})$.
Then one obtains the dynamically defined Cantor set 
\begin{equation}\label{def.Ku}
 K^{u}_{m}:=\pi_{m}(\Gamma_{m}). 
\end{equation}

\begin{remark}\label{rmk.large-thickness3}
The distortion and thickness for 
$K^{u}_{m}$ have the same properties  
as those in Remarks \ref{rmk.translation-of-bdp2} 
if $\varphi_{n}$ is sufficiently close to $\varphi_{-2,0}$. 
\end{remark}

\section{Linking property for bridges}\label{sec.Linking}

Recall that $\{f_{\mu}\}_{\mu\in \mathbb{R}}$ is the  
one-parameter family of two-dimensional  $C^r$ diffeomorphisms given in Section \ref{sec.prep}.
In this section, we will present Linking Lemma, which is crucial in the proof of Theorem \ref{main-thm}.

\subsection{Heteroclinic tangencies}\label{subsec.Hetero}

Let $h:[0,1]\times[0,1]\rightarrow M$ be an embedding.
Set $R=h([0,1]\times[0,1])$, $\sharp R=h([0,1]\times\{0,1\})$ and $\flat R=h(\{0,1\}\times [0,1])$.
Note that $\sharp R\cup \flat R=\partial R$.
We say that the pair $(R,\sharp R)$ (for short $R$) is a \emph{strip} with 
the \emph{edge} $\sharp R$.
Similarly, the pair $(R,\flat R)$ is a \emph{strip} with 
the \emph{edge} $\flat R$.
A strip $(R',\sharp R')$ is called a \emph{sub-strip} of $(R,\sharp R)$ if $R'\subset R$ 
and each component of $\sharp R$ contains a component of $\sharp R'$.

One can take a coordinate neighborhood of the fixed point $p$ in $M$ such that 
$p=(0,0)$, $f_0^{-N_*}(q)=(0,1)$ and $f_\mu$ is linear on $S=[0,2]\times [0,2]$ and satisfying the condition (S-\ref{setting3}) in Section \ref{sec.prep}, 
where $N_*$ is the positive integer given in Theorem \ref{lem.renormalization}.
Consider the foliation $\mathcal{F}_S$ on $S$ consisting of horizontal leaves, that is, 
any leaf of $\mathcal{F}_S$ has form $[0,2]\times \{y\}$ for some $0\leq y\leq 2$.
Though $\mathcal{F}_S$ is not in general an $f_\mu$-invariant foliation, the $f_\mu^{-n}$-image of 
the restriction of $\mathcal{F}_S$ on $[0,2\lambda^n]\times [0,2]$ is a sub-lamination of $\mathcal{F}_S$ 
for any positive integer $n$.
Such a partial invariance for $\mathcal{F}_S$ is sufficient in our arguments below.

For any $\bar \mu$ near $-2$, let $\mu_n=\Theta_n(\bar \mu)$ be the parameter value used in 
(\ref{henon-like endo}).
We set $f_{\mu_n}=f_n$ for short throughout the remainder of this subsection.
Take an integer integer $m\geq 3$.
From the condition (S-\ref{setting5}), 
for any sufficiently large integer $n$, we have the 
horseshoe $\Lambda:=\Lambda(f_n)$ containing $p$ for $f_n$ and 
the basic set $\Gamma_{m}:=\Gamma_m(\varphi_n)$ for the return map $\varphi_n:=f^{N_*+n}_{n}$ defined as (\ref{eqn_varphi2}) near the homoclinic tangency $q$. 
Recall that 
$\Gamma_{m}$ contains the saddle fixed point $P:=P(\varphi_n)$ and the $m$-periodic orbit 
$Q_{m}^{(i)}:=\varphi_n^{i}(Q_{m})$ for $i=0,1,\ldots,m-1$.
Let $E$ be the compact region and $\ell^u(\varphi_n)$ the arc given in Subsection \ref{subsec.Henon}.
Again by the condition (S-\ref{setting5}), 
there is an integer $N_1>0$ such that each leaf of 
$f_n^{-N_{1}}(\mathcal{F}_S)$ meets leaves of $W_{\mathrm{loc}}^{u}(\Gamma_{m})$ transversely in $E$.
See Figure \ref{fig_5_1} for the case of $m=4$.
\begin{figure}[hbt]
\centering
\scalebox{0.7}{\includegraphics[clip]{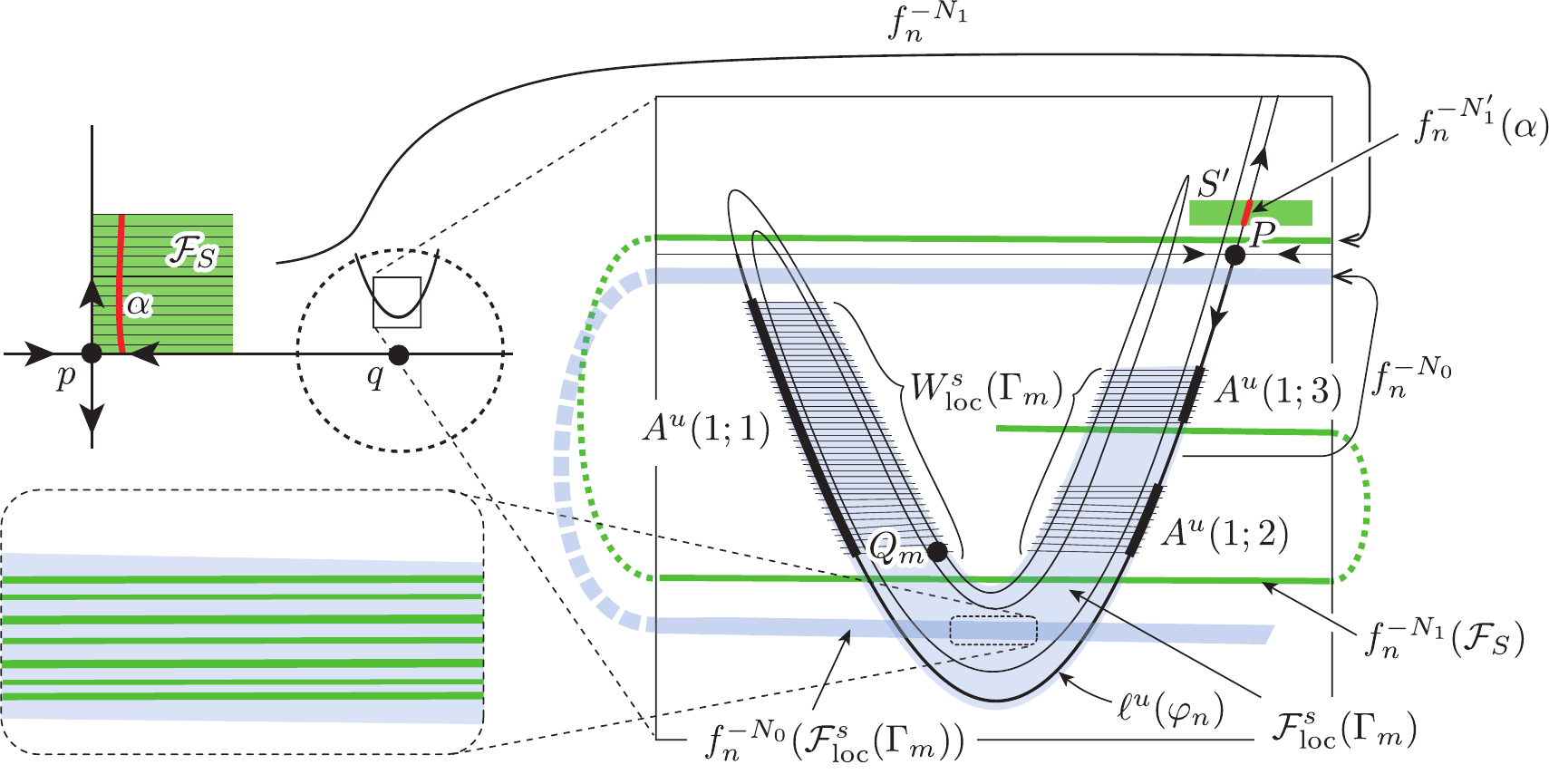}}
\caption{Pull backs of the foliation $\mathcal{F}_S$.}
\label{fig_5_1}
\end{figure}
For any integer $i\geq 0$, let $\mathcal{A}^u(i)$ be the set of $u$-bridges $A^u(i,\underline{z})$ in $\ell^u(\varphi_n)$  of generation $i$ with $A^u(0)=\ell^u(\varphi_n)$ with respect to the Cantor set $K_{m}^u$ of (\ref{def.Ku}), 
and let $\mathcal{A}^u=\bigcup_{i=0}^\infty \mathcal{A}^u(i)$.
Note that the itinerary $\underline{z}=(z_1\dots z_i)$ of $A^u(i,\underline{z})$ is an element of 
$\{1,\dots,m-1\}^i$.
Let $G^u(0)$ be the leading gap of $A^u(0)$.
The closure $\mathbb{G}^u(0)$ of the component of $E\setminus W_{\mathrm{loc}}^{s}(\Gamma_{m})$ 
containing $G^u(0)$ is a strip with $W_{\mathrm{loc}}^s(\Gamma_m)\cap \mathbb{G}^u(0)=\flat\mathbb{G}^u(0)$.
For any $A^u\in \mathcal{A}^u$, the strip $\mathbb{G}(A^u)$ containing the leading gap $G(A^u)$ of $A^u$ is defined similarly, see Figure \ref{fig_5_2}.
\begin{figure}[hbt]
\centering
\scalebox{0.7}{\includegraphics[clip]{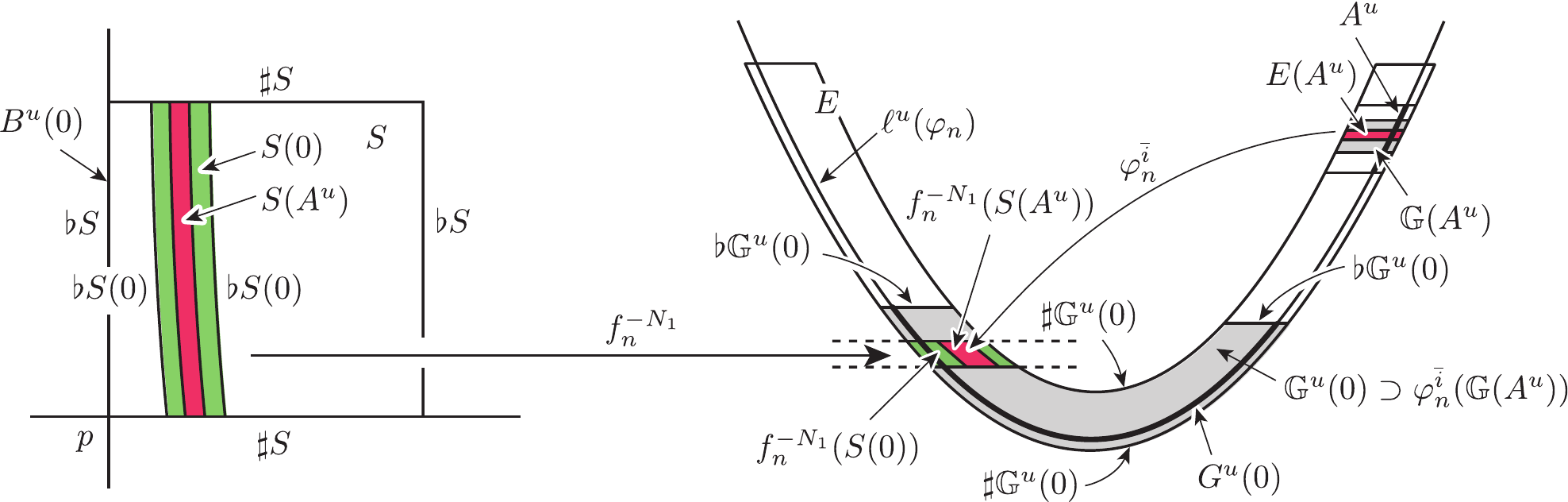}}
\caption{Pull back of the strip $S(A^u)$.}
\label{fig_5_2}
\end{figure}

For the square $S=[0,2]\times [0,2]$, set $\sharp S=[0,2]\times \{0,2\}$ and $\flat S=\{0,2\}\times [0,2]$.
For our choice of $N_1$, there exists a almost vertical sub-strip $(S(0),\sharp S(0))$ of $(S,\sharp S)$ such that $\flat S(0)$ consists of two arcs in $S$ meeting $\mathcal{F}_S$ transversely and 
$(f_n^{-N_1}(S(0)),f_n^{-N_1}(\flat S(0)))$ is a sub-strip of the strip $(\mathbb{G}^u(0),\sharp \mathbb{G}^u(0))$.
We show that one can choose the strip so that
\begin{equation}\label{eqn_S(0)}
S(0)\cap W_{\mathrm{loc}}^u(\Lambda)=\emptyset.
\end{equation}
Since $\Gamma_m$ is homoclinically related to $\Lambda$, $W^u(P)$ contains a almost vertical arc $\alpha$ in 
$S$ connecting the components of $\sharp S$.
If we take an integer $N_1'$ sufficiently large, the arc $f^{-N_1'}(\alpha)$ is contained in 
a small neighborhood of $P$, see Figure \ref{fig_5_1}.
Since $\alpha\cap W_{\mathrm{loc}}^u(\Lambda)=\emptyset$, 
we have a sub-strip $S'$ of $f_n^{-N_1'}(S)$ with $f_n^{-N_1'}(\alpha)$ as a core and such that 
$f_n^{N_1'}(S')\cap W_{\mathrm{loc}}^u(\Lambda)=\emptyset$.
If we take an integer $N_1$ sufficiently larger than $N_1'$, then 
$f_n^{-(N_1-N_1')}(S')$ is a sub-strip of $f_n^{-N_1}(S)$ containing the sub-strip $f^{-N_1}(S(0))$ of $\mathbb{G}^u(0)$ as above.
Then $S(0)$ is contained in $f^{N_1'}(S')$ and hence in particular it satisfies (\ref{eqn_S(0)}).

If the generation of a $u$-bridge $A^u$ is $i$, then there exists a unique integer $\bar i$ with
\begin{equation}\label{eqn_widehat m}
i\leq \bar i\leq (m-1)i
\end{equation}
such that $(\varphi_n^{\bar i}(\mathbb{G}(A^u)),\varphi_n^{\bar i}(\flat\mathbb{G}(A^u)))$ is a 
sub-strip of $(\mathbb{G}^u(0),\flat\mathbb{G}^u(0))$.
Then 
$$\ell^u(\varphi_n)=\pi^m\circ \varphi_n^{\bar i}(A^u)$$
holds.
There exists a sub-strip $(S(A^u),\sharp S(A^u))$ of $(S(0),\sharp S(0))$ such that 
$f_n^{-N_1}(S(A^u))=\varphi_n^{\bar i}(\mathbb{G}(A^u))\cap f_n^{-N_1}(S(0))$.
Then we have a sub-strip $(E(A^u),\sharp E(A^u))$ of $(\mathbb{G}(A^u), \sharp\mathbb{G}(A^u))$ with
$$(\varphi_n^{\bar i}(E(A^u)),\varphi_n^{\bar i}(\flat E(A^u)))=
(f_n^{-N_1}(S(A^u)),f_n^{-N_1}(\sharp S(A^u))).$$

The strip $E(A^u)$ has the foliation $\mathcal{F}_{A^u}$ induced from $\mathcal{F}_S$ via 
$\varphi_n^{-\bar i}\circ f_n^{-N_1}$.
One can retake the local unstable foliation $\mathcal{F}_{\mathrm{loc}}^s(\Gamma_m)$ on $E$ compatible with 
$W_{\mathrm{loc}}^s(\Gamma_m)$ and extending $\bigcup_{A^u\in \mathcal{A}}\mathcal{F}_{A^u}$.
The reason why we choose the unstable foliation with $\mathcal{F}_{\mathrm{loc}}^s(\Gamma_m)\supset 
\varphi_n^{-\bar i}\circ f_n^{-N_1}(\mathcal{F}_S|_{S(A^u)})$ will be explained in the proof of 
Lemma \ref{lem_RL}.

Now we take the unstable foliation $\mathcal{F}^u_{\mathrm{loc}}(\Lambda)$ compatible with $W_{\mathrm{loc}}^u(\Lambda)$ 
carefully 
as notified in Remark \ref{r_three_foliations}-(2).
The strip $S(0)$ has the foliation $\mathcal{F}_{S(0)}^u$ induced from $\mathcal{F}_{\mathrm{loc}}^u(\Gamma_m)|_{f^{-N_1}(S(0))}$ via $f^{N_1}$.
By (\ref{eqn_S(0)}), $S(0)$ is an almost vertical strip disjoint from $W_{\mathrm{loc}}^u(\Lambda)$.
The foliation $\mathcal{F}^u_{\mathrm{loc}}(\Lambda)$ can be defined so that 
$\mathcal{F}^u_{\mathrm{loc}}(\Lambda)|_{S(0)}=\mathcal{F}_{S(0)}^u$.

From the conditions (S-\ref{setting5}), (S-\ref{setting6}) of Section \ref{sec.prep}, 
there are integers $N_{0}>0$, $N_2>N_*$ and a $C^1$-arc $L$ in $U(q)$ meeting $f_n^{-N_0}(\mathcal{F}_{\mathrm{loc}}^s(\Gamma_m))$ exactly and such that 
$L'=L\cap f_n^{N_2}(\mathcal{F}_{\mathrm{loc}}^u(\Lambda))$ is a sub-arc of $L$ each element of which is a quadratic tangency of leaves of 
$f_n^{-N_0}(\mathcal{F}_{\mathrm{loc}}^s(\Gamma_m))$ and $f_n^{N_2}(\mathcal{F}_{\mathrm{loc}}^u(\Lambda))$.
See Figure \ref{fig_5_3}.
\begin{figure}[hbt]
\centering
\scalebox{0.7}{\includegraphics[clip]{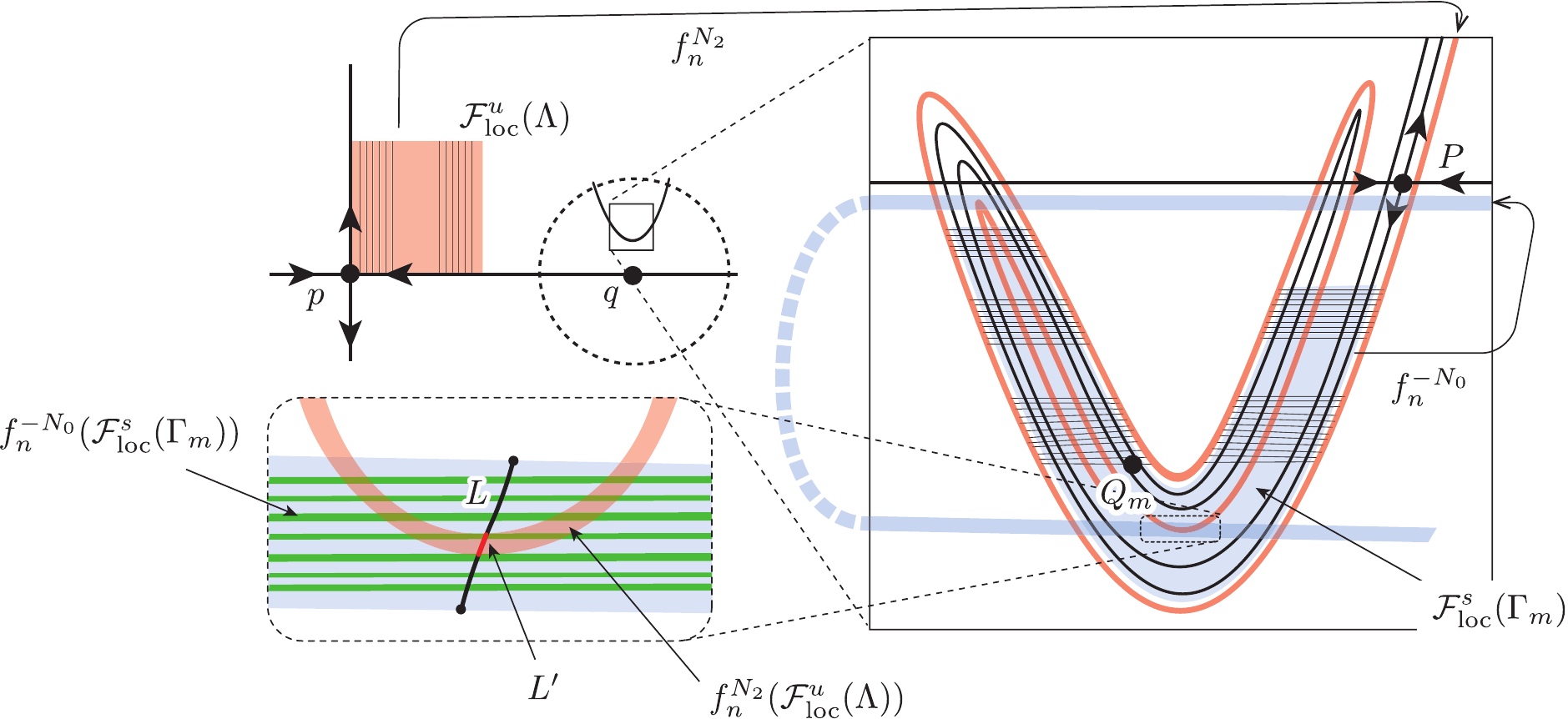}}
\caption{Heteroclinic tangencies on $L'$ of stable and unstable foliations.}
\label{fig_5_3}
\end{figure}

Let
\begin{equation}\label{eqn_pi^u}
\pi^u:E\longrightarrow L
\end{equation}
be the projection along the leaves of  $f_n^{-N_0}(\mathcal{F}_{\mathrm{loc}}^s(\Gamma_m))$.
For the sub-strip $S(A^u)$ of $S(0)$ given as above, consider the projection $\pi_{S,A^u}:S(A^u)\longrightarrow L$ defined 
by $\pi_{S,A^u}=\pi^u\circ \varphi_n^{-\bar i}\circ f_n^{-N_1}|_{S(A^u)}$ 
and the composition
\begin{equation}\label{eqn_pi_A^u}
\pi_{A^u}:=\pi_{S,A^u}\circ \psi_{A^u}:B^u(0)\longrightarrow L,
\end{equation}
where
$\psi_{A^u}:B^u(0)\longrightarrow S(A^u)$ is a diffeomorphism from $B^u(0)=\{0\}\times [0,2]$ onto a component of 
$\flat S(A^u)$ along the leaves of $\mathcal{F}_S$.
See Figure \ref{fig_5_2} and Figure \ref{fig_7_2} with $\widehat A_k^u$, $f^{-\widehat n_k}$, $f^{-N_1}$ 
replaced by $A^u$, $\varphi_n^{-\bar i}$, $f_n^{-N_1}$ respectively.
Let $K_{\Lambda,L}^s$, $K_{m,L}^u$ be the Cantor sets on $L$ defined as
\begin{equation}\label{def.Cantor sets}
K_{\Lambda,L}^s=\pi^s(K_\Lambda^s)\quad\mbox{and}\quad K_{m,L}^u=\pi^u(K_{m}^u),
\end{equation}
where
\begin{equation}\label{eqn_pi^s}
\pi^s:S\to L
\end{equation} is the projection along the leaves of $f^{N_2}(\mathcal{F}_{\mathrm{loc}}^u(\Lambda))$.
For any $B^s\in \mathcal{B}^s$, let $B_L^s:=\pi^s(B^s)$ is a bridge of $K_{\Lambda,L}^s$.
Similarly, for any $A^u\in \mathcal{A}^u$, let $A_L^u:=\pi^s(A^u)$ is a bridge of $K_{m,L}^s$.

\subsection{Encounter of $s$-bridges and $u$-bridges, I}\label{subsec.LP}
Here we will study the heteroclinical connection between $s$-bridges $B^s$ of $K_{\Lambda,L}^s$ and $u$-bridges $A^u$ of $K_{m,L}^u$ in $L$.

Let $\mathcal{U}(\varphi_{-2,0})$ be a sufficiently small $C^r$-neighborhood of $\varphi_{-2,0}$.
For any sufficiently large $n$, the return map $\varphi:=\varphi_{n}$ of (\ref{eqn_varphi2}) 
 is contained in $\mathcal{U}(\varphi_{-2,0})$.
Theorem \ref{lem.renormalization} together with Remark \ref{rmk.large-thickness3} assures that, if we take 
the integer $m$ sufficiently large, 
then the Cantor set $K_{m}^u$ in $\ell^u(\varphi)$ with respect to 
\begin{equation}\label{eqn_varphi3}
\varphi=\pi_m\circ f_{\mu_{n_*}}^{N_*+n_*}
\end{equation}
satisfies
\begin{equation} \label{cond.Ku}
\tau(K^{u}_{m})> \max\left\{ r_{s+}^{2},\ \tau(K^{s}_{\Lambda})^{-1},\ 3^8\right\}.
\end{equation}
Here $n_*=n_*(m)$ is a positive integer with $\lim_{m\to\infty}n_*(m)=\infty$ such that 
$f_{\mu_{n_*}}$ is arbitrarily $C^r$-close to $f$.
Write $\tau:=\tau(K_{m}^{u})$ for short and let
\begin{equation}\label{def.xi_0}
\xi_{0}:=\left(\frac{1}{r_{s+}}-\frac{1}{2\tau^{1/2}}\right)r_{s-}.
\end{equation}
By (\ref{cond.Ku}) and Lemma \ref{lem.bdp1}, we have $0<\xi_{0}<1$.

By Theorem \ref{lem.renormalization}, one can choose $\mu_{n_*}=\Theta_{n_*}(\bar \mu)\neq 0$ so that the Cantor sets $K_{\Lambda,L}^s$ and $K_{m,L}^u$ on $f_{\mu_{n_*}}$ are linked in $L$. 
We denote the $f_{\mu_{n_*}}$ again by $f$.
Note that the leaves of $f^{-N_0}(\mathcal{F}_{\mathrm{loc}}^s(\Gamma_m))$ are almost horizontal in $U(q)$.
Now we fix a $C^{1+\alpha}$-coordinate on a small neighborhood $U(L)$ of $L$ in $U(q)$ such that 
each horizontal line is a leaf of $f^{-N_0}(\mathcal{F}_{\mathrm{loc}}^s(\Gamma_m))$ and each vertical line 
is a leaf of $f^{-N_0}(\mathcal{F}_{\mathrm{loc}}^u(\Gamma_m))$, see Figure \ref{fig_5_4}.
\begin{figure}[hbt]
\centering
\scalebox{0.8}{\includegraphics[clip]{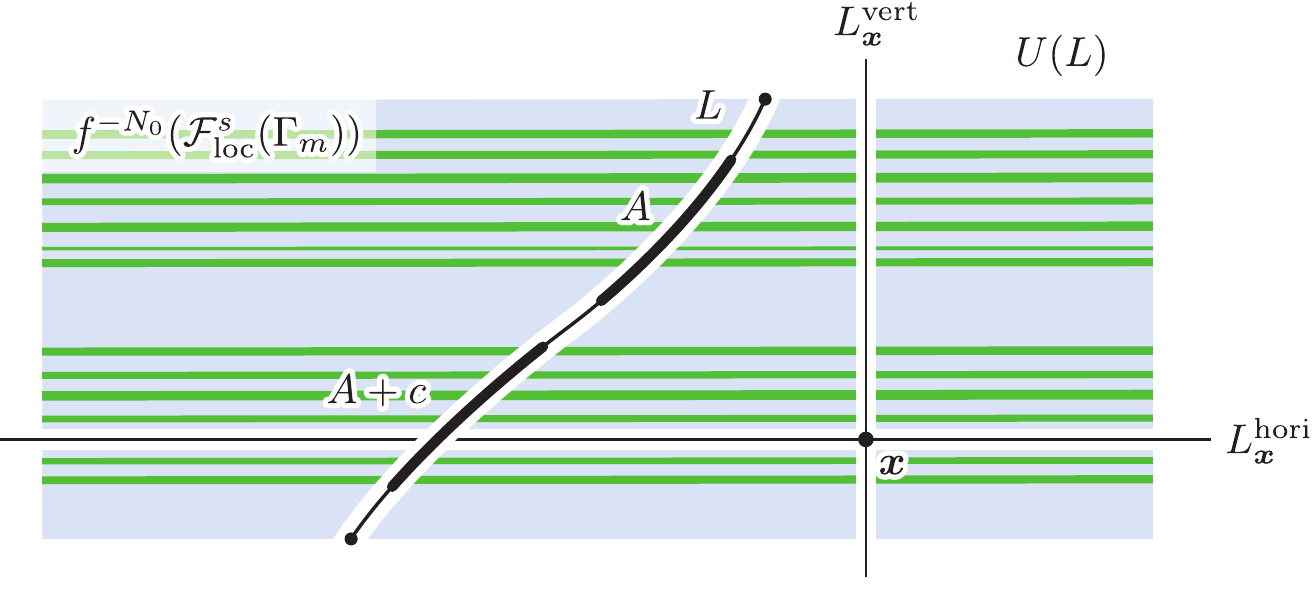}}
\caption{A $C^{1+\alpha}$-coordinate on $U(L)$.}
\label{fig_5_4}
\end{figure}
From the definition of the coordinate and those of $\mathcal{F}_{\mathrm{loc}}^s(\Gamma_m)$ and  $\mathcal{F}_{\mathrm{loc}}^u(\Lambda)$ in Subsection \ref{subsec.Hetero}, for any $\boldsymbol{x}\in U(L)$ with 
$f^{(N_0+N_1)}(\boldsymbol{x})\in S(0)$, the $f^{(N_0+N_1)}$-images of the horizontal and vertical lines 
passing through $\boldsymbol{x}$ are leaves of $\mathcal{F}_S$ and $\mathcal{F}_{\mathrm{loc}}^u(\Lambda)$ 
respectively.
This fact is used in the proof of Lemma \ref{lem_RL}, see also Remark \ref{r_Jk}.

\subsection*{A parametrization of $L$.}
Though $C^1$-arc $L$ has the parametrization naturally induced from the $C^{1+\alpha}$-coordinate on $U(L)$, we need another parameterization on $L$ suitable to our argument.
For the sub-arc $L'=L\cap f^{N_2}(\mathcal{F}_{\mathrm{loc}}^u(\Lambda))$, 
$\widetilde L=f^{-N_2}(L')$ is a $C^1$-arc in $S$ connecting the components of $\flat S$.
See Figures \ref{fig_5_3} and \ref{fig_7_3}.
Since $L$ meets $f^{N_2}(\{0\}\times [0,2])$ transversely, $\widetilde L$ also meets $\{0\}\times [0,2]$ 
transversely.
Thus there exists a small $\delta>0$ such that $\widetilde L_\delta=\widetilde L\cap ([0,\delta]\times [0,2])$ is a sub-arc of $\widetilde L$ meeting vertical lines in $[0,\delta]\times [0,2]$ transversely.
Let $\varpi^u:S\to [0,2]\times \{0\}\subset W_{\mathrm{loc}}^s(p)$ be the vertical projection  
with respect to the orthogonal coordinate on $S$.
We fix $C^1$-parametrizations on $L$ and $\widetilde L$ extending those on 
$\widetilde L_\delta$, $L_\delta:=f^{N_2}(\widetilde L_\delta)$ such that $\varpi^u|_{\widetilde L_\delta}:\widetilde L_\delta \to [0,2]\times \{0\}$ and $f^{N_2}|_{\widetilde L_\delta}:\widetilde L_\delta\to L'$ are parameter-preserving embeddings.
In particular, the parameter value of $\boldsymbol{x}\in L_\delta$ is $t$ if $\varpi^u(f^{-N_2}(\boldsymbol{x}))=(t,0)$.
For any interval $A$ in $L$ represented by $[\alpha_0,\alpha_1]$ with respect to the parametrization and any constant  $c$, the interval $[\alpha_0+c,\alpha_1+c]$ in $L$ is denoted by $A+c$ if it is well defined.
See Figure \ref{fig_5_4}.
Then the length $|A|=\alpha_1-\alpha_0$ of $A$ is equal to the length $|A+c|$ of $A+c$.

\medskip

For given bridges $B_L^{s}$ of $K_{\Lambda,L}^{s}$ and $A_L^{u}$ of $K_{m,L}^{u}$,  
we say that the pair $(B_L^{s}, A_L^{u})$  is \emph{linked} if 
\begin{itemize}
\item $\mathrm{Int}(B_L^{s}\cap A_L^{u})\neq \emptyset$, and 
\item $B_L^{s}$ is not contained in a gap of $A_L^{u}\cap K_{m,L}^{u}$ and 
$A_L^{u}$ is not contained in a gap of $B_L^{s}\cap K_{\Lambda,L}^{s}$.
\end{itemize}
Moreover the pair is called  \emph{$\xi$-linked} for a constant $0<\xi\leq 1$ if 
$$| B_L^{s} \cap A_L^{u} | \geq \xi \min\{ | B_L^{s} |,  | A_L^{u} | \}.$$
The pair $(B_L^{s}, A_L^{u})$ is $\gamma$-\emph{proportional} for a constant $\gamma$ with $0<\gamma<1$
if  
$$| A_L^{u} |\geq | B_L^{s} |\geq \gamma | A_L^{u} |,$$
and the pair is $u$-\emph{dominating} if $|A_L^{u}|\geq |B_L^{s}|$.

\begin{remark}\label{rem_BA}
Let $B^s$, $A^u$ be the bridges of $K_\Lambda^s$ and $K_{m}^u$ respectively.
Since the projections $\pi^s$, $\pi^u$ of (\ref{eqn_pi^s}) and (\ref{eqn_pi^u}) are $C^1$-maps, $\pi^s|_{B^s}:B^s\to B_L^s$ and $\pi^u|_{A^u}:A^u\to A_L^u$ are almost affine if $|B^s|$ and $|A^u|$ are 
sufficiently small.
Thus one can suppose the following conditions without loss of generality.
\begin{enumerate}[(i)]
\item
For bridges $B_{L,k}^s$, $B_{L,k+1}^s$ of bridges of $K_{m,L}^s$ with sufficiently large generations $k$, $k+1$ and $B_{L,k}^s\supset B_{L,k+1}^s$, the conclusion of Lemma \ref{lem.bdp1} holds, 
that is, 
\begin{equation}\label{eqn_bdp1}
r_{s-}\leq |B_{L,k}^s||B_{L,k+1}^s|^{-1}\leq r_{s+}
\end{equation}
if necessary modifying $r_{s-}$ and $r_{s+}$ slightly.
\item
For bridges $A_{L,k}^u$, $A_{L,k+1}^u$ of bridges of $K_{\Lambda,L}^u$ with sufficiently large generations $k$, $k+1$ and $A_{L,k}^u\supset A_{L,k+1}^u$ and the gaps $G_{L,k+1}^u$, the interval $I_k^u$ corresponding to 
those in Lemma \ref{lem.bdp2}, the conclusions (1)--(3) of Lemma \ref{lem.bdp2} hold. 
\item
From the definition of the thickness, the restricted Cantors sets satisfy $\tau(B^s\cap K_\Lambda^s)\geq \tau(K_\Lambda^s)$ and $\tau(A^u\cap K_{m}^u)\geq \tau(K_{m}^u)$.
Thus, for any $0<\varepsilon<1$ and bridges $B^s$, $A^u$ with sufficiently large generation, 
$\tau(B_L^s\cap K_{\Lambda,L}^s)\geq (1-\varepsilon)\tau(B^s\cap K_\Lambda^s)$ and 
$\tau(A_L^u\cap K_{m,L}^u)\geq (1-\varepsilon)\tau(A^u\cap K_{m}^u)$ hold.
From (\ref{cond.Ku}), one can suppose that
\begin{equation}\label{eqn_rem_BA}
\tau(B_L^s\cap K_{\Lambda,L}^s)\tau(A_L^u\cap K_{m,L}^u)>1.
\end{equation}
See Subsections 4.1 and 4.2 of \cite{KS08} for similar arguments.
\end{enumerate}
\end{remark}

\subsection*{The first perturbation of $f$}
Now we consider the perturbation corresponding to the $\Delta$-sliding in \cite[p.\ 1672]{CV01}.
Since, in \cite{CV01}, the sliding is done along the straight segment, 
the perturbed diffeomorphism is still of $C^r$ class.
However, in our case, $\widetilde L$ is guaranteed only to be of class $C^1$.
Hence a perturbed map along $\widetilde L$ would not be a $C^r$-diffeomorphism.
So we need another $C^r$-perturbation which gives an effect similar to the $\Delta$-sliding.

Consider the gap strip $\mathbb{G}_\Lambda^u(0):=\pi_{\mathcal{F}_{\mathrm{loc}}^s(\Lambda)}^{-1}(G^u(0))$, 
for short $\mathbb{G}^u(0)$, in $S$ 
associated to the gap $G^u(0):=\mathrm{Gap}(B^u(1;1),B^u(1;2))$ and the projection
$\pi_{\mathcal{F}_{\mathrm{loc}}^s(\Lambda)}:S\to \{0\}\times [0,2]$ along the leaves of $\mathcal{F}_{\mathrm{loc}}^s(\Lambda)$.
Recall that $L'$ is the sub-arc of $L$ given in Subsection \ref{subsec.Hetero} meeting $f^{N_2}(\mathcal{F}_{\mathrm{loc}}^u(\Lambda))$ exactly (see Figure \ref{fig_5_3}) 
and $\widetilde L=f^{-N_2}(L')$ is a $C^1$-arc in the strip $\mathbb{G}^u(0)$ disjoint from $\sharp\mathbb{G}^u(0)$ and meeting $\mathcal{F}_{\mathrm{loc}}^u(\Lambda)$ exactly.
See the left-hand panel of Figure \ref{fig_5_5} and Figure \ref{fig_7_3}.
For a sufficiently small $\delta_0>0$, let $\mathcal{B}_{\delta_0/2}$ and $\mathcal{B}_{\delta_0}$ be the 
disks in $M$ centered at the left edge of $\widetilde L$ of radius $\delta_0/2$ and $\delta_0$ respectively 
such that 
$\mathcal{B}_{\delta_0}\cap S\subset \mathbb{G}^u(0)$ 
and $f^{-1}(\mathcal{B}_{\delta_0})\cap (\mathcal{B}_{\delta_0}\cup X_{n_*})=\emptyset$, where 
$X_{n_*}$ is the union of rectangles used in Section \ref{sec.prep} to define the basic set $\Gamma_m$ satisfying the conditions (S-\ref{setting6}) and (S-\ref{setting7}).
See Figure \ref{fig_3_2}.
From the disjointness, any perturbation of $f$ supported on $f^{-1}(\mathcal{B}_{\delta_0})$ dose not 
affect the invariant set $\Gamma_m$ and hence the local stable foliation $\mathcal{F}_{\mathrm{loc}}^s(\Gamma_m)$ on $E$.

For any $\delta$ with $|\delta|$ sufficiently smaller than $\delta_0$, 
consider the perturbation of $f$ supported on $f^{-1}(\mathcal{B}_{\delta_0})$ such that the 
restriction of the perturbed map $f_{\delta}$ on $f^{-1}(\mathcal{B}_{\delta_0/2})$ is the horizontal $\delta$-shift.
Strictly, consider a $C^r$-diffeomorphism $h_\delta:M\to M$ which is the identify on $M\setminus \mathcal{B}_{\delta_0}$ and the $(\delta,0)$-shift $\boldsymbol{x}\mapsto \boldsymbol{x}+(\delta,0)$ 
on $\mathcal{B}_{\delta_0/2}$, and define $f_\delta=h_\delta\circ f$.
Then, for any $\boldsymbol{x}\in f^{-1}(\mathcal{B}_{\delta_0/2})$,
$$f_\delta(\boldsymbol{x})=h_\delta\circ f(\boldsymbol{x})=f(\boldsymbol{x})+(\delta,0).$$ 
One can construct the maps $f_{\delta}$ with fixed $\delta_0$ so as to $C^r$-converge to $f$ as $\delta\to 0$.
In particular, any $f_{\delta}$ can be supposed to satisfy the conditions (\ref{cond.Ku}), (\ref{eqn_bdp1}) and (\ref{eqn_rem_BA}).

Note that this perturbation moves the arc $\widetilde L$ of tangencies.
One can choose local unstable and stable foliations $\mathcal{F}_{\mathrm{loc}}^u(\Lambda;\delta)$, 
$\mathcal{F}_{\mathrm{loc}}^s(\Gamma_m;\delta)$ of $f_\delta$ compatible with 
$W_{\mathrm{loc}}^u(\Lambda)$ and $W_{\mathrm{loc}}^s(\Gamma_m)$ so that  
$$
\mathcal{F}_{\mathrm{loc}}^u(\Lambda;\delta)|_{\mathcal{B}_{\delta_0}}
=h_\delta(\mathcal{F}_{\mathrm{loc}}^u(\Lambda)|_{\mathcal{B}_{\delta_0}}),\quad 
f_\delta^{-N_0}(\mathcal{F}_{\mathrm{loc}}^s(\Gamma_m;\delta))|_{U(L)}=f^{-N_0}(\mathcal{F}_{\mathrm{loc}}^s(\Gamma_m))|_{U(L)}.
$$
The latter equality implies 
$$
f_\delta^{-(N_0+N_2)}(\mathcal{F}_{\mathrm{loc}}^s(\Gamma_m;\delta))|_{\mathcal{B}_{\delta_0}}=f^{-(N_0+N_2)}(\mathcal{F}_{\mathrm{loc}}^s(\Gamma_m))|_{\mathcal{B}_{\delta_0}}.$$
However  
$
f_\delta^{-(N_0+N_2)}(\mathcal{F}_{\mathrm{loc}}^s(\Gamma_m;\delta))|_{f_\delta^{-1}(\mathcal{B}_{\delta_0})}$ 
is not equal to $f^{-(N_0+N_2)}(\mathcal{F}_{\mathrm{loc}}^s(\Gamma_m))|_{f^{-1}(\mathcal{B}_{\delta_0})}$.
Let $\widetilde L(\delta)$ be the arc consisting of tangencies between $\mathcal{F}_{\mathrm{loc}}^u(\Lambda;\delta)$ and 
$f_\delta^{-(N_0+N_2)}(\mathcal{F}_{\mathrm{loc}}^s(\Gamma_m;\delta))$.
See Figure \ref{fig_5_5}.
\begin{figure}[hbt]
\centering
\scalebox{0.7}{\includegraphics[clip]{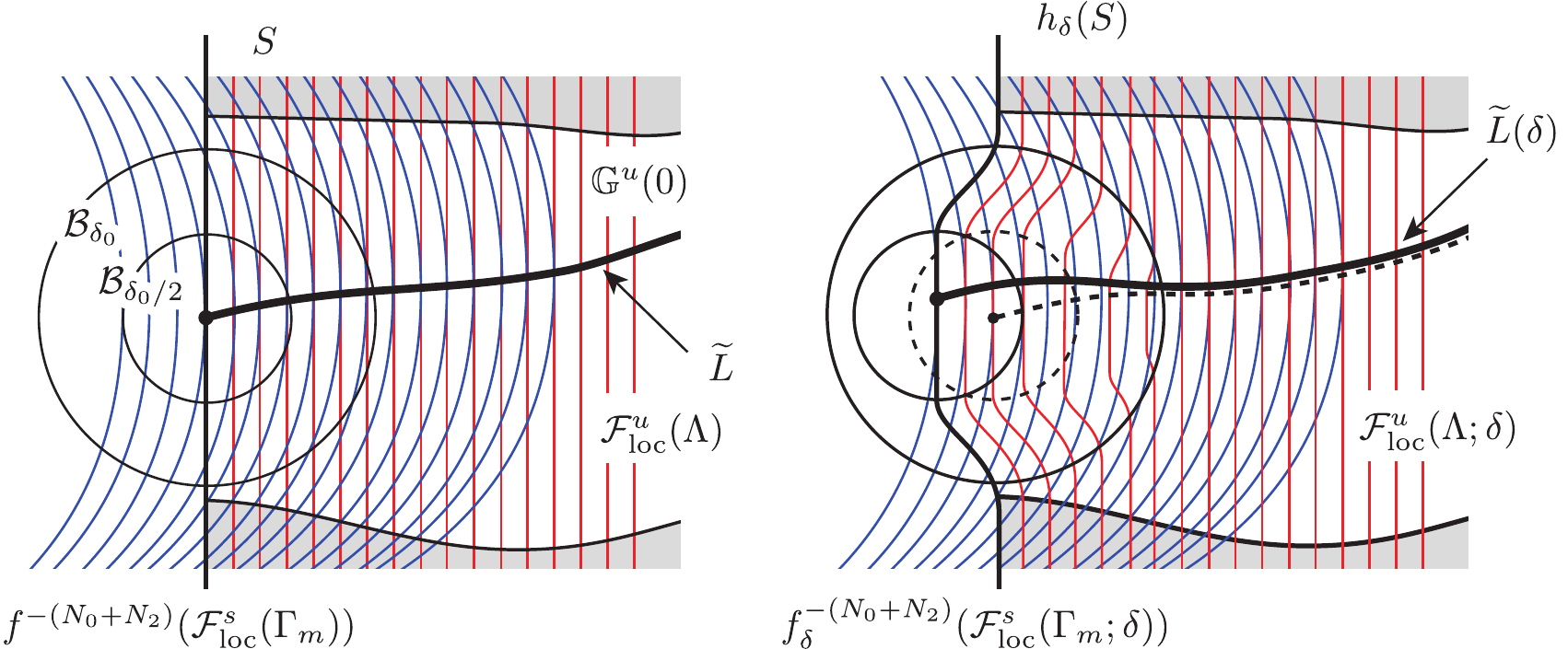}}
\caption{Shifting of $\mathcal{F}_{\mathrm{loc}}(\Lambda)$.
The case of $\delta<0$.}
\label{fig_5_5}
\end{figure}
From the Implicit Function Theorem, the arcs $\widetilde L(\delta)$ $C^1$-depend on $\delta$ 
and $\widetilde L(0)=\widetilde L$.
Let $B^s(\delta)$ be the bridge and $K_{\Lambda}^s(\delta)$ the Cantor set in $\widetilde L(\delta)$ which are 
projected respectively onto the bridge $B^s$ and the Cantor $K_\Lambda^s$ in $W_{\mathrm{loc}}^s(p)$ along the leaves of 
$\mathcal{F}_{\mathrm{loc}}^u(\Lambda;\delta)$.
Let $L(\delta)$ be an arc in $U(L)$ containing $f_\delta^{N_2}(\widetilde L(\delta))$ and crossing  $f_\delta^{-N_0}(\mathcal{F}_{\mathrm{loc}}^s(\Gamma_m;\delta))$ exactly, and let 
$\pi_\delta^u:E\to L(\delta)$ be the projection along the leaves of $f_\delta^{-N_0}(\mathcal{F}_{\mathrm{loc}}^s(\Gamma_m;\delta))$ such that $\pi_0^u$ is equal to $\pi^u$ of (\ref{eqn_pi^u}).
We also consider the bridge $A^u(\delta)$ and the Cantor set $K_{m}^u(\delta)$ in $\widetilde L(\delta)$ 
with $f_\delta^{N_2}(A^u(\delta))=\pi_\delta^u(A^u)=:A_L^u(\delta)$ and $f_\delta^{N_2}(K_{m}^u(\delta))=\pi_\delta^u(K_{m}^u)=: K_{m,L}^u(\delta)$. 
Then the situation is illustrated as follows.
$$
\begin{array}{ccccc}
\widetilde L(\delta)&\xrightarrow{\quad f_\delta^{N_2}|_{\widetilde{L}(\delta)}\quad} & L(\delta)&\xleftarrow{\qquad \pi_\delta^u\qquad }& E\\[6pt]
\bigcup& &\bigcup & &\bigcup\\[6pt]
A^u(\delta),K_m^u(\delta) &\xrightarrow{\qquad\quad} & A_L^u(\delta),K_{m,L}^u(\delta) &\xleftarrow{\qquad\quad} &  A^u,K_m^u
\end{array}
$$

\medskip

Let $\widetilde\pi^s_{\delta}:\widetilde L\to \widetilde L(\delta)$ be the composition of 
the $\delta$-shift map $\boldsymbol{x}\mapsto \boldsymbol{x}+(\delta,0)$ followed by the projection along the leaves of $\mathcal{F}_{\mathrm{loc}}^u(\Lambda;\delta)$, and let 
$\widetilde\pi^u_{\delta}:\widetilde L\to \widetilde L(\delta)$ be the projection along the leaves of $f_\delta^{-(N_0+N_2)}(\mathcal{F}_{\mathrm{loc}}^s(\Gamma_m;\delta))$.
From our construction of $f_\delta$, $B^s(\delta)=\widetilde\pi^s_{\delta}(B^s(0))$ for any $s$-bridge $B^s(0)$ in $\widetilde L\cap \mathcal{B}_{\delta_0/2}$.
Similarly, $\widetilde\pi^u_{\delta}(A^u(0))=A^u(\delta)$ for any $u$-bridge $A^u(0)$ in $\widetilde L\cap \mathcal{B}_{\delta_0/2}$.
Strictly, $\widetilde\pi^u_\delta(\widetilde L)$ is not contained in $\widetilde L(\delta)$ when $\delta>0$.
But it is not a crucial problem since, in our arguments below, $\delta=\Delta_k, \Delta$ can be taken sufficiently small so that the $\widetilde\pi^u_\delta$-images of any bridges in $\widetilde L$ 
used later 
are contained in $\widetilde L(\delta)$.

Recall that $\varpi^u:S\to [0,2]$ with the identification $[0,2]=[0,2]\times \{0\}$ is the vertical  projection used to parametrize $\widetilde L$ and $L$.
For $\boldsymbol{x}, \boldsymbol{x}', \boldsymbol{y}, \boldsymbol{y}'$ in $\widetilde L$, let 
$\boldsymbol{x}_{\delta}, \boldsymbol{x}_{\delta}'\in \widetilde L(\delta)$ be the $\widetilde\pi_{\delta}^s$-images of $\boldsymbol{x}$, $\boldsymbol{x}'$, while let
$\boldsymbol{y}_{\delta}, \boldsymbol{y}_{\delta}'\in \widetilde L(\delta)$ be the $\widetilde\pi_{\delta}^u$-images of  $\boldsymbol{y}, \boldsymbol{y}'$.
Since $\widetilde L(\delta)$ $C^1$-converges to $\widetilde L$ as $\delta\to 0$, it follows from the 
property (F-\ref{F3}) of compatible local foliations 
in Subsection \ref{subsec.Henon} 
that there exists a constant $C>0$ independent of $\delta$ and satisfying
\begin{equation}\label{eqn_1-Cdelta}
\begin{split}
&(1-C|\delta|)|\varpi^u(\boldsymbol{x})-\varpi^u(\boldsymbol{x}')| 
\leq |\varpi^u(\boldsymbol{x}_{\delta})- \varpi^u(\boldsymbol{x}_{\delta}')|
\leq (1+C|\delta|)|\varpi^u(\boldsymbol{x})-\varpi^u(\boldsymbol{x}')|,\\
&(1-C|\delta|)|\varpi^u(\boldsymbol{y})-\varpi^u(\boldsymbol{y}')| 
\leq |\varpi^u(\boldsymbol{y}_{\delta})- \varpi^u(\boldsymbol{y}_{\delta}')|
\leq (1+C|\delta|)|\varpi^u(\boldsymbol{y})-\varpi^u(\boldsymbol{y}')|,\\
&(1-C|\delta|)|\varpi^u(\boldsymbol{x})+\delta-\varpi^u(\boldsymbol{y})|-O(\delta^2)\\
&\qquad\qquad\qquad
\leq 
|\varpi^u(\boldsymbol{x}_{\delta})- \varpi^u(\boldsymbol{y}_{\delta})|
\leq (1+C|\delta|)|\varpi^u(\boldsymbol{x})+\delta-\varpi^u(\boldsymbol{y})|+O(\delta^2).
\end{split}
\end{equation}
Here we explain the reason why the third inequalities include the terms of $O(\delta^2)$.
Let $l_0$, $l_1$ be the leaves of $\mathcal{F}_{\mathrm{loc}}^u(\Lambda;\delta)$ and 
$f_\delta^{-(N_0+N_2)}(\mathcal{F}_{\mathrm{loc}}^s(\Gamma_m;\delta))$, respectively, passing through 
$\boldsymbol{x}_\delta$.
Let $\boldsymbol{x}_\delta'$ be the intersection point of $\widetilde L$ and $l_1$.
See Figure \ref{fig_5_5a}.
\begin{figure}[hbt]
\centering
\scalebox{0.7}{\includegraphics[clip]{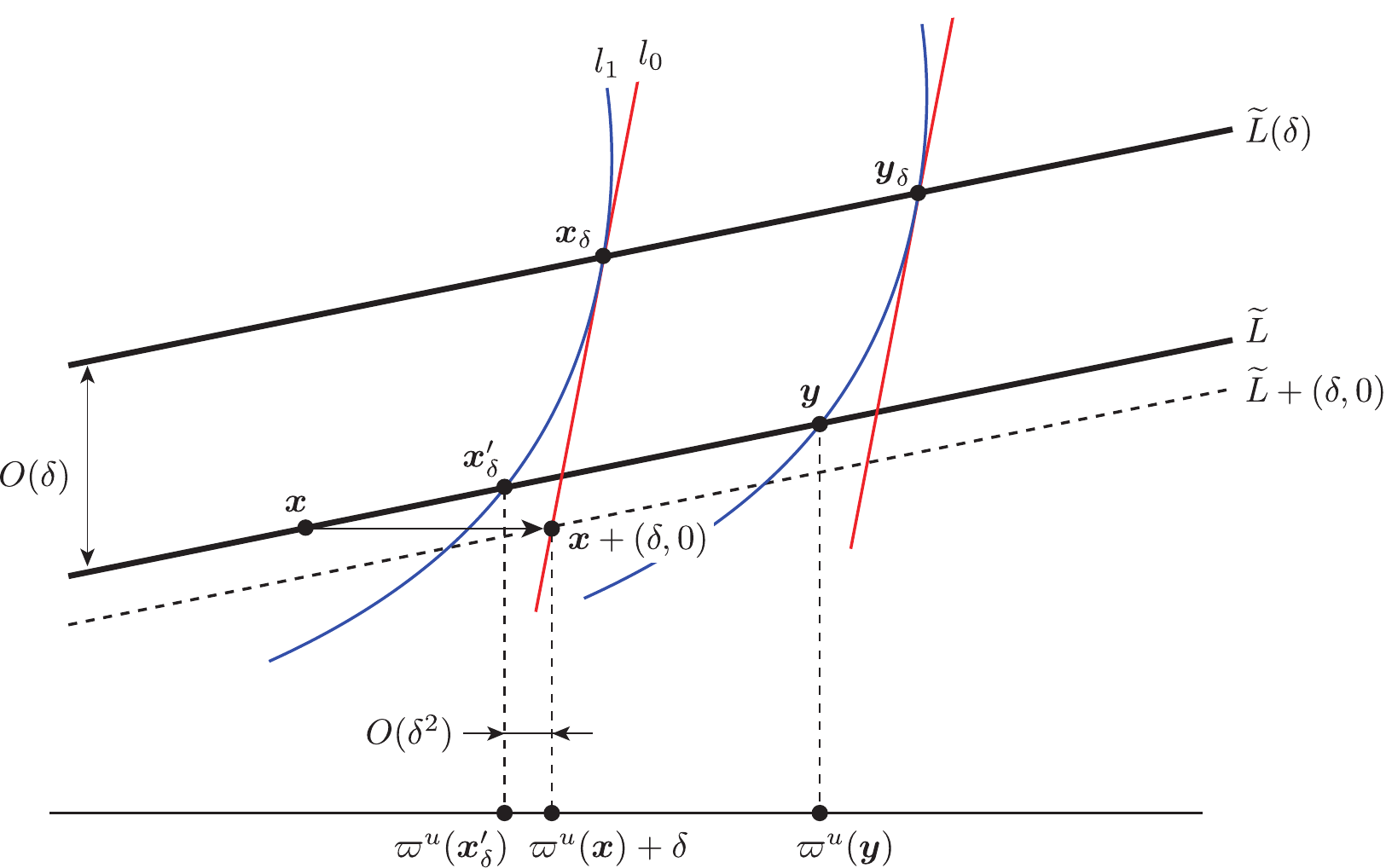}}
\caption{Explanation of the third inequalities of (\ref{eqn_1-Cdelta}).}
\label{fig_5_5a}
\end{figure}
From the second inequalities of (\ref{eqn_1-Cdelta}), 
we have
$$(1-C|\delta|)|\varpi^u(\boldsymbol{x}_\delta')-\varpi^u(\boldsymbol{y})| \leq |
\varpi^u(\boldsymbol{x}_{\delta})- \varpi^u(\boldsymbol{y}_{\delta})|
\leq (1+C|\delta|)|\varpi^u(\boldsymbol{x}_\delta')-\varpi^u(\boldsymbol{y})|.$$
Since $l_0$ and $l_1$ have a quadratic tangency at 
$\boldsymbol{x}_\delta$, we have
$$|\varpi^u(\boldsymbol{x}+(0,\delta))-\varpi^u(\boldsymbol{x}_\delta')|=
|\varpi^u(\boldsymbol{x})+\delta-\varpi^u(\boldsymbol{x}_\delta')|=O(\delta^2).$$
This shows the third inequalities of (\ref{eqn_1-Cdelta}).

\medskip

\begin{lemma}[Linking Lemma]\label{lem_LL1}
Suppose that the dynamically defined Cantor sets $K_{\Lambda,L}^{s}$ and $K_{m,L}^{u}$ satisfy (\ref{cond.Ku}).
Let
$B^{s}(0)$ be an $s$-bridge of $K_{\Lambda}^{s}(0)$ and $A^{u}(0)$ a $u$-bridge of $K_{m}^{u}(0)$ 
contained in $\mathcal{B}_{\delta_0/2}\cap \widetilde L$ 
such that the pair 
$(B^{s}(0), A^{u}(0))$ is linked.
Moreover, suppose that the generations of $B^s(0)$ and $A^u(0)$ are so large that 
the conditions of Lemma \ref{lem.bdp2} and Remark \ref{rem_BA} hold.
Then, for  any $\varepsilon$ with $0<\varepsilon\leq |B^{s}(0)\cap A^{u}(0)|$ and sufficiently smaller than the radius $\delta_0$ of $\mathcal{B}_{\delta_0}$, 
there exist 
bridges 
$B_{1}^{s}$,   $\widetilde{B}_{1}^{s}$, 
$A_{1}^{u}$, $\widetilde{A}_{1}^{u}$
and  
gaps $G^{s}$, $G^{u}$ in $\widetilde L$ with 
$$
B_{1}^{s}, \widetilde{B}_{1}^{s}\subset  B^{s}(0), \ 
G^{s}=\mathrm{Gap}(B_{1}^{s}, \widetilde{B}_{1}^{s}), \ 
A_{1}^{u}, \widetilde{A}_{1}^{u}\subset  A^{u}(0), \ 
G^{u}=\mathrm{Gap}(A_{1}^{u}, \widetilde{A}_{1}^{u})
$$ and satisfying the following properties {\rm (\ref{LL6})--(\ref{LL5})} for any $\nu$ with $|\nu|<\varepsilon$ and {\rm (\ref{LL4})--(\ref{LLGG})} for some $\delta$ with $|\delta|<\varepsilon$:
\begin{enumerate}[\rm(1)]
\item \label{LL6} 
$r_{s+}^{-1}\tau^{-5/4}\varepsilon\leq |B_{1}^{s}(\nu)|
< r_{s-}^{-1}\tau^{-3/4}\varepsilon,\quad 
r_{s+}^{-1}\tau^{-5/4}\varepsilon\leq |\widetilde B_{1}^{s}(\nu)|
< r_{s-}^{-1}\tau^{-3/4}\varepsilon$;
\item   \label{LL5} 
both $( B^{s}_{1}(\nu), A^{u}_{1}(\nu))$ and 
$(  \widetilde B^{s}_{1}(\nu),  \widetilde A^{u}_{1}(\nu))$
are $\tau^{-1}$-proportional; 
\item   \label{LL4} 
both  $(B^{s}_{1}(\delta), A^{u}_{1}(\delta))$  and 
$( \widetilde B^{s}_{1}(\delta), \widetilde A^{u}_{1}(\delta))$
are $\xi_{0}$-linked pairs, where $\xi_{0}$ is the constants given in (\ref{def.xi_0});
\item
\label{LLGG}
$G^u(\delta)$ and $G^s(\delta)$ have a common middle point.
\end{enumerate}
\end{lemma}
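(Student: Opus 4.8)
The plan is to carry out a Gap--Lemma--style selection of bridges at an appropriate scale, then use a one-parameter continuity argument in the shift to align the two connecting gaps, and finally verify the explicit estimates (1)--(4).

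\emph{Choosing the bridges and gaps.} First descend inside $B^{s}(0)$ one generation at a time. By the bounded distortion estimate (\ref{eqn_bdp1}) of Remark~\ref{rem_BA}, the lengths of the nested $s$-bridges drop by a factor in $[r_{s-},r_{s+}]$ at each step, and since (\ref{cond.Ku}) makes the window $[\,r_{s+}^{-1}\tau^{-5/4}\varepsilon,\ r_{s-}^{-1}\tau^{-3/4}\varepsilon)$ wide enough --- its ratio $(r_{s+}/r_{s-})\tau^{1/2}$ exceeds $r_{s+}^{2}/r_{s-}$, which is the width needed to accommodate one bridge together with a sibling across a descent step --- there is a generation at which we take an $s$-bridge $B_{1}^{s}\subset B^{s}(0)$, its sibling $\widetilde B_{1}^{s}$, and the connecting gap $G^{s}=\mathrm{Gap}(B_{1}^{s},\widetilde B_{1}^{s})$, all of length in that window, with $|G^{s}|$ bounded below by a fixed fraction of $|B_{1}^{s}|$ by (\ref{eqn_BB<B}) and (\ref{eqn_Psiqq}). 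Since $(B^{s}(0),A^{u}(0))$ is linked and $\tau(K_{\Lambda,L}^{s})\tau(K_{m,L}^{u})>1$ by (\ref{eqn_rem_BA}), a Gap--Lemma argument lets us make this choice so that $G^{s}$ lies in the interior of the overlap $B^{s}(0)\cap A^{u}(0)$ and is genuinely straddled by $K_{m,L}^{u}$, using the hypothesis $\varepsilon\le|B^{s}(0)\cap A^{u}(0)|$ for room. Then, among the $u$-bridges of $K_{m,L}^{u}$ meeting this part of $L$, use the bounded distortion estimates of Lemma~\ref{lem.bdp2} (transported by Remark~\ref{rem_BA}, descending along an itinerary with next entry in $\{1,2\}$ so that consecutive ratios stay in $[3/2,5]$) to pick $A_{1}^{u}$ with $|A_{1}^{u}|$ within a bounded multiple of $|B_{1}^{s}|$ --- in particular $|B_{1}^{s}|\le|A_{1}^{u}|\le\tau|B_{1}^{s}|$, so that $(B_{1}^{s},A_{1}^{u})$ is $\tau^{-1}$-proportional, since $\tau>3^{8}$ --- together with a second $u$-bridge $\widetilde A_{1}^{u}$ and the connecting gap $G^{u}=\mathrm{Gap}(A_{1}^{u},\widetilde A_{1}^{u})$ chosen with $|G^{u}|$ suitably small relative to $|G^{s}|$ and $|B_{1}^{s}|$; here parts (1)--(3) of Lemma~\ref{lem.bdp2} are used. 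All of $G^{s}$, $G^{u}$ and the four bridges now have size $O(\tau^{-3/4}\varepsilon)$ and lie within $O(\tau^{-3/4}\varepsilon)$ of one another in $L$.

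\emph{The shift and the estimates.} Properties (1) and (2) are inherited for every $|\nu|<\varepsilon$: by the first two lines of (\ref{eqn_1-Cdelta}) the perturbed lengths $|B_{1}^{s}(\nu)|$, $|\widetilde B_{1}^{s}(\nu)|$, $|A_{1}^{u}(\nu)|$ differ from the unperturbed ones by a factor $1\pm C|\nu|$ only, which is absorbed since above we worked strictly inside the stated windows. For (3) and (4), regard $\rho(t)=\mathrm{mid}(G^{s}(t))-\mathrm{mid}(G^{u}(t))$ as a function of the shift $t$, where $G^{s}(t)=\widetilde\pi_{t}^{s}(G^{s})$ and $G^{u}(t)=\widetilde\pi_{t}^{u}(G^{u})$: by the third line of (\ref{eqn_1-Cdelta}) the map $\rho$ is continuous and equals $\rho(0)+t$ up to an error $O(|t|\,|\rho(0)|+t^{2})$, while $|\rho(0)|=O(\tau^{-3/4}\varepsilon)<\varepsilon$. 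Hence $\rho(\delta)=0$ for some $\delta$ with $|\delta|<\varepsilon$, giving (4). For this $\delta$, with $\mathrm{mid}(G^{s}(\delta))=\mathrm{mid}(G^{u}(\delta))$ and all lengths comparable, one bounds $|B_{1}^{s}(\delta)\cap A_{1}^{u}(\delta)|$ from below by $|B_{1}^{s}(\delta)|$ minus the part of $G^{u}(\delta)$ protruding past $G^{s}(\delta)$; substituting the lower bound $|B_{1}^{s}(\delta)|\ge r_{s+}^{-1}\tau^{-5/4}\varepsilon$ from (1), the bound on $|G^{u}(\delta)|$, and the thickness of $K_{m,L}^{u}$, this becomes $\ge\xi_{0}|B_{1}^{s}(\delta)|$ with $\xi_{0}$ exactly the constant of (\ref{def.xi_0}); the linking of both pairs is inherited from the straddling arranged above. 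This gives (3).

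\emph{Main obstacle.} Two things carry the weight. First, the combinatorial choice of the four bridges and two gaps: they must simultaneously land at the prescribed scales and satisfy that $|G^{u}|$ is small enough relative to $|G^{s}|$ and $|B_{1}^{s}|$ that, after aligning midpoints, the overlap is at least the \emph{exact} proportion $\xi_{0}$ of (\ref{def.xi_0}) of $|B_{1}^{s}|$ --- a delicate juggling of the constants in Lemmas~\ref{lem.bdp1} and~\ref{lem.bdp2}, of (\ref{eqn_BB<B}), and of (\ref{cond.Ku}). Second, the distortion bookkeeping: since $\widetilde L$ is only $C^{1}$, the perturbation is a horizontal shift rather than a translation along $\widetilde L$, and the tangencies are quadratic, so all images of bridges and gaps are controlled solely through the $1\pm C|\delta|$ and $O(\delta^{2})$ estimates (\ref{eqn_1-Cdelta}), and one must keep $\varepsilon$ small relative to $\delta_{0}$ so that $\widetilde\pi_{\delta}^{u}(\widetilde L)\subset\widetilde L(\delta)$ throughout. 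Apart from these, the argument follows the Gap Lemma and the Linking Lemma of Colli--Vargas \cite{CV01}.
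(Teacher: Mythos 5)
Your overall strategy coincides with the paper's: select bridges at a calibrated scale via bounded distortion and the Gap Lemma, align the midpoints of the two connecting gaps by an intermediate-value argument in the shift parameter (controlled through (\ref{eqn_1-Cdelta})), and then estimate the overlap. Items (\ref{LL6}), (\ref{LL5}) and (\ref{LLGG}) are essentially right. However, there is a genuine gap at the point where the \emph{exact} constant $\xi_{0}$ of (\ref{def.xi_0}) must come out. Your selection gives only $|B_{1}^{s}|\leq|A_{1}^{u}|\leq\tau|B_{1}^{s}|$ together with an unspecified smallness of $|G^{u}|$; combined with the thickness bound $|G^{u}|\leq\tau^{-1}|A_{1}^{u}|$ this yields $|G^{u}|\leq|B_{1}^{s}|$ and hence only $|B_{1}^{s}(\delta)\cap A_{1}^{u}(\delta)|\geq|B_{1}^{s}(\delta)|-|G^{u}(\delta)|/2\geq\frac12|B_{1}^{s}(\delta)|$, and $\frac12$ can be strictly smaller than $\xi_{0}=\bigl(\frac1{r_{s+}}-\frac1{2\tau^{1/2}}\bigr)r_{s-}$ (e.g.\ when $r_{s-}\approx r_{s+}\approx\frac{21}{10}$ and $\tau$ is large). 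The paper's mechanism, which your proposal omits, is a two-scale selection (Lemma \ref{lem_LL2}): one first finds \emph{parent} bridges $\widehat B_{1}^{s}\supset B_{1}^{s}$ and $\widehat A_{1}^{u}\supset A_{1}^{u}$, one generation above, calibrated so that $\tau^{-1/2}|\widehat A_{1}^{u}|\leq|\widehat B_{1}^{s}|<\tau^{-1/4}|\widehat A_{1}^{u}|$; this forces $|G^{u}|\leq\tau^{-1/2}|\widehat B_{1}^{s}|$ (inequality (\ref{eqn_tauGB})), and then $|B_{1}^{s}\cap A_{1}^{u}|\geq r_{s+}^{-1}|\widehat B_{1}^{s}|-\tfrac12|G^{u}|$ together with $|\widehat B_{1}^{s}|\geq r_{s-}|B_{1}^{s}|$ produces exactly $\xi_{0}$. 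Getting the $u$-parent into the window $[\tau^{1/4}|\widehat B_{1}^{s}|,\tau^{1/2}|\widehat B_{1}^{s}|]$ is itself nontrivial and uses parts (\ref{bdp2_AI}) and (\ref{bdp2_tilde AA}) of Lemma \ref{lem.bdp2} to descend along leading/bottom points; your "bounded multiple" selection does not supply it.

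A second, smaller omission: after aligning midpoints you must still verify the \emph{linked} part of "$\xi_{0}$-linked" in the case $G^{u}(\delta)\subset G^{s}(\delta)$, where $B_{1}^{s}(\delta)\subset A_{1}^{u}(\delta)$ and the overlap ratio is trivially $1$, but one has to rule out that $B_{1}^{s}(\delta)$ sits inside a gap of $A_{1}^{u}(\delta)\cap K_{m}^{u}(\delta)$. The paper does this by a contradiction with the thickness product (\ref{eqn_rem_BA}): a $u$-bridge adjacent to such a gap would have to fit inside $G^{s}(\delta)$, forcing $\tau(B^{s}\cap K_{\Lambda}^{s})\tau(A^{u}\cap K_{m}^{u})<1$. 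Your phrase "inherited from the straddling arranged above" does not cover this, since the straddling was arranged before the shift and for the parent bridges, not for the children after alignment.
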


Note that $\xi_0$ of (\ref{def.xi_0}) is a universal constant independent of the choices of $B^s$, $A^u$ or $\varepsilon$.
Here we set $B_{1}^{s}$,  $\widetilde{B}_{1}^{s}$, $A_{1}^{u}$, $\widetilde{A}_{1}^{u}$ for 
simplicity instead of 
$B_{1}^{s}(0)$,  $\widetilde{B}_{1}^{s}(0)$, $A_{1}^{u}(0)$, $\widetilde{A}_{1}^{u}(0)$
respectively.

The following lemma is essential in the proof of Lemma \ref{lem_LL1}.

\begin{lemma}\label{lem_LL2}
Let 
$(B^{s}(0),A^{u}(0))$ be the linked pair given in Lemma \ref{lem_LL1}. 
For  any $\varepsilon$ with $0<\varepsilon\leq |B^{s}(0)\cap A^{u}(0)|$ and sufficiently smaller than $\delta_0$, 
there exist an 
interval $J_{1}$ with $J_{1}\subset (-\varepsilon,\varepsilon)$, sub-bridges 
$\widehat{B}^{s}_{1}  \subset  B^{s}(0)$ and 
$\widehat{A}^{u}_{1} \subset  A^{u}(0)$
satisfying the following conditions.
\begin{enumerate}[\rm(1)]
\item  \label{LL2} 
$\tau^{-5/4}\varepsilon\leq |\widehat{B}_{1}^{s}(\nu)|<\tau^{-3/4}\varepsilon$ for any $\nu$ with 
$|\nu|\leq \varepsilon$;
\item  \label{LL3} 
$\tau^{-1/2}|\widehat{A}_{1}^{u}(\nu)|\leq |\widehat{B}_{1}^{s}(\nu)|<\tau^{-1/4}|\widehat{A}_{1}^{u}(\nu)|$ for any $\nu$ with 
$|\nu|\leq \varepsilon$;
\item  \label{LL1}
$
\widehat{B}_{1}^{s}(\nu)\cap \widehat{A}_{1}^{u}(\nu)\neq \emptyset
$ 
if and only if $\nu\in J_{1}$.
\end{enumerate}
\end{lemma}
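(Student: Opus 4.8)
The plan is to mimic the proof of the Linking Lemma of Colli--Vargas \cite{CV01} in the present bounded--distortion setting. I will work in the $\varpi^u$--parametrisation of $\widetilde L$, in which, by \eqref{eqn_1-Cdelta}, the perturbation $f_\nu$ acts on $\widetilde L(\nu)$ --- up to a multiplicative error $(1\pm C|\nu|)$ on lengths and an additive error $O(\nu^2)$ on positions --- as the rigid translation carrying the $s$--structure by $+\nu$ past the $u$--structure; since $|\nu|\le\varepsilon$ and $\varepsilon$ is at our disposal, these errors are negligible. Writing $I:=B^s(0)\cap A^u(0)$ (so $|I|\ge\varepsilon$), the first step is to locate a linked pair of the prescribed $s$--scale inside $I$: by Remark \ref{rem_BA} and the almost--affineness of $\pi^s,\pi^u$, the restricted Cantor sets on $\widetilde L$ satisfy $\tau(K_\Lambda^s\cap B^s(0))\,\tau(K_m^u\cap A^u(0))>1$, so the standard nested--bridge procedure built on the Gap Lemma (as in \cite{CV01} and \cite[\S 4]{KS08}) produces an $s$--bridge $\widehat B_1^s\subseteq B^s(0)$ and a $u$--bridge in $A^u(0)$ that are linked and both lie in $\mathrm{Int}\,I$. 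Stopping this procedure at the right generation, and using that consecutive $s$--bridges differ in length by a factor in $[r_{s-},r_{s+}]$ with $r_{s+}<\tau^{1/2}$ by \eqref{cond.Ku}, I will force $|\widehat B_1^s(0)|$ into the interior of $[\tau^{-5/4}\varepsilon,\tau^{-3/4}\varepsilon)$ with room on both sides --- possible because this window has ratio $\tau^{1/2}>r_{s+}$, so the geometric progression of $s$--bridge lengths cannot skip it.

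Next I will adjust a $u$--bridge to the prescribed proportion. Since $\widehat B_1^s$ is linked with $K_m^u\cap A^u(0)$ and $K_m^u$ is very thick ($\tau$ large, by \eqref{cond.Ku}), $\widehat B_1^s$ straddles an abundant family of $u$--bridges at all small scales; restricting to $u$--bridges whose last itinerary entry is $1$ or $2$, for which consecutive lengths differ by a factor in $[3/2,5]$ (Lemma \ref{lem.bdp2}(\ref{bdp2_AA})), and using the ordered leading--point/bottom--point layout of the children $A^u(\,\cdot\,;1),\dots,A^u(\,\cdot\,;m-1)$ recorded in Lemma \ref{lem.bdp2}(\ref{bdp2_AA})--(\ref{bdp2_tilde AA}), I will extract a $u$--bridge $\widehat A_1^u\subseteq A^u(0)$ overlapping $\widehat B_1^s$ with $|\widehat A_1^u(0)|$ in the interior of $\bigl(\tau^{1/4}|\widehat B_1^s(0)|,\tau^{1/2}|\widehat B_1^s(0)|\bigr]$; this window has ratio $\tau^{1/4}\ge 9>5$ by \eqref{cond.Ku}, so it absorbs the factor $5$ and the perturbation errors. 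In particular $|\widehat A_1^u(0)|<\tau^{-1/4}\varepsilon<\varepsilon\le|I|$, so $\widehat A_1^u$ also lies in $I$, and $\widehat B_1^s(0)\cap\widehat A_1^u(0)\ne\emptyset$.

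Finally I will verify uniformity in $\nu$ and identify $J_1$. For $|\nu|\le\varepsilon$, \eqref{eqn_1-Cdelta} gives $(1-C\varepsilon)|A|\le|A(\nu)|\le(1+C\varepsilon)|A|$ for $A\in\{\widehat B_1^s(0),\widehat A_1^u(0)\}$, so --- $\varepsilon$ being small and the quantities having been placed well inside their windows --- (\ref{LL2}) and (\ref{LL3}) hold for every such $\nu$. By the third line of \eqref{eqn_1-Cdelta}, as $\nu$ varies the endpoints of $\widehat B_1^s(\nu)$ move (in the $\varpi^u$--parametrisation) at relative velocity $1+O(\varepsilon)$ against the essentially fixed endpoints of $\widehat A_1^u(\nu)$, so ``$\widehat B_1^s(\nu)\cap\widehat A_1^u(\nu)\ne\emptyset$'' reduces to a pair of perturbed linear inequalities in $\nu$, whose solution set $J_1$ is an interval, non-empty because it contains $0$. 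Its length is at most $(1+O(\varepsilon))(|\widehat A_1^u(0)|+|\widehat B_1^s(0)|)<2\tau^{-1/4}\varepsilon$ and its midpoint lies within $|\widehat A_1^u(0)|$ of $0$, so with $\tau$ large one gets $J_1\subset(-\varepsilon,\varepsilon)$, establishing (\ref{LL1}).

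I expect Step~2 to be the main obstacle. The bounded--distortion ratios available are bounded only by constants ($r_{s\pm}$; $5$ and $5\cdot2^{m-3}$) that are not small, whereas the scale windows of (\ref{LL2})--(\ref{LL3}) are prescribed with fixed $\tau$--powers; one must check at the same time that these windows are wide enough --- which is exactly what $\tau>\max\{r_{s+}^{2},\tau(K_\Lambda^s)^{-1},3^8\}$ in \eqref{cond.Ku} buys --- and that a $u$--bridge of the prescribed \emph{larger} scale can be found overlapping the already chosen $s$--bridge $\widehat B_1^s$, which is what forces the use of the ordered child structure of the $u$--bridges and of the abundance of $K_m^u$ inside $\widehat B_1^s$ rather than a single nested--bridge chain.
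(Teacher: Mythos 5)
Your Step 1 is essentially the paper's argument for (\ref{LL2}), but Step 2 contains a genuine gap: you insist that $\widehat A_1^u$ \emph{overlap} the already chosen $\widehat B_1^s$ at $\nu=0$ (so that $0\in J_1$), and a $u$-bridge meeting $\widehat B_1^s$ with length in $\bigl(\tau^{1/4}|\widehat B_1^s|,\tau^{1/2}|\widehat B_1^s|\bigr]$ need not exist. The only $u$-bridges meeting $\widehat B_1^s$ are the ancestors of the Gap-Lemma point $a_0$ and those siblings (or their descendants) lying within distance $|\widehat B_1^s|$ of $a_0$. Along the nested chain through $a_0$ the one-step contraction is bounded only by $5\cdot 2^{m-3}$ (Lemma \ref{lem.bdp2}-(\ref{bdp2_AA})), and nothing in (\ref{cond.Ku}) makes this smaller than the width $\tau^{1/4}$ of your window; on the contrary, the gap estimate behind Lemma \ref{lem.bdp2}-(\ref{bdp2_tilde AA}) shows $\tau$ is at most of order $2^m$, so $\tau^{1/4}\ll 5\cdot 2^{m-3}$ for large $m$. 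Hence the chain through $a_0$ can jump from a bridge of length $>\tau^{1/2}|\widehat B_1^s|$ directly to one of length $\le\tau^{1/4}|\widehat B_1^s|$. Restricting to children with last entry $1$ or $2$ does bound the ratio by $5$, but those children sit at the leading end of their parent and there is no reason they meet $\widehat B_1^s$; and any sibling bridge that does reach $\widehat B_1^s$ has length $O(|\widehat B_1^s|)$, far below $\tau^{1/4}|\widehat B_1^s|$. So in the bad case no bridge in your window meets $\widehat B_1^s$, and with it collapses your proof of (\ref{LL1}), which rests on $0\in J_1$ and on the midpoint of $J_1$ being within $|\widehat A_1^u(0)|$ of $0$.

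Note that the lemma never asks for $0\in J_1$, and the paper's proof exploits exactly this freedom. It first manufactures, inside a sub-arc $I\subset A^u(0)$ of length $\varepsilon/3$ having $a_0$ as an endpoint, a $u$-bridge $A_0^u$ with $\varepsilon/(3\tau^{1/4})\le |A_0^u|\le \varepsilon/3$, using Lemma \ref{lem.bdp2}-(\ref{bdp2_AI}) and (\ref{bdp2_tilde AA}); it then descends along the special chain in which the bottom point of each bridge is the leading point of its parent, for which the one-step ratio is at most $5<\tau^{1/4}$, so this chain cannot skip the window $(\tau^{1/4}|\widehat B_1^s|,\tau^{1/2}|\widehat B_1^s|]$ --- at the price that $\widehat A_1^u$ need not contain $a_0$ nor meet $\widehat B_1^s$. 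Condition (\ref{LL1}) is then deduced not from an intersection at $\nu=0$ but from proximity: $\widehat A_1^u\subset I$ and $|\widehat B_1^s|<\varepsilon/3$ keep the two objects within roughly $2\varepsilon/3$ of each other, so by (\ref{eqn_1-Cdelta}) they are separated once $|\nu|$ is close to $\varepsilon$, which gives $J_1\subset(-\varepsilon,\varepsilon)$, while the unit-speed relative sliding produces the intersection for some intermediate $\nu$. If you replace your Step 2 by this two-stage construction and prove (\ref{LL1}) by separation at the ends of $(-\varepsilon,\varepsilon)$ rather than by overlap at $0$, your argument becomes the paper's.
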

\begin{proof} 
(\ref{LL2})
First we consider the case of $\nu=0$.
Since the pair $(B^{s}(0), A^{u}(0))$ is linked and $\tau(B^s(0)\cap K_{\Lambda}^{s}(0))\tau(A^u(0)\cap K_{m}^{u}(0))>1$ by (\ref{eqn_rem_BA}), it follows from Gap Lemma that 
$(B^{s}(0)\cap K_{\Lambda}^{s}(0))\cap (A^{u}(0)\cap K_{m}^{u}(0))$ contains a point, say $a_0$.
Take an $s$-bridge $B^s\langle i\rangle$ with $B^s\langle i\rangle \ni a_0$ and $|B^s\langle i\rangle |<\tau^{-3/4}\varepsilon$, where 
$i$ represents the generation of $B^s\langle i\rangle$.
If $|B^s\langle i\rangle |\geq \tau^{-5/3}\varepsilon$, then we set $\widehat B_1^s=B^s\langle i\rangle$.
Otherwise, consider the $s$-bridge $B^s\langle i-1\rangle$ with $B^s\langle i-1\rangle \ni a_0$.
Since $r_{s+}^2<\tau$ by (\ref{cond.Ku}), we have from Lemma \ref{lem.bdp1} that 
$$|B^s\langle i-1\rangle | <r_{s+} |B^{s}\langle i\rangle |< r_{s+}  \tau^{-5/4}\varepsilon<\tau^{-3/4}\varepsilon
\quad\text{and}\quad |B^s\langle i-1\rangle |\geq 2|B^s\langle i\rangle |.$$
If $|B^s\langle i-1\rangle |\geq \tau^{-5/4}\varepsilon$, then we set $\widehat B_1^s=B^s\langle i-1\rangle$.
Otherwise, we repeat the same process until we get the $s$-bridge containing $a_0$ and satisfying the inequality of (\ref{LL2}).
We adopt the bridge as $\widehat B_1^s$.
This shows (\ref{LL2}) for the case of $\nu=0$.
From (\ref{eqn_1-Cdelta}), we know that it is not hard to generalize this result to the case of $|\nu|\leq \varepsilon$.

(\ref{LL3})
First we consider the case of $\nu=0$.
We mean by $A^u\langle i\rangle$ that the generation of the $u$-bridge is $i$. 
Suppose that $A^u=A^u\langle j\rangle$.
First we show that there exists a sub-bridge $A_0^u$ of $A^u$ with 
\begin{equation}\label{eqn_veAve}
\frac{\varepsilon}{3\tau^{1/4}} \leq|A_0^{u}|\leq\frac{\varepsilon}3 
\end{equation}
and contained in a closed sub-arc of $A^u$ of width $\varepsilon/3$ and containing $a_0$.
Here we do not necessarily require that $A_0^u$ contains $a_0$.
Let $A^u\langle j+1\rangle$ be the sub-bridge of $A^u$ containing $a_0$.
If $A^u\langle j+1\rangle\geq \varepsilon/3$, then we repeat the argument using $A^u\langle j+1\rangle$ instead of $A^u$.
So it suffices to consider the case of $A^u\langle j+1\rangle<\varepsilon/3$.
Suppose that $I$ is a sub-arc of $A^u$ with $|I|=\varepsilon/3$ and containing 
$a_0$ as a boundary point.
If $A^u\langle j+1\rangle\geq \varepsilon/3\tau^{1/4}$, then one can set $A_0^u=A^u\langle j+1\rangle$.
Otherwise, consider the maximum sub-arc $I'$ 
of $A^u$ with $a_1$ as a boundary point and containing $I$, where 
$a_1$ is the boundary point of $I$ other than $a_0$, see Figure \ref{fig_5_6}. 
\begin{figure}[hbt]
\centering
\scalebox{0.85}{\includegraphics[clip]{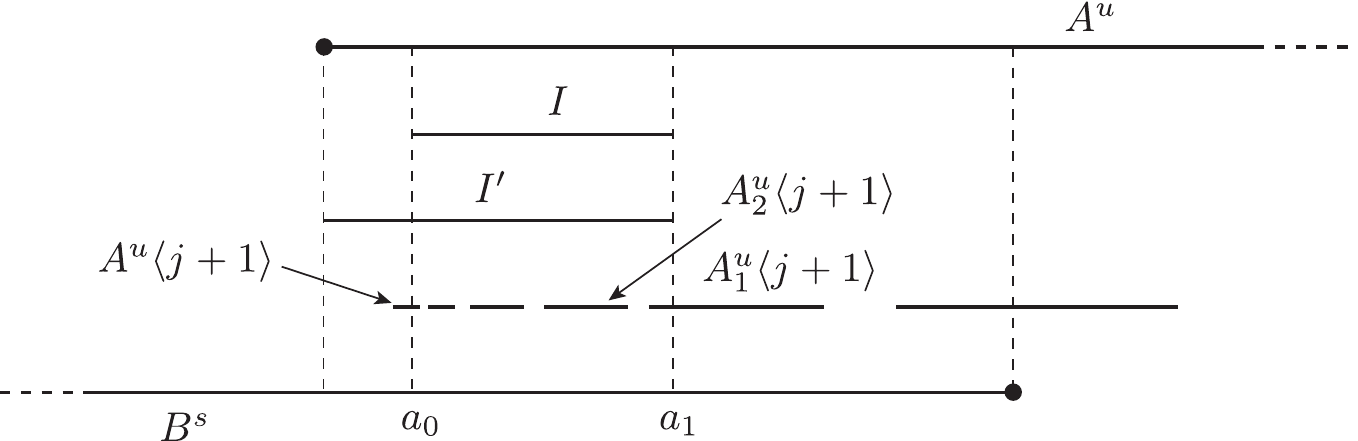}}
\caption{Detecting an unstable bridge of next generation.}
\label{fig_5_6}
\end{figure}
Let $A_1^u\langle j+1\rangle$ be the sub-bridge of $A^u$ closest to $a_0$ among all $u$-bridges not contained in $I'$.
By Lemma \ref{lem.bdp2}-(\ref{bdp2_AI}) (together with Remark \ref{rem_BA}-(ii) for strictly), $|A_1^u\langle j+1\rangle|\geq |I'|/3\geq |I|/3=\varepsilon/9$.
By Lemma \ref{lem.bdp2}-(\ref{bdp2_tilde AA}) and (\ref{cond.Ku}), the $u$-bridge $A_2^u\langle j+1\rangle$ closest to $a_1$  among all $u$-bridges contained in $I'$ satisfies $|A_2^u\langle j+1\rangle|\geq |A_1^u\langle j+1\rangle|/3>\varepsilon/27>\varepsilon/3\tau^{1/4}$.
Thus $A_0^u:=A_2^u\langle j+1\rangle$ satisfies (\ref{eqn_veAve}).

Consider the sequence of $u$-bridges 
$$A_0^u=A^u\langle k\rangle\supset A^u\langle k+1\rangle\supset \cdots \supset A^u\langle k+i\rangle\supset A^u\langle k+i+1\rangle\supset\cdots$$
such that, for any integer $i\geq 0$, the bottom point of $A^u\langle k+i+1\rangle$ is equal to the leading 
point of $A^u\langle k+i\rangle$.
By (\ref{eqn_veAve}), 
$$|A_0^u|\geq \frac{\varepsilon}{3\tau^{1/4}}=\tau^{1/4}\frac{\varepsilon}{3\tau^{1/2}}
>\tau^{1/4}\frac{\varepsilon}{\tau^{3/4}}\geq \tau^{1/4}|\widehat B_1^s|.
$$
Thus there exists $i\geq 0$ such that $|A^u\langle k+i+1\rangle|\leq \tau^{1/4}|\widehat B_1^s|<|A^u\langle k+i\rangle|$.
By Lemma \ref{lem.bdp2}-(\ref{bdp2_AA}), 
$$|\widehat B_1^s|\geq \tau^{-1/4}|A^u\langle k+i+1\rangle|\geq \tau^{-1/4}\frac{|A^u\langle k+i\rangle|}5 >\tau^{-1/2}|A^u\langle k+i\rangle|.
$$
Here we used the inequality $\tau>3^8>5^4$ derived from (\ref{cond.Ku}).
Thus $\widehat A_1^u:=A^u\langle k+i\rangle$ satisfies the inequality of (\ref{LL3}) for $\nu=0$.
Again by using (\ref{eqn_1-Cdelta}), one can generalize this result to the case of $|\nu|\leq \varepsilon$.

(\ref{LL1})
Let $\nu$ be any number with $|\nu|\leq \varepsilon$.
Since $\widehat A_1^u\subset I$, $\widehat A_1^u(\nu)$ is contained in $I(\nu)$.
By (\ref{LL2}), $|\widehat B_1^s(\nu)|<\varepsilon/3$. 
Let $\boldsymbol{x}_B$ be the left edge  of $\widehat B_1^s$ and $\boldsymbol{x}_I$ the right edge  of $I$.
Since $|\widehat B_1^s|<\varepsilon/3$ and $I$ is an arc of length $\varepsilon/3$ with $I\cap \widehat B_1^s\neq \emptyset$, 
$\varpi^u(\boldsymbol{x}_B)-\varpi^u(\boldsymbol{x}_I)>-2\varepsilon/3$.
Let $\boldsymbol{x}_B(\nu)$, $\boldsymbol{x}_I(\nu)$ be the points of $\widetilde L(\nu)$ corresponding to 
$\boldsymbol{x}_B$, $\boldsymbol{x}_I$ respectively.
By (\ref{eqn_1-Cdelta}),
$$\varpi^u(\boldsymbol{x}_B(\nu))-\varpi^u(\boldsymbol{x}_I(\nu))>\nu-\frac{2\varepsilon}3(1+C\nu)-O(\nu^2)$$
for any $0\leq \nu\leq \varepsilon$.
This implies that, if the right hand side of this inequality is positive or equivalently
$$\nu>\frac{2\varepsilon}{3-2\varepsilon C+3O(\nu)}$$
by regarding $O(\nu^2)=\nu O(\nu)$, 
then $\widehat B_1^s(\nu)$ lies in the right component of $\widetilde L(\nu)\setminus I(\nu)$.
One can choose $\varepsilon>0$ so small that the right hand side of the preceding inequality is smaller than $\varepsilon$.
Similarly, if $\nu<-2\varepsilon/(3-2\varepsilon C+3O(\nu))$, then $\widehat B_1^s(\nu)$ lies in the left component of $\widetilde L(\nu)\setminus I(\nu)$.
Thus the interval $J_1$ satisfying the condition (\ref{LL1}) is contained in $(-\varepsilon,\varepsilon)$.
This completes the proof.
\end{proof}

\begin{proof}[Proof of (\ref{LL6}) and (\ref{LL5}) of Lemma \ref{lem_LL1}] 
To show (\ref{LL6}), we will present a procedure how to define our desired sub-bridges and gaps.
Suppose that the generations of $\widehat A_1^u(\nu)$ and $\widehat B_1^s(\nu)$ given in Lemma \ref{lem_LL2} are $k$ and $l$, respectively.
Let $A_{1}^{u}$, $\widetilde{A}_{1}^{u}$ be sub-bridges of $\widehat{A}_{1}^{u}$ of 
generation $k+1$ with the connecting gap $G^{u}$ 
and such that one of $A_{1}^{u}$ and $\widetilde{A}_{1}^{u}$ contains the leading point of $\widehat A_1^u$, 
that is, $G^u$ is the leading gap of $\widehat A_1^u$.
Let $B_{1}^{s}$ and $\widetilde{B}_{1}^{s}$ be sub-bridges of $\widehat{B}_{1}^{s}$ of 
generation $l+1$ with the connecting gap $G^{s}$.
We may assume that $B_1^s$ and $A_1^u$ lie in the left sides of $G_1^s$ and $G_1^u$ respectively if 
necessary exchanging notations.
By Lemmas \ref{lem.bdp1} and \ref{lem_LL2}-(\ref{LL2}), for any $\nu$ with $|\nu|<\varepsilon$, 
$$
r_{s+}^{-1}\tau^{-5/4}\varepsilon\leq |B_{1}^{s}(\nu)|< r_{s-}^{-1}\tau^{-3/4}\varepsilon.
$$
The inequality concerning $|\widetilde B_1^s(\nu)|$ is proved in the same manner.
This shows (\ref{LL6}).

(\ref{LL5})
By Lemmas \ref{lem.bdp1}, \ref{lem.bdp2} and  \ref{lem_LL2}-(\ref{LL3}),  for any $\nu$ with $|\nu|<\varepsilon$,
\begin{align*}
|A_{1}^{u}(\nu)|&\geq 
\frac{|\widehat{A}_{1}^{u}(\nu)|}5\geq
\frac{\tau^{1/4}|\widehat{B}_{1}^{s}(\nu)|}5
\geq
\frac{\tau^{1/4}r_{s-}|B_{1}^{s}(\nu)|}5\geq |B_1^s(\nu)|,\\
|B_{1}^{s}(\nu)|&\geq r_{s+}^{-1}|\widehat B_1^s(\nu)|\geq r_{s+}^{-1}\tau^{-1/2}|\widehat A_1^u(\nu)|
\geq r_{s+}^{-1}\tau^{-1/2}|A_1^u(\nu)|\geq \tau^{-1}|A_1^u(\nu)|.
\end{align*}
This shows that $(B_1^s(\nu),A_1^u(\nu))$ is $\tau^{-1}$-proportional.
The $\tau^{-1}$-proportionality of $(\widetilde B_1^s(\nu),\widetilde A_1^u(\nu))$ 
is shown quite similarly.
This proves (\ref{LL5}).
\end{proof}

We need the following inequality in the proof of (\ref{LL4}):
\begin{equation}\label{eqn_tauGB}
\tau^{-1/2} \geq \frac{|G^{u}(\nu)|}{|\widehat{B}_{1}^{s}(\nu)|}\qquad (|\nu|<\varepsilon).
\end{equation}
In fact, by Lemma \ref{lem_LL2}-(\ref{LL3}), 
$
|\widehat{B}_{1}^{s}(\nu)|\geq \tau^{-1/2}| \widehat{A}_{1}^{u}(\nu)|\geq  \tau^{-1/2}| A_{1}^{u}(\nu)|
$.
From the definition of thickness, 
$\tau \leq |A_{1}^{u}(\nu)|/|G^{u}(\nu)|$. 
It follows that 
$$\tau^{-1/2}| A_{1}^{u}(\nu)|>  \tau^{-1/2}\tau |G^{u}(\nu)|=\tau^{1/2}|G^{u}(\nu)|.$$
Hence (\ref{eqn_tauGB}) holds.

\begin{proof}[Proof of (\ref{LL4}) and (\ref{LLGG}) of Lemma \ref{lem_LL1}]
Since $G^s(\delta)\subset \widehat B_1^s(\delta)$ and $G^u(\delta)\subset \widehat A_1^u(\delta)$, there is a $\delta\in J_1$ such 
that the middle point of $G^s(\delta)$ is equal to that of $G^u(\delta)$.
Again by Lemmas \ref{lem.bdp2}-(\ref{bdp2_AA}) and \ref{lem_LL2}-(\ref{LL3}), 
$|A_1^u(\delta)|\geq |\widehat A_1^u(\delta)|/5\geq \tau^{1/4}|\widehat B_1^s(\delta)|/5>|\widehat B_1^s(\delta)|$.
Similarly $|\widetilde A_1^u(\delta)|\geq  |\widehat B_1^s(\delta)|$.
Thus we have $\mathrm{Int} \widehat A_1^u(\delta)\supset \widehat B_1(\delta)$.
By (\ref{LL5}), $|A_1^u(\delta)|\geq |B_1^s(\delta)|$.
By Lemma \ref{lem.bdp1} and (\ref{eqn_tauGB}), $|B_1^s(\delta)|\geq r_{s+}^{-1}|\widehat B_1^s(\delta)|\geq r_{s+}^{-1}\tau^{1/2}|G^u(\delta)|\geq |G^u(\delta)|$.
This implies that $B_1^s(\delta)$ is not contained in $G^u(\delta)$.

To show that the pair $(B_{1}^{s}(\delta), A_{1}^{u}(\delta))$ is  $\xi_{0}$-linked, 
we need to consider the two 
cases of (a) $G^{s}(\delta)\subsetneq G^{u}(\delta)$ and  
(b) $G^{s}(\delta)\supset G^{u}(\delta)$, see Figure \ref{fig_5_7}.
\begin{figure}[hbt]
\centering
\scalebox{0.8}{\includegraphics[clip]{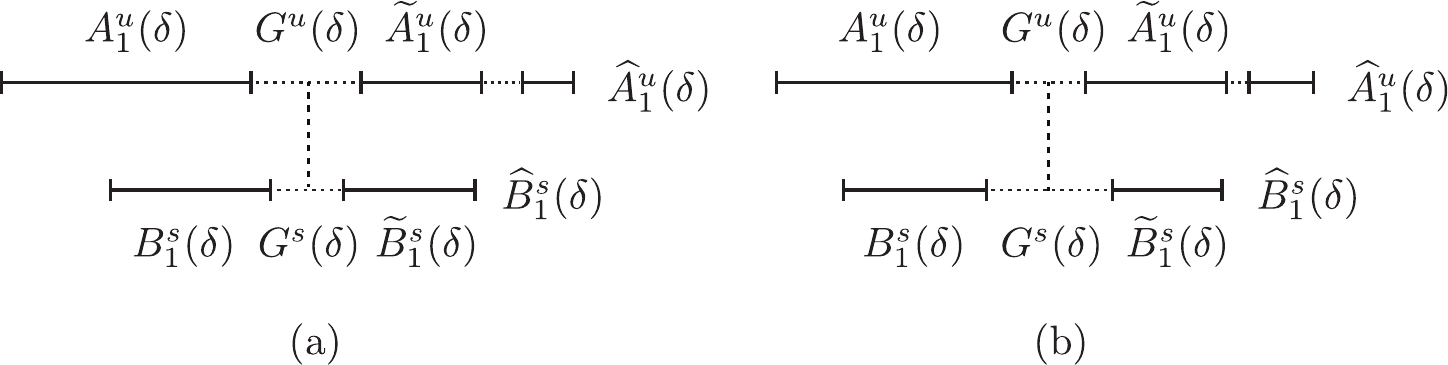}}
\caption{Stable and unstable gaps with the same middle point and bridges near the gaps.}
\label{fig_5_7}
\end{figure}

First we consider the case (a).
One of the boundary points of $B_{1}^{s}(\delta)$ is contained in $A_{1}^{u}(\delta)$ and the other is contained in $G^{u}(\delta)$.
It follows that  
$B_{1}^{s}(\delta)\cap A_{1}^{u}(\delta)\neq \emptyset$, 
$B_{1}^{s}(\delta)$ is not contained in a gap of $A_{1}^{u}(\delta)\cap K_{m}^{u}(\delta)$ and 
$A_{1}^{u}(\delta)$ is not contained in a gap of $B_{1}^{s}(\delta)\cap K_{\Lambda}^{s}(\delta)$. 
This implies that $(B_{1}^{s}(\delta), A_{1}^{u}(\delta))$ is a  linked pair.
By Lemmas \ref{lem_LL2}-(\ref{LL3}) and \ref{lem.bdp1},
$$|B_{1}^{s}(\delta)\cap A_{1}^{u} (\delta)|=|B_{1}^{s}(\delta)|+\frac{|G^{s}(\delta)|}{2}-\frac{|G^{u}(\delta)|}{2}>|B_{1}^{s}(\delta)|-\frac{|G^{u}(\delta)|}{2}\geq
\frac{|\widehat{B}^{s}_{1}(\delta)|}{r_{s+}}-\frac{|G^{u}(\delta)|}{2}.$$
Since $\min\{|B_{1}^{s}(\delta)|, |A_{1}^{u}(\delta)|\}=|B^{s}_{1}(\delta)|$, 
we have by (\ref{def.xi_0}) 
\begin{align*}
\frac{|B_{1}^{s}(\delta)\cap A_{1}^{u}(\delta)|}{\min\{|B_{1}^{s}(\delta)|, |A_{1}^{u}(\delta)|\}}
& =
\frac{|B_{1}^{s}(\delta)\cap A_{1}^{u}(\delta)|}{|B_{1}^{s}(\delta)|}
\geq 
\left(\frac{|\widehat{B}^{s}_{1}(\delta)|}{r_{s+}}-\frac{|G^{u}(\delta)|}{2}\right) \frac{r_{s-}}{|\widehat{B}^{s}_{1}(\delta)|}\\
& \geq
\left(\frac{1}{r_{s+}}-\frac{1}{2\tau^{1/2}}\right)r_{s-} =\xi_{0}.
\end{align*}
\smallskip

In the case (b), 
it is immediately seen that $B_{1}^{s}(\delta) \cap A_{1}^{u}(\delta)\neq \emptyset$ and 
$A_{1}^{u}(\delta)$ is not contained in a gap of $B_{1}^{s}(\delta)\cap K_{\Lambda}^{s}(\delta)$.
Moreover, we will show that   
$B_{1}^{s}(\delta)$ is not contained in a gap of $A_{1}^{u}(\delta)\cap K_{m}^{u}(\delta)$ by contradiction. 
Suppose that 
there would exist a gap $G_{1}^{u}(\delta)$ of $A_{1}^{u}(\delta)\cap K_{m}^{u}(\delta)$ 
with  
$G_{1}^{u}(\delta)\supset B_{1}^{s}(\delta)$. This implies that 
there is a $u$-bridge $A_{*}^{u}(\delta)$ which is adjacent to $G_{1}^{u}(\delta)$ and 
contained in $G^{s}(\delta)$. Thus, 
$$
\frac{|A_{*}^{u}(\delta)|}{|G^{s}(\delta)|}\frac{|B_{1}^{s}(\delta)|}{|G_{1}^{u}(\delta)|}=\frac{|A_{*}^{u}(\delta)|}{|G^{s}(\delta)|}\frac{|B_{1}^{s}(\delta)|}{|G_{1}^{u}(\delta)|}<1.
$$
On the other hand, we have from (\ref{eqn_rem_BA})
$$
\frac{|A_{*}^{u}(\delta)|}{|G_{1}^{u}(\delta)|}\frac{|B_{1}^{s}(\delta)|}{|G^{s}(\delta)|}>\tau(B^s(\delta)\cap K_{\Lambda}^{s}(\delta))\tau(A^u(\delta)\cap K_{m}^{u}(\delta))>1.
$$
This is a contradiction. Hence, we conclude that $B_{1}^{s}(\delta)$ is not contained in a gap of $A_{1}^{u}(\delta)\cap K_{m}^{u}(\delta)$.
Since $B_{1}^{s}(\delta)\subset A^{u}_{1}(\delta)$,  one has 
$$
\frac{|B_{1}^{s}(\delta)\cap A_{1}^{u}(\delta)|}{\min\{|B_{1}^{s}(\delta)|, |A_{1}^{u}(\delta)|\}}= 
\frac{|B_{1}^{s}(\delta)|}{|B_{1}^{s}(\delta)|}=1>\xi_{0}.
$$
This completes the proof of (\ref{LL4}).
\end{proof}

\section{Linear growth property of linked pairs}

As in the preceding section, any $s$-bridge $B^s(\delta)$ here means a bridge with respect to $K_{\Lambda}^s(\delta)$ and 
any $u$-bridge $A^u(\delta)$ means a bridge with respect to $K_{m}^u(\delta)$ for any $\delta$ with 
$|\delta|<\varepsilon$. 
The main result of this section is as follows:

\begin{lemma}[Linear Growth Lemma]\label{lem_LG1}
Let $\xi_{0}$ be the constant of (\ref{def.xi_0}) 
and 
let $(B^{s}(0), A^{u}(0))$ be  a linked pair in $\widetilde L\cap \mathcal{B}_{\delta_0/2}$. 
For any $0<\varepsilon<|B^{s}(0)\cap A^{u}(0)|$, there exist  
a constant $\Delta$ with $|\Delta|<\varepsilon\tau^{-3/4}/2$, 
collections of sub-bridges $\{B^{s}_{k}\}_{k\geq 1}$ of $B^{s}(0)$ and $\{A^{u}_{k}\}_{k\geq 1}$ of $A^{u}(0)$, positive integers $N_{s}$ and $N_{u}$ independent of $\varepsilon$
which satisfy the following {\rm (\ref{LG_1})--(\ref{LG_3})} for every $k\geq 1$.
\begin{enumerate}[\rm (1)]
\item
\label{LG_1} $(B^{s}_{k}(\Delta), A^{u}_{k}(\Delta))$ is a $u$-dominating $\xi_{0}/2$-linked pair.
\item
\label{LG_2}
For the union 
\begin{equation}\label{eqn_Ikf}
I_k=A_k^u(\Delta)\cup B_k^s(\Delta),
\end{equation}
which is an arc in $\widetilde L(\Delta)$, 
there exists a positive constant $\alpha_0$ independent of $k$ such that, for any integer $l>k$,  
the $\alpha_0|A_k^u(\Delta)|$-neighborhood of $I_k$ in $\widetilde L(\Delta)$ is disjoint from 
the $\alpha_0|A_{l}^u(\Delta)|$-neighborhood of $I_{l}$ in $\widetilde L(\Delta)$.
\item
\label{LG_3} If $n_{k}$ and $i_{k}$ are generations of $B^{s}_{k}$ and $A^{u}_{k}$, respectively, then
$$
n_{k}< n_{k+1}\leq n_{k}+N_{s},\quad 
i_{k}< i_{k+1}\leq i_{k}+N_{u}.
$$
\end{enumerate}
\end{lemma}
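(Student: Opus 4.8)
The plan is to build the collections $\{B^s_k\}$ and $\{A^u_k\}$ inductively by iterating the Linking Lemma (Lemma \ref{lem_LL1}), and to choose the total shift $\Delta$ as the convergent sum of the local shifts produced at each stage. Concretely, start from the given linked pair $(B^s(0),A^u(0))$ and set $\varepsilon_1=\varepsilon$. Suppose inductively that we have a linked pair $(B^s_k,A^u_k)$ together with a tolerance $\varepsilon_k>0$. Apply Lemma \ref{lem_LL1} with $(B^s(0),A^u(0))$ replaced by $(B^s_k,A^u_k)$ and $\varepsilon=\varepsilon_k$ to obtain sub-bridges $B^s_{k+1},\widetilde B^s_{k+1}\subset B^s_k$, $A^u_{k+1},\widetilde A^u_{k+1}\subset A^u_k$, gaps $G^s,G^u$, a parameter $\delta_{k+1}$ with $|\delta_{k+1}|<\varepsilon_k$, and the properties (1)--(4) of that lemma. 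The key point is that (\ref{LL4}) gives a $\xi_0$-linked pair \emph{for the specific value $\delta=\delta_{k+1}$}, whereas (\ref{LL5}) is valid for \emph{all} $|\nu|<\varepsilon_k$; we record $\delta_{k+1}$ and pass the slightly smaller tolerance $\varepsilon_{k+1}:=c\,\tau^{-3/4}\varepsilon_k$ for a suitable absolute constant $c\in(0,1)$ (forced by (\ref{LL6}): the new bridges have length comparable to $\tau^{-3/4}\varepsilon_k$, so the admissible shift-tolerance shrinks by that factor). Setting $\Delta:=\sum_{j\geq 1}\delta_j$, the geometric decay $|\delta_j|<\varepsilon_{j-1}=c^{j-2}\tau^{-3(j-2)/4}\varepsilon$ guarantees $|\Delta|<\varepsilon\tau^{-3/4}/2$ provided $c$ and the first tolerance are chosen appropriately.

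The remaining work is to check that the bridges, evaluated at the single fixed parameter $\Delta$ rather than at their own $\delta_{k+1}$, still satisfy (\ref{LG_1})--(\ref{LG_3}). For (\ref{LG_3}): each stage increases the generation of the $s$-bridge by exactly $l+1-l=1$ plus the number of generations consumed inside Lemma \ref{lem_LL2} to reach $\widehat B^s_1$; this number is bounded because the bridge lengths shrink by a fixed factor $\tau^{-3/4}\varepsilon_k/\varepsilon_k=\tau^{-3/4}$ each stage while Lemma \ref{lem.bdp1} forces each generation step to shrink lengths by at most $r_{s+}$, so at most $N_s:=\lceil \log(\tau^{3/4})/\log r_{s-}\rceil+O(1)$ generations are used; the analogous bound with Lemma \ref{lem.bdp2} gives $N_u$. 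For (\ref{LG_1}): the shift actually applied to $B^s_{k+1}$ when forming $B^s_{k+2},\dots$ is $\Delta-\delta_{k+1}=\sum_{j\neq k+1,\,j\geq 1}\delta_j$, whose distance from $\delta_{k+1}$ is $|\sum_{j>k+1}\delta_j|=O(\varepsilon_{k+1})$; since $\varepsilon_{k+1}$ is a fixed small fraction of $|B^s_{k+1}\cap A^u_{k+1}|$, the estimates (\ref{eqn_1-Cdelta}) show that moving the shift from $\delta_{k+1}$ to $\Delta$ changes $|B^s_{k+1}(\cdot)\cap A^u_{k+1}(\cdot)|$ by at most a small multiple of $|G^u|$, hence the $\xi_0$-linked pair degrades to a $(\xi_0/2)$-linked pair — this is where the factor $2$ in the statement comes from. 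The $u$-dominating property ($|A^u_{k+1}(\Delta)|\geq |B^s_{k+1}(\Delta)|$) follows from the $\tau^{-1}$-proportionality (\ref{LL5}), which is uniform in $\nu$ and hence holds at $\nu=\Delta$.

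For (\ref{LG_2}), the disjointness-with-margins statement, the plan is to use that $I_{k+1}=A^u_{k+1}(\Delta)\cup B^s_{k+1}(\Delta)$ is contained in $A^u_k(\Delta)$, and in fact in one of the two sub-bridges adjacent to the \emph{leading} gap $G^u$ of $\widehat A^u_k$; hence $I_{k+1}$ is separated from $I_{k+2},\dots$ (all of which lie inside $A^u_{k+1}(\Delta)$, the \emph{other} relevant data being in the complementary side after one more step) by a gap whose length is bounded below by a fixed multiple of $|A^u_{k+1}(\Delta)|$, using the gap estimate $|G^u|\,|A^u_{k+1}|^{-1}\geq 2^{-(m+1)}$ from Lemma \ref{lem.bdp2}-(\ref{bdp2_tilde AA}) together with Remark \ref{rem_BA}. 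One then sets $\alpha_0$ to be a small fraction of this lower bound, and telescopes: the $\alpha_0|A^u_k(\Delta)|$-neighborhood of $I_k$ stays on its side of the leading gap of $\widehat A^u_{k-1}$, while all later $I_l$ with their $\alpha_0|A^u_l(\Delta)|$-neighborhoods stay on the other side, because $|A^u_l(\Delta)|$ decays geometrically and $\alpha_0$ was chosen small. \textbf{The main obstacle} I expect is precisely this step (\ref{LG_2}): one must track carefully \emph{which} of the four sub-bridges $A^u_{k+1},\widetilde A^u_{k+1},B^s_{k+1},\widetilde B^s_{k+1}$ is used to continue the induction and verify that the continuation always lands strictly inside one component of the complement of the leading gap, so that the connecting gaps provide genuine, quantitatively controlled buffers; the interplay between the $s$-side gap $G^s$ and $u$-side gap $G^u$ (which by (\ref{LLGG}) share a middle point but need not be nested the same way — cases (a) and (b) in the proof of Lemma \ref{lem_LL1}) has to be handled uniformly, and one must also confirm that replacing each bridge's own parameter $\delta_{k+1}$ by the common $\Delta$ does not destroy these separations, again via (\ref{eqn_1-Cdelta}).
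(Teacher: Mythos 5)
Your overall strategy is the paper's: iterate the Linking Lemma (Lemma \ref{lem_LL1}), set $\Delta=\sum_j\delta_j$ with geometrically decaying tolerances so that $|\Delta|<\varepsilon\tau^{-3/4}/2$, bound the generation jumps via Lemmas \ref{lem.bdp1} and \ref{lem.bdp2} to get $N_s$, $N_u$, and recover (\ref{LG_1}) at the common parameter $\Delta$ by showing that the tail $\sum_{j>k}\delta_j$ is a small fraction of $|B_k^s|$ (via bounded distortion and (\ref{eqn_1-Cdelta})), which is exactly how the paper degrades $\xi_0$ to $\xi_0/2$; in the paper this bookkeeping is made uniform over all earlier levels $t\leq k$ through the decreasing thresholds $\xi_{k-t}$ of Lemma \ref{lem_LG2}, but your sketch of that part is essentially sound.

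The genuine gap is in (\ref{LG_2}), and it is created by your choice of how to continue the induction. You apply Lemma \ref{lem_LL1} at stage $k+1$ to the pair $(B_k^s,A_k^u)$ itself, so that $B_{k+1}^s,\widetilde B_{k+1}^s\subset B_k^s$ and $A_{k+1}^u,\widetilde A_{k+1}^u\subset A_k^u$. With that choice $I_{k+1}=A_{k+1}^u(\Delta)\cup B_{k+1}^s(\Delta)\subset A_k^u(\Delta)\cup B_k^s(\Delta)=I_k$, so the $\alpha_0|A_k^u(\Delta)|$-neighborhood of $I_k$ \emph{contains} $I_{k+1}$ and the disjointness in (\ref{LG_2}) is false no matter how small $\alpha_0$ is; your proposed remedy (separating later arcs from $I_k$ by the leading gap of $\widehat A_k^u$) cannot work while the continuation stays inside $A_k^u$ and $B_k^s$. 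The paper's device, which you flag as ``the main obstacle'' but do not supply, is to continue the induction in the \emph{complementary} bridges: the stage-$(k+1)$ application of Lemma \ref{lem_LL1} is made to $(\widetilde B_k^s(\Delta_k),\widetilde A_k^u(\Delta_k))$, so that $I_{k+1},I_{k+2},\dots$ all lie in $\widetilde B_k^s(\Delta)\cup\widetilde A_k^u(\Delta)$, on the far side of the connecting gaps $G_k^s$, $G_k^u$ from $I_k$. The quantitative buffer then comes from Lemma \ref{lem_LL1}-(\ref{LLGG}) (the gaps share a middle point at parameter $\Delta_k$), the gap estimates $|G_k^s(\Delta)|\geq(1-2r_{s-}^{-1})|B_k^s(\Delta)|$ and $|A_k^u(\Delta)|\leq 2^{m+1}|G_k^u(\Delta)|$, and the choice of the constant $a$ in (\ref{eqn_const_a}) which forces $|\Delta-\Delta_k|\leq|G_k^s(\Delta)|/8$ as in (\ref{eqn_D-D1}); only then does a fixed $\alpha_0$ keep the two neighborhoods on opposite sides of the (image of the) common midpoint $\boldsymbol{x}_k$. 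As written, your induction scheme would have to be changed to the complementary-bridge continuation before (\ref{LG_2}) — and hence the lemma — can be proved.
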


Here it is crucial that $\Delta$ is an arbitrarily small constant independent of $k$.
Lemma \ref{lem_LG1} follows immediately from the next technical lemma.

\begin{lemma}\label{lem_LG2}
Under the assumptions same as in Lemma \ref{lem_LG1}, there exist sequences 
\begin{itemize}
\item $\{n_{k}\}_{k\geq 1}$, $\{i_{k}\}_{k\geq 1}$ of positive integers; 
\item  $\{ \delta_{k}\}_{k\geq 1}$ of real numbers with  
$$
|\delta_{k}|
\leq 
2^{-1}\xi_{0}\tau^{-3/4}\varepsilon r_{s-}^{-k};
$$
\item $\{B^{s}_{k}\}_{k\geq 1}$, $\{\widetilde{B}^{s}_{k}\}_{k\geq 1}$ of 
$s$-bridges of generation $n_{k}$ with $B^{s}_{k}, \widetilde{B}^{s}_{k}\subset \widetilde{B}^{s}_{k-1}$, $\widetilde B_0^s=B^s(0)$ 
which have the connecting gaps $G_k^s=\mathrm{Gap}(B^{s}_{k}, \widetilde{B}^{s}_{k})$;
\item $\{A^{u}_{k}\}_{k\geq 1}$, $\{\widetilde{A}^{u}_{k}\}_{k\geq 1}$ of 
$u$-bridges of generation $i_{k}$ with $A^{u}_{k}, \widetilde{A}^{u}_{k}\subset \widetilde{A}^{u}_{k-1}$, $\widetilde A_0^u=A^u(0)$ 
which have the connecting gaps $G_k^u=\mathrm{Gap}(A^{u}_{k}, \widetilde{A}^{u}_{k})$;
\end{itemize}
satisfying the following {\rm (\ref{LG1-1})}--{\rm (\ref{LG1-3})} for each $k\geq 1$.
\begin{enumerate}[\rm (1)]
\item \label{LG1-1}
For any $t=1,\ldots, k$ and the positive number $\xi_{k-t}$ defined as (\ref{eqn_xi_k0}), both 
$(B^{s}_{t}(\Delta_{k}), A^{u}_{t}(\Delta_k))$ and 
$(\widetilde B^{s}_{t}(\Delta_{k}), \widetilde A^{u}_{t}(\Delta_k))$ are $u$-dominating $\xi_{k-t}$-linked pairs, 
where $\Delta_k=\delta_{1}+\cdots+\delta_{k}$.
\item
\label{LG1-GG}
$G_k^u(\Delta_k)$ and $G_k^s(\Delta_k)$ have a common middle point.
\item  \label{LG1-3}
There exist integers $1\leq N_{s}$, $N_{u}<\infty $  independent of  $k$ such that 
$$
n_{k}<n_{k+1}\leq n_{k}+N_{s},\quad 
i_{k}< i_{k+1}\leq i_{k}+N_{u}.
$$
Moreover, $\Delta:=\sum_{k=1}^{\infty}  \delta_{k}$ is an absolutely convergent series 
with $\Delta_*:=\sum_{k=1}^\infty|\delta_k|<\varepsilon\tau^{-3/4}/2$. 
\end{enumerate}
\end{lemma}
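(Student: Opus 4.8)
The plan is to build the data $(n_k,i_k,\delta_k,B^s_k,\widetilde B^s_k,A^u_k,\widetilde A^u_k)$ by induction on $k$, feeding the output of step $k$ back into Lemma \ref{lem_LL1} at step $k+1$. To start, apply Lemma \ref{lem_LL1} to the linked pair $(B^s(0),A^u(0))$ with the given $\varepsilon$: this yields $B^s_1,\widetilde B^s_1,A^u_1,\widetilde A^u_1$, the gaps $G^s_1,G^u_1$ with a common middle point (by item (\ref{LLGG})), a shift $\delta_1$ with $|\delta_1|<\varepsilon$ for which $(B^s_1(\delta_1),A^u_1(\delta_1))$ and $(\widetilde B^s_1(\delta_1),\widetilde A^u_1(\delta_1))$ are $\xi_0$-linked, and (via item (\ref{LL5})) $\tau^{-1}$-proportionality, which gives $u$-dominance. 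The generations $n_1,i_1$ are then fixed. Crucially, by item (\ref{LL6}) of Lemma \ref{lem_LL1}, $|B^s_1(\nu)|<r_{s-}^{-1}\tau^{-3/4}\varepsilon$ for all $|\nu|<\varepsilon$, which is the seed of the geometric bound on $|\delta_k|$.

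For the inductive step, suppose the data through generation $k$ has been constructed, with $\Delta_k=\delta_1+\cdots+\delta_k$ and with $(\widetilde B^s_k(\Delta_k),\widetilde A^u_k(\Delta_k))$ a $u$-dominating linked pair. Set $\varepsilon_{k+1}:=|\widetilde B^s_k(\Delta_k)\cap \widetilde A^u_k(\Delta_k)|$, which by the linking property and item (\ref{LL6}) applied at stage $k$ is comparable to $|B^s_k|$, hence bounded above by (a constant times) $r_{s-}^{-k}\tau^{-3/4}\varepsilon$ using Lemma \ref{lem.bdp1} iterated $k$ times. Apply Lemma \ref{lem_LL1} again, now to the linked pair $(\widetilde B^s_k(\Delta_k),\widetilde A^u_k(\Delta_k))$ inside $\widetilde L(\Delta_k)$ with this $\varepsilon_{k+1}$: this produces the next-generation bridges $B^s_{k+1},\widetilde B^s_{k+1}\subset\widetilde B^s_k$, $A^u_{k+1},\widetilde A^u_{k+1}\subset\widetilde A^u_k$, gaps with a common middle point, and a further shift $\delta_{k+1}$ with $|\delta_{k+1}|<\varepsilon_{k+1}$, which, combined with the bound on $\varepsilon_{k+1}$, yields $|\delta_{k+1}|\le 2^{-1}\xi_0\tau^{-3/4}\varepsilon\, r_{s-}^{-(k+1)}$ after absorbing the universal constants (here one uses $r_{s-}>2$ from Lemma \ref{lem.bdp1}, so $r_{s-}^{-1}<1/2$, to beat the constant $C$ from (\ref{eqn_1-Cdelta}) into the stated form). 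The linking at parameter $\Delta_{k+1}$, rather than just $\delta_{k+1}$ measured from $\Delta_k$, is where the $\xi_{k-t}$-bookkeeping of item (\ref{LG1-1}) enters: each previously linked pair $(B^s_t(\Delta_k),A^u_t(\Delta_k))$ is perturbed by the additional horizontal amount $\delta_{k+1}$, and by (\ref{eqn_1-Cdelta}) the overlap $|B^s_t\cap A^u_t|$ shrinks by at most $|\delta_{k+1}|+O(\delta_{k+1}^2)$ plus a multiplicative $(1\pm C|\delta_{k+1}|)$ factor; since $|B^s_t|$ is itself bounded below by a geometric quantity $\gtrsim r_{s+}^{-(k-t)}\varepsilon$, one defines $\xi_{k-t}$ recursively (the formula (\ref{eqn_xi_k0}) the excerpt refers to) as $\xi_0$ times a convergent product $\prod_{j}(1-C\,r_{s+}^{j}|\delta_{k+1-j}|/\dots)$ that stays bounded away from $0$, and in particular $\ge \xi_0/2$ — which is exactly what Lemma \ref{lem_LG1}(\ref{LG_1}) needs. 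The generation bounds $n_k<n_{k+1}\le n_k+N_s$, $i_k<i_{k+1}\le i_k+N_u$ are read off directly from Lemma \ref{lem_LL1}: the sub-bridges it produces are of generation exactly one more than the auxiliary $\widehat B^s_1,\widehat A^u_1$ of Lemma \ref{lem_LL2}, whose generations exceed those of the input bridges by a bounded amount because of the two-sided bounds (\ref{LL2}),(\ref{LL3}) together with the uniform bounded-distortion estimates (\ref{eqn_bdp1}) and Lemma \ref{lem.bdp2}(\ref{bdp2_AA}); so $N_s,N_u$ depend only on $r_{s\pm}$, $\tau$ and $m$, not on $k$ or $\varepsilon$.

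Finally, absolute convergence of $\Delta=\sum_k\delta_k$ with $\Delta_*=\sum_k|\delta_k|<\varepsilon\tau^{-3/4}/2$ follows by summing the geometric bound: $\sum_{k\ge1}2^{-1}\xi_0\tau^{-3/4}\varepsilon\,r_{s-}^{-k}=2^{-1}\xi_0\tau^{-3/4}\varepsilon\,\frac{r_{s-}^{-1}}{1-r_{s-}^{-1}}\le \tau^{-3/4}\varepsilon/2$ since $\xi_0<1$ and $r_{s-}>2$ gives $r_{s-}^{-1}/(1-r_{s-}^{-1})\le 1$. Then $\Delta_k\to\Delta$, and because all the bridges and gaps depend continuously (indeed $C^1$) on the shift parameter via (\ref{eqn_1-Cdelta}) and the Implicit Function Theorem applied to $\widetilde L(\delta)$, the linking and common-middle-point properties, which hold at every $\Delta_k$ with uniform constants $\xi_{k-t}\ge\xi_0/2$, pass to the limit $\Delta$ — this is the step producing Lemma \ref{lem_LG1}(\ref{LG_1}). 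Item (\ref{LG_2}) of Lemma \ref{lem_LG1}, the quantitative disjointness of neighborhoods of the nested arcs $I_k$, then comes from the gap $G^s_k(\Delta)$ (equivalently $G^u_k(\Delta)$, same middle point) separating $I_{k+1}\cup I_{k+2}\cup\cdots$ from the rest of $I_k$, its size being comparable to $|A^u_k(\Delta)|$ by Lemma \ref{lem.bdp2}(\ref{bdp2_tilde AA}); one takes $\alpha_0$ a small fixed multiple of $2^{-(m+1)}$.

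The main obstacle I anticipate is the uniformity of the linking constants across all stages simultaneously — i.e., verifying that the recursively defined $\xi_{k-t}$ never drop below $\xi_0/2$. This requires the geometric decay of $|\delta_k|$ to dominate the geometric growth $r_{s+}^{k-t}$ of the reciprocal lower bounds on $|B^s_t|$, so that the total multiplicative/additive damage inflicted on the $t$-th linked pair by all later shifts $\delta_{t+1},\delta_{t+2},\dots$ is a convergent series with small sum; this is exactly why the bound on $|\delta_k|$ must carry the factor $r_{s-}^{-k}$ with $r_{s-}>r_{s+}^{1/2}$ — wait, in fact one only has $r_{s-}\le r_{s+}$, so the relevant inequality is $r_{s-}>2\ge$ (something controlling $r_{s+}/r_{s-}$-type ratios), and the precise matching of these constants, already prepared in (\ref{cond.Ku}) through $\tau>r_{s+}^2$ and $\tau>3^8$, is the delicate point to get right. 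The rest is bookkeeping with the bounded-distortion lemmas.
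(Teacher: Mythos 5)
Your overall architecture is the same as the paper's (iterate Lemma \ref{lem_LL1} inductively, record the generations, keep track of a decreasing linking constant $\xi_{k-t}$, and sum a geometric series for $\Delta$), but there is a genuine gap at the quantitative core, and it is exactly the point you flag but do not resolve at the end. You take the new perturbation scale to be $\varepsilon_{k+1}:=|\widetilde B_k^s(\Delta_k)\cap \widetilde A_k^u(\Delta_k)|$, i.e.\ the full overlap, so the next shift only satisfies $|\delta_{k+1}|<\varepsilon_{k+1}$, which is comparable to $|\widetilde B_k^s(\Delta_k)|$ and hence to $|B_k^s(\Delta_k)|$ itself (two bridges of the same generation inside $\widetilde B_{k-1}^s$ have comparable lengths). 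By (\ref{eqn_1-Cdelta}) the shift $\delta_{k+1}$ can reduce the overlap of the pair $(B_k^s,A_k^u)$ by roughly $|\delta_{k+1}|$, which with your choice can be a definite fraction of $|B_k^s|$: already the $t=k$ pair can lose a non-small portion of its linking proportion at a single step, and your attempt to control the cumulative damage to the $t$-th pair by comparing the absolute bound $|\delta_{k+1}|\lesssim \varepsilon r_{s-}^{-k}$ against a lower bound $|B_t^s|\gtrsim r_{s+}^{-(\cdot)}\varepsilon$ cannot close, precisely because $r_{s-}\le r_{s+}$ (the ratio $(r_{s+}^{N_s}/r_{s-})^{t}$ you would need to sum is unbounded); the constants in (\ref{cond.Ku}) such as $\tau>r_{s+}^2$ do not rescue this.

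The paper's resolution is different in a small but decisive way: $\delta_{k+1}$ is chosen \emph{relative to the current innermost bridge}, namely $|\delta_{k+1}|\le \varepsilon_k:=\xi_0|\widetilde B_k^s(\Delta_k)|/(2ar_{s+})$ with the fixed constant $a$ of (\ref{eqn_const_a}). Then the damage to the $t$-th pair is estimated through the nested chain $\widetilde B_k^s\subset\widetilde B_{k-1}^s\subset\cdots\subset\widetilde B_{t-1}^s\supset B_t^s$ using only the one-sided distortion bound of Lemma \ref{lem.bdp1}: $|\widetilde B_k^s|\le r_{s-}^{-(n_k-n_{t-1})}|\widetilde B_{t-1}^s|$ and $|\widetilde B_{t-1}^s|\le r_{s+}|B_t^s|$, so that $2|\delta_{k+1}|\,|B_t^s(\Delta_{k+1})|^{-1}\le \tfrac{\xi_0}{2}r_{s-}^{-(n_k-n_{t-1})}$; no $r_{s+}$-growth in $k-t$ ever appears, and summing these losses over all later steps is exactly what the definition (\ref{eqn_xi_k0}) of $\xi_k$ encodes, with $r_{s-}>2$ giving the uniform bound $\xi_{k-t}\ge \xi_0/2$. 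This relative choice also yields the stated bound $|\delta_k|\le 2^{-1}\xi_0\tau^{-3/4}\varepsilon r_{s-}^{-k}$ directly (your scheme would at best give the bound without the factor $\xi_0/2$, a minor point), and the constant $a$ is what later makes Lemma \ref{lem_LG1}-(\ref{LG_2}) work. The rest of your plan (common middle points from Lemma \ref{lem_LL1}-(\ref{LLGG}), generation bounds $N_s$, $N_u$ from (\ref{LL6}), (\ref{LL5}) and the distortion lemmas, geometric summation of $\Delta$) matches the paper, but without the corrected choice of $\varepsilon_k$ the induction for item (\ref{LG1-1}) does not go through.
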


Let $\{n_k\}_{k=1}^\infty$ be the strictly increasing sequence of generations given in Lemma \ref{lem_LG2} 
and $n_0=0$.
For any integer $k\geq 1$, let
\begin{equation}\label{eqn_xi_k0}
\xi_{k}:=\xi_{0}\Bigl(1-\frac1{2}\sum_{i=1}^{k}r_{s-}^{-\widetilde n_{i}}\Bigr),
\end{equation}
where $\{\widetilde n_{i}\}_{i=1}^\infty$ is the sequence defined by 
$$\widetilde n_i=\inf\{n_{i+l}-n_l\,;\, l=0,1,2,\dots\}.$$
Since $n_{l+1}\geq n_l+1$, we have $\widetilde n_i\geq i$ for any $i\geq 1$.
The inequality $r_{s-}>2$ of Lemma \ref{lem.bdp1} implies
\begin{equation}\label{eqn_xi_k}
\xi_k\geq \xi_{0}\Bigl(1-\frac1{2}\sum_{i=1}^{k}r_{s-}^{-i}\Bigr)
\geq \xi_0\Bigl(1-\frac1{2}\frac1{r_{s-}-1}\Bigr) \geq \xi_0\Bigl(1-\frac12\Bigr)=\frac{\xi_0}2>0.
\end{equation}

\begin{proof}[Proof of Lemma \ref{lem_LG2}]
Applying Lemma \ref{lem_LL1} to the linked pair
$(B^{s}(0), A^{u}(0))$, 
we obtain a 
constant $\delta_{1}$ with $|\delta_1|<\varepsilon$, sub-bridges 
$B^{s}_{1}, \widetilde{B}^{s}_{1}$ of $B^{s}(0)$ and 
$A_{1}^{u},  \widetilde{A}^{u}_{1}$ of $A^{u}(0)$
such that 
$(B^{s}_{1}(\delta_{1}), A^{u}_{1}(\delta_1))$ 
and 
$(\widetilde{B}^{s}_{1}(\delta_1), \widetilde{A}^{u}_{1}(\delta_1))$ are $u$-dominating $\xi_{0}$-linked pairs 
and the connecting gaps $G_1^s(\delta_1)$ and $G_1^u(\delta_1)$ have a common middle point.
Let $a$ be the constant defined as  
\begin{equation}\label{eqn_const_a}
a=\max\left\{2,\ \frac{4\xi_0(1+C\varepsilon)}{r_{s+}(1-r_{s-}^{-1})(1-2r_{s-}^{-1})}
\right\},
\end{equation}
which will be used later to prove Lemma \ref{lem_LG1}-(\ref{LG_2}).
Again applying Lemma \ref{lem_LL1} to the linked pair
$(\widetilde B_1^{s}(\delta_1), \widetilde A_1^{u}(\delta_1))$ for $\varepsilon_1:=\xi_0|\widetilde B_1^s|/2ar_{s+}$ instead of $\varepsilon$,
we obtain a 
constant $\delta_{2}$ with $|\delta_2|\leq \varepsilon_1$, sub-bridges 
$B^{s}_{2}(\delta_1), \widetilde{B}^{s}_{2}(\delta_1)$ of $\widetilde B_1^{s}(\delta_1)$ and 
$A_{2}^{u}(\delta_1),  \widetilde{A}^{u}_{2}(\delta_1)$ of $\widetilde A_1^{u}(\delta_1)$
such that 
$(B^{s}_{2}(\Delta_{2}), A^{u}_{2}(\Delta_2))$ 
and 
$(\widetilde{B}^{s}_{2}(\Delta_2), \widetilde{A}^{u}_{2}(\Delta_2))$ are $u$-dominating $\xi_{0}$-linked pairs 
and the connecting gaps $G_2^s(\Delta_2)$ and $G_2^u(\Delta_2)$ have a common middle point.
Similarly, for any $k\geq 2$, there exist a constant $\delta_{k}$ with $|\delta_k|\leq \varepsilon_{k-1}:=\xi_0|\widetilde B_{k-1}^s|/2ar_{s+}$, sub-bridges 
$B^{s}_{k}(\Delta_{k-1}), \widetilde{B}^{s}_{k}(\Delta_{k-1})$ of $\widetilde B_{k-1}^{s}(\Delta_{k-1})$ and 
$A_{k}^{u}(\Delta_{k-1}),  \widetilde{A}^{u}_{k}(\Delta_{k-1})$ of $\widetilde A_{k-1}^{u}(\Delta_{k-1})$
such that 
$(B^{s}_{k}(\Delta_k), A^{u}_{k}(\Delta_k))$ 
and 
$(\widetilde{B}^{s}_{k}(\Delta_k), \widetilde{A}^{u}_k(\Delta_k))$ are $u$-dominating $\xi_{0}$-linked pairs 
and the connecting gaps $G_k^s(\Delta_k)$ and $G_k^u(\Delta_k)$ have a common middle point.
This shows (\ref{LG1-GG}).

Now we will show that, for any $k\geq 1$, 
$(B^{s}_{t}(\Delta_k), A^{u}_{t}(\Delta_k))$ 
is a $u$-dominating $\xi_{k-t}$-linked pair for each $t=1,\ldots, k$.
Suppose that the assertion holds until the $k$-th step and consider the $(k+1)$-st step.
When $t=k+1$, the proof is already done.
So we may suppose that $t\leq k$.
Since $(B^{s}_{t}(\Delta_k), A^{u}_{t}(\Delta_k))$ is a $u$-dominating 
$\xi_{k-t}$-linked pair, 
$$
|B^{s}_{t}(\Delta_k)\cap A^{u}_{t}(\Delta_k)|
\geq \xi_{k-t} |B^{s}_{t}(\Delta_k)|.
$$
By this inequality together with (\ref{eqn_1-Cdelta}), 
\begin{align*}
|B_t^s(\Delta_{k+1})\cap A_t^u(\Delta_{k+1})|&\geq (1-C|\delta_{k+1}|\,)\bigl(\,|B_t^s(\Delta_{k})\cap A_t^u(\Delta_{k})|-|\delta_{k+1}|\,\bigr)-O(\delta_{k+1}^2)\\
&\geq (1-C|\delta_{k+1}|\,)\bigl(\,\xi_{k-t}|B_t^s(\Delta_{k})|-|\delta_{k+1}|\,\bigr)-O(\delta_{k+1}^2)\\
&\geq (1-C|\delta_{k+1}|\,)\left(\xi_{k-t}\frac{|B_t^s(\Delta_{k+1})|}{1+C|\delta_{k+1}|}-|\delta_{k+1}|\right)-O(\delta_{k+1}^2)\\
&=\xi_{k-t}|B_t^s(\Delta_{k+1})|-\left(1+\frac{2C\xi_{k-t}|B_t^s(\Delta_{k+1})|}{1+C|\delta_{k+1}|}
+O(\delta_{k+1})\right)|\delta_{k+1}|.
\end{align*}
Since $|B_t^s(\Delta_{k+1})|<\varepsilon$ by Lemma \ref{lem_LL1} and $\xi_{k-t}<1$ by (\ref{eqn_xi_k0}), 
one can choose $\varepsilon>0$ so that the contribution of the last parenthesis is smaller than two.
Then 
\begin{align*}
|B^{s}_{t}(\Delta_{k+1})\cap A^{u}_{t}(\Delta_{k+1})|
&\geq \xi_{k-t} |B^{s}_{t}(\Delta_{k+1})|-2|\delta_{k+1}|\\
&=(\xi_{k-t} -2|\delta_{k+1}|\,|B^{s}_{t}(\Delta_{k+1})|^{-1})|B^{s}_{t}(\Delta_{k+1})|.
\end{align*}
Let $n_k$ be the generation of $\widetilde B_k^s$.
By Lemma \ref{lem.bdp1}, $|\widetilde B_k^s(\Delta_{k+1})|\leq r_{s-}^{-(n_k-n_{t-1})}|\widetilde B_{t-1}^s(\Delta_{k+1})|$ 
and $|\widetilde B_{t-1}^s(\Delta_{k+1})|\leq r_{s+}|B_t^s(\Delta_{k+1})|$.
Since $a\geq 2$ by (\ref{eqn_const_a}),
\begin{align*}
2|\delta_{k+1}|\,|B_t^s(\Delta_{k+1})|^{-1}&\leq 2\frac{\xi_0|\widetilde B_k^s(\Delta_{k+1})|}{2ar_{s+}} |B_t^s(\Delta_{k+1})|^{-1}\\
&\leq 
\frac{\xi_0 r_{s-}^{-(n_k-n_{t-1})}|\widetilde B_{t-1}^s(\Delta_{k+1})|}{ar_{s+}}r_{s+}|\widetilde B_{t-1}^s(\Delta_{k+1})|^{-1}\\
&\leq \frac{\xi_0 r_{s-}^{-(n_k-n_{t-1})}}2.
\end{align*}
Then
\begin{align*}
\xi_{k-t} -2|\delta_{k+1}|\,|B^{s}_{t}(\Delta_{k+1})|^{-1}
&\geq \xi_{0}\Bigl(1-\frac1{2}\sum_{i=1}^{k-t}r_{s-}^{-\widetilde n_{i}}\Bigr)-\xi_0 \frac{r_{s-}^{-(n_k-n_{t-1})}}2\\
&= \xi_{0}\Bigl(1-\frac1{2}\Bigl(\sum_{i=1}^{k-t}r_{s-}^{-\widetilde n_{i}}+r_{s-}^{-(n_k-n_{t-1})}\Bigr)\Bigr)\\
&
\geq \xi_{0}\Bigl(1-\frac1{2}\Bigl(\sum_{i=1}^{k-t}r_{s-}^{-\widetilde n_{i}}+r_{s-}^{-\widetilde n_{k+1-t}}\Bigr)\Bigr)=\xi_{k+1-t}.
\end{align*}
Since $\xi_{k+1-t}\geq \xi_0/2$ by (\ref{eqn_xi_k}), 
it follows that $(B^{s}_{t}(\Delta_{k+1}),A^{u}_{t}(\Delta_{k+1}))$ is a $\xi_0$/2-linked pair.
This shows (\ref{LG1-1}).

By Lemma \ref{lem_LL1}-(\ref{LL6}), 
the length of $\widetilde{B}_{k+1}^{s}$ 
 is evaluated as follows:
\begin{equation}\label{eqn.1_in_LGL}
|\widetilde{B}_{k+1}^{s}|
\geq 
r_{s+}^{-1}\tau^{-5/4}\varepsilon_k
\geq
r_{s+}^{-1}\tau^{-5/4}\frac{\xi_{0}|\widetilde{B}^{s}_{k}|}{2r_{s+}}
=2^{-1}r_{s+}^{-2}\tau^{-5/4}\xi_{0}|\widetilde{B}^{s}_{k}|.
\end{equation}
Since the generation of $\widetilde B^{s}_{k}$ is $n_{k}$, by Lemma \ref{lem.bdp1}, 
$$
r_{s-}^{-(n_{k+1}-n_{k})}\geq  |\widetilde B_{k+1}^{s}| |\widetilde B_{k}^{s}|^{-1}.
$$
This implies that
$$n_{k+1}-n_k\leq \dfrac{\log(2^{-1}r_{s+}^{-2}\tau^{-5/4}\xi_{0})}{\log(r_{s-}^{-1})}.$$
Thus, the maximum integer $N_s$ not greater than the right hand side of 
this inequality satisfies $n_{k+1}\leq n_k+N_s$ for any $k\geq 1$.  
It follows from  (\ref{eqn.1_in_LGL})  and the proportionality condition in Lemma 
\ref{lem_LL1}-(\ref{LL5}) that 
\begin{align*}
|\widetilde A_{k+1}^{u}|
&\geq 
|\widetilde B_{k+1}^{s}|
\geq 
2^{-1}r_{s+}^{-2}\tau^{-5/4}\xi_{0}|\widetilde B_{k}^{s}|
\geq 
2^{-1}r_{s+}^{-2}\tau^{-5/4}\xi_{0}\tau^{-1}|\widetilde A_{k}^{u}|\\
&=
2^{-1}r_{s+}^{-2}\tau^{-9/4}\xi_{0}|\widetilde A_{k}^{u}|.
\end{align*}
We suppose that the generation of $\widetilde A^{u}_{k}$ is $i_{k}$. 
Since $|\widetilde A_{k+1}^u| |\widetilde A_{k}^u|^{-1}\leq\bigl(\frac23\bigr)^{i_{k+1}-i_k}$ by Lemma \ref{lem.bdp2}-(\ref{bdp2_AA}), 
one has a positive integer $N_{u}$ independent of $k$ and satisfying 
$$
i_{k+1}-i_{k}
\leq 
N_{u}
\leq 
\frac
{\log(2^{-1}r_{s+}^{-2}\tau^{-9/4}\xi_{0})}
{\log\bigl(\frac2{\,3\,}\bigr)}
$$
for any $k\geq 1$.

Since $a\geq 1$, $n_{k-1}-n_1\geq (k-1)-1=k-2$ and $0<\xi_0<1$, it follows from 
Lemmas \ref{lem.bdp1} and \ref{lem_LL1}-(\ref{LL6}) that  
\begin{align*}
|\delta_k|&\leq \frac{\xi_0|\widetilde B_{k-1}^s(\Delta_{k-1})|}{2ar_{s+}}
\leq \frac{\xi_0}{2r_{s+}} r_{s-}^{-n_{k-1}+n_1}(r_{s-}^{-1}\tau^{-3/4}\varepsilon)
\leq \frac{\xi_0}{2r_{s+}} r_{s-}^{-k+2}(r_{s-}^{-1}\tau^{-3/4}\varepsilon)\\
&=\frac{\xi_0}{2r_{s+}r_{s-}^{-1}}r_{s-}^{-k}\tau^{-3/4}\varepsilon
<\frac{\varepsilon}{2}r_{s-}^{-k}\tau^{-3/4}.
\end{align*}
This shows that
$$
\sum_{k=1}^\infty |\delta_k|<\frac{\varepsilon}{2}\tau^{-3/4}\sum_{k=1}^\infty r_{s-}^{-k}
=\frac{\varepsilon}{2}\tau^{-3/4}\frac{1}{r_{s-}-1}<\frac{\varepsilon}{2}\tau^{-3/4}.
$$
In particular, $\Delta=\sum_{k=1}^\infty \delta_k$ is an absolutely convergent series with  $\Delta_*=\sum_{k=1}^\infty |\delta_k|<\varepsilon\tau^{-3/4}/2$.
This shows (\ref{LG1-3}) and completes the proof.
\end{proof}

The proof of (\ref{LG_1}) and (\ref{LG_3}) of Lemma \ref{lem_LG1} is obtained immediately 
from Lemma \ref{lem_LG2}.
So it remains to prove (\ref{LG_2}).

\begin{proof}[Proof of (\ref{LG_2}) of Lemma \ref{lem_LG1}]
Since $A_k^u(\Delta)\cap B_k^s(\Delta)\neq \emptyset$, $I_k=A_k^u(\Delta)\cup B_k^s(\Delta)$ 
is an arc in $\widetilde L(\Delta)$.
The union $\widehat B_k^s(\Delta)=B_k^s(\Delta)\cup G_k^s(\Delta)\cup \widetilde B_k^s(\Delta)$ is the smallest $s$-bridge 
containing $B_k^s(\Delta)$ and $\widetilde B_k^s(\Delta)$.
By Lemma \ref{lem.bdp1}, $|B_k^s(\Delta)|$, $|\widetilde B_k^s(\Delta)|\leq r_{s-}^{-1}|\widehat B_k^s(\Delta)|$.
It follows that
$$
|G_k^s(\Delta)|=|\widehat B_k^s(\Delta)|-(|B_k^s(\Delta)|+|\widetilde B_k^s(\Delta)|)\geq 
|\widehat B_k^s(\Delta)|(1-2r_{s-}^{-1})\geq |\widetilde B_k^s(\Delta)|(1-2r_{s-}^{-1}).$$
Similarly we have $|G_k^s(\Delta)|\geq |B_k^s(\Delta)|(1-2r_{s-}^{-1})$.
As in the proof of Lemma \ref{lem_LG2}, for any integer $l\geq k+1$,
\begin{align*}
|\delta_l|&<\frac{\xi_0|\widetilde B_{l-1}^s(\Delta_{l-1})|}{2ar_{s+}}\leq 
\frac{(1+C\varepsilon)\xi_0|\widetilde B_{l-1}^s(\Delta)|}{2ar_{s+}}\leq
\frac{(1+C\varepsilon)\xi_0}{2ar_{s+}}r_{s-}^{-n_{l-1}+n_k}|\widetilde B_k^s(\Delta)|\\
&\leq \frac{(1+C\varepsilon)\xi_0}{2ar_{s+}}r_{s-}^{-(l-1-k)}|\widetilde B_k^s(\Delta)|.
\end{align*}
Thus, by (\ref{eqn_const_a}), we have
\begin{equation}\label{eqn_D-D1}
|\Delta-\Delta_k|\leq \sum_{l=k+1}^\infty |\delta_l|\leq \frac{(1+C\varepsilon)\xi_0|\widetilde B_k^s(\Delta)|}{2ar_{s+}}\frac1{1-r_{s-}^{-1}}\leq \frac{|G_k^s(\Delta)|}8.
\end{equation}
Define the constant $\alpha_0$ as
$$\alpha_0=\max\left\{\dfrac{1-2r_{s-}^{-1}}{8\tau},\,\frac1{2^{m+3}}\right\}.$$
Then we have
\begin{equation}\label{eqn_AG1}
\alpha_0|A_k^u(\Delta)|\leq \alpha_0\tau |B_k^s(\Delta)|\leq \alpha_0\frac{\tau}{1-2r_{s-}^{-1}}|G_k^s(\Delta)|=\frac{|G_k^s(\Delta)|}8.
\end{equation}
Since $|A_k^u(\Delta)|\leq 2^{m+1}|G_k^u(\Delta)|$ by Lemma \ref{lem.bdp2}-(\ref{bdp2_tilde AA}),
\begin{equation}\label{eqn_AG2}
\alpha_0|A_k^u(\Delta)|\leq \alpha_0 2^{m+1}|G_k^u(\Delta)|\leq \frac{|G_k^u(\Delta)|}4.
\end{equation}
We have chosen $\Delta_k$ so that $G_s^s(\Delta_k)$ and $G_k^u(\Delta_k)$ have the common middle point $\boldsymbol{x}_k$, see Figure \ref{fig_6_1}.
\begin{figure}[hbt]
\centering
\scalebox{0.8}{\includegraphics[clip]{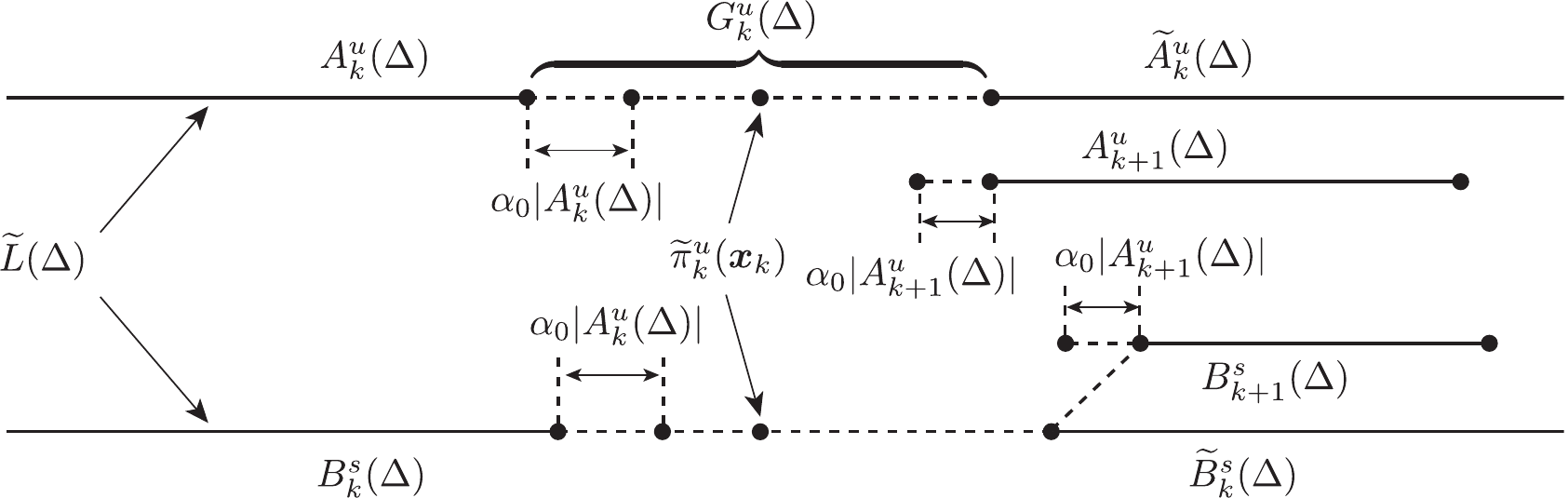}}
\caption{Detecting a sequence of $\xi_0/2$-proportional linked pairs.}
\label{fig_6_1}
\end{figure}
By (\ref{eqn_1-Cdelta}) and (\ref{eqn_D-D1}), for any $\boldsymbol{x}\in B_k^s(\Delta_k)$ and all sufficiently large $k$,
\begin{align*}
|\varpi^u(\widetilde\pi^s_k(\boldsymbol{x}))-\varpi^u(\widetilde\pi^u_k(\boldsymbol{x}_k))|
&\geq (1-C|\Delta-\Delta_k|)|\varpi^u(\boldsymbol{x})-\varpi^u(\boldsymbol{x}_k)|-|\Delta-\Delta_k|-O(|\Delta-\Delta_k|^2)\\
&\geq \frac{(1-C|\Delta-\Delta_k|)|G_k^s(\Delta)|}2-|\Delta-\Delta_k|-O(|\Delta-\Delta_k|^2)\\
&\geq \frac{|G_k^s(\Delta)|}4,
\end{align*}
where 
$\widetilde\pi^s_k:\widetilde L(\Delta_k)\to \widetilde L(\Delta)$ is the composition of 
the shift map $\boldsymbol{x}\mapsto \boldsymbol{x}+(\Delta-\Delta_k,0)$ followed by the projection 
along the leaves of $\mathcal{F}_{\mathrm{loc}}^u(\Lambda;\Delta)$ and  
$\widetilde\pi^u_k:\widetilde L(\Delta_k)\to \widetilde L(\Delta)$ is the projection along the leaves of $f_\Delta^{-(N_0+N_2)}(\mathcal{F}_{\mathrm{loc}}^s(\Gamma_m;\Delta))$.
Thus, by (\ref{eqn_AG1}), $\mathcal{N}(B_k^s(\Delta);\alpha_0|A_k^u|)$ does not contain $\widetilde\pi^u_k(\boldsymbol{x}_k)$, where $\mathcal{N}(J,\eta)$ denotes the $\eta$-neighborhood of $J$ in $\widetilde L(\Delta)$ for $\eta>0$ and a compact subset $J$ of $\widetilde L(\Delta)$.
Since $B_{k+1}(\Delta_k)$ is contained in $\widetilde B_k(\Delta_k)$ and $|A_{k+1}^u(\Delta)|\leq |A_k^u(\Delta)|$, 
one can show similarly that $\mathcal{N}(B_{k+1}^s(\Delta);\alpha_0|A_{k+1}^u(\Delta)|)$ does not contain $\widetilde\pi^u_k(\boldsymbol{x}_k)$.
By (\ref{eqn_AG2}), $\mathcal{N}(A_k^u(\Delta);\alpha_0|A_{k}^u(\Delta)|)$ also does not contain $\widetilde\pi^u_k(\boldsymbol{x}_k)$.
Since $A_{k+1}^u(\Delta)\subset \widetilde A_k^u(\Delta)$ and $B_{k+1}^s(\Delta)\subset \widetilde B_k^s(\Delta)$, we have similarly  
$\mathcal{N}(A_{k+1}^u(\Delta);\alpha_0|A_{k+1}^u(\Delta)|)\not\ni \widetilde\pi^u_k(\boldsymbol{x}_k)$.
This shows that 
\begin{equation}\label{eqn_NA_k}
\mathcal{N}(I_k;\alpha_0|A_{k}^u(\Delta)|)\cap 
\mathcal{N}(I_{k+1};\alpha_0|A_{k+1}^u(\Delta)|)=\emptyset.
\end{equation}
Since $\widetilde A_{k+1}^u(\Delta)\subset \widetilde A_k^u(\Delta)$ and $\widetilde B_{k+1}^s(\Delta)\subset \widetilde B_k^s(\Delta)$, one can also prove that 
$$\mathcal{N}(I_k;\alpha_0|A_{k}^u(\Delta)|)\cap 
\mathcal{N}(\widetilde I_{k+1};\alpha_0|A_{k+1}^u(\Delta)|)=\emptyset,$$
where $\widetilde I_{k+1}=\widetilde A_{k+1}^u(\Delta)\cup \widetilde B_{k+1}^s(\Delta)$.
From this fact together with $I_{k+2}\subset \widetilde I_{k+1}$, it follows that
\begin{equation}\label{eqn_NA_k+1}
\mathcal{N}(I_k;\alpha_0|A_{k}^u(\Delta)|)\cap 
\mathcal{N}(I_{k+2};\alpha_0|A_{k+2}^u(\Delta)|)=\emptyset.
\end{equation}
The assertion (\ref{LG_2}) is completed by applying an argument similar to that from 
(\ref{eqn_NA_k}) to (\ref{eqn_NA_k+1}) repeatedly.
\end{proof}

\section{Critical chains}\label{sec.CC}
Since $|\Delta|<\varepsilon \tau^{-3/4}/2$ by Lemma \ref{lem_LG1}, 
we may suppose that the perturbed diffeomorphism $f_\Delta$ given in Subsection \ref{subsec.LP} is arbitrarily $C^r$-close to the original diffeomorphism $f$.
So we reset the notations and write $f_\Delta=f$, $\widetilde L(\Delta)=\widetilde L$, $L(\Delta)=L$, 
$\mathcal{F}_{\mathrm{loc}}^u(\Lambda;\Delta)=\mathcal{F}_{\mathrm{loc}}^u(\Lambda)$, 
$\mathcal{F}_{\mathrm{loc}}^s(\Gamma_m;\Delta)=\mathcal{F}_{\mathrm{loc}}^s(\Gamma_m)$ and so on.
Since $\widetilde L$ and $L$ are parametrized so that $f^{N_2}|_{\widetilde L}:\widetilde L\to L$ is parameter-preserving, 
Linear Growth Lemma (Lemma \ref{lem_LG1}) holds for the bridges $B_{L,k}^s=f^{N_2}(B_k^s(\Delta))$ and 
$A_{L,k}^u=f^{N_2}(A_k^u(\Delta))$ with respect to the Cantors set $K_{\Lambda,L}^s=f^{N_2}(K_\Lambda^s(\Delta))$ and $K_{m,L}^u=f^{N_2}(K_m^u(\Delta))$ in $L$.

\subsection{Encounter of $s$-bridges and $u$-bridges, II}\label{subsec.CCC}
In Subsection \ref{subsec.LP}, we have studied the heteroclinical connection between $s$-bridges $B^s(\Delta)$ of $K_{\Lambda}^s(\Delta)$ and $u$-bridges $A^u(\Delta)$ of $K_{m}^u(\Delta)$ in $\widetilde L$ 
for $\delta=\Delta$.
To construct  wandering domains, we also need to study the homoclinical connection between 
$s$-bridges $B_L^{s*}$ of $K_{\Lambda,L}^s$ and $u$-bridges $B_L^u$ of $K_{\Lambda,L}^u$ in $L$.

We write $\underline{1}^{(n)}:=\underbrace{1\ldots\ldots 1}_{n}$, $\underline{2}^{(n)}:=\underbrace{2\ldots\ldots 2}_{n}$ and prove the following key lemma.
Consider a positive integer $z_0$ independent of $k$ and satisfying the conditions (\ref{eqn_z0+1}) and (\ref{eqn_eta}) 
which are given later.
Let $\{z_k\}_{k=1}^\infty$ be any sequence of integers such that each entry $z_k$ is either $z_0$ or $z_0+1$.

\begin{lemma}[Critical Chain Lemma]\label{lem_CC}
Let 
$(B^{s}_{k}(\Delta), A^{u}_{k}(\Delta))$ $(k=0,1,2,\dots)$ be 
the sequence of the $u$-dominating $\xi_{0}/2$-linked pairs of bridges for $(K^s_{\Lambda}(\Delta),K^u_{m}(\Delta))$ given in
Lemma \ref{lem_LG1}.
Then there exists a constant $T_0>0$ such that, for any $T\geq T_0$ and integers $k\geq 1$,
there are 
\begin{itemize}
\item 
a $u$-bridge $\widehat A_{L,k}^u$ of $K_{m,L}^u$ with $\widehat A_{L,k}^u\subset A_{L,k}^u$ and a $u$-bridge $B_{L,k}^{u}$ of $K^{u}_{\Lambda,L}$ contained in the leading gap of $\widehat A_{L,k}^{u}$, 
\item $s$-bridges $\widehat B_{L,k+1}^{s}$, $B_{L,k+1}^{s*}$ of $K^{s}_{\Lambda,L}$ with 
 $B_{L,k+1}^{s*}\subset \widehat B_{L,k+1}^{s}\subset {B}_{L,k+1}^{s}$;
 \item positive constants $C_{1}$, $C_{2}$ independent of $k$ 
 \end{itemize}
satisfying the following conditions. 
\begin{enumerate}[\rm (1)]
\item \label{CC1}
There exists an interval $J_{k+1}^*\subset (-\varepsilon_0r_{s-}^{-T(k+1)},\varepsilon_0r_{s-}^{-T(k+1)})$ such that 
$(B_{L,k+1}^{s*}+t)\cap B_{L,k+1}^{u}\neq \emptyset$ if and only if 
$t\in J_{k+1}^*$, where $\varepsilon_0=\xi_0|B_0^s(\Delta)|/2$.
\item \label{CC3}
Let $\widehat A_{k}^u$ be the bridge of $K_m^u$ with $\pi^u(\widehat A_{k}^u)=\widehat A_{L,k}^u$ 
 and $\mathbb{G}(\widehat A_{k}^u)$ the strip of the leading gap $G(\widehat A_{k}^u)$ of  defined as in Subsection \ref{subsec.Hetero}, 
where $\pi^u:E\to L$ is the projection of (\ref{eqn_pi^u}).
See Figure \ref{fig_5_2}.
Suppose that $\widehat i_k$ is the minimum positive integer satisfying $f^{\widehat i_k}(\mathbb{G}(\widehat A_{k}^u))\subset \mathbb{G}^u(0)$ and having $N_*+n_*$ as a divisor, 
where $N_*$ and $n_*$ are the integers given in Theorem \ref{lem.renormalization} and 
Subsection \ref{subsec.LP}, respectively.
In other words, $\bar i_k=\widehat i_k/(N_*+n_*)$ is the minimum integer with 
$(\varphi^{\bar i_k}(\mathbb{G}(\widehat A_{k}^u)),\varphi^{\bar i_k}(\flat\mathbb{G}(\widehat A_{k}^u)))\subset (\mathbb{G}^u(0),\flat \mathbb{G}^u(0))$. 
Let $\widehat n_{k+1}$ be the generation of $\widehat B_{k+1}^{s}$.  
Then the inequality  $\widehat{i}_{k}+\widehat{n}_{k+1}<C_{1}Tk+C_{2}$ holds.
\end{enumerate}
Moreover, if the itinerary of $\widehat B_{k+1}^{s}$ is $\widehat{\underline{w}}_{\,k+1}$, then one can suppose that 
$B_{L,k+1}^u=\pi_{\widehat A_k^u}(B_{k+1}^u)$ and 
$B_{L,k+1}^{s*}=\pi^s(B_{k+1}^{s*})$ for 
the projections $\pi_{\widehat A_k^u}:B^u(0)\to L$ of (\ref{eqn_pi_A^u}) and 
$\pi^s:S\to L$ of (\ref{eqn_pi^s})
and for the bridges $B_{k+1}^u$ of $K_\Lambda^u(\Delta)$ and $B_{k+1}^{s*}$ of $K_\Lambda^s(\Delta)$
with 
the itineraries $\underline{1}^{(z_kk^2)}\underline{2}^{(k^2)}\,\underline{v}_{\,k+1}\,[\widehat{\underline{w}}_{\,k+1}]^{-1}$ and $\widehat{\underline{w}}_{\,k+1}\,[\underline{v}_{\,k+1}]^{-1}\underline{2}^{(k^2)}\,\underline{1}^{(z_kk^2)}$ respectively.
That is, 
\begin{equation}\label{eqn_CC1}
\begin{split}
B_{k}^u&=B^u(z_kk^2+\langle k\rangle;\,\underline{1}^{(z_kk^2)}\underline{2}^{(k^2)}\,\underline{v}_{\,k+1}\,[\widehat{\underline{w}}_{\,k+1}]^{-1}),\\
B_{k+1}^{s*}&=B^s(z_kk^2+\langle k\rangle;\,\widehat{\underline{w}}_{\,k+1}\,[\underline{v}_{\,k+1}]^{-1}\underline{2}^{(k^2)}\,\underline{1}^{(z_kk^2)}),
\end{split}
\end{equation}
where
\begin{equation}\label{eqn_lakra}
\langle k\rangle=\widehat n_{k+1}+k^2+k
\end{equation}
and $\underline{v}_{\,k+1}$ is an arbitrarily chosen element of $\{1,2\}^{k}$.
In other words, 
$\underline{v}_{\,k+1}$ is a non-specified itinerary of length $k$.
\end{lemma}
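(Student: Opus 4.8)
The plan is to build the chain of the lemma by transporting one fixed ``good'' configuration --- a gap of $K^u_{m,L}$ that meets $K^u_{\Lambda,L}$ --- into every linked pair produced by Lemma \ref{lem_LG1}, and then to pin a horseshoe $s$-bridge carrying the prescribed long itinerary so that, after projection to $L$, it sits almost exactly over the corresponding returning $u$-bridge. First I would use the homoclinic relation in (S-\ref{setting5}): since $\Gamma_m$ and $\Lambda$ are homoclinically related, $W^s_{\mathrm{loc}}(\Lambda)$ crosses $W^u(\Gamma_m)$ transversely, and pulling this crossing back to $L$ along the leaves of $f^{-N_0}(\mathcal F^s_{\mathrm{loc}}(\Gamma_m))$ (the projection $\pi^u$ of (\ref{eqn_pi^u})) gives a gap $G_\ast$ of $K^u_{m,L}$ whose interior contains a point, hence a whole $u$-bridge, of $K^u_{\Lambda,L}$. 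Composing with a bounded number of backward $\varphi$-iterates I may take $G_\ast$ to be the \emph{leading} gap of $A^u(0)=\ell^u(\varphi)$; then for every $A^u\in\mathcal A^u$ the map $\pi_{A^u}$ of (\ref{eqn_pi_A^u}) carries $K^u_\Lambda(\Delta)$ into the leading gap of the $u$-bridge $\pi^u(A^u)$ of $K^u_{m,L}$, which is the source of the returning bridge $B^u_{L,k+1}$.

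Next, starting from the $u$-dominating $\xi_0/2$-linked pair $(B^s_k(\Delta),A^u_k(\Delta))$ of Lemma \ref{lem_LG1}, I would append to the itinerary of $A^u_k(\Delta)$ the fixed finite word turning its leading gap into $G_\ast$ and, using the linking together with the thickness inequality (\ref{eqn_rem_BA}), push a little further so that the resulting sub-$u$-bridge $\widehat A^u_{L,k}\subset A^u_{L,k}$ has its leading gap $\widehat G^u_{L,k+1}$ overlapping a sub-bridge of $B^s_{L,k+1}$; this costs only finitely many extra generations, independent of $k$. By (\ref{eqn_widehat m}) the number $\bar i_k$ of $\varphi$-returns carrying $\mathbb G(\widehat A^u_k)$ onto $\mathbb G^u(0)$ is at most $(m-1)\,\mathrm{gen}(\widehat A^u_k)$, and by Lemma \ref{lem_LG1}-(\ref{LG_3}) $\mathrm{gen}(\widehat A^u_k)=i_k+O(1)=O(k)$, so $\widehat i_k=(N_*+n_*)\bar i_k=O(k)$. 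I then choose $\widehat{\underline w}_{\,k+1}$ to be any word extending the itinerary of $B^s_{k+1}$, of length $\widehat n_{k+1}$ comparable to $\widehat i_k$, so that $\widehat B^s_{L,k+1}$ fits inside and fills most of $\widehat G^u_{L,k+1}$; then $\widehat i_k+\widehat n_{k+1}=O(k)<C_1Tk+C_2$ for suitable $C_1,C_2$, which is (\ref{CC3}), and $B^{s*}_{L,k+1}\subset\widehat B^s_{L,k+1}\subset B^s_{L,k+1}$ holds by construction.

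Finally I would set $B^u_{L,k+1}=\pi_{\widehat A^u_k}(B^u_{k+1})$ and $B^{s*}_{L,k+1}=\pi^s(B^{s*}_{k+1})$ for the $u$- and $s$-bridges of $K^u_\Lambda(\Delta)$ and $K^s_\Lambda(\Delta)$ with the two itineraries of (\ref{eqn_CC1}), noting that these are one another's reverses and that $\underline v_{\,k+1}\in\{1,2\}^k$ is left unspecified, the whole argument being insensitive to it. Using the reverse-itinerary device --- an $s$-bridge and a $u$-bridge built from the same finite word read in opposite directions sit, after projection, over essentially the same point of $L$ --- together with the uniform bounded-distortion estimates of Lemmas \ref{lem.bdp1} and \ref{lem.bdp2} for the $C^{1+\alpha}$ projections $\pi^s$ and $\pi_{\widehat A^u_k}$, I would bound $|\mathrm{mid}(B^u_{L,k+1})-\mathrm{mid}(B^{s*}_{L,k+1})|$ by a fixed constant times $\max\{|B^u_{L,k+1}|,|B^{s*}_{L,k+1}|\}$. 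Since $(B^{s*}_{L,k+1}+t)\cap B^u_{L,k+1}\neq\emptyset$ precisely for $t$ in the closed interval of length $|B^u_{L,k+1}|+|B^{s*}_{L,k+1}|$ centred at that difference, this interval is $J^*_{k+1}$; and since the common generation $z_kk^2+\langle k\rangle$ exceeds $z_0k^2\ge T(k+1)$ once $z_0$ is taken large in terms of $T$ (absorbing the finitely many small $k$ into $\varepsilon_0$), both bridge lengths are at most $r_{s-}^{-T(k+1)}$ times a constant, so $J^*_{k+1}\subset(-\varepsilon_0 r_{s-}^{-T(k+1)},\varepsilon_0 r_{s-}^{-T(k+1)})$, which is (\ref{CC1}).

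The hard part is this last step. One must show that the returning bridge $B^u_{L,k+1}$, reached only after travelling through $\Gamma_m$ by the long composition $\pi_{\widehat A^u_k}=\pi^u\circ\varphi^{-\bar i_k}\circ f^{-N_1}\circ\psi_{\widehat A^u_k}$, and the horseshoe bridge $B^{s*}_{L,k+1}=\pi^s(B^{s*}_{k+1})$ land within the exponentially narrow window $\varepsilon_0 r_{s-}^{-T(k+1)}$ of each other, \emph{uniformly in $k$}, while keeping the total generation cost $\widehat i_k+\widehat n_{k+1}$ linear in $k$. This forces one to track the bounded-distortion constants through every intervening map and to use the quadratic tangency along $L$ in tandem with the reverse-itinerary duality; it is exactly this tight bookkeeping that later permits the second perturbation $\underline{\boldsymbol t}$ to be chosen summably small.
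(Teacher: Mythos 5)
There is a genuine gap in your derivation of condition (\ref{CC1}). You deduce the inclusion $J_{k+1}^*\subset(-\varepsilon_0r_{s-}^{-T(k+1)},\varepsilon_0r_{s-}^{-T(k+1)})$ from the claim that $|\mathrm{mid}(B_{L,k+1}^u)-\mathrm{mid}(B_{L,k+1}^{s*})|$ is bounded by a constant times $\max\{|B_{L,k+1}^u|,|B_{L,k+1}^{s*}|\}$, attributing this to the ``reverse-itinerary device.'' That claim is false. The reverse itinerary gives only the horseshoe identity $f^{z_kk^2+\langle k\rangle}(\mathbb{B}_k^u)=\mathbb{B}_{k+1}^{s*}$; it says nothing about the relative positions in $L$ of the two projections $\pi_{\widehat A_k^u}(B_{k+1}^u)$ and $\pi^s(B_{k+1}^{s*})$, which are taken along entirely different foliations ($f^{-N_0}(\mathcal{F}_{\mathrm{loc}}^s(\Gamma_m))$ after the excursion through $\Gamma_m$, versus $f^{N_2}(\mathcal{F}_{\mathrm{loc}}^u(\Lambda))$). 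These two bridges have lengths of order $r_{s-}^{-ck^2}$, while their separation inside the ambient intervals $\widehat A_{L,k+1}^u$ and $\widehat B_{L,k+1}^s$ is generically of order $\varepsilon_0r_{s-}^{-T(k+1)}$ --- exponential in $k$, not in $k^2$. Indeed, if your midpoint bound were true, the perturbation vectors $\boldsymbol{t}_{k+1}\in J_{k+1}^*$ would be super-exponentially small and Lemmas \ref{l_uvt} and \ref{lem.perturb_2} would be vacuous; the whole point is that $\|\boldsymbol{t}_{k+1}\|$ is only $O(r_{s-}^{-Tk})$. The correct mechanism is the one the paper uses: apply Lemma \ref{lem_LL2} to the linked pair $(B_{L,k+1}^s,A_{L,k+1}^u)$ at scale $\varepsilon=\varepsilon_0r_{s-}^{-T(k+1)}$ (legitimate because the $\xi_0/2$-linking of Lemma \ref{lem_LG1} gives $|B_{L,k+1}^s\cap A_{L,k+1}^u|\geq\varepsilon_0r_{s-}^{-T_0(k+1)}\geq\varepsilon$), obtaining $\widehat B_{L,k+1}^s$, $\widehat A_{L,k+1}^u$ and the interval $J_{k+1}\subset(-\varepsilon,\varepsilon)$ characterizing exactly when their translates meet; then the inclusions $B_{L,k+1}^{s*}\subset\widehat B_{L,k+1}^s$ and $B_{L,k+1}^u\subset\widehat A_{L,k+1}^u$ force $J_{k+1}^*\subset J_{k+1}$ with no midpoint estimate at all.

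Two secondary inaccuracies feed into this. First, reaching $\widehat A_{L,k}^u$ and $\widehat B_{L,k+1}^s$ does \emph{not} cost ``finitely many extra generations independent of $k$'': their lengths must be cut down to order $r_{s-}^{-Tk}$, so their generations satisfy $\widehat n_{k+1}\asymp Tk$ and $\mathrm{gen}(\widehat A_k^u)\asymp Tk$, not $i_k+O(1)$; this is precisely why the bound in (\ref{CC3}) carries the factor $T$ and why $T_0$ must be taken as in the paper's proof ($T_0=N_s\log r_{s+}/\log r_{s-}$, so that the window $\varepsilon_0r_{s-}^{-Tk}$ fits inside the guaranteed overlap). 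Your final answer $\widehat i_k+\widehat n_{k+1}=O(k)$ is consistent with $C_1Tk+C_2$, but the intermediate generation count is wrong. Second, you do not need (and should not invoke) ``$z_0$ large in terms of $T$'' to control $|B_{L,k+1}^{s*}|$: the containment argument above makes any comparison between $z_0k^2$ and $T(k+1)$ unnecessary, and in the paper $z_0$ is fixed by (\ref{eqn_z0+1}) and (\ref{eqn_eta}) independently of $T$, which is needed later since $T$ is sent to infinity while the limit measures $\nu_0,\nu_1$ depend on $z_0$.
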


See Figure \ref{fig_7_1} for the situation of Lemma \ref{lem_CC}, where 
$\widehat G_{L,k+1}^u=\pi^u(G(\widehat A_{k+1}^u))$.
Figure \ref{fig_7_2} illustrates the transition from $B_{k}^u$ to $B_{L,k}^u$ via 
$f^{-N_0}\circ f^{-\widehat i_k}\circ f^{-N_1}$ schematically.
\begin{figure}[hbt]
\centering
\scalebox{0.8}{\includegraphics[clip]{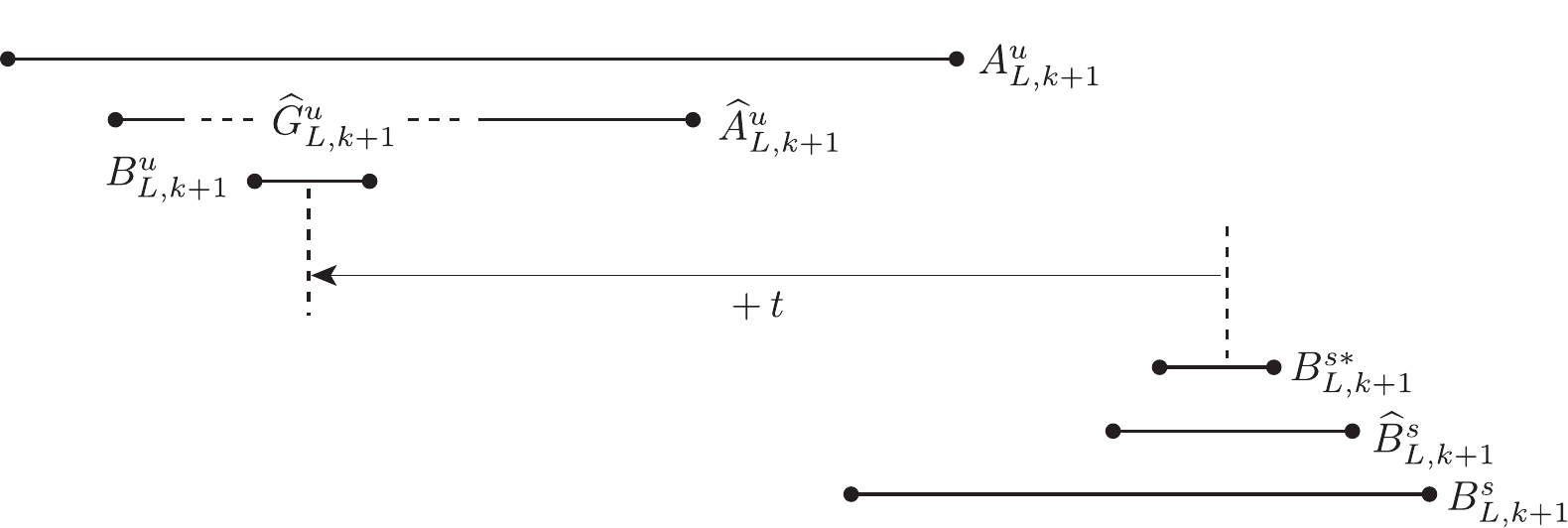}}
\caption{Sliding of $B_{L,k+1}^{s*}$ by $+t$.}
\label{fig_7_1}
\end{figure}
\begin{figure}[hbt]
\centering
\scalebox{0.8}{\includegraphics[clip]{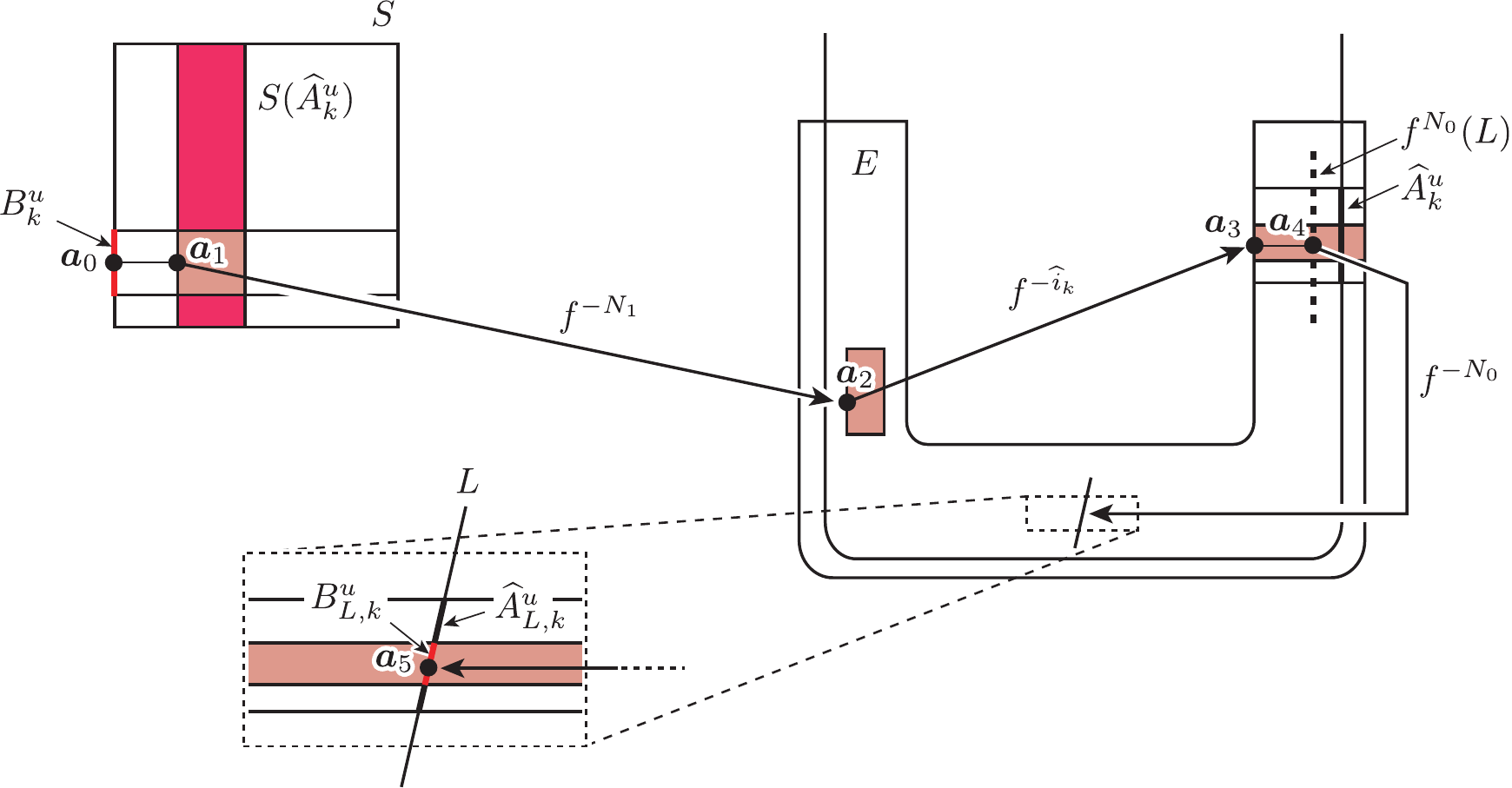}}
\caption{Backward transition from $B_k^u$ to $B_{L,k}^u$. 
$\boldsymbol{a}_0$ and $\boldsymbol{a}_1$ lie on the same horizontal line and $\boldsymbol{a}_3$ and $\boldsymbol{a}_4$ also do.}
\label{fig_7_2}
\end{figure}

\begin{proof}[Proof of Lemma \ref{lem_CC}]
Let $T_0$ be the constant defines as
$$T_0=\frac{N_s\log(r_{s+})}{\log(r_{s-})}.$$
Since $\varepsilon_0=\xi_0|B_0^s(\Delta)|/2$, 
by Lemmas \ref{lem.bdp1} and \ref{lem_LG1}-(2), 
$$|B_{L,k}^s\cap A_{L,k}^u|\geq \frac{\xi_0|B_k^s(\Delta)|}2 
\geq \frac{\xi_0|B_0^s(\Delta)|}2\, r_{s+}^{-N_sk}=\varepsilon_0 r_{s-}^{-T_0k}$$
for any integer $k\geq 1$.
Here we fix an integer $T\geq T_0$.
By Lemma \ref{lem_LL2}, 
there exist an 
interval $J_{k}$ with $J_{k}\subset (-\varepsilon_0r_{s-}^{-Tk},\varepsilon_0r_{s-}^{-Tk})$, 
sub-bridges 
 $\widehat{B}^{s}_{L,k}\subset {B}_{L,k}^{s}$ and $\widehat{A}^{u}_{L,k}\subset {A}_{L,k}^{u}$ satisfying the  following conditions;
\begin{enumerate}[\hspace*{15pt}(a)]
\item  \label{lem.CC-b}
$\tau^{-5/4}\varepsilon_0r_{s-}^{-Tk}\leq |\widehat{B}_{L,k}^{s}|<\tau^{-3/4}\varepsilon_0r_{s-}^{-Tk}$;
\item  \label{lem.CC-c}
$\tau^{-1/2}|\widehat{A}_{L,k}^{u}|\leq |\widehat{B}_{L,k}^{s}|<\tau^{-1/4}|\widehat{A}_{L,k}^{u}|$;
\item \label{lem.CC-a}
$(\widehat{B}_{L,k}^{s}+t)\cap \widehat{A}_{L,k}^{u}\neq\emptyset$ if and only if   $t\in J_{k}$.
\end{enumerate}
Suppose that $\widehat{A}_{L,k}^{u}$ is of generation $i_{k}> 0$.
By (\ref{eqn_widehat m}), we have the integer $\bar i_k$ with 
$(\varphi^{\bar i_k}(\mathbb{G}(A_{k}^u)),\varphi^{\bar i_k}(\flat\mathbb{G}(A_{k}^u)))\subset (\mathbb{G}^u(0),\flat \mathbb{G}^u(0))$ and $i_k\leq \bar i_k\leq (m-1)i_k$.
We set $\widehat i_k=\bar i_k(N_*+i_*)$. 
Let $B_{k+1}^{s*}$ and $B_{k+1}^{u}$ be the bridges defined as (\ref{eqn_CC1}).
Since $B_{L,k+1}^{s*}=\pi^s(B_{k+1}^{s*})\subset \widehat B_{L,k+1}^s$ 
and $B_{L,k+1}^u=\pi_{\widehat A_k^u}(B_{k+1}^u)\subset \widehat A_{L,k+1}^u$, 
by the condition (\ref{lem.CC-a}),  
there is a sub-interval $J_{k+1}^*$ of $J_{k+1}$ such that 
$(B_{L,k+1}^{s*}+t)\cap B_{L,k+1}^{u}\neq \emptyset$ if and only if 
$t\in J_{k+1}^*$, see Figures \ref{fig_7_1} and \ref{fig_7_2}.
This completes the proof of (\ref{CC1}).

Now we will show (\ref{CC3}). 
By Lemma \ref{lem.bdp1}, one has
$$
r_{s+}^{-\widehat{n}_{k}}|{B}_{0}^{s}(\Delta)| \leq  | \widehat{B}_{k}^{s}(\Delta)|\leq r_{s-}^{-\widehat{n}_{k}} |{B}_{0}^{s}(\Delta)|,
$$
and from (\ref{lem.CC-b})
$$
\tau^{-5/4}\varepsilon_0r_{s-}^{-Tk}  \leq r_{s-}^{-\widehat{n}_{k}} |{B}_{0}^{s}(\Delta)|.
$$
Hence,  for every $l\geq 0$, 
\begin{equation}\label{order bar-n}
\widehat{n}_{k+l}\leq T (k+l)+\frac{\log (\tau^{5/4}\varepsilon_0^{-1}|{B}_{0}^{s}(\Delta)|)}{\log r_{s-}}.
\end{equation}
It follows from (\ref{lem.CC-c}) together with Lemma \ref{lem.bdp2} that 
$$\tau^{-5/4}\varepsilon_0r_{s-}^{-Tk}\leq |\widehat{B}_{L,k}^{s}|<\tau^{-1/4}\Bigl(\frac32\Bigr)^{-i_{k}}|{A}_{L,0}^{u}|.$$
By using the inequalities as above, we have
\begin{align*}
\log\Bigl(\frac32\Bigr)i_k&\leq \log (r_{s-})Tk+\log (\tau\varepsilon_0^{-1}|A_{L,0}^u|)+\widehat n_k\log (r_{s+}/r_{s-})\\
&\leq \log (r_{s-})Tk+\log (\tau\varepsilon_0^{-1}|A_{L,0}^u|)+\left(Tk+\frac{\log(\tau^{5/4}\varepsilon_0^{-1}|B_0^s(\Delta)|)}{\log (r_{s-})}\right)\log (r_{s+}/r_{s-})\\
&=\log (r_{s+})Tk+C_3,
\end{align*}
where $\displaystyle C_3=\log (\tau\varepsilon_0^{-1}|A_{L,0}^u|)+\frac{\log(\tau^{5/4}\varepsilon_0^{-1}|B_0^s(\Delta)|)\log (r_{s+}/r_{s-})}{\log (r_{s-})}$.
Hence  
\begin{equation}\label{order bar-m}
\widehat{i}_{k}
\leq (m-1)i_k(N_*+n_*)\leq 
\frac{\log (r_{s+}^{(N_*+n_*)(m-1)})}{\log\bigl(\frac32\bigr)}Tk+\frac{C_3(N_*+n_*)(m-1)}{\log\bigl(\frac32\bigr)}.
\end{equation}
By (\ref{order bar-n}) and (\ref{order bar-m}), one can get positive constants $C_1$ and $C_2$ satisfying (\ref{CC3}).
\end{proof}

\subsection{The second perturbation of $f$}\label{SS_Pert}
In Subsection \ref{subsec.LP}, we perturbed $f$ by performing the `$\Delta$-sliding' of $f$ along the arc  $\widetilde L$.
The second perturbation presented here is `switchback slidings' in neighborhoods of the brides $B_k^s(\Delta)$ $(k=1,2,\dots)$ which are done individually by using bump functions with mutually disjoint supports in $S$.

Consider bridges $B^s$ in $[0,2]\times \{0\}\subset W_{\mathrm{loc}}^s(p)$ and $B^u$ in $\{0\}\times [0,2]\subset W_{\mathrm{loc}}^u(p)$ associated with the Cantor sets $K_\Lambda^s$ and $K_\Lambda^u$ of (\ref{def.Ks}), respectively.
Set $\mathbb{B}^s=(\pi_{\mathcal{F}_{\mathrm{loc}}^u(\Lambda)})^{-1}(B^s)$ and $\mathbb{B}^u=(\pi_{\mathcal{F}_{\mathrm{loc}}^s(\Lambda)})^{-1}(B^u)$, where $\pi_{\mathcal{F}_{\mathrm{loc}}^u(\Lambda)}$ and $\pi_{\mathcal{F}_{\mathrm{loc}}^s(\Lambda)}$ are the projections given in Subsection \ref{subsec.horse}.
Note that $\mathbb{B}^s$ is a sub-strip of $(S,\sharp S)$ and $\mathbb{B}^u$ is a sub-strip of $(S,\flat S)$.
We call that $\mathbb{B}^{u(s)}$ is the \emph{bridge strip}  for $B^{u(s)}$.
From our definition of ${B}_{k}^u$ and ${B}_{k+1}^{s*}$ in (\ref{eqn_CC1}), 
$f^{z_kk^2+\langle k\rangle}(\mathbb{B}_{k}^u)=\mathbb{B}_{k+1}^{s*}$, see Figure \ref{fig_7_3}.
The \emph{gap strip} $\mathbb{G}^{u(s)}$ for a gap $G^{u(s)}$ of $K_\Lambda^{u(s)}$ is defined similarly.

Recall that the bridges $B^s(\Delta)$, $A^u(\Delta)$ in Lemma \ref{lem_LG1} are taken in $\widetilde L\cap \mathcal{B}_{\delta_0/2}$ so that our perturbation of the diffeomorphisms $f$ does 
not affect the invariant set $\Gamma_m$ and local stable foliation $\mathcal{F}_{\mathrm{loc}}^s(\Gamma_m)$ on $E$.

Now we need to choose them more carefully.
Let $D_{n_*}$ be the rectangle used in Section \ref{sec.prep} to define the basic set $\Gamma_m$, which satisfies the conditions (S-\ref{setting6}) and 
(S-\ref{setting7}).
The dynamics of $\varphi=\varphi_{n_*}$ on $D_{n_*}$ is determined by that of $f$ on $X_{n_*}=D_{n_*}\cup f(D_{n_*})\cup \cdots\cup f^{N_*+n_*}(D_{n_*})$, 
where $N_*$ and $n_*$ are the integers given in Theorem \ref{lem.renormalization} and Subsection \ref{subsec.LP}, respectively.
So our aim is accomplished by perturbing $f$ in the complement of a neighborhood of 
$X_{n_*}$.
By (S-\ref{setting6}) and Theorem \ref{lem.renormalization}, one can choose the parameter 
$\mu_*=\Theta_{n_*}(\bar \mu_*)$ and bridges $B^s(\Delta)$ of $K_{\Lambda}(\Delta)$ and $A^u(\Delta)$ of $K_{m}(\Delta)$ so that 
$f=f_{\mu_*}$ satisfies the conditions of Lemma \ref{lem_LL1} and the following extra conditions 
for a small constant $\alpha_1>0$.
Recall that $\varpi^u:S\to [0,2]\times \{0\}\subset W_{\mathrm{loc}}^s(p)$ is the vertical projection 
with respect to the orthogonal coordinate on $S$, which is given in Subsection \ref{subsec.LP}.
\begin{enumerate}[({B-}i)]
\item\label{B1}
For the sub-arc $I=B^s(\Delta)\cup A^u(\Delta)$ of $\widetilde L$, the 
$\alpha_1|I|$-neighborhood $\widehat{\mathbb{I}}$ of the strip $\mathbb{I}=(\varpi^u)^{-1}\circ \varpi^u(I)$ in 
$S$ is disjoint from $X_{n_*}$.
\item\label{B2}
For the sub-bridges $B_k^s(\Delta)$ of $B^s(\Delta)$ and $A_k^u(\Delta)$ of $A^u(\Delta)$  given in Lemma \ref{lem_LG1} and the sub-arc $I_k=B_k^s(\Delta)\cup A_k^u(\Delta)$ of (\ref{eqn_Ikf}), 
the strips $\widehat{\mathbb{I}}_k=(\varpi^u)^{-1}(\widehat I_k)$ 
$(k=1,2,\dots)$ in $S$ 
are mutually disjoint, where $\widehat I_k$ is the $\alpha_1|\varpi^u(I_k)|$-neighborhood of $\varpi^u(I_k)$ in $[0,2]\times \{0\}$.
\end{enumerate}

In fact, the assertion (B-\ref{B1}) is guaranteed by choosing the parameter $\bar\mu_*$ and the initial linked pair $B^s(0)$ and $A^u(0)$ of Lemma \ref{lem_LL1} suitably so that the 
sub-arc $I$ is contained in an arbitrarily small neighborhood of the 
left component of $\flat S$ in $S$, which is represented by the shaded region in 
Figure \ref{fig_3_2}.
We refer to Subsection 6.5 in \cite{PT93} (and also Subsection 5.3 in \cite{KS08}) for such an argument.
The assertion (B-\ref{B2}) holds if 
we take $\alpha_1$ sufficiently small comparing with $\alpha_0$ of Lemma \ref{lem_LG1}-(\ref{LG_2}).

We will define an auxiliary stable foliation on $S$ for Lemma \ref{l_slope} below.
Consider a $C^1$-foliation $\mathcal{G}^s(0)$ on $\mathbb{G}^u(0)$ such that $\widetilde L$ and any components of $\sharp\mathbb{G}^u(0)$ are leaves of $\mathcal{G}^s(0)$ and each leaf meets $\mathcal{F}_{\mathrm{loc}}^u(\Lambda)$ 
exactly.
Then $\mathcal{G}^s(0)$ is uniquely extended to a local stable $C^1$-foliation $\mathcal{G}_{\mathrm{loc}}^{s}(\Lambda)$ 
on $S$ compatible with $W_{\mathrm{loc}}^s(\Lambda)$.

Let $\widehat A_{L,k}^u$ and $B_{L,k}^u$ be the bridges of $K_{m,L}^u$ and $K_{\Lambda,L}^u$ given in Lemma \ref{lem_CC}, respectively.
Note that $\widehat A_{L,k}^u$ contains $B_{L,k}^u$, see Figure \ref{fig_7_1}.
Consider the curves $\widetilde L_k$ in $S$ and $L_k$ in $U(q)$ defined by
$$\widetilde L_k=f^{-(z_kk^2+\langle k\rangle)}(\mathbb{B}_{k+1}^{s*}\cap \widetilde L),
\quad L_k=f^{-N_0}\circ f^{-\widehat i_k}\circ f^{-N_1}(\widetilde L_k\cap 
S(\widehat A_k^u)),$$
see Figure \ref{fig_7_3}, 
where  
$S(\widehat A_k^u)$ is the strip in $S$ associated with $\widehat A_k^u$, see also Figure \ref{fig_5_2}.
\begin{figure}[hbt]
\centering
\scalebox{0.8}{\includegraphics[clip]{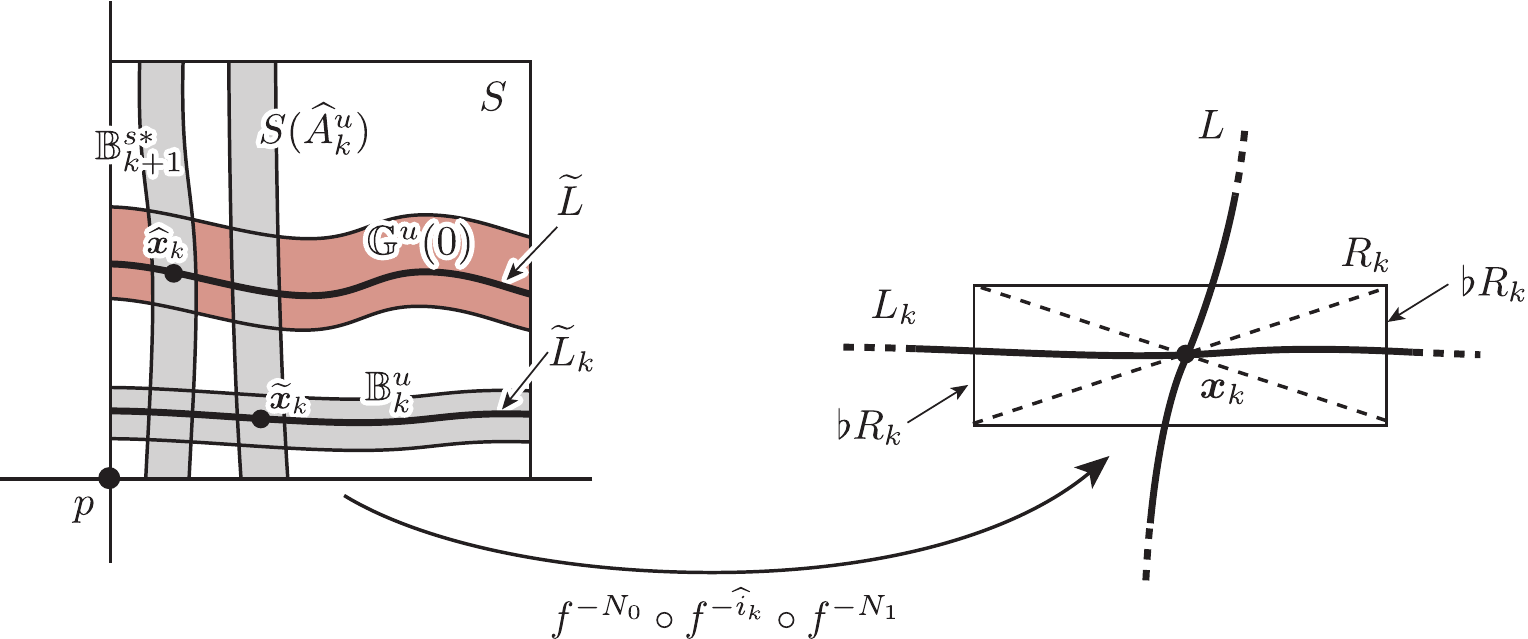}}
\caption{Backward transition from $L$ to $L_k$ via $\widetilde L$ and $\widetilde L_k$.}
\label{fig_7_3}
\end{figure}
Note that $\widetilde L_k$ is a leaf of the foliation $\mathcal{G}_{\mathrm{loc}}^s(\Lambda)$.

Suppose that $l$ is a compact $C^1$-curve in $S$ such that any line tangent to $l$ is not vertical.
For any $\boldsymbol{a}\in l$, let $\mathrm{slope}_{\boldsymbol{a}}(l)\geq 0$ be the (absolute) slope 
of $l$ at $\boldsymbol{a}$ and define the maximum slope $\mathrm{slope}(l)$ of $l$ by $\max\{\mathrm{slope}_{\boldsymbol{a}}(l)\,;\,\boldsymbol{a}\in l\}$.
The slope is defined similarly for compact $C^1$-curves in $U(L)$ which do not have vertical tangent lines, 
where $U(L)$ is supposed to have the $C^{1+\alpha}$-coordinate introduced in Subsection \ref{subsec.LP}.

\begin{lemma}\label{l_slope}
There exists a constant $\alpha>1$ satisfying the following inequalities for any integer $k>0$ 
and any leaf $l$ of $\mathcal{G}_{\mathrm{loc}}^{s}(\Lambda)$ contained in $\mathbb{B}_{k}^u$.
$$
\mathrm{slope}(l)<\alpha(\sigma^{-1}\lambda)^{z_kk^2},\quad \mathrm{slope}(L_k)<\alpha^k(\sigma^{-1}\lambda)^{z_kk^2}.$$
In particular, $\mathrm{slope}(\widetilde L_k)<\alpha(\sigma^{-1}\lambda)^{z_kk^2}$.
\end{lemma}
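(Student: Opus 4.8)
The plan is to estimate the slope by tracking how the differential $Df$ distorts tangent vectors along the relevant backward orbit, exploiting that $f$ is \emph{linear} on $S$ with $f(x,y)=(\lambda x,\sigma y)$, $0<\lambda<1<\sigma$. First I would analyze a leaf $l$ of $\mathcal{G}_{\mathrm{loc}}^s(\Lambda)$ contained in the bridge strip $\mathbb{B}_k^u$. Recall $f^{z_kk^2+\langle k\rangle}(\mathbb{B}_k^u)=\mathbb{B}_{k+1}^{s*}$, so $l$ is obtained from a leaf of $\mathcal{G}_{\mathrm{loc}}^s(\Lambda)$ sitting near $W_{\mathrm{loc}}^s(p)$ by pulling back under $f^{z_kk^2}$ (the part of the orbit genuinely inside the linearizing neighborhood $U(p)$) followed by $f^{\langle k\rangle}$. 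The key observation is that a linear map $(x,y)\mapsto(\lambda x,\sigma y)$ sends a line of slope $s$ to a line of slope $(\sigma/\lambda)s$, so its inverse multiplies slopes by $\sigma^{-1}\lambda<1$. Hence if the leaves of $\mathcal{G}_{\mathrm{loc}}^s(\Lambda)$ have uniformly bounded slope on $S$ (which holds since $\mathcal{G}_{\mathrm{loc}}^s(\Lambda)$ is a fixed $C^1$ foliation transverse to the vertical direction, compatible with $W_{\mathrm{loc}}^s(\Lambda)$), then pulling back $z_kk^2$ times inside $U(p)$ contracts the slope by exactly $(\sigma^{-1}\lambda)^{z_kk^2}$, and the remaining $\langle k\rangle$ iterations — which occur outside the purely linear region and contribute a bounded-per-step distortion — introduce at most a factor $\alpha$ coming from $\sup\|Df^{\pm1}\|$ over the finitely many ``non-linear'' neighborhoods visited (the itinerary of (\ref{eqn_CC1}) spends only $O(k)$ steps outside $U(p)\cup U(\widehat p)$, and time in $U(\widehat p)$ is linear too, hence again only a bounded contribution). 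Choosing $\alpha$ to absorb these gives $\mathrm{slope}(l)<\alpha(\sigma^{-1}\lambda)^{z_kk^2}$.

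Next I would treat $L_k=f^{-N_0}\circ f^{-\widehat i_k}\circ f^{-N_1}(\widetilde L_k\cap S(\widehat A_k^u))$. Since $\widetilde L_k$ is a leaf of $\mathcal{G}_{\mathrm{loc}}^s(\Lambda)$ contained in $\mathbb{B}_k^u$, the first bullet already controls $\mathrm{slope}(\widetilde L_k)$. Then one applies $f^{-N_1}$ (bounded, $C^1$ distortion; a fixed factor), then $f^{-\widehat i_k}$, and then $f^{-N_0}$ (again a fixed factor). The only $k$-dependent piece is $f^{-\widehat i_k}$, but by Lemma \ref{lem_CC}\,(\ref{CC3}) one has $\widehat i_k\leq C_1Tk+C_2$, i.e.\ $\widehat i_k=O(k)$. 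Each of these $\widehat i_k$ backward steps lies inside the return-map region near $q$ and multiplies the slope by a bounded amount $\le \alpha$ (after possibly enlarging $\alpha$), so the total extra factor is at most $\alpha^{O(k)}$; enlarging $\alpha$ again so that $\alpha^{O(k)}\cdot(\text{fixed factors})\le\alpha^k$ yields $\mathrm{slope}(L_k)<\alpha^k(\sigma^{-1}\lambda)^{z_kk^2}$. The statement about $\widetilde L_k$ itself is then just the first bullet applied to the leaf $\widetilde L_k\subset\mathbb{B}_k^u$.

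The main obstacle is making precise that the ``non-linear'' portion of the orbit — the $\langle k\rangle=\widehat n_{k+1}+k^2+k$ steps for $l$, and the $N_0+\widehat i_k+N_1$ backward steps for $L_k$ — contributes only a factor $\alpha^{O(k)}$ rather than something growing faster. The subtle point is the $k^2$ steps in $U(\widehat p)$: these must \emph{not} be allowed to cost $\alpha^{k^2}$. The resolution is that $f$ is linear in $U(\widehat p)$ as well (the coordinates near $\widehat p$ can be taken linearizing, $\widehat p$ being a saddle in the horseshoe $\Lambda$), so those $k^2$ steps again just multiply the slope by a power of the corresponding $\sigma^{-1}\lambda$-type constant, which is $\le1$ and can be absorbed — or at worst contributes a single bounded factor per entry/exit transition between $U(p)$, $U(\widehat p)$, and the intermediate regions, of which there are only $O(k)$. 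Thus the only genuinely $k$-dependent non-contracting contributions are the $O(k)$ transitions, each bounded by a fixed constant, giving $\alpha^{O(k)}$; one then fixes $\alpha$ large enough that $\alpha^{O(k)}\le\alpha^k$ after adjusting constants. Bounded distortion of the $C^{1+\alpha}$ foliations and of $f$ on the relevant compact regions makes all ``bounded factor'' claims rigorous.
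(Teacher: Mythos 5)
Your treatment of the second inequality (for $L_k$) is essentially the paper's: $\widehat i_k=O(k)$ by Lemma \ref{lem_CC}-(\ref{CC3}), each of the $N_0+\widehat i_k+N_1$ backward steps distorts the slope (measured against the horizontal foliations, which these maps preserve) by at most a fixed factor $\gamma^2$, and the resulting $\gamma^{O(k)}$ is absorbed into $\alpha^k$. That part is fine.

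The gap is in the first inequality, $\mathrm{slope}(l)<\alpha(\sigma^{-1}\lambda)^{z_kk^2}$ with $\alpha$ \emph{independent of} $k$. You pull $l$ back from $\mathbb{B}_{k+1}^{s*}$ through the entire word of length $z_kk^2+\langle k\rangle$, and so you must control the $\langle k\rangle=\widehat n_{k+1}+k^2+k$ iterates corresponding to $\underline{2}^{(k^2)}\underline{v}_{k+1}[\widehat{\underline{w}}_{k+1}]^{-1}$. By your own accounting these contribute $\alpha^{O(k)}$ from the $O(k)$ steps with arbitrary symbols, and you propose to fix this by $\alpha^{O(k)}\le\alpha^k$ --- but that yields $\alpha^k(\sigma^{-1}\lambda)^{z_kk^2}$, not the constant $\alpha$ that the first inequality asserts and that Lemma \ref{l_[0,2]} later uses. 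Your alternative fix, that the $k^2$ iterates near $\widehat p$ contract slopes because ``$f$ is linear in $U(\widehat p)$ as well,'' imports an assumption not in the setup: (S-\ref{setting3}) linearizes $f$ only near $p$, and the symbol $2$ corresponds to the folded strip of the horseshoe where $f$ is genuinely nonlinear in the coordinates on $S$ in which slope is measured. The paper avoids all of this with a different decomposition: since the itinerary of $B_k^u$ \emph{begins} with $\underline{1}^{(z_kk^2)}$, the map $f^{z_kk^2}$ is exactly the linear map on a neighborhood of $l$ and sends $l$ into $l'_{[0,\delta]}=l'\cap([0,\delta]\times[0,2])$ for some leaf $l'$, a region where compactness of $\mathcal{G}_{\mathrm{loc}}^s(\Lambda)$ gives a uniform slope bound $\alpha_1$; pulling back through the purely linear $f^{z_kk^2}$ multiplies slopes by exactly $(\sigma^{-1}\lambda)^{z_kk^2}$, and one never iterates beyond step $z_kk^2$. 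You need this forward-stopping idea; the backward-through-the-whole-word route does not deliver a $k$-independent constant.
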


\begin{proof}
Since the left component $\{0\}\times [0,2]$ of $\flat S$ is a leaf of $\mathcal{F}_{\mathrm{loc}}^u(\Lambda)$, 
any leaf of $\mathcal{G}_{\mathrm{loc}}^{s}(\Lambda)$ meets $\{0\}\times [0,2]$ transversely.
If necessary replacing $\delta$ by a smaller positive number, we may assume that,  
for any leaf $l$ of $\mathcal{G}_{\mathrm{loc}}^{s}(\Lambda)$, the restriction $l_{[0,\delta]}=l\cap ([0,\delta]\times [0,2])$ has no vertical tangent line.
From the compactness of $\mathcal{G}_{\mathrm{loc}}^{s}(\Lambda)$, we have a constant $\alpha_1>0$ independent 
of $l\in \mathcal{G}_{\mathrm{loc}}^{s}(\Lambda)$ such that $\mathrm{slope}(l_{[0,\delta]})<\alpha_1$.
We denote the left edge of $l$ by $e(l)$.
Let $n_0$ be the smallest integer with $n_0\geq \log\delta/\log\lambda$.
For any $n\geq n_0$, the component $l_n$ of $f^{-n}(l_{[0,\delta]})\cap S$ containing $f^{-n}(e(l))$ is a leaf of $\mathcal{G}_{\mathrm{loc}}^{s}(\Lambda)$.
Since $f^n$ is the linear map $(\sigma^nx,\lambda^ny)$ on a small neighborhood of $l_n$ in $S$, $\mathrm{slope}(l_n)<\alpha_1(\sigma^{-1}\lambda)^n$.

Now we suppose that $l$ is any leaf of $\mathcal{G}_{\mathrm{loc}}^{s}(\Lambda)$ contained in $\mathbb{B}_{k}^u$.
It follows from the form (\ref{eqn_CC1}) of $B_{k}^u$ that, for any integer $k>0$ with $z_kk^2\geq z_0k^2> n_0$, 
$f^{z_kk^2}$ is the linear map $(\sigma^{z_kk^2}x,\lambda^{z_kk^2}y)$ on a small neighborhood of $l$ 
with $f^{z_kk^2}(l)\subset l_{[0,\delta]}'$ for some leaf $l'$ of $\mathcal{G}_{\mathrm{loc}}^{s}(\Lambda)$.
This shows that $\mathrm{slope}(l)<\alpha_1(\sigma^{-1}\lambda)^{z_kk^2}$.
By replacing $\alpha_1$ by a larger constant $\alpha>1$ if necessary, one can suppose that 
$\mathrm{slope}(l)<\alpha(\sigma^{-1}\lambda)^{z_kk^2}$ for all $k> 0$.
Thus the first inequality holds.

Since $M$ is compact, there exists a constant $\gamma>1$ satisfying
\begin{equation}\label{eqn_bDfb}
\gamma^{-1}< \|Df^{-1}_{\boldsymbol{x}}(\boldsymbol{v})\|<\gamma
\end{equation}
for any point $\boldsymbol{x}$ of $M$ and any unit tangent vector $\boldsymbol{v}\in T_{\boldsymbol{x}}(M)$.
By Lemma \ref{lem_CC}-(\ref{CC3}), there exists a constant $H>0$ with $N_0+\widehat i_k+N_1<Hk$ for any $k>0$.
Note that $f^{-N_0}\circ f^{-\widehat i_k}\circ f^{-N_1}$ maps $\widetilde L_k\cap S(\widehat A_k)$ onto $L_k$ and the horizontal segment passing through any point of $\widetilde L_k\cap S(\widehat A_k)$  to the horizontal segment passing through a point of $L_k$. 
Since moreover $\mathrm{slope}(\widetilde L_k)<\alpha(\sigma^{-1}\lambda)^{z_kk^2}$,  
$$\mathrm{slope}(L_k)<\alpha\gamma^{2Hk}(\sigma^{-1}\lambda)^{z_kk^2}
<(\alpha\gamma^{2H})^k(\sigma^{-1}\lambda)^{z_kk^2}.$$
Thus we have the second inequality by denoting $\alpha\gamma^{2H}$ newly by $\alpha$.
\end{proof}

In particular, Lemma \ref{l_slope} implies that, for all sufficiently large $k$, $L_k$ is almost horizontal 
and hence $L_k$ meets $L$ transversely at a single point $\boldsymbol{x}_k=(x_k,y_k)$.
Note that $\widetilde{\boldsymbol x}_k=f^{N_1}\circ f^{\widehat i_k}\circ f^{N_0}(\boldsymbol{x}_k)$ is a point of $\widetilde L_k$.
There exists a neighborhood $\mathcal{N}_0$ of $\widetilde L$ in $\mathbb{G}^u(0)\setminus \sharp\mathbb{G}^u(0)$ consisting of leaves of $\mathcal{G}^s(0)$.
In particular, $\mathcal{N}_0$ is a sub-strip of $(\mathbb{G}^u(0),\flat \mathbb{G}^u(0))$ 
and the components $l_{1,0}$, $l_{2,0}$ of $\sharp\mathcal{N}_0$ meet $\mathcal{F}_{\mathrm{loc}}^u(\Lambda)$ exactly.
We set
\begin{equation}\label{eqn_calN0}
\mathrm{dist}(\sharp \mathcal{N}_0,\widetilde L)=\zeta>0.
\end{equation}
Let $\mathcal{N}_k$ be the component of $f^{-(z_kk^2+\langle k\rangle)}(\mathcal{N}_0)\cap S$ containing $\widetilde L_k$.
Since $\mathcal{N}_0\subset \mathbb{G}^u(0)$, $\mathcal{N}_k$ is contained in some 
gap strip in $\mathbb{B}_{k}^u$.

For sequences $\{u_k\}$, $\{v_k\}$ of positive numbers, $u_k\asymp v_k$ means that
there exist constants $0<c_1<c_2$ independent of $k$ such that 
$c_1\leq \dfrac{u_k}{v_k}\leq c_2$ holds for all $k$.

\begin{lemma}\label{l_[0,2]}
There exists a constant $0<\nu<1$ such that, 
for all sufficiently large $k$, 
$[0,2]\times \bigl[\widetilde y_k-\nu^{k^2}\sigma^{-z_kk^2},\widetilde y_k+\nu^{k^2}\sigma^{-z_kk^2}\bigr]$ 
is contained in $\mathcal{N}_{k}$, where 
$\widetilde y_k$ is the $y$-entry of $\widetilde{\boldsymbol x}_k=(\widetilde x_k,\widetilde y_k)$, see 
Figure \ref{fig_7_4}.
\end{lemma}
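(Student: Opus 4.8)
The plan is to exploit the explicit linear dynamics near $p$ to control the preimage of the strip $\mathcal{N}_0$ under the very long iterate $f^{z_kk^2+\langle k\rangle}$. First I would decompose the backward orbit from $\widetilde{\boldsymbol x}_k$ to the corresponding point in $\mathcal{N}_0$ into the portion spent in the linearizing neighborhood $U(p)$ and the bounded remainder. By the form of the itinerary $\underline{1}^{(z_kk^2)}\underline{2}^{(k^2)}\,\underline{v}_{\,k+1}\,[\widehat{\underline{w}}_{\,k+1}]^{-1}$ in (\ref{eqn_CC1}), the orbit of $\boldsymbol{x}_{k+1}$ spends exactly $z_kk^2$ consecutive steps in $U(p)$ under the linear map $(x,y)\mapsto(\lambda x,\sigma y)$, while the remaining $k^2+\langle k\rangle - z_kk^2 = O(k^2)$ steps — in fact, using (\ref{eqn_lakra}), this remainder is $2k^2+\widehat n_{k+1}+k$, which by Lemma \ref{lem_CC}-(\ref{CC3}) is $O(Tk)$ for the blocks outside $U(p)$ plus the $2k^2$ steps in $U(\widehat p)$-related and $\underline{2}$-blocks — pass through regions where $f$ need not be linear but where the derivative norms are uniformly bounded by the compactness constant $\gamma$ of (\ref{eqn_bDfb}).

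Next I would track how a small vertical interval around $\widetilde y_k$ shrinks under the preimage map. The linear block contracts the $y$-direction by $\sigma^{-z_kk^2}$ precisely (reading the iterate forward: $f^{z_kk^2}$ expands $y$ by $\sigma^{z_kk^2}$, so the preimage of a fixed-size piece of $\mathcal{N}_0$ in the $y$-direction is of size $\asymp \sigma^{-z_kk^2}$ near $\widetilde{\boldsymbol x}_k$, up to the distortion coming from the non-linear blocks). The non-linear blocks, being of length $O(k^2)$ in the $\underline{2}$-part and $O(Tk)$ elsewhere, contribute a multiplicative factor at most $\gamma^{O(k^2)}$; absorbing $\gamma^{O(k^2)}$ into a factor $\nu_0^{-k^2}$ for a suitable $\nu_0<1$ (here the point is that $z_kk^2 \geq z_0 k^2$ with $z_0$ taken large enough in (\ref{eqn_z0+1}), (\ref{eqn_eta}) so that $\sigma^{-z_0}\gamma^{C}<\nu_0<1$ for the relevant constant $C$), I get that the preimage contains a vertical interval of half-width $\asymp \nu_0^{k^2}\sigma^{-z_kk^2}$ about $\widetilde y_k$. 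I would also need the horizontal extent: using (\ref{eqn_S(0)}) and that $\{0\}\times[0,2]$ is a leaf of $\mathcal{F}^u_{\mathrm{loc}}(\Lambda)$, the strip $\mathcal{N}_0$ pulls back to a strip whose horizontal cross-sections near $\widetilde{\boldsymbol x}_k$ span the full width $[0,2]$ of $S$, since the linear map expands horizontally and the $y$-direction is the contracting one for the preimage; combined with the fact (\ref{eqn_calN0}) that $\mathcal{N}_0$ has definite width $\zeta$ around $\widetilde L$, and with the slope bound $\mathrm{slope}(\widetilde L_k)<\alpha(\sigma^{-1}\lambda)^{z_kk^2}$ from Lemma \ref{l_slope} (so $\widetilde L_k$ is nearly horizontal), the pulled-back strip $\mathcal{N}_k$ contains the honest rectangle $[0,2]\times[\widetilde y_k-\nu^{k^2}\sigma^{-z_kk^2},\widetilde y_k+\nu^{k^2}\sigma^{-z_kk^2}]$ for a possibly smaller $\nu\in(\nu_0,1)$.

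The main obstacle I expect is controlling the distortion across the non-linear blocks uniformly in $k$: the number of non-linear steps grows like $k^2$ (from the $\underline{2}^{(k^2)}$ block and the $\underline v_{k+1}$, $[\widehat{\underline w}_{k+1}]^{-1}$ pieces), so a naive bound $\gamma^{O(k^2)}$ is enormous, and the whole point is that it is dwarfed by $\sigma^{z_kk^2}$ only because $z_0$ was chosen sufficiently large — so I must be careful that the bound produced here is consistent with the earlier constraints (\ref{eqn_z0+1}), (\ref{eqn_eta}) on $z_0$, and in particular that the exponent base $\nu$ can be chosen in $(0,1)$ simultaneously with all the other requirements on $z_0$. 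A secondary technical point is that the blocks $\underline{2}^{(k^2)}$, $\underline v_{k+1}$, $[\widehat{\underline w}_{k+1}]^{-1}$ correspond to the orbit passing through the horseshoe region and through the return near $q$, so I should invoke the bounded-distortion estimates for the relevant dynamically defined Cantor sets (Lemmas \ref{lem.bdp1}, \ref{lem.bdp2} and Remark \ref{rmk.large-thickness3}) rather than the crude compactness constant $\gamma$ wherever a sharper control is available, to keep the base of the exponential under control. Once these distortion estimates are in place, the conclusion follows by direct comparison of the $y$-widths, taking $k$ large enough that the nearly-horizontal curves $\widetilde L_k$ and the boundary leaves $\sharp\mathcal{N}_k$ stay within the claimed rectangle.
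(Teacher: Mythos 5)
Your proposal follows the paper's proof in all essentials: the paper bounds $\mathrm{dist}(\sharp\mathcal{N}_k,\widetilde L_k)\geq\gamma^{-\langle k\rangle}\sigma^{-z_kk^2}\zeta$ using the crude derivative bound (\ref{eqn_bDfb}) over the $\langle k\rangle\asymp k^2$ non-linear steps and the exact linear contraction over the $z_kk^2$ steps in $U(p)$, absorbs $\gamma^{-\langle k\rangle}$ into $\nu^{k^2}$, and then uses the slope bounds of Lemma \ref{l_slope} for $\sharp\mathcal{N}_k$ and $\widetilde L_k$ to pass to the honest rectangle. One small correction to your parenthetical: absorbing the distortion into $\nu^{k^2}$ requires no largeness of $z_0$ (only $\nu\leq\mathrm{const}\cdot\gamma^{-C}$ where $\langle k\rangle\leq Ck^2$, since the $\sigma^{-z_kk^2}$ factors cancel on both sides); $z_0$ enters solely through the slope comparison, where $\lambda^{z_0}<\nu$ (implied by (\ref{eqn_eta}) because $\nu^{-1}\leq\omega$) forces $\mathrm{slope}(l_{i,k})/\bigl(\nu^{k^2}\sigma^{-z_kk^2}\bigr)\to 0$, so the nearly horizontal boundary leaves stay outside the claimed rectangle.
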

\begin{figure}[hbt]
\centering
\scalebox{0.8}{\includegraphics[clip]{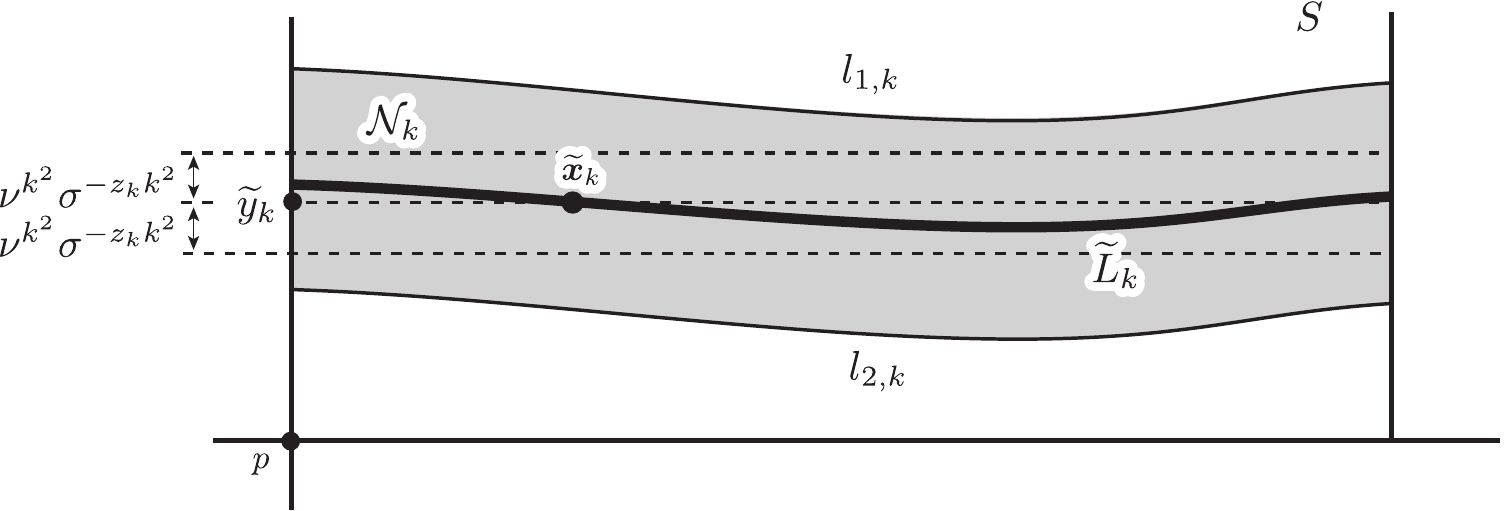}}
\caption{Situation in $\mathcal{N}_k$.}
\label{fig_7_4}
\end{figure}
\begin{proof}
By (\ref{eqn_bDfb}) and (\ref{eqn_calN0}), 
we have 
$$\mathrm{dist}(\sharp\mathcal{N}_k,\widetilde L_k)\geq \gamma^{-\langle k\rangle}\sigma^{-z_kk^2}\zeta.$$
By Lemma \ref{lem_CC}-(\ref{CC3}), $\widehat n_{k+1}\asymp k$ and hence 
$\langle k\rangle=\widehat n_{k+1}+k^2+k\asymp k^2$.
Thus one can take a constant $0<\nu<1$ satisfying
$$
\mathrm{dist}(\sharp\mathcal{N}_k,\widetilde L_k)\geq 2\nu^{k^2}\sigma^{-z_kk^2}
$$
for any $k>0$.
Since the components $l_{1,k}$, $l_{2,k}$ of $\sharp\mathcal{N}_k$ are leaves of $\mathcal{G}_{\mathrm{loc}}^s(\Lambda)$,   
$\mathrm{slope}(l_{i,k})<\alpha(\sigma^{-1}\lambda)^{z_kk^2}$ for $i=1,2$ by Lemma \ref{l_slope}.
Here we suppose that the integer $z_0$ satisfies
\begin{equation}\label{eqn_lambdaz0nu}
\lambda^{z_0}<\nu.
\end{equation}
We will see later that the condition (\ref{eqn_lambdaz0nu}) is implied by the condition (\ref{eqn_eta}).
Since   $$\frac{\mathrm{slope}(l_{i,k})}{2\nu^{k^2}\sigma^{-z_kk^2}}<
\frac{\alpha(\nu^{-1})^{k^2}\lambda^{z_0k^2}}2\to 0\quad (k\to \infty)$$
for $i=1,2$, it follows that
$$[0,2]\times \bigl[\widetilde y_k-\nu^{k^2}\sigma^{-z_kk^2},\widetilde y_k+\nu^{k^2}\sigma^{-z_kk^2}\bigr]\ 
\subset\ \mathcal{N}_{k}$$
for all sufficiently large $k$. 
\end{proof}

Consider the map $h_k$ defined by
\begin{equation}\label{eqn_h_k}
h_k:= f^{z_kk^2+\langle k\rangle}\circ f^{N_{1}}\circ f^{{\widehat i}_{k}}\circ f^{N_{0}}
\end{equation}
and the sequence $\{\widehat{\boldsymbol{x}}_k\}$ with $\widehat{\boldsymbol{x}}_k=f^{z_kk^2+\langle k\rangle}(\widetilde{\boldsymbol x}_k)=h_k(\boldsymbol{x}_k)$.
Let $\boldsymbol{u}_k=(u_k,v_k)$ be the vector with
\begin{equation}\label{eqn_uk}
\widehat{\boldsymbol{x}}_k+\boldsymbol{u}_k=f^{-N_2}(\boldsymbol{x}_{k+1}).
\end{equation}
Figure \ref{fig_7_5} illustrates the transition of base points schematically.
\begin{figure}[hbt]
\centering
\scalebox{0.8}{\includegraphics[clip]{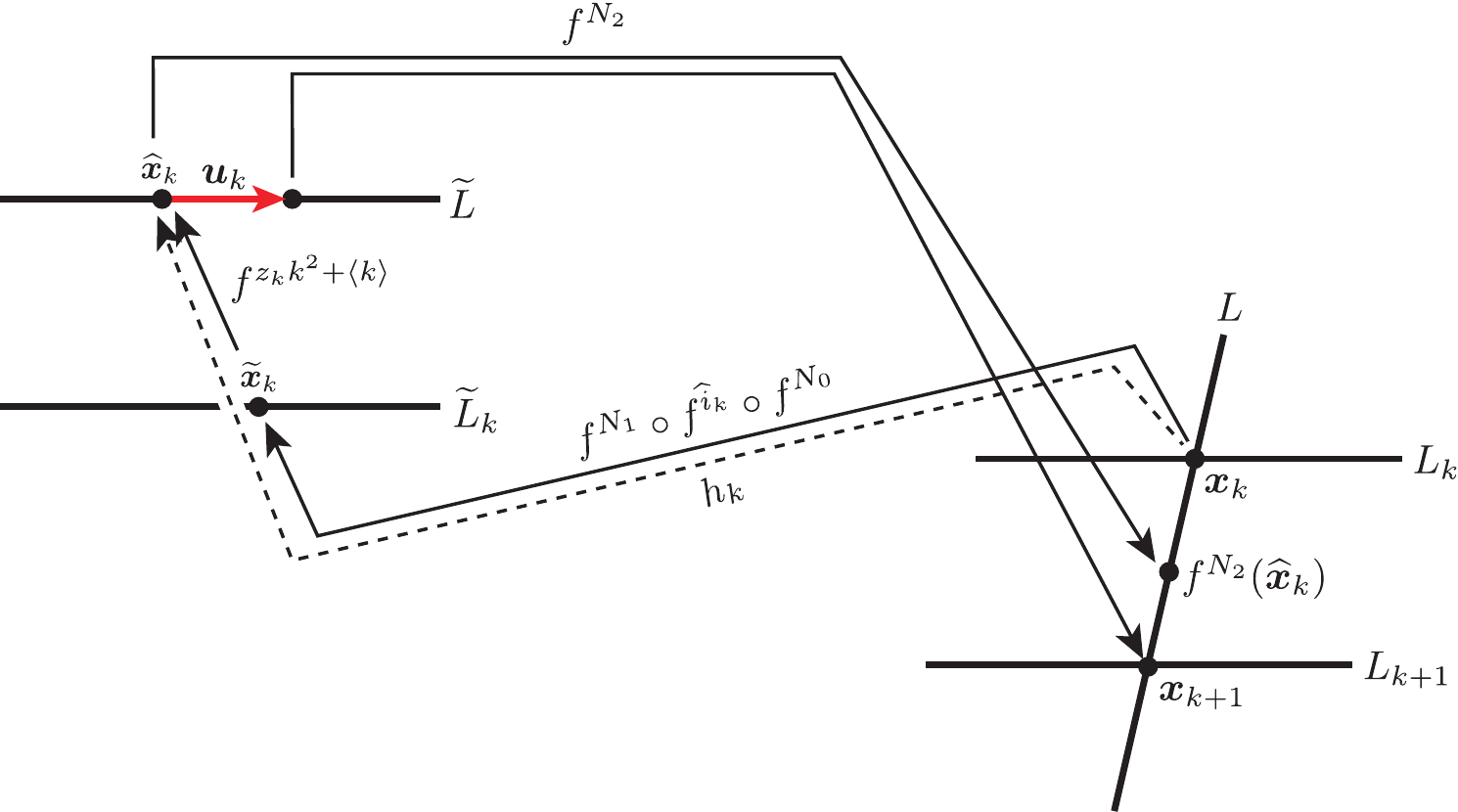}}
\caption{A transition from $\boldsymbol{x}_k$ to $f^{N_2}(\widehat{\boldsymbol x}_k)$ and  
a shifted transition from $\boldsymbol{x}_k$ to ${\boldsymbol x}_{k+1}$.}
\label{fig_7_5}
\end{figure}

\begin{lemma}\label{l_uvt}
There exists a constant $\beta>0$ independent of $k$ such that
$\|\boldsymbol{u}_k\|\leq \beta r_{s-}^{-Tk}$, where $T$ is a number not smaller than the constant 
$T_0$ given in Lemma \ref{lem_CC}.
\end{lemma}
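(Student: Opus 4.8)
The estimate $\|\boldsymbol{u}_k\|\le \beta r_{s-}^{-Tk}$ should be obtained by decomposing the vector $\boldsymbol{u}_k$ of \eqref{eqn_uk} into a ``stable'' and an ``unstable'' contribution relative to the splitting of $S$ into horizontal and vertical directions, and controlling each separately. Recall that $h_k$ of \eqref{eqn_h_k} carries $\boldsymbol{x}_k$ to $\widehat{\boldsymbol{x}}_k$ and that, by construction, $f^{N_2}(\widehat{\boldsymbol{x}}_k)$ should land ``close to'' $\boldsymbol{x}_{k+1}$; the vector $\boldsymbol{u}_k$ measures the residual discrepancy after pulling back by $f^{-N_2}$. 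First I would fix, for each $k$, the arc $L_k$ and its base point $\boldsymbol{x}_k=(x_k,y_k)$ provided by Lemma \ref{l_slope}, together with $\widetilde{\boldsymbol{x}}_k=f^{N_1}\circ f^{\widehat i_k}\circ f^{N_0}(\boldsymbol{x}_k)\in\widetilde L_k$ and $\widehat{\boldsymbol{x}}_k=f^{z_kk^2+\langle k\rangle}(\widetilde{\boldsymbol{x}}_k)$.

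\textbf{Step 1: the horizontal coordinate.} Both $\widehat{\boldsymbol{x}}_k$ and $f^{-N_2}(\boldsymbol{x}_{k+1})$ lie in the strip $\mathbb{B}_{k+1}^{s*}$ (by the identity $f^{z_kk^2+\langle k\rangle}(\mathbb{B}_k^u)=\mathbb{B}_{k+1}^{s*}$ noted before \eqref{eqn_h_k}, together with $\widetilde L_k=f^{-(z_kk^2+\langle k\rangle)}(\mathbb{B}_{k+1}^{s*}\cap\widetilde L)$). Hence their horizontal separation is controlled by the width of the $s$-bridge $B_{k+1}^{s*}$, measured on $W_{\mathrm{loc}}^s(p)$ by $\pi_{\mathcal{F}_{\mathrm{loc}}^u(\Lambda)}$. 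Now $B_{k+1}^{s*}$ is an $s$-bridge of generation $z_kk^2+\langle k\rangle$ for $K_\Lambda^s(\Delta)$, so by the bounded distortion Lemma \ref{lem.bdp1} (iterated) its length is at most $r_{s-}^{-(z_kk^2+\langle k\rangle)}|B^s(0)|$. Since $\langle k\rangle=\widehat n_{k+1}+k^2+k\ge k^2$ and $z_k\ge z_0\ge1$, this generation is $\gg Tk$ for $k$ large (indeed of order $k^2$), so $|u_k|$ is dominated by $r_{s-}^{-Tk}$ with room to spare; more precisely $|u_k|\le C\, r_{s-}^{-(z_0+1)k^2}\le C\,r_{s-}^{-Tk}$ for $k$ large. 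Here I would also absorb the $C^1$-distortion of the projections $\pi^s$, $f^{-N_2}$ (bounded on the compact manifold $M$) into the constant.

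\textbf{Step 2: the vertical coordinate.} The vertical discrepancy is where the point $\boldsymbol{x}_{k+1}$ enters. By Lemma \ref{lem_CC}\,\eqref{CC1}, the admissible sliding parameters $t$ with $(B_{L,k+1}^{s*}+t)\cap B_{L,k+1}^u\ne\emptyset$ form an interval $J_{k+1}^*\subset(-\varepsilon_0 r_{s-}^{-T(k+1)},\varepsilon_0 r_{s-}^{-T(k+1)})$, and the base point $\boldsymbol{x}_{k+1}$ is chosen on $L_{k+1}\cap L$, i.e.\ in this window; translated back along $f^{-N_2}$ and into the vertical coordinate near $p$ via the parametrization of $L$ fixed in Subsection \ref{subsec.LP}, this forces $|v_k|\le C'\varepsilon_0 r_{s-}^{-T(k+1)}\le C' r_{s-}^{-Tk}$. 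One must be slightly careful that the perturbation vector compares $\widehat{\boldsymbol x}_k$ with $f^{-N_2}(\boldsymbol x_{k+1})$ rather than directly with a point already on $L$: but $f^{N_2}(\widehat{\boldsymbol x}_k)$ and $\boldsymbol x_{k+1}$ both lie on $L$ (the former since $\widehat{\boldsymbol x}_k\in\widetilde L\subset\mathbb{G}^u(0)$ and $f^{N_2}(\widetilde L)=L'\subset L$, the latter by definition), so the $L$-parameter difference between them is exactly the required sliding, and $f^{-N_2}$ distorts it only by a bounded factor.

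\textbf{Combining.} Taking $\beta$ to be the sum of the constants from Steps 1 and 2 (and using $r_{s-}>2$ so that all the geometric series converge) gives $\|\boldsymbol u_k\|\le|u_k|+|v_k|\le\beta r_{s-}^{-Tk}$ for all sufficiently large $k$, and enlarging $\beta$ to account for the finitely many small $k$ completes the proof. \textbf{The main obstacle} I anticipate is bookkeeping rather than conceptual: one must verify that the comparison in \eqref{eqn_uk} between $\widehat{\boldsymbol x}_k+\boldsymbol u_k$ and $f^{-N_2}(\boldsymbol x_{k+1})$ really decomposes cleanly along the horizontal/vertical axes of the linearizing chart near $p$ — equivalently, that the bridge strips $\mathbb{B}^{s*}_{k+1}$ are genuinely ``vertical'' and the sliding in Lemma \ref{lem_CC} is genuinely ``horizontal'' in these coordinates — and that all the $C^1$-distortions introduced by $\pi^s$, $\pi^u$, $f^{\pm N_0}$, $f^{\pm N_1}$, $f^{\pm N_2}$ (each a fixed map, hence with bounded $C^1$-norm independent of $k$) are safely absorbed. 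The factor $r_{s-}^{-Tk}$ is in fact very generous: the true decay from Step 1 is of order $r_{s-}^{-k^2}$, so any polynomial-in-$k$ loss from these distortions is harmless.
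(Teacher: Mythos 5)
Your Step 1 rests on a false premise: $f^{-N_2}(\boldsymbol{x}_{k+1})$ does \emph{not} lie in the strip $\mathbb{B}_{k+1}^{s*}$ in general. What the construction gives is $f^{N_2}(\widehat{\boldsymbol{x}}_k)\in B_{L,k+1}^{s*}$ and $\boldsymbol{x}_{k+1}\in B_{L,k+1}^{u}$, and these two bridges of $L$ need not intersect at all --- Lemma \ref{lem_CC}-(\ref{CC1}) only guarantees that they meet after a sliding by some $t\in J_{k+1}^*$, i.e.\ they may sit at parameter distance up to $\varepsilon_0 r_{s-}^{-T(k+1)}$ from each other. That distance is vastly larger than the width $\lesssim r_{s-}^{-(z_kk^2+\langle k\rangle)}|B^s(0)|$ of $B_{k+1}^{s*}$, so the horizontal separation is \emph{not} of order $r_{s-}^{-k^2}$; it is precisely the quantity that the sliding estimate must control. (If your Step 1 were correct, the perturbation vectors would decay like $r_{s-}^{-k^2}$ and the whole Critical Chain apparatus would be superfluous.) Relatedly, your assignment of mechanisms to coordinates is reversed: the parameter on $L$ is defined via $\varpi^u\circ f^{-N_2}$, i.e.\ it is the \emph{horizontal} coordinate of the $f^{-N_2}$-preimage in $S$, so the window $J_{k+1}^*$ of Lemma \ref{lem_CC}-(\ref{CC1}) is what bounds the horizontal component of $\boldsymbol{u}_k$, not the vertical one.

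The correct argument is the parenthetical remark at the end of your Step 2, and it is the paper's entire proof: writing $\boldsymbol{t}_{k+1}$ for the shift with $f^{N_2}(\widehat{\boldsymbol{x}}_k)+\boldsymbol{t}_{k+1}=\boldsymbol{x}_{k+1}$ and $t_{k+1}$ for the corresponding parameter shift on $L$, the slid bridge $B_{L,k+1}^{s*}+t_{k+1}$ contains $\boldsymbol{x}_{k+1}\in B_{L,k+1}^{u}$, hence $t_{k+1}\in J_{k+1}^*$ and $|t_{k+1}|<\varepsilon_0 r_{s-}^{-T(k+1)}$. Since both points lie on the $C^1$-parametrized arc $L$, one has $\|\boldsymbol{t}_{k+1}\|\asymp|t_{k+1}|$ --- this one-dimensional comparison along $L$ already controls both components of the vector simultaneously --- and since $N_2$ is independent of $k$, $\|\boldsymbol{u}_k\|\asymp\|\boldsymbol{t}_{k+1}\|$. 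No horizontal/vertical decomposition is needed. So your proposal does contain the right proof, but as written it assigns a wrong (and unjustifiable) bound to one component and the wrong mechanism to the other.
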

\begin{proof}
Since $f^{N_2}(\widehat{\boldsymbol x}_k)\in B_{L,k+1}^{s*}$ and 
$\boldsymbol{x}_{k+1}\in B_{k+1}^u$, it follows from Lemma \ref{lem_CC}-(\ref{CC1}) that there exists a vector $\boldsymbol{t}_{k+1}=(s_{k+1},t_{k+1})$ with $|t_{k+1}|<2^{-1}\xi_0|B_0^s(\Delta)|
r_{s-}^{-T(k+1)}$ and $f^{N_2}(\widehat{\boldsymbol x}_k)+\boldsymbol{t}_{k+1}=\boldsymbol{x}_{k+1}$.
Since $N_2$ is independent of $k$, $\|\boldsymbol{u}_k\|\asymp \|\boldsymbol{t}_{k+1}\|
\asymp |t_{k+1}|$.
Thus we have a constant $\beta>0$ satisfying 
$\|\boldsymbol{u}_k\|=\|\boldsymbol{x}_{k+1}-f^{-N_2}(\widehat{\boldsymbol x}_k)\|\leq \beta r_{s-}^{-Tk}$ for any $k\geq 1$.
\end{proof}

This proof suggests that $\boldsymbol{t}_{k+1}$ and hence $\boldsymbol{u}_k$ depend on $T$.
So we will write 
$$\boldsymbol{t}_k=\boldsymbol{t}_k(T)\quad\text{or}\quad \boldsymbol{u}_k=\boldsymbol{u}_k(T)$$
when we emphasize the dependence.

\medskip

Recall that we supposed that $r$ is an integer with $3\leq r<\infty$.
The projection $\pi_{\mathcal{F}_{\mathrm{loc}}^s(\Lambda)}$ is a $C^{1+\alpha}$-function but not necessary of $C^r$-class.
So we need a suitable substitute for the map.
Let $\varpi^s:\mathbb{G}^u(0)\to \{0\}\times [0,2]\subset W_{\mathrm{loc}}^u(p)$ be a $C^r$-map arbitrarily $C^1$-close to $\pi_{\mathcal{F}_{\mathrm{loc}}^s(\Lambda)}|_{\mathbb{G}^u(0)}$.
Since $\mathcal{N}_0$ is contained in $\mathbb{G}^u(0)\setminus \sharp\mathbb{G}^u(0)$, 
there exist a $d>0$ and sub-intervals $H$, $\widehat H$ of $G^u(0)$ such that 
$G^u(0)=[\,\min H-2d|H|,\max H+2d|H|\,]$, $\widehat H=[\,\min H-d|H|,\max H+d|H|\,]$ and $\pi^{-1}_{\mathcal{F}_{\mathrm{loc}}^s(\Lambda)}(H)$ contains $\mathcal{N}_0$.
Then one can choose the $C^r$-map $\varpi^s$ so that 
$\mathbb{H}=(\varpi^s)^{-1}(H)$ and $\widehat{\mathbb{H}}=(\varpi^s)^{-1}(\widehat H)$ are 
strips in $S$ with
$$\mathcal{N}_0\ \subsetneq\ \mathbb{H}\ \subsetneq \ \widehat{\mathbb{H}}\ \subsetneq \ \mathbb{G}^u(0).$$
See Figure \ref{fig_7_6}.
\begin{figure}[hbt]
\centering
\scalebox{0.8}{\includegraphics[clip]{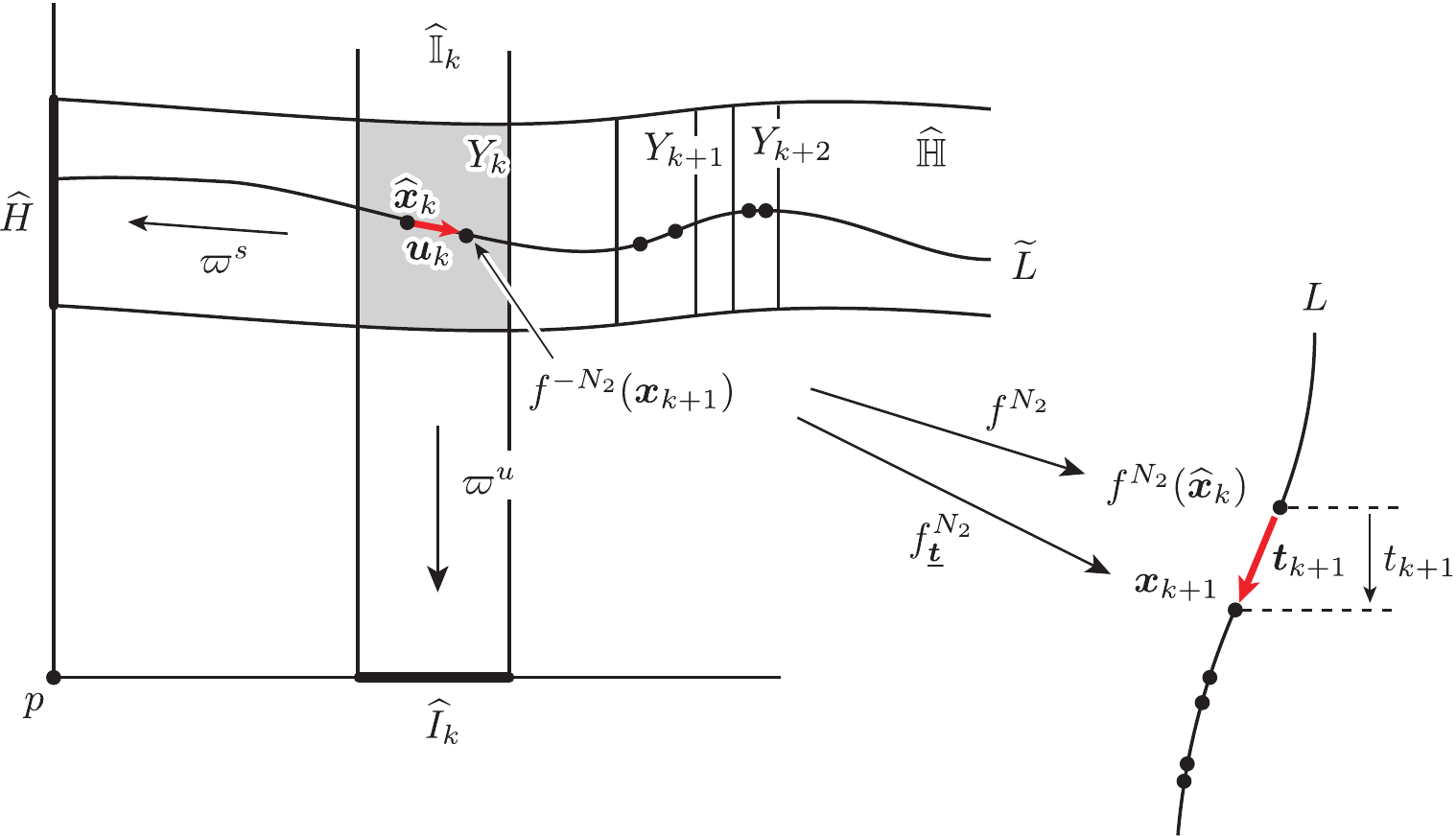}}
\caption{A shifting from $\widehat{\boldsymbol{x}}_k$ to $f^{-N_2}(\boldsymbol{x}_{k+1})$ and 
the resultant shifting from $f^{N_2}(\widehat{\boldsymbol{x}}_k)$ to $\boldsymbol{x}_{k+1}$.}
\label{fig_7_6}
\end{figure}

Now we define the bump functions $\theta_k$ and $\widetilde \theta$ supported on $\widehat I_k$ and $\widehat H$ respectively, where $\widehat I_k$ is the interval in $[0,2]\times \{0\}$ given in (B-\ref{B2}).
First, consider a  non-decreasing $C^{\infty}$ function on $\mathbb{R}$ with 
$$s(x)=
\begin{cases}
0 & \text{if }x\leq -1;\\
1 & \text{if }x\geq 0.
\end{cases}
$$
For $\rho>0$ and the interval $[a,b]$, let $S_{\rho, [a,b]}$ be the non-negative $C^{\infty}$ function on $\mathbb{R}$ defined as 
$$S_{\rho, [a,b]}(x):=s\left(\frac{x-a}{\rho(b-a)}\right)+s\left(\frac{b-x}{\rho(b-a)}\right)-1.$$
The support of $S_{\rho, [a,b]}$ is $[a-\rho(b-a), b+\rho(b-a)]$ and the height on $[a,b]$ is identical to $1$. 
Since $S_{\rho, [a,b]}$ is symmetric with respect to $x=\frac{a+b}2$, one has 
\begin{equation}\label{eqn_||S||}
\| S_{\rho, [a,b]} \|_{r}\leq \frac{1}{\bigl(\rho(b-a)\bigr)^{r}}\| s \|_{r},
\end{equation}
where $\|\cdot\|_{r}$ is the norm given by the derivative until order $r$.
Then our desired bump functions are defined by
\begin{equation}\label{eqn_theta_k}
\theta_{k}:=S_{\alpha_1/2, \varpi^u(I_{k})}, \quad \widetilde\theta:=S_{d, H},
\end{equation}
where $I_{k}$ is in $\widetilde L$  defined as (\ref{eqn_Ikf}) and $\alpha_1$ is the constant given in (B-\ref{B2}).

\begin{lemma}\label{lem.perturb_2}
Let $T$ be any real number with 
$$T>N_{u}r \log (5\cdot 2^{m-3})/\log(r_{s-}),$$
where 
$N_{u}$ is the integer given in Lemma \ref{lem_LG2}-(\ref{LG1-3}).
Then there exists a constant $C_T>0$ satisfying 
$$\lim_{T\to +\infty}C_{T}=0\quad\text{and}\quad  \sum_{k=1}^{\infty}\frac{\|\boldsymbol{u}_{k}(T)\|}{| {A}_{L,k+1}^{u}|^{r}}<C_{T}.$$
\end{lemma}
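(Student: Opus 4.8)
The plan is to estimate the two ingredients $\|\boldsymbol u_k(T)\|$ and $|A_{L,k+1}^u|$ separately as functions of $k$, and then choose $T$ large enough that the resulting series is a geometric series with ratio less than one whose sum tends to $0$ as $T\to\infty$. By Lemma~\ref{l_uvt} we already have the numerator bound $\|\boldsymbol u_k(T)\|\le \beta\, r_{s-}^{-Tk}$ with $\beta>0$ independent of $T$ and $k$. So everything comes down to a lower bound on $|A_{L,k+1}^u|$, i.e.\ showing that the $u$-bridges do not shrink too fast. First I would invoke the bounded distortion property for $u$-bridges: by Remark~\ref{rem_BA}(ii) together with Lemma~\ref{lem.bdp2}(\ref{bdp2_AA}), for consecutive generations one has $|A_{L,k}^u|\,|A_{L,k+1}^u|^{-1}\le 5\cdot 2^{m-3}$, and by Lemma~\ref{lem_LG1}(\ref{LG_3}) the generation of $A_{L,k+1}^u$ exceeds that of $A_{L,k}^u$ by at most $N_u$. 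Iterating, $|A_{L,k+1}^u|\ge (5\cdot 2^{m-3})^{-N_u(k+1)}|A_{L,0}^u|$, so that $|A_{L,k+1}^u|^{-r}\le (5\cdot 2^{m-3})^{rN_u(k+1)}|A_{L,0}^u|^{-r}$.

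Combining these two estimates, the general term of the series is bounded by
$$
\frac{\|\boldsymbol u_k(T)\|}{|A_{L,k+1}^u|^{r}}\ \le\ \beta\,|A_{L,0}^u|^{-r}\,(5\cdot 2^{m-3})^{rN_u}\cdot
\Bigl(r_{s-}^{-T}\,(5\cdot 2^{m-3})^{rN_u}\Bigr)^{k}.
$$
Write $\rho(T)=r_{s-}^{-T}(5\cdot 2^{m-3})^{rN_u}$. The hypothesis $T>N_u r\log(5\cdot 2^{m-3})/\log(r_{s-})$ is exactly the statement $r_{s-}^{-T}<(5\cdot 2^{m-3})^{-rN_u}$, hence $\rho(T)<1$, and in fact $\rho(T)\to 0$ as $T\to+\infty$ since $r_{s-}>2$. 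Therefore the series is dominated by a convergent geometric series, and its sum is at most
$$
C_T\ :=\ \beta\,|A_{L,0}^u|^{-r}\,(5\cdot 2^{m-3})^{rN_u}\cdot\frac{\rho(T)}{1-\rho(T)},
$$
which is finite for every admissible $T$ and satisfies $\lim_{T\to+\infty}C_T=0$ because $\rho(T)\to 0$. This yields both assertions of the lemma.

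A technical point I would be careful about: the quantities $\boldsymbol u_k$, the bridges $A_{L,k}^u$, $B_{L,k}^s$, and the various auxiliary constants all depend implicitly on the choice of $T$ (through $\boldsymbol t_{k+1}$, as noted after Lemma~\ref{l_uvt}, and through the construction in Lemma~\ref{lem_CC}). What matters is that $\beta$ in Lemma~\ref{l_uvt}, the generation gap bound $N_u$ in Lemma~\ref{lem_LG1}(\ref{LG_3}), the distortion constant $5\cdot 2^{m-3}$ in Lemma~\ref{lem.bdp2}, and $|A_{L,0}^u|$ are all independent of $T$; this independence is precisely what makes the base of the geometric series, $\rho(T)$, shrink to $0$ uniformly. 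I expect the main (minor) obstacle to be bookkeeping this $T$-independence cleanly, together with making sure the exponent $k$ versus $k+1$ discrepancies in the bridge-length iteration are absorbed into the constant $C_T$ rather than into the base $\rho(T)$; none of this is deep, but it must be stated carefully so that the final limit $\lim_{T\to\infty}C_T=0$ is genuinely valid.
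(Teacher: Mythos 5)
Your proposal is correct and follows essentially the same route as the paper: both combine the bound $\|\boldsymbol{u}_k(T)\|\leq\beta r_{s-}^{-Tk}$ from Lemma \ref{l_uvt} with the lower bound on $|A_{L,k+1}^u|$ coming from Lemma \ref{lem.bdp2}-(\ref{bdp2_AA}) and the generation-gap bound $N_u$ of Lemma \ref{lem_LG2}-(\ref{LG1-3}), and then sum the geometric series with ratio $(5\cdot 2^{m-3})^{N_ur}/r_{s-}^{T}<1$. The only cosmetic difference is that the nesting in Lemma \ref{lem_LG2} runs through the bridges $\widetilde A_k^u$ (and the paper phrases the length bound via the generations $i_k\leq N_uk+i_0$), but this only shifts constants into the $T$-independent prefactor, exactly as you anticipated.
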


\begin{proof}
By  Lemmas \ref{lem.bdp2}-(\ref{bdp2_AA}) and \ref{lem_LG2}-(\ref{LG1-3}), we have 
$$
|A^{u}_{L,k}|\geq (5\cdot 2^{m-3})^{-i_{k}} |{A}^{u}_{L,0}|\quad\text{and}\quad
i_{k}\leq N_{u}k+i_0.
$$
Then  
$$
\frac{1}{|{A}^{u}_{L,k}|}<\frac{1}{(5\cdot 2^{m-3})^{-i_{k}} |{A}^{u}_{L,0}|}
<\frac{(5\cdot 2^{m-3})^{i_0}}{(5\cdot 2^{m-3})^{-N_{u}k }|{A}^{u}_{L,0}|}.
$$
By Lemma \ref{l_uvt}, one has
$$
\sum_{k=1}^{\infty}\frac{\|\boldsymbol{u}_{k}(T)\|}{| {A}_{L,k+1}^{u}|^{r}}
<
\frac{(5\cdot 2^{m-3})^{i_0+N_ur}\beta}{|{A}^{u}_{L,0}|^{r}}
\sum_{k=1}^{\infty}\left(\frac{(5\cdot 2^{m-3})^{N_{u} r }}{r_{s-}^T}\right)^{k}.
$$
Since
$T>N_{u}r \log (5\cdot 2^{m-3})/\log(r_{s-})$, the right-hand side of the inequality 
is equal to  
$$
C_{T}:=\frac{(5\cdot 2^{m-3})^{i_0+N_ur}\beta}{|A_{L,0}^u|^r(r_{s-}^T-(5\cdot 2^{m-3})^{N_ur})}.
$$
Since $r_{s-}\geq 2$, $\lim_{T\to \infty}C_T=0$.
\end{proof}

The square $S=[0,2]\times [0,2]$ is naturally supposed to be embedded in $\mathbb{R}^2$.
We may assume that the ambient surface $M$ has a Riemannian metric whose restriction on $S$ coincides 
with the standard Euclidean metric on $\mathbb{R}^2$.
The curvature of a leaf of $\mathcal{F}_{\mathrm{loc}}^u(\Lambda)$ (resp.\ 
$f^{-(N_0+N_2)}(\mathcal{F}_{\mathrm{loc}}^s(\Gamma_m))$) at $\boldsymbol{x}\in 
\widetilde L$ is denoted by $\kappa_{\Lambda}(\boldsymbol{x})$ (resp.\ $\kappa_{\Gamma_m}(\boldsymbol{x})$).
By (F-\ref{F3}) in Subsection \ref{subsec.Henon}, both $\kappa_{\Lambda}(\boldsymbol{x})$ and $\kappa_{\Gamma_m}(\boldsymbol{x})$ vary $C^1$ along $\widetilde L$. 
Since $\mathcal{F}_{\mathrm{loc}}^u(\Lambda)$ and $f^{-(N_0+N_2)}(\mathcal{F}_{\mathrm{loc}}^s(\Gamma_m))$ have 
quadratic tangencies along $\widetilde L$, there exists a constant $K>0$ with  
$|\kappa_{\Lambda}(\boldsymbol{x})-\kappa_{\Gamma_m}(\boldsymbol{x})|\geq K$ for 
any $\boldsymbol{x}\in \widetilde L$.
Moreover, by Lemma \ref{l_uvt}, 
\begin{equation}\label{eqn_kappa}
|\kappa_{\Lambda}(\widehat{\boldsymbol{x}}_k)-\kappa_{\Gamma_m}(\widehat{\boldsymbol{x}}_k+\boldsymbol{u}_k)|\geq K/2
\end{equation}
for all sufficiently large $k$.
Let $l_k$, $\widehat l_k$ be the leaves of $\mathcal{F}_{\mathrm{loc}}^u(\Lambda)$ passing though $\widehat{\boldsymbol{x}}_k$ and $\widehat{\boldsymbol{x}}_k+\boldsymbol{u}_k$ respectively, and 
let $-\pi\leq \omega_k\leq \pi$ be the angle of $l_k+\boldsymbol{u}_k$ and $\widehat l_k$ at $\widehat{\boldsymbol{x}}_k+\boldsymbol{u}_k$. 
See Figure \ref{fig_7_7}.
\begin{figure}[hbt]
\centering
\scalebox{0.8}{\includegraphics[clip]{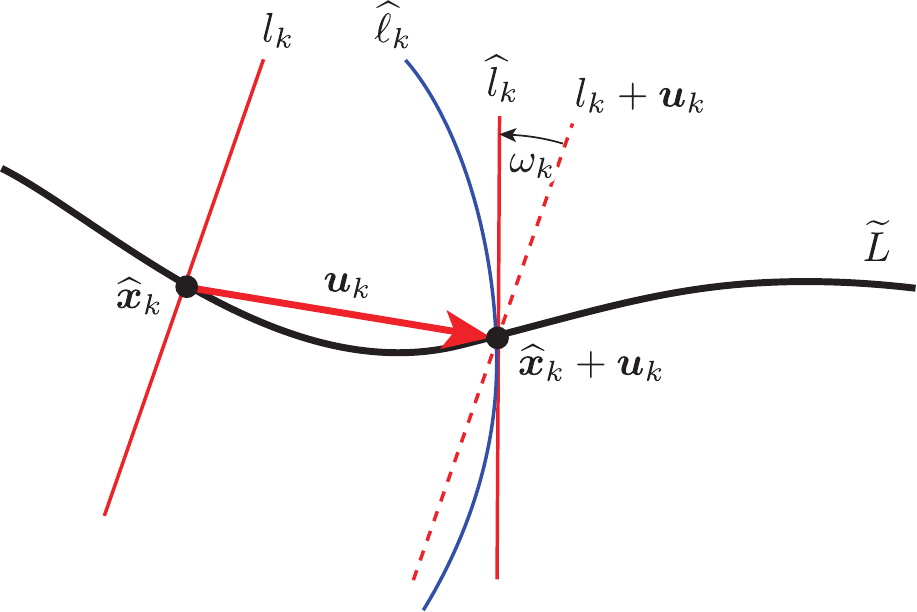}}
\caption{A small parallel translation and a small rotation.}
\label{fig_7_7}
\end{figure}
Since the directions of leaves of $\mathcal{F}_{\mathrm{loc}}^u(\Lambda)$ also vary $C^1$ along $\widetilde L$ 
by (F-\ref{F3}) in Subsection \ref{subsec.Henon}, 
there exists a constant $C>0$ independent of $k$ with 
\begin{equation}\label{eqn_omega_C_u}
|\omega_k|\leq C\|\boldsymbol{u}_k\|.
\end{equation}

Consider the orientation-preserving isometry $\xi_k:\mathbb{R}^2\to \mathbb{R}^2$ defined by
$$\xi_k(\boldsymbol{x})=A_k(\boldsymbol{x}-\widehat{\boldsymbol{x}}_k)+\widehat{\boldsymbol{x}}_k
+\boldsymbol{u}_k,$$
where $A_k$ is the orthogonal matrix of rotation $\omega_k$.
Since 
$$A_k-E=\begin{pmatrix}\cos\omega_k-1&-\sin\omega_k\\ \sin\omega_k&\cos\omega_k-1\end{pmatrix},$$
the inequality (\ref{eqn_omega_C_u}) implies that the $C^r$-norm of $A_k-E$ as a linear map is
\begin{equation}\label{eqn_DA-E}
\|A_k-E\|_r=\|A_k-E\|=O(\|\boldsymbol{u}_k\|),
\end{equation}
where $E$ is the unit matrix of order two.

It follows from the definition of $\xi_k$ that $\xi_k(l_k)$ is a curve tangent to $\widehat l_k$ at $\widehat{\boldsymbol{x}}_k+\boldsymbol{u}_k$ and hence to the leaf $\widehat\ell_k$ of 
$f^{-(N_0+N_2)}(\mathcal{F}_{\mathrm{loc}}^s(\Gamma_m))$ passing through $\widehat{\boldsymbol{x}}_k
+\boldsymbol{u}_k$.
Note that $\xi_k(l_k)$ is in general not identical to $\widehat l_k$.
Since any orientation-preserving isometry on $\mathbb{R}^2$ preserves curvature, by (\ref{eqn_kappa}), $\widehat{\boldsymbol{x}}_k
+\boldsymbol{u}_k$ is a quadratic tangency of $\xi_k(l_k)$ and $\widehat\ell_k$.
Since 
$\xi_k(\boldsymbol{x})-\boldsymbol{x}=(A_k-E)(\boldsymbol{x}-\widehat{\boldsymbol{x}}_k)+\boldsymbol{u}_k$ 
and $S$ is bounded, 
(\ref{eqn_DA-E}) implies 
\begin{equation}\label{eqn_upsilon_k}
\|(\xi_k-\mathrm{Id}_{\mathbb{R}^2})|_S\|_r=O(\|\boldsymbol{u}_k\|).
\end{equation}

The sequence $\underline{\boldsymbol{t}}=(\boldsymbol{t}_{2}, \boldsymbol{t}_{3}, \ldots, \boldsymbol{t}_{k},\ldots)$ of vectors with 
$f^{N_2}(\widehat{\boldsymbol x}_k)+\boldsymbol{t}_{k+1}=\boldsymbol{x}_{k+1}$ is 
called the \emph{perturbation sequence}.
Note that  $\underline{\boldsymbol{t}}$ depends on the constant $T$ given in Lemma \ref{lem_CC} and each entry $\boldsymbol{t}_k=\boldsymbol{t}_k(T)$ converges 
to the zero vector as $T\rightarrow \infty$.
Using the bump functions $\{\theta_{k}\}_{k\geq 1}$, $\widetilde\theta$ and 
the isometries $\xi_k(x,y)$, 
we define the sequence of the $C^r$-perturbation maps $\psi_{\underline{\boldsymbol{t}},a}:M\to M$ $(a=1,2,\dots)$ supported on the disjoint union $\bigcup_{k=1}^aY_k\subset S$ as  
\begin{equation}\label{eqn_psi_ta}
\psi_{\underline{\boldsymbol{t}},a}(\boldsymbol{x}):=\boldsymbol{x}+\sum_{k=1}^{a} \vartheta_k(\boldsymbol{x})(\xi_k(\boldsymbol{x})-\boldsymbol{x}),
\end{equation}
where
$$\vartheta_k(\boldsymbol{x})=\theta_{k}(\varpi^u(\boldsymbol{x})) \widetilde{\theta}(\varpi^s(\boldsymbol{x}))\quad\text{and}
\quad Y_k=\widehat{\mathbb{I}}_{k}\cap\ \widehat{\mathbb{H}}.$$
Each $Y_k$ is a rectangle with curvilinear top and bottom as illustrated in Figure \ref{fig_7_6}.

\begin{lemma}\label{lem.perturb}
For any $\underline{\boldsymbol{t}}=\underline{\boldsymbol{t}}(T)$, 
the sequence $\{\psi_{\underline{\boldsymbol{t}},a}\}_{a=1}^\infty$ $C^r$-converges to the $C^r$-map 
$\psi_{\underline{\boldsymbol{t}}}$ with 
\begin{equation}\label{eqn_psi_t}
\psi_{\underline{\boldsymbol{t}}}(\boldsymbol{x}):=\boldsymbol{x}+\sum_{k=1}^{\infty} \vartheta_k(\boldsymbol{x})(\xi_k(\boldsymbol{x})-\boldsymbol{x})
\end{equation}
for $(x,y)\in S$.
Moreover, $\psi_{\underline{\boldsymbol{t}}}=\psi_{\underline{\boldsymbol{t}}(T)}$ are $C^r$-diffeomorphisms on $M$ for all sufficiently large $T$ which $C^r$-converge  to the identity as $T\to \infty$. 
\end{lemma}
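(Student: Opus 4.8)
The plan is to treat the $k$-th summand $g_k(\boldsymbol{x}):=\vartheta_k(\boldsymbol{x})\bigl(\xi_k(\boldsymbol{x})-\boldsymbol{x}\bigr)$ as a single $C^r$-map on $M$ supported on the rectangle $Y_k=\widehat{\mathbb{I}}_k\cap\widehat{\mathbb{H}}$, so that $\psi_{\underline{\boldsymbol{t}},a}(\boldsymbol{x})=\boldsymbol{x}+\sum_{k=1}^a g_k(\boldsymbol{x})$ as in (\ref{eqn_psi_ta}) and the candidate limit (\ref{eqn_psi_t}) is the pointwise sum $\boldsymbol{x}+\sum_{k=1}^{\infty}g_k(\boldsymbol{x})$. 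I would carry out three steps: first bound $\|g_k\|_r$; second sum these bounds by invoking Lemma \ref{lem.perturb_2}; third use the disjointness of the supports $Y_k$ guaranteed by (B-\ref{B2}) to upgrade absolute summability of $C^r$-norms into $C^r$-convergence of $\{\psi_{\underline{\boldsymbol{t}},a}\}_a$ and into $C^r$-smallness of $\psi_{\underline{\boldsymbol{t}}}-\mathrm{Id}_M$.

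For the first step I would estimate $\|g_k\|_r$ as follows. Since $\varpi^u$, $\varpi^s$ are fixed $C^r$-maps and $\widetilde\theta$ is a fixed bump function, the chain rule and the Leibniz rule yield a constant $c$ depending only on $r$, $\varpi^u$, $\varpi^s$, $\widetilde\theta$ with $\|\vartheta_k\|_r\leq c\,\|\theta_k\|_r$, while (\ref{eqn_||S||}) applied to $\theta_k=S_{\alpha_1/2,\varpi^u(I_k)}$ gives $\|\theta_k\|_r\leq(\alpha_1/2)^{-r}|\varpi^u(I_k)|^{-r}\|s\|_r$. Then I would compare lengths: $|\varpi^u(I_k)|\asymp|I_k|\asymp|A_{L,k}^u|\geq|A_{L,k+1}^u|$, using that $\varpi^u|_{\widetilde L}$ and $f^{N_2}|_{\widetilde L}$ relate these lengths with bounded distortion on short arcs, that the pair $(B_k^s(\Delta),A_k^u(\Delta))$ is $u$-dominating by Lemma \ref{lem_LG1}-(\ref{LG_1}) (with the almost-affinity of the relevant projections as in Remark \ref{rem_BA}), and that $A_{L,k+1}^u\subset A_{L,k}^u$. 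This gives $\|\vartheta_k\|_r\leq C\,|A_{L,k+1}^u|^{-r}$ with $C$ independent of $k$ and $T$; combining with (\ref{eqn_upsilon_k}), namely $\|(\xi_k-\mathrm{Id}_{\mathbb{R}^2})|_S\|_r=O(\|\boldsymbol{u}_k\|)$, and the Leibniz rule on the bounded region $S$, I obtain $\|g_k\|_r\leq C'\,\|\boldsymbol{u}_k(T)\|\,|A_{L,k+1}^u|^{-r}$ with $C'$ independent of $k$ and $T$.

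For the remaining steps I would fix $T$ at least as large as required both in Lemma \ref{lem_CC} and in Lemma \ref{lem.perturb_2}; then $\sum_{k\geq1}\|g_k\|_r\leq C'\sum_{k\geq1}\|\boldsymbol{u}_k(T)\|\,|A_{L,k+1}^u|^{-r}<C'C_T<\infty$, hence $\|g_k\|_r\to0$. Because the $Y_k$ are pairwise disjoint and each bump function vanishes to infinite order on the boundary of its support, at every point of $M$ at most one $g_k$ is nonzero, so for $a'>a$ one has $\|\psi_{\underline{\boldsymbol{t}},a'}-\psi_{\underline{\boldsymbol{t}},a}\|_r=\max_{a<k\leq a'}\|g_k\|_r\to0$; thus $\{\psi_{\underline{\boldsymbol{t}},a}\}_a$ is $C^r$-Cauchy and $C^r$-converges to a $C^r$-map, which is visibly the pointwise sum $\psi_{\underline{\boldsymbol{t}}}$ of (\ref{eqn_psi_t}). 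The one point to check carefully is that an accumulation point of the $Y_k$ is harmless: every $g_k$ occurring near it vanishes to infinite order there and has $C^r$-norm tending to zero, so $\psi_{\underline{\boldsymbol{t}}}$ is $C^r$ at that point with all derivatives up to order $r$ vanishing. Finally $\psi_{\underline{\boldsymbol{t}}}$ is the identity off $S$ and $\|\psi_{\underline{\boldsymbol{t}}}-\mathrm{Id}_M\|_r\leq\sum_{k\geq1}\|g_k\|_r<C'C_T$; since $C_T\to0$ as $T\to\infty$, for all sufficiently large $T$ the map $\psi_{\underline{\boldsymbol{t}}}=\psi_{\underline{\boldsymbol{t}}(T)}$ is $C^1$-close to the identity of the compact manifold $M$, hence a $C^r$-diffeomorphism, and $\psi_{\underline{\boldsymbol{t}}(T)}\to\mathrm{Id}_M$ in $C^r$ as $T\to\infty$.

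The hard part will be the first step: correctly pushing the explicit bump-function bound (\ref{eqn_||S||}), the fixed-map bounds for $\varpi^u$, $\varpi^s$, $\widetilde\theta$, and the rotation–translation estimate (\ref{eqn_upsilon_k}) through the Leibniz and chain rules, and in particular identifying the width $|I_k|$ of the support of $\theta_k$ with a bounded multiple of $|A_{L,k+1}^u|$, so that the resulting series is precisely the one controlled by Lemma \ref{lem.perturb_2}. Everything after that is soft: the disjointness (B-\ref{B2}) converts $\ell^1$-summability of $C^r$-norms into $C^r$-convergence, and the $C^r$-smallness of $\psi_{\underline{\boldsymbol{t}}}-\mathrm{Id}_M$ then follows at once from $C_T\to0$.
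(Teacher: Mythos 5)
Your proposal is correct and follows essentially the same route as the paper: bound the $k$-th summand via (\ref{eqn_||S||}) and (\ref{eqn_upsilon_k}) with $|I_k|\asymp|A_{L,k}^u|$, sum using Lemma \ref{lem.perturb_2} to get a Cauchy sequence in the complete space of $C^r$-maps, and conclude $\|\psi_{\underline{\boldsymbol{t}}(T)}-\mathrm{id}_M\|_r\leq C'C_T\to0$, whence diffeomorphism for large $T$. The only cosmetic deviation is replacing the paper's tail-sum estimate (\ref{def.perturb_2}) by the maximum over disjointly supported terms, which changes nothing of substance.
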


\begin{proof}
Recall that $I_{k+1}=A_{k+1}^u(\Delta)\cup B_{k+1}^s(\Delta)$ is the arc of (\ref{eqn_Ikf}) in $\widetilde L$.
Since $N_2$ is independent of $k$, 
it follows from $|A_{k+1}^u(\Delta)|\geq |B_{k+1}^s(\Delta)|$ that
$$|I_{k+1}|= |A_{k+1}^u(\Delta)\cup B_{k+1}^s(\Delta)|\asymp |A_{k+1}^u(\Delta)|\asymp |A_{L,k+1}^u|.$$ 
By this fact together with (\ref{eqn_||S||}) and (\ref{eqn_upsilon_k}), for any integers $a,b$ with $1\leq a<b$,
\begin{equation}\label{def.perturb_2}
\|
\psi_{\underline{\boldsymbol{t}},b}-\psi_{\underline{\boldsymbol{t}},a}\|_{r}
\leq
C_0 \| S_{\rho, [-1,1]} \|_{r}\sum_{k=a+1}^{b}\frac{\|\boldsymbol{u}_{k}\|}{| {A}_{L,k+1}^{u}|^{r}},
\end{equation}
where $C_0$ is a constant independent of $\underline{\boldsymbol{t}}$.
By Lemma \ref{lem.perturb_2}, $\{\psi_{\underline{\boldsymbol{t}},a}\}_{a=1}^\infty$ is a Cauchy sequence in 
the space $(\mathrm{Map}^r(M),\|\cdot\|_r)$ of $C^r$-maps on $M$, which is a complete metric space.
Thus $\psi_{\underline{\boldsymbol{t}},a}$ $C^r$-converges to the $C^r$-map 
$\psi_{\underline{\boldsymbol{t}}}=\psi_{\underline{\boldsymbol{t}}(T)}$ defined by (\ref{eqn_psi_t}) as $a\to \infty$.
Again by Lemma \ref{lem.perturb_2}, 
$$
\|
\psi_{\underline{\boldsymbol{t}}(T)}-\mathrm{id}_M\|_{r}
\leq
C_0 \| S_{\rho, [-1,1]} \|_{r}\sum_{k=1}^{\infty}\frac{\|\boldsymbol{u}_{k}\|}{| {A}_{L,k+1}^{u}|^{r}}
\leq C_0C_T.
$$
Since $\lim_{T\to \infty}C_T=0$, the map $\psi_{\underline{\boldsymbol{t}}(T)}$ $C^r$-converges to the identity as $T\to \infty$.
Since the identity is a diffeomorphism, $\psi_{\underline{\boldsymbol{t}}(T)}$ is also a diffeomorphism for all sufficiently large $T$.
This completes the proof. 
\end{proof}

\begin{remark}\label{r_Jk}
We note that $\widetilde L$ is no longer a tangency curve of $\psi_{\underline{\boldsymbol{t}}}(\mathcal{F}_{\mathrm{loc}}^u(\Lambda))$ and 
$f^{-(N_0+N_2)}(\mathcal{F}_{\mathrm{loc}}^s(\Gamma_m))$.
However, from our construction (\ref{eqn_psi_t}) of $\psi_{\underline{\boldsymbol{t}}}$, the leaves of them passing through $\widehat{\boldsymbol{x}}_k+\boldsymbol{u}_k$ still 
have a quadratic tangency.
In fact, (\ref{eqn_theta_k}) implies $\vartheta_k(\boldsymbol{x})=1$ and $\vartheta_{k'}(\boldsymbol{x})=0$ $(k'\neq k)$ for any $\boldsymbol{x}\in S$ sufficiently close to $\widehat{\boldsymbol{x}}_k$, 
and hence 
$$\psi_{\underline{\boldsymbol{t}}}(\boldsymbol{x})=\boldsymbol{x}+(\xi_k(\boldsymbol{x})-\boldsymbol{x})
=\xi_k(\boldsymbol{x}).$$
It follows that $\psi_{\underline{\boldsymbol{t}}}(l_k)$ and $\widehat{\ell}_k$ have a quadratic 
tangency at $\widehat{\boldsymbol{x}}_k+\boldsymbol{u}_k$.

Let $J_k$ be a short segment in $U(L)$ which is vertical with respect to 
the $C^{1+\alpha}$-coordinate on $U(L)$ given in Subsection \ref{subsec.LP} and passes through 
$\boldsymbol{x}_k$, for example see Figure \ref{fig_8_1}.
From the definition of the foliation $\mathcal{F}_{\mathrm{loc}}^u(\Lambda)$, 
we know that $h_k(J_k)$ is a segment intersecting $\widetilde L$ transversely at $\widehat{\boldsymbol{x}}_k$ and 
contained in a leaf $l_k$ of $\mathcal{F}_{\mathrm{loc}}^u(\Lambda)$, where 
$h_k$ is the diffeomorphism defined as (\ref{eqn_h_k}).
Thus $\psi_{\underline{\boldsymbol{t}}}(h_k(J_k))$ and some leaf of $f^{-(N_0+N_2)}(\mathcal{F}_{\mathrm{loc}}^s(\Gamma_m))$ has a quadratic tangency at 
$\widehat{\boldsymbol{x}}_k+\boldsymbol{u}_k$.
\end{remark}

From the definition (\ref{eqn_theta_k}) of $\theta_{k}$, $\widetilde{\theta}$ and the form (\ref{eqn_psi_t}) of $\psi_{\underline{\boldsymbol{t}}}$, 
we know that the support $\mathrm{supp}(\psi_{\underline{\boldsymbol{t}}})$ of $\psi_{\underline{\boldsymbol{t}}}$ is contained in the disjoint union $\bigcup_{k=1}^\infty Y_k$.
We now define the $C^r$-map $f_{\underline{\boldsymbol{t}}}$ by
\begin{equation}\label{def_f_mu,t}
f_{\underline{\boldsymbol{t}}}:=f\circ \psi_{\underline{\boldsymbol{t}}}, 
\end{equation}
for the perturbation sequence  $\underline{\boldsymbol{t}}=(\boldsymbol{t}_{2}, \boldsymbol{t}_{3}, \ldots, \boldsymbol{t}_{k}, \ldots)$.
Since $\mathrm{supp}(\psi_{\underline{\boldsymbol{t}}})$ is contained in a small region in $S$ sufficiently 
close to the left component of $\flat S$, $\mathrm{supp}(\psi_{\underline{\boldsymbol{t}}})$ is 
disjoint from $\bigcup_{i=1}^{N_2}f^i (\mathrm{supp}(\psi_{\underline{\boldsymbol{t}}}))$, 
see Figure \ref{fig_3_2}.
It follows that $f_{\underline{\boldsymbol{t}}}$ is equal to $f$ on $\bigcup_{i=1}^{N_2}f^i (\mathrm{supp}(\psi_{\underline{\boldsymbol{t}}}))$.
Hence, by (\ref{eqn_uk}) and (\ref{eqn_psi_t}), 
\begin{equation}\label{eqn_fxx}
f_{\underline{\boldsymbol{t}}}^{N_2}(\widehat{\boldsymbol{x}}_k)=
f_{\underline{\boldsymbol{t}}}^{N_2-1}\circ (f\circ \psi_{\underline{\boldsymbol{t}}})(\widehat{\boldsymbol{x}}_k)
=f^{N_2-1}\circ f(\widehat{\boldsymbol{x}}_k+{\boldsymbol{u}}_k)=\boldsymbol{x}_{k+1},
\end{equation}
see Figure \ref{fig_7_6}.
By Lemma \ref{lem.perturb}, one can suppose that $f_{\underline{\boldsymbol{t}}}$ is a $C^r$-diffeomorphism arbitrarily $C^{r}$-close to $f$ if we take 
$T$ sufficiently large.

\begin{remark}\label{rem_disjoint}
(1)
Since $\bigcup_{k=1}^\infty Y_k\subset \widehat{\mathbb{I}}$, by (B-\ref{B1}) the support $\mathrm{supp}(\psi_{\underline{\boldsymbol{t}}})$ is disjoint from 
$X_{n_*}$.
Thus both the invariant set $\Gamma_m$ and local stable foliation $\mathcal{F}_{\mathrm{loc}}^s(\Gamma_m)$ for 
the perturbed diffeomorphism $f_{\underline{\boldsymbol{t}}}$ are the same as the originals. 
\smallskip

\noindent(2)
It is possible to rearrange our construction so that both $f$ and $\psi_{\underline{\boldsymbol{t}},a}$ are of $C^\infty$-class.
However, even in the case, Lemma \ref{lem.perturb} only asserts that $\psi_{\underline{\boldsymbol{t}}}$ is of $C^r$-class with $2\leq r<\infty$ but not necessarily of $C^\infty$-class.
In fact, though $\psi_{\underline{\boldsymbol{t}}}$ is a $C^\infty$-map on the 
neighborhood $Y_k$ of any $\widehat{\boldsymbol{x}}_k$, the authors do not know whether  
it holds also at the limit point of $\{\widehat{\boldsymbol{x}}_k\}$.
So it may be impossible to suppose that the composition $f_{\underline{\boldsymbol{t}}}:=f\circ \psi_{\underline{\boldsymbol{t}}}$ of (\ref{def_f_mu,t}) is a $C^\infty$-diffeomorphism.
The same problem would occur in the original example in \cite{CV01} though they assert that 
it is of class $C^\infty$.
\end{remark}

\section{Detection of wandering domains with historic behavior}\label{sec.DWD}

\subsection{Some constants for wandering domains}\label{subsec.DWD.preliminary}
From now on, we write $f_{\underline{\boldsymbol{t}}}=f$ for short. 
Recall that $\sigma$ is the unstable eigenvalue of the derivative $Df_p$ of $f$ at $p$ given in (S-\ref{setting3}).
Define the constant $\omega$ by
\begin{equation}\label{eqn_omega}
\omega=\max\bigl\{\nu^{-1},\ \sup\bigl\{\|Df_{\boldsymbol{x}}\|\,;\,\boldsymbol{x}\in S\bigr\},\  
\sup\bigl\{\|(Df_{\boldsymbol{x}})^{-1}\|\,;\,\boldsymbol{x}\in S\bigr\}\bigr\},
\end{equation}
where $\nu$ is the constant given in Lemma \ref{l_[0,2]}.
In this section, the constant  
\begin{equation}\label{def.b_k}
b_{k}:=\varepsilon (5\cdot 2^{m-3})^{-p_k}\omega^{-q_k} \sigma^{-r_k}
\end{equation}
plays an important role, where `$5\cdot 2^{m-3}$' is the number presented in Lemma \ref{lem.bdp2}-(\ref{bdp2_AA}) and
\begin{equation}\label{eqn_pkqkrk}
p_k=\sum_{i=0}^{\infty}\frac{\widehat{i}_{k+i}}{2^{i}},\quad
q_k=\sum_{i=0}^{\infty}\frac{\langle k+i\rangle}{2^{i}},\quad r_k=\sum_{i=0}^{\infty}\frac{z_{k+i}(k+i)^2}{2^{i}},
\end{equation}
where $\widehat{i}_{k+i}$ is the constant given in Lemma \ref{lem_CC}-(\ref{CC3}).
The factor $\varepsilon$ in (\ref{def.b_k}) is a positive constant independent of $k$ which will be fixed later.
Remember that each $z_k$ is either $z_0$ or $z_0+1$ for the integer $z_0$ given in Subsection \ref{subsec.CCC} 
and $\langle k+i\rangle=\widehat n_{k+1+i}+(k+i)^2+k+i$ by (\ref{eqn_lakra}).

\begin{lemma}\label{lem.preliminary1}
\begin{enumerate}[\rm (1)]
\item  \label{preliminary1.2}
For any $\eta>0$, suppose that $z_0$ satisfies (\ref{eqn_z0+1}).
Then there exists an integer $k_*>0$ such that, for any $k\geq k_*$, 
\begin{align*}
&b_{k}^{\frac{1}{2}}
\leq 
\varepsilon^{-\frac{1}{2}}
(5\cdot 2^{m-3})^{-\frac{\widehat i_k}2+\frac34 p_{k+1}}
\omega^{-\frac{\langle k\rangle}2 +\frac34 q_{k+1}}
 \sigma^{(1+\eta)z_kk^2}
b_{k+1},\\
&\frac{\langle k\rangle}2+\frac34 q_{k+1}<4k^2.
\end{align*}
\item  \label{preliminary1.3}
For any integer $k>0$,
$$
b_{k+1}=\varepsilon^{-1} (5\cdot 2^{m-3})^{2\widehat{i}_{k}} \omega^{2\langle k\rangle}\sigma^{2z_kk^2}b_{k}^{2}.
$$
\end{enumerate}
\end{lemma}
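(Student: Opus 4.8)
\textbf{Proof proposal for Lemma \ref{lem.preliminary1}.}

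The plan is to treat this as a direct computation starting from the explicit definition (\ref{def.b_k}) of $b_k$ and the series (\ref{eqn_pkqkrk}) for $p_k$, $q_k$, $r_k$, exploiting the fact that each of these three series satisfies the same one-step recursion. The key algebraic observation is that for a sequence defined by $s_k=\sum_{i=0}^\infty a_{k+i}/2^i$ one has the exact identity $s_k = a_k + \tfrac12 s_{k+1}$, which rearranges to $\tfrac12 s_{k+1} = s_k - a_k$ and hence $s_{k+1} = 2s_k - 2a_k$. I would first record this for all three of $p_k$ (with $a_k=\widehat i_k$), $q_k$ (with $a_k=\langle k\rangle$), and $r_k$ (with $a_k = z_kk^2$). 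One must of course note that the three series converge: $\widehat i_k$, $\langle k\rangle$, $\langle k\rangle$ are of order $k$, $k^2$, $k^2$ respectively by Lemma \ref{lem_CC}-(\ref{CC3}) (together with $\langle k\rangle = \widehat n_{k+1}+k^2+k$ and $\widehat n_{k+1}\asymp k$), and $z_{k+i}(k+i)^2\le (z_0+1)(k+i)^2$, so $\sum_i (k+i)^2/2^i$ converges; this convergence is what makes $b_k$ well-defined, and I would state it briefly at the outset.

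For part (\ref{preliminary1.3}): starting from $b_{k+1}=\varepsilon(5\cdot2^{m-3})^{-p_{k+1}}\omega^{-q_{k+1}}\sigma^{-r_{k+1}}$, substitute $p_{k+1}=2p_k-2\widehat i_k$, $q_{k+1}=2q_k-2\langle k\rangle$, $r_{k+1}=2r_k-2z_kk^2$. This yields
$$b_{k+1}=\varepsilon(5\cdot2^{m-3})^{-2p_k+2\widehat i_k}\omega^{-2q_k+2\langle k\rangle}\sigma^{-2r_k+2z_kk^2},$$
and since $b_k^2=\varepsilon^2(5\cdot2^{m-3})^{-2p_k}\omega^{-2q_k}\sigma^{-2r_k}$, the claimed identity $b_{k+1}=\varepsilon^{-1}(5\cdot2^{m-3})^{2\widehat i_k}\omega^{2\langle k\rangle}\sigma^{2z_kk^2}b_k^2$ follows by inspection. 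This part is essentially bookkeeping.

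For part (\ref{preliminary1.2}): from $b_k^{1/2}=\varepsilon^{1/2}(5\cdot2^{m-3})^{-p_k/2}\omega^{-q_k/2}\sigma^{-r_k/2}$, use $p_k=\widehat i_k+\tfrac12 p_{k+1}$, $q_k=\langle k\rangle+\tfrac12 q_{k+1}$, $r_k=z_kk^2+\tfrac12 r_{k+1}$ to write $-p_k/2 = -\widehat i_k/2 - p_{k+1}/4 = (-\widehat i_k/2 + \tfrac34 p_{k+1}) - p_{k+1}$, and likewise for $q$ and $r$. Dividing out $b_{k+1}=\varepsilon(5\cdot2^{m-3})^{-p_{k+1}}\omega^{-q_{k+1}}\sigma^{-r_{k+1}}$ gives
$$b_k^{1/2}=\varepsilon^{-1/2}(5\cdot2^{m-3})^{-\widehat i_k/2+\tfrac34 p_{k+1}}\omega^{-\langle k\rangle/2+\tfrac34 q_{k+1}}\sigma^{-z_kk^2+\tfrac34 r_{k+1}}\,b_{k+1}.$$
The first inequality of (\ref{preliminary1.2}) then reduces to showing $-z_kk^2 + \tfrac34 r_{k+1} \le (1+\eta)z_kk^2$, i.e. $\tfrac34 r_{k+1}\le (2+\eta)z_kk^2$; here I would use $r_{k+1}=\sum_{i\ge0} z_{k+1+i}(k+1+i)^2/2^i \le (z_0+1)\sum_{i\ge0}(k+1+i)^2/2^i$, note the latter sum is $(z_0+1)(k^2 + O(k))$, compare against $z_kk^2\ge z_0k^2$, and observe that $\tfrac34(z_0+1)/z_0 \le 2+\eta$ once $z_0$ is large enough — this is precisely the content of the condition (\ref{eqn_z0+1}) on $z_0$ (for the given $\eta$), with the $O(k)$ error absorbed by taking $k\ge k_*$. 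The second inequality, $\langle k\rangle/2 + \tfrac34 q_{k+1} < 4k^2$, follows from $\langle k\rangle = k^2+O(k)$ and $q_{k+1}=\sum_i \langle k+1+i\rangle/2^i = k^2 + O(k) \cdot(\text{bounded}) + \sum_i(k+1+i)^2/2^i$; more crudely $q_{k+1}\le 2(k+O(1))^2\cdot\text{const}$, and one checks $\tfrac12 k^2 + \tfrac34\cdot(\text{something} < 4k^2$-worth) $<4k^2$ for all large $k$. The main obstacle — though a mild one — is organizing the $O(k)$ lower-order terms in the quadratic series carefully enough to see that the strict inequalities hold for all $k\ge k_*$ with a single threshold $k_*$ and a single choice of $z_0$ governed by (\ref{eqn_z0+1}); there is no conceptual difficulty, only the need to make the asymptotic estimates on $\sum_i(k+i)^2/2^i$ and $\widehat n_{k+1}\asymp k$ explicit enough.
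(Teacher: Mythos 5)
Your overall strategy --- rewriting everything through the exact recursion $s_k=a_k+\tfrac12 s_{k+1}$ for the three series $p_k,q_k,r_k$ and reducing part (\ref{preliminary1.3}) and the first inequality of part (\ref{preliminary1.2}) to exponent bookkeeping --- is the same as the paper's, and your part (\ref{preliminary1.3}) and the second inequality of part (\ref{preliminary1.2}) are essentially correct. But your proof of the first inequality of part (\ref{preliminary1.2}) has a genuine gap, produced by two compounding factor-of-two slips. First, since $-r_k/2=-\tfrac12 z_kk^2-\tfrac14 r_{k+1}$, the exponent of $\sigma$ in $b_k^{1/2}/b_{k+1}$ is $-\tfrac12 z_kk^2+\tfrac34 r_{k+1}$, not $-z_kk^2+\tfrac34 r_{k+1}$; so the inequality you actually need is $\tfrac34 r_{k+1}\le(\tfrac32+\eta)z_kk^2$, not the weaker $\tfrac34 r_{k+1}\le(2+\eta)z_kk^2$. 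Second, $\sum_{i\ge0}(k+1+i)^2/2^i=2k^2+O(k)$, not $k^2+O(k)$, so in the worst admissible case ($z_k=z_0$ but $z_{k+1+i}=z_0+1$ for all $i$) one has $\tfrac34 r_{k+1}=\tfrac32(z_0+1)k^2+O(k)$.

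Putting the corrected target next to the corrected estimate shows the inequality is tight: you need $\tfrac32(z_0+1)\le(\tfrac32+\eta)z_0$ up to lower-order terms, i.e.\ $z_0\gtrsim 3/(2\eta)$, and this is precisely what condition (\ref{eqn_z0+1}) supplies (the paper picks $\eta_1$ with $\tfrac{4}{2-(1+\eta_1)^2}-\tfrac1{1+\eta_1}-2=1+\eta$, requires $z_0+1\le(1+\eta_1)z_0$, bounds $z_{k+1+i}(k+1+i)^2\le(1+\eta_1)^{2(i+1)}z_kk^2$, sums the geometric series, and uses the lower bound $r_k\ge\tfrac{2}{1+\eta_1}z_kk^2$). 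Your reading of (\ref{eqn_z0+1}) as merely ``$\tfrac34(z_0+1)/z_0\le 2+\eta$ once $z_0$ is large enough'' is the symptom: that inequality holds for every $z_0\ge1$ and every $\eta>0$, so your argument never actually uses the hypothesis on $z_0$, whereas the true inequality fails for small $z_0$. Once you correct the exponent and the value of the series, the remainder of your plan (absorbing the $O(k)$ terms by taking $k\ge k_*$ and controlling $(k+1+i)^2/k^2$ by powers of $1+\eta_1$) does go through and reproduces the paper's argument.
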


\begin{proof}
(\ref{preliminary1.2})
By (\ref{def.b_k}),
$$
\frac{b_{k}^{\frac{1}{2}}}{b_{k+1}}
=\varepsilon^{-\frac{1}{2}}
(5\cdot 2^{m-3})^{p_{k+1}-\frac{p_k}2}
\omega^{q_{k+1}-\frac{q_k}2}
 \sigma^{r_{k+1}-\frac{r_k}2}.
$$
It is immediate from (\ref{eqn_pkqkrk}) that $p_{k+1}-\frac{p_k}2=-\frac{\widehat i_k}2+\frac34 
 p_{k+1}$ and $q_{k+1}-\frac{q_k}2=-\frac{\langle k\rangle}2+\frac34 
 q_{k+1}$.
 
Since $f(x)=\frac{4}{2-(1+x)^2}-\frac1{1+x}-2$ is a monotone increasing function on $0\leq x<\sqrt2-1$ 
with $f(0)=1$ and $\lim_{x\nearrow \sqrt2-1}f(x) =\infty$, for any $\eta>0$, there exists a unique 
$0<\eta_1<\sqrt2-1$ with
$$\frac{4}{2-(1+\eta_1)^2}-\frac1{1+\eta_1}-2=1+\eta.$$
Since $\lim_{k\to \infty}\frac{(k+1)^2}{k^2}=1$, there exists an integer $k_*>0$ such that 
$(k+1)^2\leq (1+\eta_1)k^2$ for any $k\geq k_*$.
We choose the integer $z_0$ so as to satisfy
\begin{equation}\label{eqn_z0+1}
z_0+1\leq (1+\eta_1)z_0.
\end{equation}
Then $z_{k+1}(k+1)^2\leq  (z_0+1)(k+1)^2\leq (1+\eta_1)^2z_kk^2$ holds.
Repeating a similar argument, one can have $z_{k+1+i}(k+1+i)^2\leq z_k(1+\eta_1)^{2(i+1)}k^2$ for any $k\geq k_*$ and $i\geq 0$.
Since $0<\left(\frac{1+\eta_1}2\right)^2<1$, 
$$r_{k+1}\leq \left(\sum_{i=0}^\infty\frac{(1+\eta_1)^{2(i+1)}}{2^i}\right)z_kk^2
=\left(\frac{4}{2-(1+\eta_1)^2}-2\right)z_kk^2.$$
On the other hand, since $r_k\geq \sum_{i=0}^\infty \frac1{2^i}\frac{z_k}{1+\eta_1}k^2=\frac{2}{1+\eta_1}z_kk^2$, 
$$r_{k+1}-\frac{r_k}2\leq \left(\frac{4}{2-(1+\eta_1)^2}-2-\frac1{1+\eta_1}\right)z_kk^2=(1+\eta)z_kk^2.$$
This shows the first inequality of (\ref{preliminary1.2}).

Since $\widehat n_{k+1}\asymp k$ by Lemma \ref{lem_CC}-(\ref{CC3}), one can choose $k_*$ 
so that $\langle k\rangle=\widehat n_{k+1}+k^2+k\leq 2k^2$ for any $k\geq k_*$.
Since 
$$q_{k+1}-\sum_{i=0}^\infty \frac{k^2}{2^i}=\sum_{i=0}^{\infty}\frac{\langle k+1+i\rangle}{2^{i}}
-\sum_{i=1}^\infty \frac{k^2}{2^i}\asymp k,$$
we may assume that 
$q_{k+1}<2\sum_{i=0}^\infty \frac{k^2}{2^i}=4k^2$ for any $k\geq k_*$.
It follows that
$$\frac{\langle k\rangle}2+\frac34 q_{k+1}<k^2+\frac34\cdot 4k^2=4k^2.$$
This shows the second inequality of (\ref{preliminary1.2}).

\noindent(\ref{preliminary1.3})
The equality of (\ref{preliminary1.3}) is derived immediately from (\ref{def.b_k}) together 
with the equalities 
$$2p_k-p_{k+1}=2\widehat i_k,\quad 2q_k-q_{k+1}=2\langle k\rangle,\quad 
2r_k-r_{k+1}=2z_kk^2.$$
This completes the proof.
\end{proof}

Since $z_0$ is required to satisfy (\ref{eqn_z0+1}), we need to choose $z_0$ sufficiently large 
according as $\eta>0$ is taken small. 
By (S-\ref{setting3}) in Section \ref{sec.prep}, $\lambda\sigma<1$.
We take and fix a sufficiently small $\eta>0$ with $\lambda\sigma^{1+\eta}<1$.
So one can choose $z_0$ so as to satisfy 
\begin{equation}\label{eqn_eta}
\omega^4(\lambda\sigma^{1+\eta})^{z_0}<1.
\end{equation}
Since $\nu^{-1}\leq \omega$, it is not hard to show that the condition (\ref{eqn_eta}) implies (\ref{eqn_lambdaz0nu}).
We will see later that the superscript `4' of $\omega$ corresponds to the coefficient `4' of 
$k^2$ in the second inequality of Lemma \ref{lem.preliminary1}-(\ref{preliminary1.2}).

\subsection{Critical chain of rectangles}
For every $k\geq 1$, let $\boldsymbol{x}_k=(x_k,y_k)$ be the intersection point of 
$L_k$ and $L$ given in Subsection \ref{subsec.CCC}.
We consider the rectangle 
$$R_k=\bigl[x_k-b_k^{\frac12},x_k+b_k^{\frac12}\bigr]\times [y_k-b_k,y_k+b_k]$$
centered at $\boldsymbol{x}_k$ with respect to the $C^{1+\alpha}$-coordinate on $U(L)$ defined in 
Subsection \ref{subsec.LP}, see Figures \ref{fig_5_4} and \ref{fig_7_3}.
The absolute slope of the diagonals of $R_k$ is $b_k^{\frac12}$.
By (\ref{def.b_k}), there exists a constant $0<\gamma<1$ such that 
$b_k^{\frac12}>\gamma^k \omega^{-k^2}\sigma^{-z_kk^2}$.
By Lemma \ref{l_slope} and (\ref{eqn_eta}), 
$$\frac{\mathrm{slope}(L_k)}{b_k^{\frac12}}<\frac{\alpha^k(\sigma^{-1}\lambda)^{z_kk^2}}{\gamma^k\omega^{-k^2}\sigma^{-z_kk^2}}=(\alpha\gamma^{-1})^k(\omega\lambda^{z_k})^{k^2}\rightarrow +0\quad 
(k\rightarrow \infty).$$
Thus one can suppose that $L_k\cap R_k$ is an arc in $R_k$ passing through $\boldsymbol{x}_k$ and well approximating the horizontal line $y=y_k$ if $k\geq k_*$.
In particular, the arc connects the components of the edge $\flat R_k$.
The strip $\mathbb{B}_{k}^u$ is divided by $\widetilde L_k$ into the two strips $\mathbb{B}_{k}^{u\pm}$.
Similarly, $R_k$ is divided by 
$L_k$ into the two strips $R_k^{\pm}$.
See Figure \ref{fig_7_3} again.
By Lemma \ref{lem.bdp2}-(1), one can have a constant $C>0$ satisfying
$$\mathrm{dist}\bigl(f^{N_1}\circ f^{\widehat i_k}\circ f^{N_0}(l_k^\pm),\widetilde L_k\bigr)\leq C(5\cdot 2^{m-3})^{\widehat i_k}b_k,$$
where $l_k^\pm=\sharp R_k\cap R_k^\pm$.
By (\ref{eqn_omega}) and (\ref{def.b_k}), 
$$(5\cdot 2^{m-3})^{\widehat i_k}b_k\leq 
(5\cdot 2^{m-3})^{\widehat i_k}\varepsilon (5\cdot 2^{m-3})^{-\widehat i_k}\omega^{-k^2}\sigma^{-z_kk^2}
\leq \varepsilon \nu^{k^2}\sigma^{-z_kk^2}.$$
It follows from Lemma \ref{l_[0,2]} that $f^{N_1}\circ f^{\widehat i_k}\circ f^{N_0}(R_k^\pm)\subset \mathbb{B}_{k}^{u\pm}\cap \mathcal{N}_k$ holds for any $k\geq k_*$ 
if we take $\varepsilon >0$ sufficiently small.
In particular, this implies that $h_k(R_k)\subset \mathbb{B}_{k+1}^{s*}\cap \widehat{\mathbb{H}}$, 
where $h_k$ is the diffeomorphism defined as (\ref{eqn_h_k}).
See Figure \ref{fig_7_3} for $\mathbb{B}_{k+1}^{s*}$ and Figure \ref{fig_7_6} for 
$\widehat{\mathbb{H}}$.

For any integer $k>0$, consider the composition  
\begin{equation}\label{def.transition}
g_{k}:= f^{N_{2}}\circ f^{z_kk^2+\langle k\rangle}\circ f^{N_{1}}\circ f^{\widehat{i}_{k}}\circ f^{N_{0}}=f^{N_2}\circ h_k.
\end{equation}
By (\ref{eqn_fxx}),  
$f^{N_2}(\widehat{\boldsymbol{x}}_{k})=\boldsymbol{x}_{k+1}$ and hence 
$g_k(\boldsymbol{x}_k)=\boldsymbol{x}_{k+1}$.
Moreover $f$ is chosen so that 
$f^{N_2}(\mathcal{F}_{\mathrm{loc}}^u(\Lambda))$ and $f^{-N_0}(\mathcal{F}_{\mathrm{loc}}^s(\Gamma_m))$ have a 
quadratic tangency at $\boldsymbol{x}_{k+1}$.

For sequences $\{u_k(\varepsilon)\}$, $\{v_k(\varepsilon)\}$ of positive numbers, $u_k(\varepsilon)\prec v_k(\varepsilon)$ means that
there exists a positive constant $a$ independent of $k$, $\varepsilon$ and satisfying  
$u_k(\varepsilon)<a v_k(\varepsilon)$ for all $k$ and $\varepsilon>0$.

The main result of this section is as follows:

\begin{lemma}[Rectangle Lemma]\label{lem_RL}
There exist an integer $k_0\geq k_*$ and $\varepsilon >0$ such that, for any $k\geq k_0$, $g_k(R_k)\subset \mathrm{Int} R_{k+1}$.
\end{lemma}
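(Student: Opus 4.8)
The plan is to track how the map $g_k = f^{N_2}\circ h_k$ distorts the rectangle $R_k$ in two stages, namely the ``linear-dominated'' stage $h_k$ and the ``quadratic-folding'' stage $f^{N_2}$, and to show that the output fits inside $\operatorname{Int}R_{k+1}$ by comparing sizes via Lemma~\ref{lem.preliminary1}. First I would recall from the discussion preceding the statement that $h_k(R_k)\subset \mathbb{B}_{k+1}^{s*}\cap \widehat{\mathbb H}$ and $g_k(\boldsymbol{x}_k)=\boldsymbol{x}_{k+1}$, so the issue is purely metric: I must bound the horizontal and vertical extents of $g_k(R_k)$ around $\boldsymbol{x}_{k+1}$ by $b_{k+1}^{1/2}$ and $b_{k+1}$ respectively. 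The horizontal direction of $R_k$ (length $2b_k^{1/2}$) is the one expanded by the unstable eigenvalue along the $z_kk^2$ iterations inside $U(p)$: it picks up a factor of order $\sigma^{z_kk^2}$, together with bounded factors $\omega^{\langle k\rangle}$ from the $N_0+\widehat i_k+N_1$ excursion iterations (here one uses $\widehat i_k = \bar i_k(N_*+n_*)$ with $\bar i_k\le (m-1)i_k$ and Lemma~\ref{lem.bdp2}-(\ref{bdp2_AA}) to control the distortion of the $u$-bridge stack, yielding the $(5\cdot 2^{m-3})^{\widehat i_k}$ factor). The vertical direction of $R_k$ (length $2b_k$) is contracted by $\lambda^{z_kk^2}$ and then, crucially, \emph{folded} by the quadratic tangency in $f^{N_2}$, so its image contributes a term of order (vertical extent)$^{1/2}$ plus (horizontal extent)$^2$ to the vertical size at $\boldsymbol{x}_{k+1}$ — this is exactly the mechanism making the definition $b_k = \varepsilon(5\cdot2^{m-3})^{-p_k}\omega^{-q_k}\sigma^{-r_k}$ with the dyadic sums $p_k,q_k,r_k$ the ``right'' one.

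The key computation is then to verify the two inequalities of Lemma~\ref{lem.preliminary1}-(\ref{preliminary1.2}) combine to give the inclusion. Concretely: the width of $g_k(R_k)$ is $\prec (5\cdot2^{m-3})^{\widehat i_k}\omega^{\langle k\rangle}\sigma^{z_kk^2}\, b_k^{1/2}$, and using the first displayed inequality of Lemma~\ref{lem.preliminary1}-(\ref{preliminary1.2}), namely $b_k^{1/2}\le \varepsilon^{-1/2}(5\cdot2^{m-3})^{-\widehat i_k/2+\frac34p_{k+1}}\omega^{-\langle k\rangle/2+\frac34 q_{k+1}}\sigma^{(1+\eta)z_kk^2}b_{k+1}$, this width is bounded by a quantity of the form (bounded powers)$\cdot\sigma^{(2+\eta)z_kk^2-z_kk^2}\cdots$ — wait, more carefully one collects the net power of $\sigma$ and checks it is absorbed; the surplus exponent is killed by the factor $\omega^4(\lambda\sigma^{1+\eta})^{z_0}<1$ of (\ref{eqn_eta}) applied $k^2$ times, which dominates all the polynomially-in-$k$ many bounded factors $\alpha^k$, $\gamma^{-k}$, etc. For the vertical size one uses $b_{k+1}=\varepsilon^{-1}(5\cdot2^{m-3})^{2\widehat i_k}\omega^{2\langle k\rangle}\sigma^{2z_kk^2}b_k^2$ from Lemma~\ref{lem.preliminary1}-(\ref{preliminary1.3}): the folded image has vertical extent $\prec$ (square of the horizontal input)$\asymp (5\cdot2^{m-3})^{2\widehat i_k}\omega^{2\langle k\rangle}\sigma^{2z_kk^2}b_k = \varepsilon\, b_{k+1}$, so choosing $\varepsilon$ small enough forces this into $[y_{k+1}-b_{k+1},y_{k+1}+b_{k+1}]$. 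Finally one must check the slope condition — that the image still looks like an almost-horizontal curvilinear rectangle so that ``width'' and ``height'' are the meaningful quantities — which follows from Lemma~\ref{l_slope} (the slope of $L_k$, hence of the image diagonal direction, is $\ll b_{k+1}^{1/2}$ for large $k$) exactly as in the paragraph establishing $h_k(R_k)\subset \mathbb{B}_{k+1}^{s*}$.

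\textbf{Main obstacle.} The hard part is not any single estimate but the bookkeeping: one has \emph{three} competing exponential scales — the $\sigma^{z_kk^2}$ expansion, the $\lambda^{z_kk^2}$ contraction composed with the square-root loss from the quadratic fold, and the merely-bounded-base but exponent-$\asymp k$ factors $(5\cdot2^{m-3})^{\widehat i_k}$, $\omega^{\langle k\rangle}$, $\alpha^k$, $\gamma^{-k}$ — and one must confirm that after the $b_{k+1}$-vs-$b_k$ substitution every net power of $\sigma$ in the width estimate is strictly negative with room to spare, the room being provided precisely by the $(1+\eta)$ versus $2$ discrepancy in Lemma~\ref{lem.preliminary1} and absorbed via (\ref{eqn_eta}). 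Getting the constants $\eta,z_0,\varepsilon,k_0$ chosen in the correct order (first $\eta$ small with $\lambda\sigma^{1+\eta}<1$, then $z_0$ large satisfying (\ref{eqn_z0+1}) and (\ref{eqn_eta}), then $k_0\ge k_*$, then $\varepsilon$ small) is where care is required, but each individual bound is a routine application of the mean value theorem together with the bounded-distortion Lemmas~\ref{lem.bdp1} and \ref{lem.bdp2} and the linearization (S-\ref{setting3}).
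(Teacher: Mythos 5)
There is a genuine gap: you have the two directions of $R_k$ playing the wrong roles, and the error is not cosmetic — it breaks both of your estimates. In the $C^{1+\alpha}$-coordinate on $U(L)$, the \emph{horizontal} lines are leaves of $f^{-N_0}(\mathcal{F}_{\mathrm{loc}}^s(\Gamma_m))$ whose $f^{N_0+\widehat i_k+N_1}$-images are horizontal leaves of $\mathcal{F}_S$ in $S$, i.e.\ they lie along the \emph{stable} direction of the linearization $(x,y)\mapsto(\lambda x,\sigma y)$; the \emph{vertical} lines map to leaves of $\mathcal{F}_{\mathrm{loc}}^u(\Lambda)$. Hence the long horizontal side of $R_k$ (length $2b_k^{1/2}$) is \emph{contracted} by $\lambda^{z_kk^2}$, while the short vertical side (length $2b_k$) is \emph{expanded} by $\sigma^{z_kk^2}$, and it is the image of the central \emph{vertical} segment — an arc in an unstable leaf — that $f^{N_2}$ folds quadratically onto $L$ at $\boldsymbol{x}_{k+1}$ (Remark \ref{r_Jk}). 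You assert the opposite. With your assignment the width of $g_k(R_k)$ comes out as $\prec(5\cdot2^{m-3})^{\widehat i_k}\omega^{\langle k\rangle}\sigma^{z_kk^2}b_k^{1/2}$; since $b_k^{1/2}\asymp \sigma^{-r_k/2}\cdots$ with $r_k/2\approx z_kk^2$, this is of order a constant and does not decay like $b_{k+1}^{1/2}$. Indeed, substituting the first inequality of Lemma \ref{lem.preliminary1}-(\ref{preliminary1.2}) leaves a surplus factor $\sigma^{(2+\eta)z_kk^2}$ with \emph{no} compensating $\lambda^{z_kk^2}$ anywhere in your computation — the whole point of (\ref{eqn_eta}) is to kill the product $\lambda^{z_kk^2}\sigma^{(1+\eta)z_kk^2}$ arising from the \emph{contracted} horizontal fibers, and that $\lambda^{z_kk^2}$ never appears in your width estimate. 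The "wait, one checks it is absorbed" step is exactly where the argument fails. Your vertical estimate has a parallel error: Lemma \ref{lem.preliminary1}-(\ref{preliminary1.3}) gives $(5\cdot2^{m-3})^{2\widehat i_k}\omega^{2\langle k\rangle}\sigma^{2z_kk^2}b_k^{\,2}=\varepsilon b_{k+1}$ with $b_k^2$, not $b_k$; the quantity that gets squared by the quadratic fold must be the image of the vertical side of length $b_k$ (so that its square produces $b_k^2$), not of the horizontal side of length $b_k^{1/2}$.

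The correct architecture, which your proposal does not reach, is: (i) the central vertical segment $J_k=\{x_k\}\times[y_k-b_k,y_k+b_k]$ is expanded to an unstable arc of length $\asymp(5\cdot2^{m-3})^{\widehat i_k}\omega^{\langle k\rangle}\sigma^{z_kk^2}b_k=\varepsilon^{1/2}b_{k+1}^{1/2}$, whose $f^{N_2}$-image is quadratically tangent to the horizontal at $\boldsymbol{x}_{k+1}$ and therefore has horizontal extent $\prec\varepsilon^{1/2}b_{k+1}^{1/2}$ and vertical extent $\prec\varepsilon b_{k+1}$, hence lies in $\tfrac12R_{k+1}$ for $\varepsilon$ small; (ii) each horizontal fiber $Z_{k,y}$ is carried to a set of diameter $\prec(5\cdot2^{m-3})^{\widehat i_k}\omega^{\langle k\rangle}\lambda^{z_kk^2}b_k^{1/2}$, which Lemma \ref{lem.preliminary1}-(\ref{preliminary1.2}) together with (\ref{eqn_eta}) bounds by $\tfrac12b_{k+1}$ for $k$ large; (iii) since every $g_k(Z_{k,y})$ meets $g_k(J_k)$, the inclusion $g_k(R_k)\subset\mathrm{Int}\,R_{k+1}$ follows. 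Your order of choice of the constants $\eta$, $z_0$, $k_0$, $\varepsilon$ is right, but the metric computation at the heart of the lemma must be redone with the stable/unstable roles interchanged.
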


\begin{proof}
For any $\boldsymbol{x}\in U(L)$, let $L_{\boldsymbol{x}}^{\mathrm{hori}}$ and 
$L_{\boldsymbol{x}}^{\mathrm{vert}}$ be the horizontal and vertical lines in $U(L)$ passing through $\boldsymbol{x}$ respectively.
See Figure \ref{fig_5_4}.
Let 
$$\pi_{\boldsymbol{x}}^{\mathrm{hori}}:U(L)\to L_{\boldsymbol{x}}^{\mathrm{vert}},\quad \pi_{\boldsymbol{x}}^{\mathrm{vert}}:U(L)\to L_{\boldsymbol{x}}^{\mathrm{hori}}$$
be the projections along the 
horizontal and vertical lines on the $C^{1+\alpha}$-coordinate.

First we show that the $g_k$-image of the center vertical segment $J_k=\{x_k\}\times [y_k-b^k,y_k+b_k]$ of $R_k$ is contained in 
$$\tfrac12 R_{k+1}=
\bigl[x_{k+1}-\tfrac12 b_{k+1}^{\frac12},x_{k+1}+\tfrac12 b_{k+1}^{\frac12}\bigr]\times \bigl[y_{k+1}-\tfrac12 b_{k+1},y+\tfrac12 b_{k+1}\bigr].$$
See Figure \ref{fig_8_1}.
\begin{figure}[hbt]
\centering
\scalebox{0.7}{\includegraphics[clip]{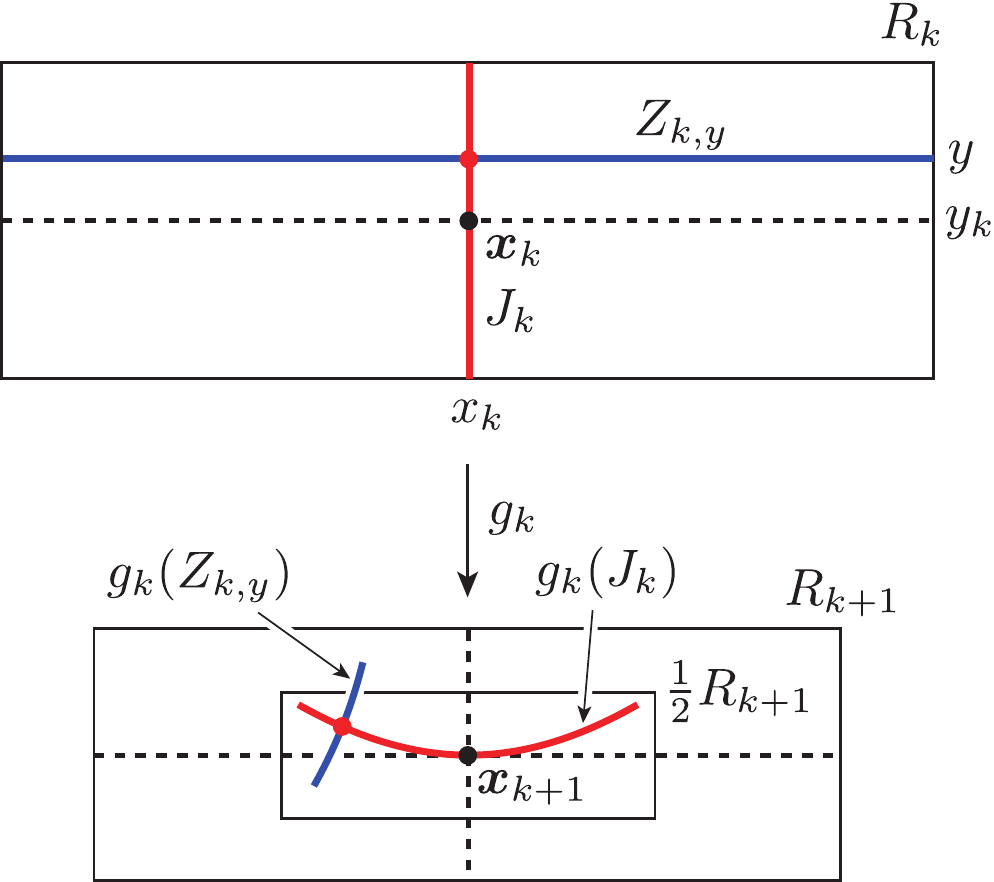}}
\caption{A transition from $R_k$ into $R_{k+1}$.}
\label{fig_8_1}
\end{figure}
We set $\widetilde J_{k}=h_k(J_{k})$.
By Remark \ref{r_Jk},  
$\widetilde J_k$ is in a leaf of $\mathcal{F}_{\mathrm{loc}}^u(\Lambda)$.
Note that, by (\ref{def.b_k}), $b_k$ and hence $|J_{k}|$ depend on $\varepsilon$.
Since $f^{z_kk^2}$ coincides with the linear map $(x,y)\mapsto (\lambda^{z_kk^2}x,\sigma^{z_kk^2}y)$ on 
$f^{N_1}\circ f^{\widehat i_k}\circ f^{N_0}(R_k)$ and $N_0$, $N_1$ are independent of $k$,
$$|\widetilde J_{k}|\prec (5\cdot 2^{m-3})^{\widehat i_k}\omega^{\langle  k\rangle}\sigma^{z_kk^2}b_k.$$
Since $g_k(J_{k})=f^{N_2}(\widetilde J_{k})$, by Lemma \ref{lem.preliminary1}-(\ref{preliminary1.3}), we have
$$
|\pi_{\boldsymbol{x}_{k+1}}^{\mathrm{vert}}(g_k(J_{k}))|\prec (5\cdot 2^{m-3})^{\widehat i_k}
\omega^{\langle k\rangle}\sigma^{z_kk^2}b_k=\varepsilon^{\frac12}b_{k+1}^{\frac12}.
$$
By Remark \ref{r_Jk}, $g_k(J_k)$ and $L_{\boldsymbol{x}_{k+1}}^{\mathrm{hori}}$ have a quadratic tangency at $\boldsymbol{x}_{k+1}$.
Since the curvatures of leaves of $f^{N_2}(\mathcal{F}_{\mathrm{loc}}^u(\Lambda))$ and  $f^{-N_0}(\mathcal{F}_{\mathrm{loc}}^s(\Gamma_m))$ vary $C^1$ along $L$ 
by (F-\ref{F3}) in Subsection \ref{subsec.Henon}, 
there exists a constant $A>0$ independent of $k$ with 
$|\pi_{\boldsymbol{x}_{k+1}}^{\mathrm{hori}}(g_k(J_{k}))|\leq A
|\pi_{\boldsymbol{x}_{k+1}}^{\mathrm{vert}}(g_k(J_{k}))|^2$.
This implies that
$$
|\pi_{\boldsymbol{x}_{k+1}}^{\mathrm{hori}}(g_k(J_{k}))|\prec 
\varepsilon b_{k+1}.
$$
One can choose $\varepsilon>0$ so that
$$|\pi_{\boldsymbol{x}_{k+1}}^{\mathrm{vert}}(g_k(J_{k}))|\leq \frac12 b_{k+1}^{\frac12}\quad\text{and}\quad
|\pi_{\boldsymbol{x}_{k+1}}^{\mathrm{hori}}(g_k(J_{k}))|\leq \frac12 b_{k+1}.$$
It follows that $g_k(J_{k})\subset \frac12 R_{k+1}$.

Consider next the horizontal segment $Z_{k,y}=\bigl[x_k-b_k^{\frac12},x_k+b_k^{\frac12}\bigr]\times\{y\}$ in 
$R_k$ with $y_k-b_k\leq y \leq y_k+b_k$.
Note that the intersection $Z_{k,y}\cap J_k$ consists of the single point $(x_k,y)$.
By our constructions of the unstable foliation $\mathcal{F}_{\mathrm{loc}}^u(\Gamma_m)$ in 
Subsection \ref{subsec.Hetero} and the horizontal coordinate on $U(L)$ in Subsection \ref{subsec.LP}, 
the image $f^{N_1}\circ f^{\widehat i_k}\circ f^{N_0}(Z_{k,y})$ is a strictly horizontal segment in 
$S$ with 
$$|f^{N_1}\circ f^{\widehat i_k}\circ f^{N_0}(Z_{k,y})|\prec
(5\cdot 2^{m-3})^{\widehat i_k}b_k^{\frac12}.$$
Since $f^{z_kk^2}$ is the linear map $(\sigma^{z_kk^2}x,\lambda^{z_kk^2}y)$ in a small neighborhood 
of $f^{N_1}\circ f^{\widehat i_k}\circ f^{N_0}(Z_{k,y})$ in $S$, 
the curve $\widetilde Z_{k,y}=h_k(Z_{k,y})$ satisfies
$$|\widetilde Z_{k,y}|\prec (5\cdot 2^{m-3})^{\widehat i_k}\omega^{\langle  k\rangle}\lambda^{z_kk^2}b_k^{\frac12}.$$
By Lemma \ref{lem.preliminary1}-(\ref{preliminary1.2}), 
\begin{align*}
|g_k(Z_{k,y})|&\prec
(5\cdot 2^{m-3})^{\widehat i_k}\omega^{\langle  k\rangle}\lambda^{z_kk^2}b_k^{\frac12}\\
&\leq  
\varepsilon^{-\frac{1}{2}}
(5\cdot 2^{m-3})^{\frac{\widehat i_k}2+\frac34p_{k+1}}
\omega^{\frac{\langle k\rangle}2+\frac34 q_{k+1}}
 \bigl(\lambda\sigma^{1+\eta}\bigr)^{z_kk^2}
b_{k+1}\\
&<  
\varepsilon^{-\frac{1}{2}}
(5\cdot 2^{m-3})^{\frac{\widehat i_k}2+\frac34p_{k+1}}
\omega^{4k^2}
 \bigl(\lambda\sigma^{1+\eta}\bigr)^{z_kk^2}
b_{k+1}.
\end{align*}
By Lemma \ref{lem_CC}-(\ref{CC3}) and (\ref{eqn_pkqkrk}), $\frac{\widehat i_k}2+\frac34 p_{k+1}\asymp k$.
Since $\omega^4(\lambda\sigma^{1+\eta})^{z_k}<1$ by (\ref{eqn_eta}), 
there exists an integer $k_0\geq k_*$ such that, for any $k\geq k_0$, 
$|g_k(Z_{k,y})|<\frac12 b_{k+1}$.
Since $g_k(Z_{k,y})\cap g_k(J_k)\neq \emptyset$ and $g_k(J_k)\subset \frac12 R_{k+1}$, 
$g_k(Z_{k,y})$ is contained in $\mathrm{Int} R_{k+1}$ for all $y\in [y_k-b_k,y_k+b_k]$.
It follows that $g_k(R_k)\subset \mathrm{Int} R_{k+1}$.
This completes the proof.
\end{proof}

\subsection{Proof of Theorem \ref{main-thm}}
Now we will present the proof of Theorem \ref{main-thm} under the notation same as in the preceding subsection.

\begin{proposition}\label{prop_exist_wd}
Let $f$ be a $C^r$ diffeomorphism on $M$ contained in a Newhouse open set $\mathcal{N}$ of $\mathrm{Diff}^r(M)$.
Then there exist $C^r$-diffeomorphisms $f_n$ on $M$ which admit  wandering domains 
and $C^r$-converge to $f$.
\end{proposition}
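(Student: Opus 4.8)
The plan is to assemble Proposition~\ref{prop_exist_wd} from the machinery developed in Sections \ref{sec.prep}--\ref{sec.DWD}, with Rectangle Lemma (Lemma~\ref{lem_RL}) as the engine. First I would recall that any $f\in\mathcal{N}$ is $C^r$-approximated by a diffeomorphism satisfying (S-\ref{setting1})--(S-\ref{setting7}); after the renormalization (Theorem~\ref{lem.renormalization}), the thickness condition (\ref{cond.Ku}) on $K^u_m$ via the choice of large $m$, and the two perturbations of Subsections \ref{subsec.LP} and \ref{SS_Pert}, we obtain a diffeomorphism $f_{\underline{\boldsymbol{t}}}=f\circ\psi_{\underline{\boldsymbol{t}}}$ which, by Lemma~\ref{lem.perturb}, is $C^r$-close to the original $f$ once $T$ is large. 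So it suffices to produce, inside this class, a diffeomorphism with a wandering domain, and then let $T\to\infty$ (equivalently index the resulting diffeomorphisms by $n$) to get the $C^r$-convergent sequence $f_n$.

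Next I would fix the data: choose $\eta>0$ small with $\lambda\sigma^{1+\eta}<1$, then $z_0$ large enough for (\ref{eqn_z0+1}) and (\ref{eqn_eta}) (hence (\ref{eqn_lambdaz0nu})), fix a sequence $\boldsymbol{z}=\{z_k\}$ with each $z_k\in\{z_0,z_0+1\}$, fix $T$ large enough for Lemmas~\ref{lem_CC}, \ref{lem.perturb_2} and \ref{lem.perturb}, and fix $\varepsilon>0$ and $k_0\ge k_*$ as supplied by Lemma~\ref{lem_RL}. With the transition maps $g_k$ of (\ref{def.transition}) satisfying $g_k(\boldsymbol{x}_k)=\boldsymbol{x}_{k+1}$ and $g_k(R_k)\subset\mathrm{Int}\,R_{k+1}$ for all $k\ge k_0$, set $D=\mathrm{Int}\,R_{k_0}$. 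Each $g_k$ has the form $f^{m_k}$ for the integer $m_k=N_0+\widehat i_k+N_1+z_kk^2+\langle k\rangle+N_2$ (this is the $m_k$ of Remark~\ref{r_fzDz}), so writing $M_k=m_{k_0}+\cdots+m_{k-1}$ we get $f^{M_k}(D)\subset\mathrm{Int}\,R_k$ for every $k>k_0$. The two bullets in the definition of a wandering domain must now be checked. For disjointness of the forward images: the rectangles $R_k$ are pairwise disjoint (they sit over the disjoint arcs $I_k$ of Lemma~\ref{lem_LG1}-(\ref{LG_2}), inside the disjoint strips $\widehat{\mathbb{I}}_k$ of (B-\ref{B2})), so images landing in distinct $R_k$ cannot meet; for iterates strictly between two consecutive ``landing times'' $M_k$ and $M_{k+1}$ one uses the itinerary structure (\ref{eqn_CC1}) — the intermediate iterates are forced to visit the linearizing box $U(p)$, the box $U(\widehat p)$, and a bounded collection of other small neighborhoods that are disjoint from all the $R_j$, and a diameter estimate (as in Lemma~\ref{l_f^nD}, using that $f^{M_k}(D)$ has diameter $\prec b_k\to 0$) rules out self-intersections. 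For non-triviality of $\omega(D,f)$: by choosing the free words $\underline{v}_{\,k+1}\in\{1,2\}^k$ in (\ref{eqn_CC1}) so that every finite admissible word for the $2$-shift on $\Lambda$ occurs with the frequency needed, $\omega(x,f)$ contains the horseshoe $\Lambda$ for every $x\in D$, so in particular $\omega(D,f)$ is not a single periodic orbit. This already gives that $D$ is a (non-trivial) wandering domain; contraction is immediate since $\mathrm{diam}(f^{M_k}(D))\le\mathrm{diam}(R_k)\prec b_k\to 0$ and the finitely-many-orbit-types argument of Lemma~\ref{l_f^nD} controls the intermediate iterates.

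Finally I would record the dependence on $T$ (and on $\boldsymbol{z}$): for each sufficiently large $T$ the above produces a diffeomorphism $f_{\underline{\boldsymbol{t}}(T)}$ with a contracting non-trivial wandering domain, and by Lemma~\ref{lem.perturb} these $C^r$-converge to $f$ as $T\to\infty$; relabeling by an integer parameter $n$ gives the sequence $f_n$ claimed in the proposition. I expect the main obstacle to be the disjointness verification for the \emph{intermediate} iterates $f^{j}(D)$ with $M_k<j<M_{k+1}$: one must argue that, although these iterates travel through $U(p)$, $U(\widehat p)$, and a bounded number of auxiliary neighborhoods, the images stay inside regions that are pairwise disjoint from the whole critical chain $\{R_j\}$ and from each other across different values of $k$ — this is exactly where the careful placement of supports in (B-\ref{B1}), (B-\ref{B2}) and the smallness of $\varepsilon$ (via $b_k$) are used, and where one must invoke the partial invariance of $\mathcal{F}_S$ together with the linearization (S-\ref{setting3}) to pin down the geometry of $f^{j}(R_k)$ along the orbit. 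The other delicate point, though more routine given (\ref{eqn_CC1}), is arranging the words $\underline{v}_{\,k+1}$ so that $\Lambda\subset\omega(x,f)$ uniformly over $x\in D$; this is the part flagged in the outline as ``possible by choosing entries of $v_k$ suitably.''
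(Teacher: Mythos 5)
Your overall architecture (renormalization, the two perturbations, Lemma \ref{lem.perturb}, Rectangle Lemma, then let $T\to\infty$) matches the paper, and your treatment of contraction and of $\omega(x)\supset\Lambda$ via the free words $\underline{v}_{\,k+1}$ is in line with what is actually done. However, there is a genuine gap at exactly the point you flag as ``the main obstacle'': the verification that $f^i(D)\cap f^j(D)=\emptyset$ for \emph{all} $i\neq j$. Your proposed route is to locate every intermediate iterate $f^j(D)$, $M_k<j<M_{k+1}$, in regions disjoint from all the $R_l$ and ``from each other across different values of $k$,'' using the disjoint supports (B-\ref{B1}), (B-\ref{B2}), the smallness of $b_k$, and a diameter estimate ``as in Lemma \ref{l_f^nD}.'' None of these tools gives the needed conclusion: during the blocks $\underline{1}^{(z_kk^2)}$ and $\underline{1}^{(z_{k'}k'^2)}$ two intermediate images both sit deep inside the linearizing box $U(p)$, and neither small diameter, nor disjointness of the perturbation supports, nor pairwise disjointness of the rectangles $R_l$ prevents those two images from overlapping; Lemma \ref{l_f^nD} cannot be quoted here either, since its constant is built from a \emph{fixed} gap $a$ between iterates, whereas your gaps $m_k\asymp z_kk^2$ are unbounded, and that lemma anyway presupposes that $D$ is already wandering for a power of $f$.

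The paper closes this step by a short contradiction argument driven by the itineraries (\ref{eqn_CC1}), not by a direct geometric separation of all images. Assume $f_n^a(D)\cap D\neq\emptyset$ for some $a\geq 1$ and choose $k\geq k_0$ with $z_kk^2>a$. By Remark \ref{r_fzDz} there is a time $q\geq 1$ at which $f_n^{q}(D)$ lies in the bridge strip of $B^u(z_kk^2+\langle k\rangle;\underline{1}^{(z_kk^2)}\underline{2}^{(k^2)}\,\underline{v}_{\,k+1}\,[\widehat{\underline{w}}_{\,k+1}]^{-1})$; consequently $f_n^{q+z_kk^2}(D)\subset \mathbb{B}^u(1;2)$, while $f_n^{q+z_kk^2-a}(D)\subset \mathbb{B}^u(1;1)$ because $a$ symbols $1$ still remain in the itinerary at that earlier time. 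But $f_n^{q+z_kk^2-a}\bigl(f_n^a(D)\cap D\bigr)$ is a nonempty set contained in $f_n^{q+z_kk^2}(D)\cap f_n^{q+z_kk^2-a}(D)\subset \mathbb{B}^u(1;2)\cap\mathbb{B}^u(1;1)=\emptyset$, a contradiction. This device requires no control whatsoever of where the intermediate images travel, which is precisely what makes the disjointness proof work; without it (or an equally effective substitute, e.g.\ a careful argument that images at distinct times always lie in distinct, hence disjoint, bridges of the coding, including the bounded excursions outside $S$), your proof of the wandering property is incomplete.
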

\begin{proof}
Let $f_{\underline{\boldsymbol{t}}}$ be the diffeomorphism given in (\ref{def_f_mu,t}).
By Lemmas \ref{lem.perturb} and \ref{lem_RL}, 
there exists a perturbation sequence  
$\underline{\boldsymbol{t}_n}=(\boldsymbol{t}_{n,2},\boldsymbol{t}_{n,3},\dots)$ such that 
$f_n:=f_{\underline{\boldsymbol{t}_n}}$ $C^r$-converge to $f$.
Then the interior $D$ of the rectangle $R_{k_0}$ given in Lemma \ref{lem_RL} is a wandering domain of $f_n$.
If not, there would exist an $a\in \mathbb{N}$ such that $f_n^a(D)\cap D\neq \emptyset$.
Take an integer $k$ with $k\geq k_0$ and $z_kk^2>a$.
By (\ref{eqn_CC1}), there exists an integer  $q\geq 1$ such that 
$$f_n^{q+z_kk^2}(D)\subset \mathbb{B}^u(\langle k\rangle;\underline{2}^{(k^2)}\,\underline{v}_{\,k+1}\,[\widehat{\underline{w}}_{\,k+1}]^{-1})
\subset \mathbb{B}^u(1;2),$$
see Figure \ref{fig_4_1}.
Moreover we have
$$f^{q+z_kk^2-a}(D)\subset \mathbb{B}^u(
\langle k\rangle+a;\underline{1}^{(a)}\underline{2}^{(k^2)}\,\underline{v}_{\,k+1}\,[\widehat{\underline{w}}_{\,k+1}]^{-1}
)\subset \mathbb{B}^u(1;1).$$
Hence the intersection $f_n^{q+z_kk^2-a}(f_n^a(D)\cap D)
=f_n^{q+z_kk^2}(D)\cap f_n^{q+z_kk^2-a}(D)$ is empty.
This contradicts that $f_n^a(D)\cap D\neq \emptyset$.
Thus $D$ is a wandering domain of $f_n$.
It is immediate from our construction of $D$ that $\lim_{j\to \infty}\mathrm{diam}\,f_n^j(D)=0$.
Moreover, for a fixed $x_0\in D$, one can suppose that the $\omega$-limit set $\omega(x_0)$ contains $\Lambda$ by taking 
the words $\underline{v}_{\,k+1}\in \{1,2\}^{k}$ of (\ref{eqn_CC1}) suitably.
Since $\lim_{j\to \infty}\mathrm{diam}\,f_n^j(D)=0$, we also have $\omega(x)\supset \Lambda$ for any $x\in D$.
This shows that $D$ is a wandering domain of $f_n$.
\end{proof}

Note that the first perturbation of $f$ in Subsection \ref{subsec.LP} does not 
depend on the sequence $\boldsymbol{z}=\{z_k\}_{k=1}^\infty$ such that each entry $z_k$ is either $z_0$ or $z_0+1$ but the second perturbation in Subsection \ref{SS_Pert} does.
The wandering domain $D=\mathrm{Int} R_{k_0}$ also depends on $\boldsymbol{z}$.
We express the dependence by $f_{n,\boldsymbol{z}}$ and $D_{\boldsymbol{z}}=\mathrm{Int} R_{k_0,\boldsymbol{z}}$.
On the other hand, since the support of the second perturbation is fully contained in $\mathbb{G}^u(0)$,  
$\mathbb{B}^s\cap \mathbb{B}^u$ is independent of the sequence $\boldsymbol{z}$ 
for any bridges $B^s$ of $K_\Lambda^s$ and $B^u$ of $K_\Lambda^u$.

\begin{remark}\label{r_fzDz}
For any integer $j\geq k_0$, consider the integer $m_j$ defined by 
\begin{align*}
m_j&=N_2+z_j j^2+\langle j\rangle+N_1+\widehat i_j+N_0\\
&=N_2+z_j j^2+\widehat n_{j+1}+j^2+j +N_1+\widehat i_j+N_0,
\end{align*}
and $\widehat m_k=\sum_{j=k_0}^km_j$ if $k\geq k_0$.
By (\ref{def.transition}), $g_j=f^{m_j}$.
Our wandering domain $D_{\boldsymbol{z}}$ satisfies $f_{n,\boldsymbol{z}}^{\widehat m_k}(D_{\boldsymbol{z}})\subset \mathrm{Int} R_{k,\boldsymbol{z}}$ for any integer $k\geq k_0$ and 
$f_{n,\boldsymbol{z}}^j(D_{\boldsymbol{z}})$ stays in $S$ for $j$ with 
$\widehat u_k\leq j< \widehat m_{k}-N_2$, where 
$\widehat u_k=\widehat m_{k-1}+N_0+\widehat i_k+N_1$.
See Figure \ref{fig_7_3}.
Fix a sufficiently large positive integer $l$ and suppose that $k>\max\{k_0,l\}$.
From the form of ${B}_k^u$ of (\ref{eqn_CC1}) and the fact that 
$f_{n,\boldsymbol{z}}^{\widehat u_k}(D_{\boldsymbol{z}})\subset \mathbb{B}_k^u$, 
$f_{n,\boldsymbol{z}}^j(D_{\boldsymbol{z}})$ is contained in $\mathbb{B}^u(l;\,\underline{1}^{(l)})\cap \mathbb{B}^s(l;\,\underline{1}^{(l)})$
for any $j$ with $\widehat u_{k}+l\leq j< \widehat u_{k}+z_kk^2-l$
and in $\mathbb{B}^u(l;\,\underline{2}^{(l)})\cap \mathbb{B}^s(l;\,\underline{2}^{(l)})$ 
for any $j$ with $\widehat u_{k}+z_kk^2+l\leq j< \widehat u_{k}+z_kk^2+k^2-l$.
Since by 
Lemma \ref{lem_CC}-(\ref{CC3}) $N_2+\widehat n_{j+1}+j +\widehat i_j+N_0\asymp j$, it follows that 
$$
\lim_{k\to \infty}\frac{(z_kk^2-2l)+(k^2-2l)}{\widehat m_k-\widehat m_{k-1}}=
\lim_{k\to \infty}\frac{z_kk^2+k^2-4l}{m_k}=1.
$$
Thus, for any integer $l>0$ and almost all $j>0$, we have
\begin{equation}\label{eqn_f_nz^j}
f_{n,\boldsymbol{z}}^j(D_{\boldsymbol{z}})\subset (\mathbb{B}^u(l;\,\underline{1}^{(l)})\cap \mathbb{B}^s(l;\,\underline{1}^{(l)}))\cup (\mathbb{B}^u(l;\,\underline{2}^{(l)})\cap \mathbb{B}^s(l;\,\underline{2}^{(l)})).
\end{equation}
Here `\emph{almost all} $j$' means that, if $d_l(a)$ is the number of integers $j$ $(0<j\leq a)$ satisfying (\ref{eqn_f_nz^j}) for $a>0$, then $\lim_{a\to \infty}d_l(a)/a=1$ holds.
\end{remark}

Now we are ready to prove our main theorem.

\begin{proof}[Proof of Theorem \ref{main-thm}]
Let $f_{n,\boldsymbol{z}}$ be the diffeomorphism and $D_{\boldsymbol{z}}$ the wandering domain of $f_{n,\boldsymbol{z}}$ as above.
We will show that the sequence $\boldsymbol{z}=\{z_k\}_{k=1}^\infty$ can be chosen so that, 
for any $x\in D_{\boldsymbol{z}}$, the forward orbit $\{x,f_{n,\boldsymbol{z}}(x), f_{n,\boldsymbol{z}}^2(x),\dots\}$ has historic behavior.

Note that the horseshoe $\Lambda$ has two fixed points $p$, $\widehat p$ with
$$\{p\}=\bigcap_{l=1}^\infty \mathbb{B}^u(l;\,\underline{1}^{(l)})\cap \mathbb{B}^s(l;\,\underline{1}^{(l)})\quad\text{and}\quad
\{\widehat p\}=\bigcap_{l=1}^\infty\mathbb{B}^u(l;\,\underline{2}^{(l)})\cap \mathbb{B}^s(l;\,\underline{2}^{(l)}).$$

Consider the space $\mathcal{P}(M)$ of probability measures on $M$ with the weak topology.
For any $x\in D_{\boldsymbol{z}}=\mathrm{Int} R_{k_0,\boldsymbol{z}}$ and any non-negative integer $m$, the element $\mu_{x}(m)$ of $\mathcal{P}(M)$ is defined as
$$\mu_{x}(m)=\frac{1}{m+1}\sum_{i=0}^m\delta_{f_{n,\boldsymbol{z}}^i(x)}.$$
We are concerned with the subsequence $\{\mu_{x}(\widehat m_k)\}_{k= k_0}^\infty$ of $\{\mu_{x}(m)\}$.
Let $\nu_{0}$ and $\nu_{1}$ be the elements of $\mathcal{P}(M)$ defined as
$$\nu_{0}=\frac1{z_0+1}\bigl(z_0\delta_{p}+\delta_{\widehat p}\bigr)\quad\text{and}\quad
\nu_1=\frac1{z_0+2}\bigl((z_0+1)\delta_{p}+\delta_{\widehat p}\bigr),$$
and $U_0$, $U_1$ arbitrarily small neighborhoods of $\nu_0$ and $\nu_1$ in $\mathcal{P}(M)$ with $U_0\cap U_1=\emptyset$ respectively.

Let $k_1$ be an integer  sufficiently larger than $k_0$.
Then one can suppose that the integer $l<k_1$ in (\ref{eqn_f_nz^j}) is also large enough.
If we take the entries of $\boldsymbol{z}$ so that $z_k=z_0$ for $k=1,\dots,k_1$, then $\mu_x(\widehat m_{k_1})$ is contained in $U_0$.

For any integer $m'>m$, let $\mu_{x}(m',m)$ be the measure on $M$ defined as $\mu_{x}(m',m)=\frac{1}{m'+1}\sum_{i=m+1}^{m'}\delta_{f_n^i(x)}$.
If $k>k_1$, then 
$$\mu_{x}(\widehat m_{k})=\frac{\widehat m_{k_1}+1}{\widehat m_{k}+1}\mu_{x}(\widehat m_{k_1})+\mu_{x}(\widehat m_{k},\widehat m_{k_1}).$$
Thus the contribution of the first term goes to zero as $k\to \infty$.
It follows that, if $k_2$ is sufficiently larger than $k_1$, then for any sequence $\boldsymbol{z}=\{z_j\}_{j=1}^\infty$ with $z_j=z_0$ $(j=1,\dots,k_1)$ and $z_j=z_0+1$ $(j=k_1+1,\dots,k_2)$, $\mu_{x}(\widehat m_{k_2})$ is contained in $U_1$.

Repeating a similar argument, we have a monotone increasing sequence $\{k_a\}_{a=1}^\infty$ such that
$\{\mu_{x}(\widehat m_{k_a},\boldsymbol{z})\}_{a=1}^\infty$ has two subsequences one of which converges to $\nu_0$ and the other to $\nu_1$, where $\boldsymbol{z}=\{z_j\}_{j=1}^\infty$ is the sequence with
$$
\begin{cases}
z_j=z_0 & \text{for \ $j=1,\dots,k_1$, $k_{2a}+1,\dots,k_{2a+1}$ $(a=1,2,\dots)$}\\
z_j=z_0+1&\text{for \ $j=k_{2a-1}+1,\dots,k_{2a}$ $(a=1,2,\dots)$}.
\end{cases}
$$
In particular, this implies that the limit of $\mu_{x}(m)$ does not exist.
It follows that, for any $x\in D_{\boldsymbol{z}}$, the forward orbit of $x$ under $f_{n,\boldsymbol{z}}$ is historic.
This completes the proof of our main theorem.
\end{proof}

\subsection*{Acknowledgements}
We would like to thank 
Sebastian van Strien, 
Dmitry Turaev, 
Eduardo Colli
and 
Edson Vargas
for helpful conversations, 
which were very important in improving this paper.
This work was partially supported by 
JSPS KAKENHI Grant Numbers 25400112 and 26400093.


\end{document}